\newtheorem*{rep@theorem}{\rep@title}
\newcommand{\newreptheorem}[2]{%
\newenvironment{rep#1}[1]{%
 \def\rep@title{#2 \ref{##1}}%
 \begin{rep@theorem}}%
 {\end{rep@theorem}}}
\theoremstyle{plain}
\newtheorem{theorem}{Theorem}[section]
\newtheorem{prop}[theorem]{Proposition}
\newtheorem{lem}[theorem]{Lemma}
\newtheorem{cor}[theorem]{Corollary}
\theoremstyle{definition}
\newtheorem{defn}[theorem]{Definition}
\newtheorem{rem}[theorem]{Remark}
\newtheorem{notation}[theorem]{Notation}
\newtheorem{example}[theorem]{Example}
\newcommand{\C}{\ensuremath{{\mathbb{C}}}}
\newcommand{\Chat}{\ensuremath{{\hat{\mathbb{C}}}}}
\newcommand{\D}{\ensuremath{\mathbb{D}}}
\newcommand{\N}{\ensuremath{\mathbb{N}}}
\newcommand{\T}{\ensuremath{\mathbb{T}}}
\newcommand{\R}{\ensuremath{\mathbb{R}}}
\newcommand{\Z}{\ensuremath{\mathbb{Z}}}
\newcommand{\F}{\mathcal F}
\newcommand{\Dtwo}{\ensuremath{{\mathbb{D}^2}}}
\newcommand{\Ttwo}{\ensuremath{{\mathbb{T}^2}}}
\newcommand{\Ctwo}{\ensuremath{{\mathbb{C}^2}}}
\newcommand{\Rtwo}{\ensuremath{{\mathbb{R}^2}}}
\newcommand{\Ztwo}{\ensuremath{{\mathbb{Z}^2}}}
\newcommand{\Ctwohat}{\ensuremath{{\hat{\mathbb{C}}^2}}}
\newcommand{\cc}{{\subset\!\!\!\subset}}
\newcommand{\Dsd}{\ensuremath{D^\sigma_\delta}}
\newcommand{\Tsd}{\ensuremath{\mathbb{T}^\sigma_\delta}}
\newcommand{\TsdP}{\ensuremath{\mathbb{T}^\sigma_{P,\delta}}}
\newcommand{\Ds}[1]{\ensuremath{D^\sigma_{#1}}}
\newcommand{\cl}[1]{\operatorname{cl}({#1})}
\newcommand{\inter}[1]{\operatorname{int}{(#1)}}
\newcommand{\cj}[1]{\overline{#1}}
\newcommand{\GLtwo}{\ensuremath{{\operatorname{GL}_2}}}
\newcommand{\Aut}{\ensuremath{{\operatorname{Aut}}}}
\newcommand{\subjclass}[2][1991]{%
  \let\@oldtitle\@title%
  \gdef\@title{\@oldtitle\footnotetext{#1 \emph{Mathematics subject classification.} #2}}%
}
\newcommand{\keywords}[1]{%
  \let\@@oldtitle\@title%
  \gdef\@title{\@@oldtitle\footnotetext{\emph{Key words.} #1.}}%
}
\date{}
\begin{document}

\title{Resonances for rational Anosov maps on the torus}
\setcounter{tocdepth}{2}
\author{Julia Slipantschuk\footnote{Department of Mathematics, University of Warwick, Coventry, UK. Email: julia.slipantschuk@warwick.ac.uk},
Oscar F. Bandtlow\footnote{School of Mathematical Sciences, Queen Mary University of London, UK. Email: o.bandtlow@qmul.ac.uk} and
Wolfram Just\footnote{Institut f\"ur Mathematik, Universit\"at Rostock, Rostock, Germany. Email: wolfram.just@uni-rostock.de}}

\subjclass[2020]{37C30, 37D20, 32Q02, 32H04}

\maketitle

\begin{abstract}
A complete description of resonances for rational toral Anosov diffeomorphisms preserving certain
Reinhardt domains is presented. As a consequence it is shown that every homotopy class of
two-dimensional Anosov diffeomorphisms contains maps with the sequence of resonances decaying stretched-exponentially.
This is achieved by introducing a certain group of rational toral diffeomorphisms and computing the
resonances of the respective composition operator considered on suitable anisotropic spaces of hyperfunctions.
The class of examples is sufficiently rich to also include non-linear Anosov maps with trivial resonances, or
resonances decaying exponentially, as well as with or without area-preservation or reversing symmetries.
\end{abstract}

\tableofcontents

\section{Introduction}

Anosov diffeomorphisms are the simplest truly hyperbolic dynamical systems, whose
long term asymptotic behaviour characterized by correlation decay or rates of mixing are classical topics
in smooth ergodic theory and statistical physics.
The main tool for studying statistical properties for Anosov maps $T$ acting on a compact manifold $M$ is the {\it weighted composition operator}\footnote{This operator is also referred to as the transfer operator or Koopman operator, depending on the weight.} defined by
\[C_{T,w}\colon f \mapsto w \cdot f\circ T, \]
where $w$ is a smooth function on $M$ and $C_{T,w}$ acts on a suitable space of distributions.
There is a
considerable amount of recent literature devoted to the construction of so-called anisotropic spaces for
hyperbolic dynamical systems, on which the above operator is quasicompact,
implying a spectral gap and exponential decay of correlations. The general idea behind the construction of these spaces is to create sufficient
smoothness in the expanding direction and dual smoothness in the contracting direction, see \cite{Ru,BKL,GL,BT,FRS,B}, to name but a few.

If the Anosov map is real-analytic, it is sometimes possible to prove compactess and even nuclearity
of the above operator, see \cite{Ru, Je, FR}, implying that its non-zero spectrum is a sequence of isolated eigenvalues
known as {\it Pollicott-Ruelle resonances}, which determine all intrinsic
exponential mixing rates of the given system. However, quantitative results
such as location, or the very existance, of non-trivial resonances are rare, a few instances in different hyperbolic settings being \cite{FGL,DFG}.
Even for Anosov diffeomorphisms on
the torus $\T^2$, arguably the simplest setting of uniformly hyperbolic dynamical systems, it was
established only recently in \cite{A} (after an idea of F.~Naud) that non-trivial resonances
exist generically. In \cite{SlBaJu_NONL17} the authors presented a one-parameter family of Anosov maps,
proving the presence of infinitely many distinct resonances by calculating their location explicitly, and
also conjectured locations of resonances for another family of Anosov maps, which was recently proven in \cite{PoS}.
In both cases, after establishing compactness of the transfer operator on a suitable
anisotropic Hilbert space (originally introduced in \cite{FR}), the eigenvalues are read off from an upper triangular matrix
representation of the operator with respect to a weighted Fourier basis. These results,
though valuable for rigorously establishing the location of resonances for these particular families, are rather ad hoc as
they do not reveal the underlying spectral structure of the associated operator.
In this context, a key contribution of this article is to explain
the underlying structure of resonances
for a class of rational Anosov diffeomorphisms. For this it will be
helpful to work with simple anisotropic spaces, which, albeit less general,
interact well with the analytic structure of the underlying map.

For analytic Anosov maps on $\T^2$ with constant invariant stable and unstable cone fields we
define anisotropic Hilbert spaces of hyperfunctions as
a closure of Laurent polynomials under a weighted $L^2$ norm with the weight function adapted to the invariant cones,
and show that the respective weighted composition operator is well defined and trace-class. These spaces are isometrically
isomorphic to a direct sum of Hardy-Hilbert spaces on log-conical Reinhardt domains, with the logarithmic base
induced by the stable and unstable cones. Using this viewpoint and assuming additionally that these Anosov maps extend
holomorphically to certain domains in $\Ctwohat$, we are able to explicitly compute all the eigenvalues of the respective
composition operator. In addition we present a group of (non-linear) toral diffeomorphisms which satisfy these assumptions
and provide examples in each homotopy class of toral Anosov diffeomorhisms for which the composition operator has
infinitely many distinct non-trivial resonances $(\lambda_n)_{n \in \N}$ whose rate of decay is stretched-exponential, that
is, the upper bound of $\exp(-a n^{1/2})$ for some $a>0$ is tight.

Results in \cite{BJS} on resonances for analytic expanding circle maps of degree $d$, with $|d|>2$, arising from finite
Blaschke and anti-Blaschke products, made it possible to establish exponential lower bounds for the decay rate
on a dense set of analytic expanding circle maps \cite{BN}. In the same vein, the class of non-linear toral diffeomorphisms presented
in this paper will be an essential ingredient in proving that, generically, the stretched-exponential decay rate of resonances is
optimal within the class of analytic Anosov diffeomorphisms on $\T^2$. Moreover, as all constructions are very explicit
most of our results should extend to Anosov maps in higher dimensions. These are, however, beyond the current scope and
will be pursued in subsequent works.

\subsection{Statement of results}
We will only consider toral Anosov diffeomorphisms with constant expanding
and contracting cone fields. Restricting to constant invariant cone fields enables us to
work with simple anisotropic Hilbert spaces $H_\nu$, which can
be seen as the completion of Laurent polynomials
under the $\nu$-weighted $L^2_\nu$ inner product
for a \textit{cone-wise exponential weight}
function $\nu\colon\Ztwo \to \R_{\geq 0}$, that is $\nu(n) = e^{f(n)}$ where
$f\colon \Ztwo \to \mathbb{R}$ is piecewise linear, with the pieces being cones in $\R^2$.
We will make an additional assumption on the invariant cone fields, termed
{\it strongly expanding constant invariant cone field} condition or short-hand {\it (sec)},
see Definition \ref{def:strongly_exp} for the precise definition.
If the (constant) unstable invariant cone field can be chosen
to correspond to the positive/negative quadrants $\pm \mathbb{R}_{>0}^2$, we refer to these as positive, and call
the respective condition {\it (p-sec)}.

\begin{theorem}\label{thm:thmA}
Let $T$ be an analytic Anosov diffeomorphism of $\Ttwo$
satisfying the \textit{(sec)} assumption and $w\colon\Ttwo \to \C$ an analytic function.
Then there exists a cone-wise exponential $\nu\colon \Ztwo \to \R_{\geq 0}$ such that
\[ f \mapsto w \cdot f \circ T\] is a well-defined
trace-class operator on $H_\nu$.
\end{theorem}

For a special subclass of analytic Anosov diffeomorphisms where cone fields
can be chosen to correspond to quadrants of $\mathbb{R}^2$ and the diffeomorphisms extend holomorphically
to certain domains of $\Ctwohat$, we are able to compute
all eigenvalues of the above operator explicitly in terms of multipliers
of fixed points on these domains.
To state the results we introduce the notation
$\Sigma = \{\sigma = (\sigma_1, \sigma_2)\colon \sigma_1, \sigma_2 \in \{\pm1\}\}$, and
$D^\sigma =\{z \in \Ctwohat \colon |z_1|^{\sigma_1} > 1,
|z_2|^{\sigma_2} > 1\}$
with $\sigma\in \Sigma$ for the four bidisks in $\Ctwohat$. It will further be convenient to
write $\mathcal{N}^{1} = \mathbb{N}^2_0\setminus\{(0,0)\}$,
$\mathcal{N}^{-1} = \mathbb{N}^2$, and decompose $\Sigma$ as $\Sigma=\Sigma^{1} \cup \Sigma^{-1}$
with $\Sigma^{\ell} = \{\sigma \in \Sigma \colon \sigma_1 \cdot \sigma_2 =\ell\}, \ell \in \{\pm 1\}$.
The notation $T^\ell$ for $\ell=-1$ stands for the inverse of $T$. We write $C_T = C_{T,1}$ for the unweighted composition operator.

\begin{theorem}\label{thm:thmB}
Let $T$ be an analytic Anosov diffeomorphism on $\Ttwo$ satisfying the \textit{(p-sec)} assumption.
Then there exists a cone-wise exponential $\nu\colon \Ztwo \to \R_{\geq 0}$ such that
$C_T$ is a well-defined trace-class operator on $H_\nu$, and its spectral
determinant is an entire function of the form
\[\det (\operatorname{Id} - z C_T) =
(1-z) \chi_T(z),\]
where $\chi_T$ is an entire function with zeros outside of $\cl{\D}$.
Moreover, if $T^\ell$ holomorphically extends\footnote{Here we view
$\mathbb{T}^2$ as a subset of $\hat\C \times \hat\C$ and consider
extensions of $T$ and $T^{-1}$ to subsets of $\hat\C \times \hat\C$ as holomorphic
in the same sense as for mappings of the Riemann sphere.} to
$D^\sigma$ for all $\sigma \in \Sigma^\ell$ and $\ell \in \{\pm 1\}$ then $\chi_T$ is the product of two entire functions
$\chi^{+1}_T$ and  $\chi^{-1}_T$, whose zeros are given explicitly. Specifically,
for every $\ell\in\{\pm 1\}$ exactly one of the following cases holds:
\begin{enumerate}[(i)]
  \item if $T^{\ell}(D^\sigma) \subseteq D^\sigma$ for all $\sigma \in \Sigma^\ell$, then
  \[\chi_T^\ell(z) =  \prod_{n \in \mathcal{N}^{\ell}} \prod_{\sigma \in \Sigma^{\ell}} (1 - z\ell^s \lambda^n_\sigma) \]
  where $\lambda_\sigma = (\lambda_{\sigma,1}, \lambda_{\sigma,2})$ are the multipliers\footnote{The multipliers of $T$ at a point
$z$ are the eigenvalues $\lambda = (\lambda_1, \lambda_2)$ of
$D_zT = \left(\frac{\partial T_k}{\partial z_l}\right)_{k,l}$ at $z$.
We write $\lambda^n = \lambda_{1}^{n_1} \cdot \lambda_{2}^{n_2}$ for $n \in \Z^2$.} of the unique attracting
fixed point $z^*_\sigma \in D^\sigma$ of $T^\ell$, and $s = 0$ if $T$ is orientation-preserving and $s=1$ if it is orientation-reversing.
Additionally, it holds that $z_{\sigma}^*  = \cj{1/z^*_{-\sigma}}$ and $\lambda_\sigma = \cj{\lambda_{-\sigma}}$
for $\sigma \in \Sigma^\ell$.
\item if $T^{\ell}(D^\sigma) \subseteq D^{-\sigma}$ for all $\sigma \in \Sigma^{\ell}$, then
\[\chi^{\ell}_T(z) = \prod_{n \in \mathcal{N}^{\ell}} \prod_{\sigma \in \Sigma^{\ell}}  (1 - z^2\lambda^n_\sigma)^{1/2}, \]
where $\lambda_\sigma = \lambda_{-\sigma}$ are the multipliers of the unique attracting
fixed point $z^*_\sigma \in D^\sigma$ of $T^{2\ell}$.
Additionally, $\lambda_{\sigma,1}$ and $\lambda_{\sigma,2}$ are either real, or complex conjugates of each other.
\end{enumerate}
\end{theorem}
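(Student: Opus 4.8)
The plan is to exploit the isometric identification, set up earlier, of $H_\nu$ with a direct sum of Hardy--Hilbert spaces on the four bidisks $D^\sigma$, $\sigma \in \Sigma$, under which the weighted Laurent monomials $z^n$ become the orthogonal Hardy bases indexed by the sets $\mathcal{N}^\ell$. First I would invoke the argument behind Theorem \ref{thm:thmA}, specialised to the (p-sec) case, to produce a cone-wise exponential $\nu$ for which $C_T$ is trace-class; here the quadrant cones force the log-conical Reinhardt domains to be genuine bidisks, which is exactly what makes the spectrum computable. The constant function is a fixed vector, $C_T 1 = 1$, contributing the simple eigenvalue $1$ (simplicity reflecting topological transitivity of the Anosov map), and hence the factor $(1-z)$; the index sets $\mathcal{N}^{1} = \N_0^2 \setminus \{0\}$ and $\mathcal{N}^{-1} = \N^2$ are arranged precisely so that the monomial $z^0$ is counted once and absorbed into this factor. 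That the remaining eigenvalues lie in $\D$, i.e.\ $\chi_T$ has zeros outside $\cl{\D}$, will follow once they are identified with products $\lambda_\sigma^n$ of multipliers of modulus $<1$.

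Next I would use the holomorphic extension hypotheses to block-diagonalise. Since $T^\ell$ extends to $D^\sigma$ for $\sigma \in \Sigma^\ell$, the composition operator leaves invariant the Hardy sector attached to the two bidisks of a fixed parity $\ell$ (directly for $\ell=+1$, and for $\ell=-1$ via the natural action, which is that of $T^{-1}$); this yields the splitting $\chi_T = \chi_T^{+1}\chi_T^{-1}$, and explains both the notation $T^\ell$ and the fact that the positive quadrant is the unstable direction. On each block I would determine the spectrum through the interior fixed point. In case (i), where $T^{\ell}(D^\sigma) \subseteq D^\sigma$, a Denjoy--Wolff / hyperbolicity argument gives a unique attracting fixed point $z^*_\sigma$ in the interior with multipliers $\lambda_\sigma$ of modulus $<1$; filtering $H^2(D^\sigma)$ by order of vanishing at $z^*_\sigma$ renders the operator upper triangular with diagonal the action of $D_{z^*_\sigma}T^\ell$ on the symmetric powers of the cotangent space, whose eigenvalues are exactly $\lambda_\sigma^n$. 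Since the operator is trace-class, the spectral determinant equals the product of $(1 - z\lambda_\sigma^n)$ over the diagonal, which gives the stated formula for $\chi_T^\ell$; the sign $\ell^s$ records the effect of the Jacobian of an orientation-reversing $T$ on this triangular action.

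For case (ii), where $T^{\ell}(D^\sigma) \subseteq D^{-\sigma}$, the map swaps the two bidisks of parity $\ell$, so the block is off-diagonal on $H^2(D^\sigma) \oplus H^2(D^{-\sigma})$ and its square is $C_{T^{2\ell}}$, a self-map with attracting fixed point $z^*_\sigma$ of $T^{2\ell}$. Writing the block as $\left(\begin{smallmatrix} 0 & B \\ A & 0\end{smallmatrix}\right)$, the Schur-complement identity gives $\det(\operatorname{Id} - z C_{T^\ell}) = \det(\operatorname{Id} - z^2 AB) = \prod_n (1 - z^2\lambda_\sigma^n)$, which is exactly $\prod_n\prod_{\sigma \in \Sigma^\ell}(1 - z^2\lambda_\sigma^n)^{1/2}$ once one uses $\lambda_\sigma = \lambda_{-\sigma}$; the paired eigenvalues $\pm\sqrt{\lambda_\sigma^n}$ account for the square root and for $z^2$. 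The symmetry relations come from reality of $T$: the anti-holomorphic involution $j(z) = \cj{1/z}$ fixes $\Ttwo$ pointwise and swaps $D^\sigma$ with $D^{-\sigma}$, and since the holomorphic extension commutes with $j$ one obtains $z^*_\sigma = \cj{1/z^*_{-\sigma}}$ and $\lambda_\sigma = \cj{\lambda_{-\sigma}}$ in case (i); in case (ii) the swap together with $j$-invariance of $z^*_\sigma$ forces $\lambda_\sigma = \lambda_{-\sigma}$ and makes $D_{z^*_\sigma}T^{2\ell}$ a real matrix, whence $\lambda_{\sigma,1}, \lambda_{\sigma,2}$ are real or complex conjugate.

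The main obstacle I anticipate is concentrated in the eigenvalue step and is twofold. First, I must prove that the filtration by vanishing order genuinely triangularises the operator into closed cofinite subspaces, so that the trace-class property makes the spectral determinant exactly the product of the diagonal terms with no spurious spectrum; this is the rigorous substitute for multidimensional Koenigs linearisation and has to remain valid in the presence of resonances among $\lambda_{\sigma,1}, \lambda_{\sigma,2}$. Second, I must justify the sector decomposition together with the existence, uniqueness and interior location of the attracting fixed point $z^*_\sigma$ from the (p-sec)/Anosov hypotheses and the self-map property $T^{\ell}(D^\sigma)\subseteq D^{\pm\sigma}$ — that is, confirm that hyperbolicity on $\Ttwo$ forces the complexified map to be a strict self-map of the appropriate bidisks and that the corresponding Hardy sector is indeed invariant under the composition operator.
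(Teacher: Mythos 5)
Your treatment of the expanding ($\ell=+1$) factor, of the symmetry relations via the anti-holomorphic involution $z\mapsto \cj{1/z}$, and of fixed-point existence/uniqueness from strict containment is sound and close in spirit to the paper, which packages the fixed-point spectral step into Lemma \ref{lem:trace_fp} (quoting \cite{BJ}) and the symmetries into Corollary \ref{cor:TA} via Rudin's theorem. The genuine gap is in the contracting ($\ell=-1$) factor, and it is twofold. First, your block-diagonalisation premise --- that $C_T$ ``leaves invariant the Hardy sector attached to the two bidisks of a fixed parity'' --- is unjustified, and as literal invariance it is false for the $\Sigma^{-1}$ sector: already for the cat map, $p_{(1,-1)}\circ T = p_{(1,0)}$, so the mixed-sign sector is mapped into the $\Sigma^{1}$ sector. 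The paper never needs such invariance: it splits the \emph{diagonal trace sum} $\operatorname{Tr}(C_T^k)=S_1(k)+S_{-1}(k)$ (valid in any orthonormal basis for a trace-class operator, with no invariant-subspace claim) and exponentiates via \eqref{eq:det_def}, which factorises the determinant automatically.

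Second, and more seriously, computing the $\ell=-1$ spectrum by Koenigs-style triangularisation of the \emph{unweighted} composition operator of $T^{-1}$ at its attracting fixed point gives the wrong answer: that operator has eigenvalues $\mu^n$ for all $n\in\N_0^2$, i.e.\ an extra eigenvalue $1$, all axis powers $\mu_1^{n_1}$ and $\mu_2^{n_2}$, and no orientation sign --- none of which appear in $\chi_T^{-1}$. The missing mechanism is the paper's Lemma \ref{lem:off_conjugation} (combined with Remark \ref{rem:en_pn_equiv}): a change of variables shows that the diagonal matrix elements of $C_T^k$ over the mixed-sign Fourier sectors coincide with those of a \emph{weighted} composition operator $(M_{\hat w}C_{\hat T})^k$, where $\hat T = I_{rs}\circ T^{-1}\circ I_{rs}$ and $\hat w=\omega_T\det D\hat T$. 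It is precisely the weight at the fixed point, $\hat w(\zeta^*)=\omega_T\mu_1\mu_2$, that turns the bare trace $((1-\mu_1^k)(1-\mu_2^k))^{-1}$ into $\omega_T^k(\mu_1\mu_2)^k((1-\mu_1^k)(1-\mu_2^k))^{-1} = \omega_T^k\sum_{n\in\N^2}(\mu^n)^k$, producing simultaneously the restricted index set $\mathcal{N}^{-1}=\N^2$ and the sign $\ell^s$. Your heuristics (``$z^0$ is counted once'', ``the sign records the Jacobian'') point at these features but supply no argument --- in an unweighted triangularisation there is no Jacobian at all, and $\mathcal{N}^{-1}$ omits not just $z^0$ but every axis monomial. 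The same correction is needed in your Schur-complement treatment of case (ii) for $\ell=-1$: the off-diagonal blocks there are again weighted operators, which is why the paper instead computes odd and even traces separately, using Lemma \ref{lem:tr_1} for the odd ones.
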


Clearly, certain linear toral diffeomorphisms satisfy the assumptions of this theorem.
For example,
the well-known cat map $(z_1, z_2) \mapsto (z_1^2z_2, z_1z_2)$ satisfies assumption {\it (i)} and
hence we can compute all fixed points and their multipliers, which however are all trivial (that is, zero) in this case.
The group $\Aut(\T^2)$ of linear diffeomorphisms (automorphisms) of the torus is isomorphic to $\GLtwo(\Z)$.
In order to construct
non-linear maps, we shall consider a group of diffeomorphisms generated by a finite set $\Gamma$ of generators of
$\Aut(\T^2)$ and a (uncountably infinite) set $\mathcal{G}$ of certain rational diffeomorphisms preserving $\Ttwo$.
One particular choice for this set is \[\mathcal{G} = \{(z_1, z_2)\mapsto(b_a(z_1), z_2) \colon a \in \D\}\]
with $b_a(z) = (z-a)/(1-\cj{a} z)$. We call $\mathcal{F}$ the group of diffeomorphisms
generated by $\Gamma \cup \mathcal{G}$.
A certain subset of $\mathcal{F}$ comprises
of hyperbolic diffeomorphisms satisfying the assumptions of Theorem~\ref{thm:thmB}.
The explicit construction of elements of this set (see Section \ref{sec:blaschke_examples})
allows us to compute the sequence of eigenvalues of the
corresponding composition operators and present examples with qualitatively different decay rates of this sequence (stretched-exponential, exponential, and trivially super-exponential/all-zero).
Using the structure of conjugacy classes of $\GLtwo(\Z)$ we obtain the following theorem.

\begin{theorem} \label{thm:thmC}
Every homotopy class of analytic Anosov diffeomorphisms on $\Ttwo$ contains (non-linear)
Anosov diffeormorphisms $T\in \mathcal{F}$, such that for suitable $H_\nu$ the corresponding operator $C_T$ is well defined and trace-class,
with the entire function $z \mapsto \det(\operatorname{Id} - z C_T) = (1-z) \chi_T(z)$ as its spectral determinant.
In particular, denoting by $(\lambda_n)_{n\in\N}$ the sequence of eigenvalues of $C_T$
ordered by modulus in decreasing order, and counted with multiplicities, we obtain the following.
\begin{enumerate}[(i)]
  \item For every homotopy class $\mathcal{H}$ and $\eta > 0$, there exists $T\in \mathcal{H} \cap \mathcal{F}$ such that
  the eigenvalue sequence of $C_T$ satisfies
  \[\lim_{n \to \infty} \frac{- \log |\lambda_n|}{n^{1/2}} = \eta.\]

  \item For every homotopy class $\mathcal{H}$ of
  orientation-preserving Anosov diffeomophisms and $\eta > 0$, there exists $T\in \mathcal{H} \cap \mathcal{F}$ such that
  the eigenvalue sequence of $C_T$ satisfies
  \[\lim_{n \to \infty} \frac{- \log |\lambda_n|}{n} = \eta.\]

  \item Every homotopy class $\mathcal{H}$ of Anosov diffeomorphisms not containing a linear conjugate of one of $\{(z_1, z_2) \mapsto (z_1^kz_2, z_1), k\in \N\}$ has an element $T\in \mathcal{H} \cap \mathcal{F}$ not smoothly conjugated to a linear Anosov diffeomorphism, such that \[\chi_T(z) = 1\] for all $z \in \C$.
\end{enumerate}
\end{theorem}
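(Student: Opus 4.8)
The plan is to exploit the classification of homotopy classes of analytic Anosov diffeomorphisms of $\Ttwo$ by their action on homology: each class $\mathcal{H}$ corresponds to a conjugacy class of hyperbolic matrices in $\GLtwo(\Z)$, and each such class has a representative that is a word in the generating set $\Gamma$, hence a linear element of $\mathcal{F}$. Since every map in $\mathcal{G}$ restricts to a degree-one orientation-preserving circle diffeomorphism $b_a$, it is homotopic to the identity, so the ``linear part'' homomorphism $\mathcal{F} \to \GLtwo(\Z)$ has $\mathcal{G}$ in its kernel. Consequently, composing a linear representative of $\mathcal{H}$ with suitable elements of $\mathcal{G}$ produces genuinely non-linear maps lying in the same homotopy class. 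The first task is therefore to select, within each conjugacy class, a normal form adapted to the four bidisks $D^\sigma$ --- concretely a Perron--Frobenius (positive-entry) representative whose attracting fixed point sits at a corner of $\Ctwohat$ --- and then to insert Blaschke factors from $\mathcal{G}$ so that the resulting $T \in \mathcal{F}$ remains Anosov, satisfies \textit{(p-sec)}, and extends holomorphically to the $D^\sigma$ with the invariance required by Theorem~\ref{thm:thmB}. This explicit construction is carried out in Section~\ref{sec:blaschke_examples}.

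Given such a $T$, Theorem~\ref{thm:thmB} identifies the non-trivial eigenvalues of $C_T$ with the numbers $\ell^s \lambda_\sigma^n$ (or their square-root analogues in case (ii) of that theorem), indexed by $n \in \mathcal{N}^\ell$ and $\sigma \in \Sigma^\ell$, where $\lambda_\sigma = (\lambda_{\sigma,1}, \lambda_{\sigma,2})$ are the multipliers at the attracting fixed points in the $D^\sigma$. The decay of the modulus-ordered sequence $(\lambda_n)$ is then governed purely by a lattice-point count: writing $|\lambda_{\sigma,i}| = e^{-\alpha_i}$ with $\alpha_i \in (0,\infty]$, the modulus of the resonance attached to $n$ is $e^{-(\alpha_1 n_1 + \alpha_2 n_2)}$, so the decay rate reduces to counting points of $\mathcal{N}^\ell$ in a half-plane. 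The three parts of the theorem correspond to three configurations of the $\alpha_i$, each realizable by tuning the Blaschke parameters $a$ and the linear normal form.

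For part (i) I would keep both $\alpha_1, \alpha_2$ finite and positive. Then
\[ \#\{ n \in \mathcal{N}^{\ell} : \alpha_1 n_1 + \alpha_2 n_2 \leq R \} \sim \frac{R^2}{2\alpha_1 \alpha_2} \qquad (R \to \infty), \]
summed over the finitely many contributing families, so that the $n$-th eigenvalue satisfies $-\log|\lambda_n| \sim \eta\, n^{1/2}$ with $\eta$ an explicit function of the multipliers; varying $a$ sweeps $\eta$ over all of $(0,\infty)$, giving the stretched-exponential rate in every class. For part (ii) I would instead force one multiplier to vanish, say $\alpha_2 = \infty$: only the terms with $n_2 = 0$ survive, the resonances collapse to finitely many geometric sequences of ratio $e^{-\alpha_1}$, and $-\log|\lambda_n| \sim \eta\, n$ with $\eta$ tunable; this single-zero configuration is compatible with the invariance $T^\ell(D^\sigma) \subseteq D^\sigma$ of case (i) of Theorem~\ref{thm:thmB} but not with the bidisk-swapping square-root structure of its case (ii), which is why part (ii) is stated only for orientation-preserving classes. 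For part (iii) I would force \emph{both} multipliers to vanish at all four attracting fixed points, i.e.\ a nilpotent Jacobian there, so that $\chi_T^\ell \equiv 1$ and hence $\chi_T \equiv 1$; this degeneracy is exactly the phenomenon already visible for the cat map, and the Perron--Frobenius normal forms make it available in every conjugacy class except the companion classes of $M_k = \left(\begin{smallmatrix} k & 1 \\ 1 & 0 \end{smallmatrix}\right)$, for which the required non-linear insertion cannot be performed without either destroying the degeneracy or rendering the map linearizable.

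The main obstacle is not the asymptotic bookkeeping but the reconciliation of the global and local constraints: the homotopy class fixes the $\GLtwo(\Z)$ conjugacy type, while Theorem~\ref{thm:thmB} demands the very specific local data of holomorphic extension, bidisk invariance, and prescribed (possibly degenerate) multipliers. Verifying that a single $T \in \mathcal{F}$ can meet all of these simultaneously --- and delineating exactly the classes for which the fully degenerate configuration of part (iii) is unattainable --- is the delicate point, handled case-by-case through the normal-form analysis of Section~\ref{sec:blaschke_examples}. Finally, the non-linearizability assertion in part (iii) would be established via the periodic-data rigidity for Anosov maps on $\Ttwo$: a linear model has constant multipliers along periodic orbits, whereas the Blaschke-composed examples have non-constant derivative and hence non-constant periodic data, so they cannot be smoothly conjugate to any linear Anosov diffeomorphism, even though their complexified bidisk fixed points are degenerate enough to annihilate all resonances.
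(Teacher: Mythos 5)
Your overall strategy is the same as the paper's: reduce each homotopy class to a $\GLtwo(\Z)$-conjugacy normal form (the paper uses the factorization of Lemma \ref{lem:conjstandard}, realized as $I_{11}^s\circ(F^{k_1}\circ R)\circ\cdots\circ(F^{k_n}\circ R)$), deform it by Moebius factors from $\mathcal{G}$ to obtain non-linear members of the same class satisfying Theorem \ref{thm:thmB}, and read off the decay rate from the multiplier configuration by lattice-point counting (Corollary \ref{cor:limrate}, Lemma \ref{lem:cwexp_decay}). Your parts (i) and (ii) are in substance the paper's Proposition \ref{prop:uka_class} and Corollary \ref{cor:existing_cases}, with one imprecision worth flagging: your explanation of the orientation-preserving restriction in (ii) is not the right one. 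A single vanishing multiplier is \emph{not} incompatible with the bidisk-swapping case of Theorem \ref{thm:thmB}; for $s=1$ and $n$ even the paper's family has multipliers $(|A_o^{K_o}|^2,|A_e^{K_e}|^2)$, which has exactly one zero entry when exactly one $a_i$ vanishes, and this is an orientation-preserving, bidisk-swapping configuration yielding exponential decay. The actual obstruction is that for odd word length $n$ (which is what orientation reversal means here) the multipliers of the family take the forms $(v,-v)$ or $(v,\cj{v})$, whose components always have equal modulus, so ``exactly one zero'' is unattainable.

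The genuine gap is in part (iii), in the non-linearizability argument. The inference ``non-constant derivative, hence non-constant periodic data'' is a non sequitur: conjugating a linear automorphism by any non-linear diffeomorphism produces a map with non-constant derivative whose periodic data (eigenvalues of $DT^p$ at period-$p$ points) agrees exactly with the linear model, since these eigenvalues are $C^1$-conjugacy invariants. Worse, your candidate maps for (iii) are the area-preserving ones with nilpotent complexified Jacobians, and for these the engineered degeneracy provides no obstruction whatsoever: trivial resonances are precisely what a map smoothly conjugate to a linear automorphism would exhibit, so triviality of $\chi_T$ is perfectly consistent with linearizability. To close the argument along your lines you would have to locate concrete periodic orbits on the real torus (not the complexified fixed points in the bidisks, which carry no information about smooth conjugacy) and verify that their multipliers are inconsistent with a single hyperbolic matrix; this computation is absent and is exactly the delicate point. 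The paper sidesteps it by using a different, deliberately \emph{non}-area-preserving family $\Xi_{K,a}=I_{11}^s\circ W_{k_1,0}\circ\cdots\circ W_{k_{n-1},0}\circ W_{k_n,a}$ with $W_{k,a}(z)=(z_1^k\, b_a(z_2),\,z_1)$ (Lemma \ref{lem:nonareapres}): these still have all complexified multipliers degenerate, so $\chi_T\equiv 1$, while the failure of area preservation (a Jacobian determinant of modulus $\neq 1$, which for any $C^1$-conjugate of a toral automorphism is impossible at periodic points) rules out smooth conjugacy to a linear map. Either switch to such non-area-preserving examples, or replace your ``non-constant derivative'' step with an explicit periodic-data verification.
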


\begin{rem}
We note that the maps $T\in \mathcal{F}$ above can be written in closed form.
In contrast to the more common setting of analytic perturbations of linear maps (see, for example,  \cite{A,FR}),
these maps are not required to be $C^1$ close to the respective linear automorphisms, that is, the
$C^1$ distance can be arbitrarily large. In fact, as the proof of Theorem \ref{thm:thmC} will show,
the spectral gap for the composition operator associated to the maps in $(i)$ and $(ii)$ can take an arbitrary value in $(0,1)$.
Moreover, under the additional assumption that $T$ is orientation-preserving, in case $(i)$ the second-largest eigenvalue $\lambda_2 \in \D$
and the decay rate $\eta > 0$ can be chosen arbitrarily, independently of each other.
The behaviour in $(i)$ is believed to be generic for two-dimensional
Anosov maps, whereas the cases $(ii)$ and $(iii)$ are exceptional.
We also note that under the assumptions of Theorem~\ref{thm:thmB} (as well as of Theorem~\ref{thm:thmC}), the spectral determinant can be shown
to coincide with the usual dynamical determinant, as the usual trace formula (see, for example, \cite[Proposition 5]{FR}) can
be established in this setting.
\end{rem}

\begin{rem}
As shown in \cite{FG}, any homotopy class~$\mathcal{H}$ of analytic Anosov diffeomorphisms on $\T^2$ is path-connected,
so that in fact, in the cases $(i)$-$(iii)$ in Theorem \ref{thm:thmC}, any $T' \in \mathcal{H}$
is homotopic via a continuous path of toral diffeomorphisms to a $T \in \mathcal{H}$ with the respective spectral property.
Moreover, by the Franks-Newhouse classification theorem, every Anosov diffeomorphism of codimension $1$ on
a closed Riemannian manifold\footnote{An Anosov diffeomorphism is said to be of codimension $1$ if the codimension of
either its stable or unstable foliations has dimension $1$.} is topologically conjugated to
a hyperbolic toral automorphism, see for example \cite{Hi}.
Thus, since any two smooth surfaces that are homeomorphic are also diffeomorphic, in the above theorem $\Ttwo$ can be replaced with any two-dimensional closed Riemannian manifold.
\end{rem}

This paper is organised as follows.
Starting with a motivational example of the composition operator for the cat map in Section \ref{sec:pedestrian}, we discuss its
boundedness and compactness on an anisotropic space of hyperfunctions relevant to the current work.
In Section \ref{sec:cones_nonped} we consider a class of analytic toral Anosov diffeomorphisms with strongly expanding
constant invariant cone \textit{(sec)} fields and show how properties on the tangent bundle are translated into properties of the map in
a small complex neighbourhood of the torus. We start Section \ref{sec:comp_compact} by summarising properties of Hardy-Hilbert spaces
on log-conical Reinhardt domains and realizing the anisotropic Hilbert spaces of hyperfunctions from Section \ref{sec:pedestrian}
as direct sums of these Hardy-Hilbert spaces. The main theorem of this section is Theorem \ref{thm:boundedness_large2small},
which together with a standard factorisation argument yields a trace-class weighted composition operator for analytic
Anosov maps with the \textit{(sec)} property, thus proving Theorem \ref{thm:thmA}. Section \ref{sec:resonances_rational} is devoted
to the proof of Theorem \ref{thm:thmB}, that is, the computation of resonances of the composition operator
associated to rational Anosov diffeomorphisms satisfying the \textit{(p-sec)} condition and preserving certain polydisks. In Section \ref{sec:resonances_blaschke}
we prove Theorem \ref{thm:thmC}. For this, we first introduce in Section~\ref{sec:blaschke_examples} the group of diffeomorphisms $\mathcal{F}$, discuss their properties and present examples of Anosov maps with and without additional properties
such as area-preservation and symmetry reversal.
In Section \ref{sec:red_maps}, using conjugacy classes of toral Anosov automorphisms, we construct non-linear Anosov maps in $\mathcal{F}$
satisfying the assertions of Theorem \ref{thm:thmC}.

\section{Cone conditions for Anosov diffeomorphisms on \texorpdfstring{$\Ttwo$}{the two-torus}}\label{sec:cones}

\subsection{A pedestrian approach}\label{sec:pedestrian}
In this section we want to explore functional-analytic properties
of composition operators associated to Anosov automorphisms on the torus
on Hilbert spaces that can be defined as the completion of the space of Laurent polynomials with
a norm depending on a certain weight function. We take the well-known cat map
as an example of a toral automorphism, and discuss boundedness and compactness
of the associated composition operator depending on the weight function. The main
result of this section is to show that for a suitable weight function the composition operator is
Hilbert-Schmidt.
Since most of our work in this paper focuses on the two-dimensional case, for convenience
we introduce the following shorthands.

\begin{notation}
For $\alpha, \beta \in \Rtwo$ (or $\Z^2$) we write $\alpha > \beta$ if $\alpha_1 > \beta_1$ and $\alpha_2 > \beta_2$,
and moreover for $c \in \R$ the notation $\alpha > c$ will be used as a shorthand for $\alpha_1 > c$ and $\alpha_2 > c$
(and analogously for other comparison operators).
For $z \in \C^2$ (or $\R^2$, $\Z^2$) and $n \in \Z^2$ we use the multiindex notation $z^n = z_1^{n_1} z_2^{n_2}$, and
we write $|z| = (|z_1|, |z_2|)$.
\end{notation}

\subsubsection{Weighted Hilbert spaces}

Let $\T^2 = \{z \in \C^2: |z_1| = |z_2| = 1\}$, and let $\mathcal{P}$ denote the space of Laurent polynomials on $\T^2$,
\begin{equation*}
\label{calL}
\mathcal{P}=\{f:\mathbb{T}^2\to \mathbb{C}:
f(z)=\sum_{n \in \Z^2, |n|\leq N}f_nz^n,
\text{ with $f_n\in\mathbb{C}, N\in \mathbb{N}$} \}\,.
\end{equation*}
For any $\nu: \mathbb{Z}^2 \to \R_{>0}$, we define an inner product on $\mathcal{P}$ by
\[\langle f, g \rangle_\nu = \sum_{n\in\mathbb{Z}^2} f_n \bar{g}_n \nu(n)^2,  \]
where $(f_n)_{n\in\mathbb{Z}^2}$ and $(g_n)_{n\in\mathbb{Z}^2}$ are the Fourier coefficients
of $f$ and $g$, and we denote by $\|\cdot\|_\nu$ the corresponding norm.
Note that $\nu(n) = \|p_n\|_\nu$ for $n \in \Z^2$, where $p_n$ is the monomial $z\mapsto z^n$.

\begin{defn}\label{defn:Hnu}
We write $\mathcal{H}_\nu$ for the completion of $\mathcal{P}$ with
respect to the norm $\|\cdot\|_\nu$.
\end{defn}
It turns out that $\mathcal{H}_\nu$ is a separable Hilbert space, which contains the Laurent
polynomials as a dense subset, with an orthonormal basis given by the normalised monomials
\[e_n(z) = \frac{z^n}{\nu(n)}, \qquad n \in \Z^2.\]

\subsubsection{Composition operator for the cat map}

The hyperbolic matrix
\[A = \begin{pmatrix}
2 & 1 \\
1 & 1
\end{pmatrix}\] induces the well-known toral
automorphism $T \colon \T^2 \to \T^2$ given by $(z_1, z_2) \mapsto (z_1^2 z_2, z_1, z_2)$, known as the cat map.
We shall study the properties of the
composition operator $C_T$ associated to $T$ when considered on different Hilbert spaces
$H_\nu$. In particular, we will see that for certain choices of $\nu$ the operator $C_T$ is
Hilbert-Schmidt.
A bounded operator $L$ on a Hilbert space $H$ is Hilbert-Schmidt if
$\sum_n \| L e_n\|^2 < \infty$ where $\{e_n\}$ is an orthonormal basis of $H$.
A sufficient condition for this is that there are $\delta, c >0$ such that for all $n$ it holds that
\begin{align}\label{eq:Len}
\| L e_n \| \leq \delta \exp(-c \|n\|).
\end{align}
We can compute $\|C_T e_n\|_\nu$ explicity, as
\begin{align}\label{eq:Cen}
 \|C_T e_n\|^2_\nu = \sum_{m\in\mathbb{Z}^2} \left(\frac{\nu(m)}{\nu(n)}\right)^2 \delta_{2n_1+n_2, m_1}\delta_{n_1+n_2, m2}
 = \left(\frac{\nu(A n)}{\nu(n)}\right)^2.
\end{align}

We first consider weight functions $\nu$ adapted to the dynamics of $T$ introduced in \cite{FR}. For this,
we denote the unstable/stable eigenvalues of $A$ by
$\lambda_{u/s} = \varphi^{\pm2}$, where $\varphi = (1+\sqrt{5})/2$ is the golden mean, and
write $V = (v_{u}, v_{s})$ for the matrix with the corresponding
(normalised) unstable/stable eigenvectors as its column vectors. We write $\langle \cdot, \cdot\rangle$ for
the standard inner product on $\mathbb{R}^2$.

\begin{lem}\label{lem:cat_HS_old_space}
For any $a = (a_1, a_2) >0$ and $\nu(n) = \nu_a(n) = \exp(-a_1 |\langle n, v_u \rangle| + a_2 |\langle n, v_s\rangle|)$,
the composition operator $C_T$ is a well-defined Hilbert-Schmidt operator on $(H_\nu, \|\cdot\|_\nu)$.
\end{lem}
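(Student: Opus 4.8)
The plan is to verify the sufficient condition \eqref{eq:Len} directly, exploiting the fact that $C_T$ acts diagonally (up to reindexing) on the monomial basis. Since $T$ is the toral automorphism induced by $A$, the monomial $p_n(z)=z^n$ satisfies $p_n\circ T = z^{An}$, so that $C_T e_n = (\nu(An)/\nu(n))\,e_{An}$; as $A\in\GLtwo(\Z)$ is a bijection of $\Z^2$, the family $\{e_{An}\}_{n}$ is again an orthonormal basis, which re-derives \eqref{eq:Cen} and in particular gives $\|C_T e_n\|_\nu = \nu(An)/\nu(n)$. It then suffices to estimate this single ratio.

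Next I would diagonalise using the eigenstructure of $A$. Because $A$ is symmetric, its normalised eigenvectors $v_u,v_s$ are orthonormal, and $\langle An, v_u\rangle = \lambda_u\langle n, v_u\rangle = \varphi^2\langle n, v_u\rangle$, and likewise $\langle An, v_s\rangle = \varphi^{-2}\langle n, v_s\rangle$. Substituting into the definition of $\nu_a$ yields
\[
\frac{\nu(An)}{\nu(n)} = \exp\bigl(-a_1(\varphi^2-1)\,|\langle n, v_u\rangle| \;-\; a_2(1-\varphi^{-2})\,|\langle n, v_s\rangle|\bigr).
\]
The decisive point — and the only place where the particular form of the weight matters — is that both coefficients are strictly positive: in the unstable direction the weight already decays and expansion ($\varphi^2>1$) reinforces this, while in the stable direction the weight grows but contraction ($\varphi^{-2}<1$) again produces net decay. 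Hence the ratio never exceeds $1$ (which gives boundedness of $C_T$ for free) and in fact decays along both eigendirections.

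Finally I would convert this into the geometric bound \eqref{eq:Len}. Setting $c=\min\{a_1(\varphi^2-1),\,a_2(1-\varphi^{-2})\}>0$ and using orthonormality of $\{v_u,v_s\}$, which gives $\|n\|\le |\langle n, v_u\rangle|+|\langle n, v_s\rangle|$, I obtain $\|C_T e_n\|_\nu \le \exp(-c\|n\|)$. Summing over the lattice, $\sum_{n\in\Z^2}\exp(-2c\|n\|)<\infty$, so $C_T$ is Hilbert--Schmidt; finiteness of the Hilbert--Schmidt norm on the dense subspace $\mathcal{P}$ simultaneously guarantees that $C_T$ extends to a bounded (indeed compact) operator on $H_\nu$, settling well-definedness.

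I do not expect a genuine obstacle: the computation is essentially forced once one observes that $C_T$ is monomial-to-monomial and that $A$ is symmetric, so that the weight ratio depends only on the eigenvalues acting on the eigencoordinates of $n$. The only points requiring care are the bookkeeping of signs — ensuring that both the expanding and the contracting contributions produce decay rather than growth, which is precisely what the choice of signs in $\nu_a$ arranges — and the elementary comparison between $|\langle n, v_u\rangle|+|\langle n, v_s\rangle|$ and the Euclidean norm $\|n\|$ needed to pass from the eigencoordinate estimate to \eqref{eq:Len}.
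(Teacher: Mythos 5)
Your proposal is correct and takes essentially the same approach as the paper's proof: both compute $\|C_T e_n\|_\nu = \nu(An)/\nu(n)$ via \eqref{eq:Cen}, use the symmetry of $A$ to get $\langle An, v_{u/s}\rangle = \lambda_{u/s}\langle n, v_{u/s}\rangle$, and conclude net exponential decay $\exp(-a_1(\lambda_u-1)|\langle n,v_u\rangle| - a_2(1-\lambda_s)|\langle n,v_s\rangle|)$ in both eigendirections. Your explicit inequality $\|n\| \leq |\langle n,v_u\rangle| + |\langle n,v_s\rangle|$ from orthonormality merely makes concrete the paper's appeal to equivalence of norms on $\R^2$.
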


\begin{proof}
By using $\langle An, v_{u/s} \rangle = \langle n, A^T v_{u/s} \rangle = \lambda_{u/s}\langle n, v_{u/s} \rangle$ and
\eqref{eq:Cen}, we obtain
\begin{align*}
\|C_T e_n\|_\nu &=
\exp(-a_1 \lambda_u|\langle n, v_u \rangle| + a_2 \lambda_s |\langle n, v_s\rangle|) \cdot
\exp(a_1 |\langle n, v_u \rangle| - a_2 |\langle n, v_s\rangle|)\\
&= \exp(-a_1 (\lambda_u - 1)|\langle n, v_u \rangle| - a_2(1-\lambda_s)|\langle n, v_s\rangle|).
\end{align*}
As $(\lambda_u - 1)>0$, $(1-\lambda_s)>0$, and all norms on $\mathbb{R}^2$ are equivalent,
we obtain inequality \eqref{eq:Len}.
\end{proof}

It turns out that replacing $V$ by the identity matrix yields an operator that
is not even compact. We omit a proof, as this follows by a direct calculation.

\begin{lem}
Let $a = (a_1, a_2) \in \R^2_{>0}$ and $\nu(n) = \nu_a(n) = \exp(-a_1 |n_1| + a_2 |n_2|)$.
The operator $C_T$ is bounded on $(H_\nu, \|\cdot\|_\nu)$ if and only if
$a_1 = a_2$. It is never compact on $(H_\nu, \|\cdot\|_\nu)$.
\end{lem}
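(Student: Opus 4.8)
The plan is to exploit that the cat map sends monomials to monomials, so that $C_T$ acts on the orthonormal basis $(e_n)_{n\in\Z^2}$ of $H_\nu$ as a weighted bijection. Since $C_T p_n = p_{An}$, one has $C_T e_n = c_n\, e_{An}$ with $c_n = \nu(An)/\nu(n)$, and because $A \in \GLtwo(\Z)$ the map $n \mapsto An$ is a bijection of $\Z^2$. For such a weighted permutation of an orthonormal basis, \eqref{eq:Cen} gives $\|C_T e_n\|_\nu = c_n$ and, writing $f = \sum_n f_n e_n$, one has $\|C_T f\|_\nu^2 = \sum_n |f_n|^2 c_n^2$; hence $C_T$ extends to a bounded operator iff $\sup_n c_n < \infty$ (with operator norm equal to this supremum), and is compact iff $c_n \to 0$ as $|n| \to \infty$ (that is, $\{n : c_n \geq \varepsilon\}$ is finite for every $\varepsilon > 0$). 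Everything therefore reduces to analysing the exponent $g(n) := \log c_n = \log\nu(An) - \log\nu(n)$, which is even and positively homogeneous of degree one on $\R^2$.

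First I would write $g$ out explicitly. With $An = (2n_1+n_2,\, n_1+n_2)$ and the given weight, $g(n) = -a_1|2n_1+n_2| + a_2|n_1+n_2| + a_1|n_1| - a_2|n_2|$. For the necessity of $a_1 = a_2$ I evaluate $g$ at two integer witnesses: $g(1,0) = a_2 - a_1$ and $g(1,-2) = a_1 - a_2$. If $a_1 \neq a_2$, one of these is strictly positive, say $g(n_0) > 0$; homogeneity then gives $g(k n_0) = k\, g(n_0) \to +\infty$, so $c_{k n_0} \to \infty$ and $C_T$ is unbounded.

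For sufficiency I set $a_1 = a_2 = a$ and must show $g \leq 0$ everywhere, that is $|2n_1+n_2| + |n_2| \geq |n_1+n_2| + |n_1|$. The key step is the substitution $p = n_1 + n_2$, $q = n_1$, under which $2n_1 + n_2 = p + q$ and $n_2 = p - q$, turning the claim into $|p+q| + |p-q| \geq |p| + |q|$; this holds because the left-hand side equals $2\max(|p|,|q|)$. Hence $\sup_n c_n \leq 1$ and $C_T$ is bounded. This elementary but direction-dependent inequality — verifying that the piecewise-linear $g$ has the correct sign in every sector — is the technical heart of the argument, though after the change of variables it collapses to the one-line identity above; I expect this to be the only genuinely non-routine step.

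For non-compactness I observe that $g$ vanishes along the ray $(k,0)$: since $g(1,0) = 0$ when $a_1 = a_2$, homogeneity gives $c_{(k,0)} = 1$ for all $k$. Thus the orthonormal sequence $(e_{(k,0)})_{k}$ tends weakly to $0$ while its images $C_T e_{(k,0)} = e_{(2k,k)}$ retain norm one, which is incompatible with compactness (a compact operator maps weakly null sequences to norm-null ones). When $a_1 \neq a_2$ the operator is already unbounded, hence a fortiori not compact; so $C_T$ is compact for no choice of $(a_1,a_2)$, as claimed.
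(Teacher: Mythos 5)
Your proof is correct. The paper actually omits the proof of this lemma entirely (stating only that it ``follows by a direct calculation''), and your argument — reducing via \eqref{eq:Cen} to the weighted-permutation action $C_T e_n = c_n e_{An}$, then analysing the sign of $g(n) = \log\nu(An) - \log\nu(n)$ with the witnesses $(1,0)$, $(1,-2)$ for necessity, the substitution $p = n_1+n_2$, $q = n_1$ turning sufficiency into $|p+q|+|p-q| = 2\max(|p|,|q|) \geq |p|+|q|$, and the weakly-null orthonormal sequence $(e_{(k,0)})_k$ for non-compactness — is exactly that calculation, carried out correctly in all cases.
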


On the other hand, as will become apparent in the next section, working with a diagonal matrix $V$ yields more convenient
function spaces (specifically, Hilbert spaces of holomorphic functions on polydisks).
We can restore the nice properties of the compositon operator in this setting,
by allowing $a$ to be a function of $n\in\mathbb{Z}^2$.

\begin{defn}[Quadrant-wise exponential weight]\label{defn:weight}
Let $\alpha, \gamma \in \mathbb{R}^2$ and define $a\colon\mathbb{Z}^2\to \mathbb{R}^2$ as
\[a_{\alpha,\gamma}(n) =
\begin{cases}
    \alpha, & \text{if } n_1\cdot n_2 \geq 0\\
    \gamma, & \text{if }  n_1\cdot n_2 <0,
\end{cases}
\]
and $\nu(n) = \nu_{\alpha,\gamma}(n) = \exp(-\langle a_{\alpha,\gamma}(n), |n| \rangle)$.
We call such weight function {\it quadrant-wise exponential}.
\end{defn}

\begin{lem}\label{lem:main_cat}
Let $\nu = \nu_{\alpha,\gamma}$ be a quadrant-wise exponential weight.
If $\alpha \in \R^2_{>0}$ and $\gamma \in \R^2_{<0}$, then
the operator $C_T$ is a well-defined Hilbert-Schmidt operator on $H_\nu$.
\end{lem}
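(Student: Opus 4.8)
The plan is to verify the Hilbert--Schmidt criterion directly from the explicit formula \eqref{eq:Cen}. Since $A\in\GLtwo(\Z)$ permutes $\Z^2$ and $C_T e_n = (\nu(An)/\nu(n))\,e_{An}$, the operator sends the orthonormal basis $\{e_n\}$ to scalar multiples of distinct basis vectors; hence $C_T$ is well defined and Hilbert--Schmidt as soon as $\sum_{n}(\nu(An)/\nu(n))^2 < \infty$, and by \eqref{eq:Len} it suffices to produce constants $\delta,c>0$ with $\nu(An)/\nu(n)\le \delta\exp(-c\|n\|)$. Writing $\phi:=\log\nu$, so that $\phi(n)=-\langle a_{\alpha,\gamma}(n),|n|\rangle$, this becomes $\psi(n):=\phi(An)-\phi(n)\le -c\|n\|+\log\delta$. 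The structural feature I would exploit is that $\phi$ is positively homogeneous of degree one and piecewise linear, the pieces being the closed quadrants; consequently $\psi$ is positively homogeneous of degree one and piecewise linear, so it is enough to show $\psi(n)<0$ for all $n\ne 0$, with the uniform $c>0$ then following from compactness of the unit circle intersected with each of the finitely many linear pieces.

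Next I would reduce the number of cases. Since $A(-n)=-An$ while both $\phi$ and the quadrant-type of $n$ are invariant under $n\mapsto -n$, the function $\psi$ is even, so it suffices to treat $n$ in the closed first and fourth quadrants. For $n$ in the first quadrant one has $An\in\mathbb{N}_0^2$ as well (all entries of $A$ are positive), so both $n$ and $An$ lie in the positive region where $\nu$ decays; here $\psi$ is linear with negative values on the generating rays $(1,0)$ and $(0,1)$, using $\alpha\in\R_{>0}^2$. For $n$ in the fourth quadrant the image $An$ may land in three different regions, and I would split the quadrant along the $A$-preimages of the coordinate axes, namely $A^{-1}(1,0)=(1,-1)$ and $A^{-1}(0,-1)=(1,-2)$, which cut the fourth quadrant into subcones mapped by $A$ into the first quadrant, back into the fourth quadrant, and into the third quadrant, respectively. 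On each subcone $\psi$ is again linear, and I would check negativity on its two extreme rays; since a linear functional that is negative on the generators of a pointed cone is negative on the whole cone, this gives $\psi<0$ throughout.

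The main obstacle is the middle subcone, where both $n$ and $An$ lie in the negative region $\{n_1n_2<0\}$ and the weight $\nu$ \emph{grows} in both. This is precisely the subcone containing the stable direction of $A$ (the eigenray of slope $-\varphi$), so a priori nothing forces $\psi<0$. The resolution is that here $A$ contracts, and the hypothesis $\gamma\in\R_{<0}^2$ makes $\phi=-\langle\gamma,|\cdot|\rangle$ a \emph{positive} weighted $\ell^1$-norm on this region; combining the contraction rate $\lambda_s=\varphi^{-2}<1$ with the positivity of $-\gamma$ is exactly what yields strict decay. Concretely, on this subcone $\psi$ has coefficients built from $-\gamma$ alone and is negative on the extreme rays $(1,-1)$ and $(1,-2)$, and this is the step that genuinely uses both the sign condition on $\gamma$ and the hyperbolicity of $A$. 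A final minor technical point I would address is the discontinuity of $\phi$ across the coordinate axes, where the weight switches between $\alpha$ and $\gamma$: the axis lattice points belong to the positive region and are therefore covered by the first/third quadrant estimates, and one checks that the one-sided bounds match up so that a single uniform $c>0$ can be extracted. Collecting the finitely many cases gives \eqref{eq:Len} and hence the Hilbert--Schmidt property.
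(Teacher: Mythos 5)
Your proposal is correct and follows essentially the same route as the paper: both reduce to the sufficient criterion \eqref{eq:Len} via the explicit formula \eqref{eq:Cen}, and then verify $\log\big(\nu(An)/\nu(n)\big)\le -c\|n\|$ by a case analysis according to which quadrants contain $n$ and $An$, with the mixed-sign-to-mixed-sign cone (the one containing the stable direction) being exactly where $\gamma\in\R^2_{<0}$ and the hyperbolicity of $A$ are used. The differences are only organizational: you refine the paper's case $n_1 n_2<0$, $(An)_1(An)_2\ge 0$ into two subcones cut out by the $A$-preimages of the axes and extract the uniform constant by homogeneity and negativity on extreme rays, whereas the paper chases explicit constants $c_1,c_2,c_3$ using norm equivalence.
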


\begin{proof}
For $n\in\mathbb{Z}^2$ we denote
\[\varphi(n) = \langle a_{\alpha,\gamma}(n), |n|\rangle  -  \langle a_{\alpha,\gamma}(An), |An|\rangle,\]
so that
\[\|C_T e_n\|_\nu = \frac{\nu(A n)}{\nu(n)} = \exp(\varphi(n)).\]
To prove the lemma, it suffices to show that there exists $c>0$ such that
\begin{equation}\label{eq:hs}
\varphi(n) < -c (|n_1| + |n_2| ) \quad   (\forall n \in \mathbb{Z}^2).
\end{equation}
Set $m = An$. We note that since $\| n \|_2 \leq \| A^{-1} \| \| m \|_2$ and by the equivalence of norms in $\R^2$,
there exists $\tilde c > 0$ such that $|m_1| + |m_2| = \| m \|_1 \geq \tilde c \| n \|_1 = \tilde c (|n_1| + |n_2|)$, for all $n \in \Z^2$
and $m = An$.
There are three cases we need to take care of.
\begin{enumerate}[(i)]
  \item $n_1\cdot n_2 \geq  0$. Note that in this case $|m| = |An| = A|n|$ and $m_1\cdot m_2 \geq 0$. It follows that
  \[
  \varphi(n) = \langle \alpha, |n| - A|n| \rangle
  =  \langle \alpha - A^T \alpha, |n|\rangle =
  -(\alpha_1 + \alpha_2) |n_1| - \alpha_1|n_2| < -c_1 (|n_1| + |n_2|),
  \]
for any $0 < c_1 < \alpha_1$.

  \item $m_1 \cdot m_2 < 0$, which implies $n_1\cdot n_2 < 0$ as
  $n=A^{-1}m$ and $n_1 \cdot n_2 = (m_1 - m_2) (2m_2 - m_1) = 3m_1m_2 - 2m_2^2 - m_1^2 < 0$.
  Noting that $|n_1| = |m_1 - m_2| = |m_1| + |m_2|$ and $|n_2| = |2 m_2 - m_1 | = 2 |m_2| + |m_1|$ we obtain
  \[
\varphi(n) = \langle \gamma, |n| - |m| \rangle
= \gamma_1 |m_2| + \gamma_2 ( |m_1| + |m_2|) < -c_2 (|n_1| + |n_2|),
  \]
  for any $ 0 < c_2 < -\gamma_2 \tilde c$.

\item $n_1\cdot n_2 < 0$ and $m_1\cdot m_2 \geq 0$.
In this case
\[
\varphi(n) = \langle \gamma, |n| \rangle - \langle \alpha, |m| \rangle
\leq \max(\gamma_1, \gamma_2) (|n_1| + |n_2|) - \min(\alpha_1, \alpha_2) (|m_1| + |m_2|) < -c_3 (|n_1| + |n_2|),
\]
for any $0 < c_3 < 2 \min(-\gamma_1, -\gamma_2, \alpha_1\tilde c , \alpha_2\tilde c)$.
\end{enumerate}
Combining the above, we have that \eqref{eq:hs} holds for all $n \in \mathbb{Z}^2$ with $c = \min(c_1, c_2, c_3)$.
\end{proof}

\begin{rem}
It turns out that the use of quadrant-wise exponential weight functions is not
restricted to linear toral Anosov diffeomorphisms, but can also be applied to certain
non-linear maps.
It is possible to show that for any map in the family of non-linear maps studied
in \cite{SlBaJu_NONL17}, one can find suitable $\alpha$ and $\gamma$, such that the associated
composition operator considered on the Hilbert space $H_\nu$ with $\nu = \nu_{\alpha,\gamma}$ is Hilbert-Schmidt,
and even trace-class.
The proof is a straightforward but lengthy calculation involving properties of the underlying map summarized
in \cite[Lemma~2.3]{SlBaJu_NONL17}. See also \cite{PoS} for this and related results.
\end{rem}

\subsection{Anosov diffeomorphisms with strong mapping conditions}\label{sec:cones_nonped}
In this section, we establish more general conditions on the toral diffeomorphisms which will
be sufficient to prove that the associated composition operator is trace-class on a suitable
weighted Hilbert space. In particular, we characterise maps that satisfy the conditions
of the main result of the next section (Theorem \ref{thm:boundedness_large2small}).
We start by recalling some well-known facts about cones and
Reinhardt domains.

\subsubsection{Convex cones in $\R^2$}

A \emph{cone} $\Lambda \subset \Rtwo$ is a set such that if $v\in \Lambda$, then $\lambda v \in \Lambda$ for all $\lambda > 0$.
A cone shifted by a vector, that is a set of the form $x + \Lambda$, where $x \in \R^2$ and $\Lambda \subset \Rtwo$ is a cone,
is called an \emph{affine cone}.
For $p_{u}, p_{s} \in \mathbb{R}^2$ denote by $P = (p_u, p_s)$ the matrix having $p_u$
and $p_s$ as its column vectors. For an invertible matrix $P$ denote by
$\Lambda_P$ the convex open polyhedral cone in $\mathbb{R}^2$ (positively) spanned by $p_u$ and $p_s$, that is,
\[\Lambda_P = \{P x = x_1 p_u + x_2 p_s \colon x > 0\} = P(\R^2_{>0}) .\]
Writing $W = (w_u, w_s) = (P^T)^{-1}$, this can equivalently be expressed as
\[\Lambda_P = \{x \in \Rtwo\colon \langle w_u, x\rangle > 0, \langle w_s, x\rangle > 0\}.\]
Its {\it polar cone} is given by
\begin{equation*}
(\Lambda_P)^o
= \{x \in \R^2\colon \langle x, p\rangle \leq 0 \text{ for all } p\in \Lambda_P\}
= \{x \in \R^2\colon P^T x \leq 0 \}
= W(\R^2_{\leq 0}).
\end{equation*}
While $\Lambda_P$ is the image of the positive quadrant of $\Rtwo$ under $P$, it will later be useful
to consider the images of all quadrants. For this, we write
$I^\sigma = \begin{pmatrix} \sigma_1 & 0 \\ 0 & \sigma_2 \end{pmatrix}$
for $\sigma \in \Sigma,$ where\footnote{For brevity, when using $\sigma = (\sigma_1, \sigma_2) \in \Sigma$
as an index, we will often just write out its signs,
e.g.~writing $R^{++}$ for $R^{(+1, +1)}$, $R^{+-}$ for $R^{(+1, -1)}$, etc.}
\[\Sigma := \{\sigma = (\sigma_1, \sigma_2)\colon \sigma_1, \sigma_2 \in \{\pm1\}\},\]
and denote by $\Lambda^{\sigma}_P$ the image of the quadrant $R^\sigma = I^\sigma (\R^2_{>0})$
under $P$, that is $\Lambda^{\sigma}_P = P(R^\sigma)$ for $\sigma \in \Sigma$. A short calculation reveals that
the cone $\Lambda^{\sigma}_P$ can be written as
\[\Lambda^{\sigma}_P = \{x \in \Rtwo\colon \sigma_1\langle w_u, x\rangle > 0, \sigma_2\langle w_s, x\rangle > 0\}.\]

All the above cones have $(0, 0)$ as their apex. As we shall see shortly we will need to
work with cones translated by a vector. For this, let us denote by $R^\sigma_\delta$ the image under $I^\sigma$
of the first quadrant translated by $\delta\in \Rtwo$, that is $R^\sigma_\delta = I^\sigma(\R^2_{>0} + \delta) = R^\sigma + v_\delta^\sigma$
with apex $v_\delta^\sigma = I^\sigma \delta$.

\begin{defn}\label{defn:cone_vertex}
For $\delta \in\R^2$, $\sigma\in \Sigma$ and $P\in \GLtwo(\R)$
we denote $\Lambda^\sigma_{P,\delta}$ the convex affine cone
\begin{align}\label{eq:general_cone}
\Lambda^\sigma_{P,\delta} = P(R_\delta^\sigma) = \Lambda_P^\sigma + v^\sigma_{P,\delta},
\end{align}
with apex $v^\sigma_{P,\delta} = P v^\sigma_{\delta}$.
\end{defn}
We note that $\sigma_1 \langle w_u, v^\sigma_{P,\delta}\rangle = \sigma_1 \langle P^T w_u, I^\sigma \delta \rangle
= \sigma_1 \langle (1, 0)^T, I^\sigma \delta \rangle = \delta_1$
and $\sigma_2 \langle w_s, v^\sigma_{P,\delta}\rangle = \delta_2$.
Using these equalities, $\Lambda^\sigma_{P, \delta}$ can be rewritten as
\begin{align}\label{eq:shifted_general_cone}
\begin{split}
\Lambda^\sigma_{P, \delta}
&= \{x \in \Rtwo\colon \sigma_1\langle w_u, x -  v^\sigma_{P,\delta}\rangle > 0, \sigma_2\langle w_s, x - v^\sigma_{P,\delta}\rangle > 0\} \\
&= \{x \in \Rtwo\colon \sigma_1\langle w_u, x\rangle  > \delta_1, \sigma_2\langle w_s, x \rangle  > \delta_2\}.
\end{split}
\end{align}

\subsubsection{Log-conical Reinhardt domains of $\Ctwohat$}

The translated convex cones in \eqref{eq:general_cone} will be useful for defining certain two-dimensional complex domains.
For this we first require some definitions.
\begin{notation}
We denote the Riemann sphere by $\Chat = \C \cup \{\infty\}$, and write $\Chat^2 = \Chat \times \Chat$.
For $z\in \Ctwohat$, $v\in \Rtwo$ and $a \in \Z^2$ we write $|z| = (|z_1|, |z_2|)$, $z^a = z_1^{a_1} z_2^{a_2}$, $e^v = (e^{v_1}, e^{v_2})$,
and for $z\in (\mathbb{C}\setminus\{0\})^2$ we write $\log |z| = (\log|z_1|, \log|z_2|)$.
For any domain $D \subseteq \Ctwohat$, we write $D^+ =  D \cap (\mathbb{C}\setminus\{0\})^2$.
\end{notation}

\begin{defn}
A domain $D \subset \Ctwohat$ is called {\it polycircular} or a {\it Reinhardt domain} if it is
invariant under polyrotations, that is, if $z\in D$ implies $\omega z = (\omega_1 z_1, \omega_2 z_2) \in D$ for all $\omega \in \Ttwo$.
The set
$|D| := \{|z|\colon z\in D\} \subset (\R_{\geq 0}\cup \{\infty\})^2 $ is called {\it absolute domain} of $D$,
and $\Lambda = \log |D^+|:=\{\log |z|: z\in D^+\} \in \Rtwo$ the {\it logarithmic base} of
$D^+$. A Reinhardt domain $D$ is called {\it log-conical} if the
 logarithmic base of $D^+$ is a convex open affine cone.
\end{defn}

We define $\T^2_\rho = \{z \in \C^2: |z_1|=\rho_1, |z_2| = \rho_2\}$ for $\rho \in \R^2_{>0}$, and
write $e^\Lambda \Ttwo:= \bigcup_{r\in\Lambda} \mathbb{T}^2_{e^r}$.
Further let $\mathcal{E}(\Lambda) \subset \Rtwo$ denote the union of the set of \emph{faces} of $\Lambda$,
that is $\mathcal{E}(\Lambda) = \partial \Lambda \setminus \{p\}$ with $p$ the apex of $\Lambda$.
Then, every convex open affine cone $\Lambda\subset \Rtwo$ induces a log-conical Reinhardt domain in $\Chat^2$ via
\[
D= \cl{e^\Lambda \Ttwo}\setminus e^{\mathcal{E}(\Lambda)} \Ttwo,
\]
where the closure is taken in $\Chat^2$ (that is, it may contain points of the form $(z_1, z_2)$,
with either $z_1$, or $z_2$, or both, taking the values $0$ or $\infty$).
We shall denote by $D^\sigma_{P,\delta}$ the log-conical Reinhardt domain induced by the convex affine cone
$\Lambda^\sigma_{P, \delta}$ in \eqref{eq:general_cone}.
We can calculate
\begin{align*}
(D^{\sigma}_{P,\delta})^+ &= e^{\Lambda^\sigma_{P,\delta}} \mathbb{T}^2 =
\bigcup_{r\in\Lambda^{\sigma}_{P, \delta}} \mathbb{T}^2_{e^r} =
\bigcup_{x\in\Lambda^\sigma_P} \mathbb{T}^2_{\exp(v^\sigma_{P,\delta}+x)}\\
&= \{z\in(\C\setminus \{0\})^2 \colon |z_1| = e^{(v^\sigma_{P,\delta})_1 + x_1}, |z_2| = e^{(v^\sigma_{P,\delta})_2 + x_2},\, x\in \Lambda^\sigma_P\}\\
&= \{z\in(\C\setminus \{0\})^2 \colon
|z|^{\sigma_1w_u} >  e^{\delta_1}, |z|^{\sigma_2 w_s} > e^{\delta_2} \},
\end{align*}
which yields
\[
D^\sigma_{P,\delta} = \{z\in\Chat^2 \colon |z|^{\sigma_1w_u} >  e^{\delta_1}, |z|^{\sigma_2 w_s} > e^{\delta_2} \}.
\]
The {\it distinguished boundary} or {\it Shilov boundary} of $D^{\sigma}_{P, \delta}$ is a torus in $\Ctwo$
given by
\[
\mathbb{T}_{P, \delta}^\sigma:=\partial^*D^{\sigma}_{P,\delta} :=
\{z\in\mathbb{C}^2 \colon |z_1| = e^{(v^\sigma_{P,\delta})_1},  |z_2| = e^{(v^\sigma_{P,\delta})_2}\}.
\]

\subsubsection{Anosov toral diffeomorphisms with constant invariant cone fields}\label{sec:sec_cond}

\begin{defn}[Toral Anosov diffeomorphisms]\label{def:Anosov}
Let $M = ([0, 2\pi]/\sim)^2$. A smooth diffeomorphism $\tilde{T}\colon M\to M$ is called {\it Anosov} if
there exist two uniformly transversal open
continuous cone fields $\mathcal{K}^u = \{K^u(x)\}$,
$\mathcal{K}^s = \{K^s(x)\}$ with cones $K^u(x), K^s(x) \subset T_xM$,
a norm $\| \cdot \|$ on $T_xM$ and $\lambda>1$ such that, for all $x \in M$,
\begin{enumerate}[(i)]
  \item $D_x\Tilde{T}(\cl{K^u(x)}) \subset K^u(\tilde{T}(x)) \cup \{0\},
  D_x\Tilde{T}^{-1} (\cl{K^s(x)}) \subset K^s(\tilde{T}^{-1}(x)) \cup \{0\}$ and
  \item $\| D_x\tilde{T} (v)\| > \lambda \|v\|\; \forall v \in \cl{K^u(x)} \text{ and }
  \| D_x\tilde{T}^{-1} (v)\| > \lambda \|v\|\; \forall v \in \cl{K^s(x)}$.
\end{enumerate}
Without loss of generality, the cone fields can be chosen to be complementary, that is, $K^s(x) = T_xM \setminus \cl{K^u(x)}$
for all $x \in M$.

If the expanding and contracting cones $K^u(x)$ and $K^s(x)$ can
be chosen independently of $x$, that is\footnote{Here we slightly abuse notation, using the canonical identification $T_xM \cong \mathbb{R}^2$ for all $x \in M$.} $K^u, K^s\subset T_xM$ such that $K^u(x) = K^u$ and
$K^s(x) = K^s$ for all $x\in M$, then we say $\tilde{T}$ is an {\it Anosov diffeomorphism with constant invariant cone fields}.
\end{defn}

Let $\tilde{T}$ be a toral Anosov diffeomorphism with constant complementary cone fields $\mathcal{K}^u = \{K^u\}$ and
$\mathcal{K}^s = \{K^s\}$.
Then $K^u$ can be decomposed as $K^u = K^u_+ \cup -K^u_+$, where $K^u_+$ is a convex cone, so there exists a matrix
$P \in \GLtwo(\R)$ such that $K^u_+ = \Lambda^\sigma_P = P I^\sigma (\R^2_{>0})$ with $\sigma=(+1, +1)$.
Adopting the notation
\[\Sigma^{1} = \{\sigma \in \Sigma: \sigma_1 = \sigma_2\} \quad \text{ and } \quad
\Sigma^{-1} = \{\sigma \in \Sigma : \sigma_1 = -\sigma_2\} \]
we can write $K^u = \bigcup_{\sigma \in \Sigma^1} \Lambda^{\sigma}_{P}$ and
$K^s = \bigcup_{\tilde{\sigma} \in \Sigma^{-1}} \Lambda^{\tilde{\sigma}}_P$.

\begin{defn}\label{def:strongly_exp}
Let $\tilde{T}:M\to M$ be a smooth Anosov diffeomorphism.
\begin{enumerate}[(i)]
\item We say that $\tilde{T}$ has {\it $P$-induced constant invariant cone fields} if
it has constant invariant cone fields $\mathcal{K}^u = \{K^u_0\}$ and $\mathcal{K}^s = \{K^s_0\}$
given by $K^u_{\delta} = \bigcup_{\sigma \in \Sigma^1} \Lambda^{\sigma}_{P, \delta}$ and{}
 $K^s_\delta = \bigcup_{\tilde{\sigma} \in \Sigma^{-1}} \Lambda^{\tilde{\sigma}}_{P, \delta}$ with
 $P\in \GLtwo(\R)$ and $\delta\in \Rtwo$.
\item We say that $\tilde{T}$ has {\it $P$-induced strongly expanding constant invariant cone fields \textit{(sec)}} if it has
$P$-induced constant invariant cone fields and if there are
$\delta, \tilde{\delta} \in \R^2_{>0}$ and $\sigma \in \Sigma^1$, $\tilde \sigma \in \Sigma^{-1}$ such that, for all $x\in M$,
\begin{align}\label{eq:secc}
(D_x\tilde{T}) v \in K^u_\delta \text{ and } (D_x\tilde{T}^{-1}) \tilde{v} \in  K^s_{\tilde{\delta}},
\end{align}
where $v = v^\sigma_{P,\delta}$, $\tilde v = v^{\tilde \sigma}_{P,\tilde \delta}$. In the special case $P = \mathbb{I}$, we
refer to $K_0^u = \R^2_{>0} \cup \R^2_{<0}$ as {\it positive}, and call \eqref{eq:secc} the {\it (p-sec)} condition.
\end{enumerate}
\end{defn}

Figure~\ref{fig:secc_cond} illustrates the {\it (p-sec)} condition.
Note that (i) is satisfied by all smooth toral Anosov diffeomorphisms with constant invariant cone fields,
whereas (ii) is a stronger condition, which eventually will be required to prove compactness
of the associated weighted composition operator (see Section \ref{sec:comp_compact}).

\begin{figure}[ht!]
  \centering
  \includegraphics[width=0.4\textwidth]{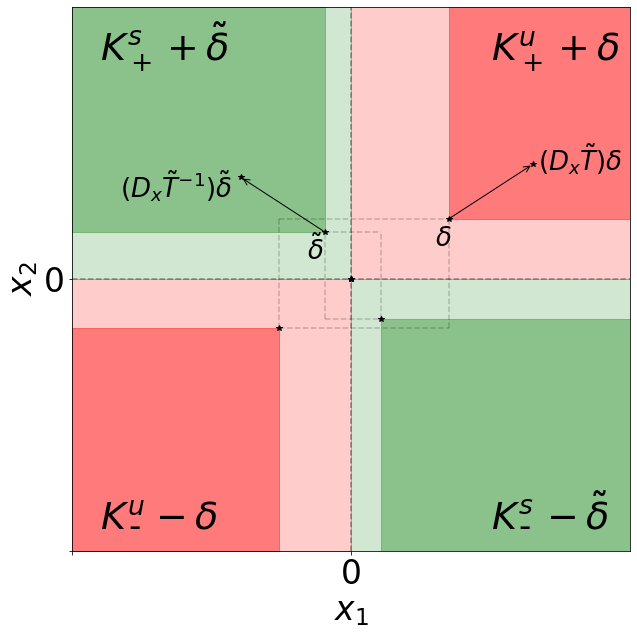}
  \caption{Illustration of the {\it (sec)} condition in Definition \ref{def:strongly_exp}(ii)
  for $P=\mathbb{I}$ and $K^{u}_\delta = K^{u}_{\pm} \pm \delta$.}
  \label{fig:secc_cond}
\end{figure}

\begin{rem}\label{rem:secc} The cone conditions in \eqref{eq:secc} are often easy to check.
For example, the {\it (p-sec)} condition requires $K^u_0 = \R^2_{>0} \cup \R^2_{<0}$ and $K^s_0=\Rtwo \setminus \cl{K^u_0}$.
Assuming $D_x\tilde{T} = \begin{pmatrix} a_x & b_x \\ c_x & d_x \end{pmatrix}$ preserves $\R^2_{\geq 0}$ for all $x \in M$,
the first part of \eqref{eq:secc} is automatically satisfied if either $\inf_{x \in M}{a_x} \geq 1$, or $\inf_{x\in M}{d_x} \geq 1$,
or if
\[\sup_{x \in M} \left(\frac{1-a_x}{b_x}\right) \cdot \sup_{x\in M} \left(\frac{1-d_x}{c_x}\right) < 1.\]
A similar condition for the first part of \eqref{eq:secc} in the case when $D_x \tilde T$ maps $\R^2_{\geq 0}$ to $\R^2_{\leq 0}$
can be deduced easily, as can be analogous conditions for the second part of \eqref{eq:secc}.
\end{rem}

We shall next state conditions that can be easily deduced from the fact that the map $\tilde{T}$ possesses
constant invariant expanding and (co)-expanding cone fields, that is, from (i) in Definition \ref{def:strongly_exp}.

\begin{lem}\label{lem:exists_q}
Let $\tilde{T}\colon M\to M$ be a smooth Anosov diffeomorphism with $P$-induced constant invariant cone fields for some $P \in \GLtwo(\R)$.
Then, for any $\delta, \tilde{\delta} \in \R^2_{>0}$, $\sigma \in \Sigma^1$ and $\tilde{\sigma} \in \Sigma^{-1}$, there exists $q\in \Lambda^{\tilde{\sigma}}_{P, \tilde{\delta}}$ with $(D_x \tilde{T})
q \in \Lambda^{\sigma}_{P, \delta}$ for all $x\in M$.
\end{lem}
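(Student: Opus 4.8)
The plan is to split the claim into a linear (directional) part and an affine (scaling) part. First I would look for $q$ of the form $q = v^{\tilde\sigma}_{P,\tilde\delta} + s\, d$ with a fixed direction $d \in \Lambda^{\tilde\sigma}_P$ to be chosen and a large scalar $s > 0$. Since $s\,d \in \Lambda^{\tilde\sigma}_P$ and $\Lambda^{\tilde\sigma}_{P,\tilde\delta} = \Lambda^{\tilde\sigma}_P + v^{\tilde\sigma}_{P,\tilde\delta}$ by \eqref{eq:general_cone}, any such $q$ automatically lies in $\Lambda^{\tilde\sigma}_{P,\tilde\delta}$. Writing $D_x\tilde T\, q = D_x\tilde T\, v^{\tilde\sigma}_{P,\tilde\delta} + s\, D_x\tilde T\, d$ and using the half-space description $\Lambda^\sigma_{P,\delta} = \{y : \sigma_1\langle w_u, y\rangle > \delta_1,\, \sigma_2\langle w_s, y\rangle > \delta_2\}$ from \eqref{eq:shifted_general_cone}, the membership $D_x\tilde T\, q \in \Lambda^\sigma_{P,\delta}$ reduces to a single requirement: that $D_x\tilde T\, d$ lie in the open linear cone $\Lambda^\sigma_P$ \emph{uniformly} in $x$. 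Indeed, if $\sigma_1\langle w_u, D_x\tilde T\, d\rangle \ge \mu$ and $\sigma_2\langle w_s, D_x\tilde T\, d\rangle \ge \mu$ for some $\mu > 0$ and all $x$, while the $x$-continuous, hence bounded, terms $\sigma_1\langle w_u, D_x\tilde T\, v^{\tilde\sigma}_{P,\tilde\delta}\rangle$ and $\sigma_2\langle w_s, D_x\tilde T\, v^{\tilde\sigma}_{P,\tilde\delta}\rangle$ are $\ge -C$, then every $s > (C + \max(\delta_1,\delta_2))/\mu$ works simultaneously for all $x$. So the whole problem reduces to producing one direction $d \in \Lambda^{\tilde\sigma}_P$ whose image under \emph{every} $D_x\tilde T$ lies in the open cone $\Lambda^\sigma_P$.

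For this directional step I would use only the cone invariance $D_x\tilde T(\cl{K^u}) \subseteq K^u \cup \{0\}$ from Definition \ref{def:Anosov}(i) (with constant cone fields) together with the invertibility of $D_x\tilde T$; the expansion estimate plays no role. As $\cl{\Lambda^{++}_P}\setminus\{0\}$ is connected and $D_x\tilde T$ is a linear isomorphism, its image is a connected subset of $K^u = \Lambda^{++}_P \cup \Lambda^{--}_P$ and hence lies in a single component $\Lambda^\tau_P$ with $\tau \in \Sigma^1$. Continuity of $x \mapsto D_x\tilde T$ and connectedness of $M$ make $\tau$ independent of $x$. In particular the two edges $p_u, p_s$ of $\Lambda^{++}_P$ satisfy $D_x\tilde T\, p_u, D_x\tilde T\, p_s \in \Lambda^\tau_P$ for all $x$, and by compactness of $M$ the sets $\{D_x\tilde T\, p_u : x \in M\}$ and $\{D_x\tilde T\, p_s : x \in M\}$ are compact subsets of the open cone $\Lambda^\tau_P$, hence bounded away from $\partial\Lambda^\tau_P$ uniformly.

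Finally, the two edges of $\Lambda^{\tilde\sigma}_P$ are $\tilde\sigma_1 p_u$ and $\tilde\sigma_2 p_s$; their images $\tilde\sigma_1 D_x\tilde T\, p_u$ and $\tilde\sigma_2 D_x\tilde T\, p_s$ lie in $\Lambda^\tau_P$ or in the opposite component $\Lambda^{-\tau}_P$ according to the sign of $\tilde\sigma_1$ resp.\ $\tilde\sigma_2$. Since $\tilde\sigma \in \Sigma^{-1}$ forces $\tilde\sigma_1 \ne \tilde\sigma_2$, the two edges map into the two \emph{distinct} components of $K^u$, so whichever target $\sigma \in \Sigma^1$ is prescribed, exactly one edge $e$ of $\Lambda^{\tilde\sigma}_P$ satisfies $D_x\tilde T\, e \in \Lambda^\sigma_P$ for all $x$, with a uniform gap from the boundary. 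I would then move slightly into the interior by setting $d = e + t\,e'$, where $e'$ is the other edge of $\Lambda^{\tilde\sigma}_P$ and $t > 0$ is small: this places $d$ in the open cone $\Lambda^{\tilde\sigma}_P$, while $\|D_x\tilde T\, d - D_x\tilde T\, e\| \le t\,(\sup_{x}\|D_x\tilde T\|)\,\|e'\|$ is uniformly small, so $D_x\tilde T\, d$ stays in $\Lambda^\sigma_P$ with a uniform constant $\mu > 0$, exactly as Paragraph 1 requires. I expect the directional step to be the crux — in particular, checking that the prescribed quadrant $\Lambda^\sigma_P$ is always reachable and that openness survives both the supremum over the compact $M$ and the inward perturbation; once $d$ is in hand, the affine rescaling is routine.
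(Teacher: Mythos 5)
Your proof is correct, and it reaches the conclusion by a route that differs in execution from the paper's, even though the underlying mechanism is the same. The paper conjugates by $P$ (setting $N_x = P^{-1}(D_x\tilde T)P$) to reduce everything to coordinate quadrants, and then invokes an auxiliary appendix result (Lemma \ref{lem:matrix}) for the compact family $\{N_x\}$: there, positivity of all matrix entries together with uniform bounds $\underline D < (N_x)_{kl} < \overline D$ lets one write down $q'$ explicitly, with one large coordinate along the edge whose image dominates and one controlled coordinate of the opposite sign. Your argument stays in the $\Lambda^\sigma_P$-picture, splits off the apex via $q = v^{\tilde\sigma}_{P,\tilde\delta} + s\,d$, and obtains the needed direction $d$ from cone invariance alone, via connectedness of $\cl{\Lambda^{++}_P}\setminus\{0\}$ and of $M$, compactness for the uniform margin, and an inward perturbation of the correct edge -- the same "large multiple of one edge dominates everything" mechanism, but coordinate-free and with no entry-wise computation. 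A concrete merit of your version: the paper simply \emph{recalls} that either $(D_x\tilde T)(\Lambda_P)\subset\Lambda_P$ for all $x\in M$ or $(D_x\tilde T)(\Lambda_P)\subset-\Lambda_P$ for all $x\in M$, and this $x$-uniform dichotomy (which is a hypothesis of Lemma \ref{lem:matrix}) is exactly what your connectedness argument proves; you also correctly observe that only Definition \ref{def:Anosov}(i) is needed and the expansion estimate plays no role. What the paper's route buys in exchange is a very short dynamical proof, fully explicit constants, and a matrix lemma stated for an arbitrary compact family, which it can cite independently of the dynamics.
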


\begin{proof}
Fix $\delta, \tilde{\delta} \in \R^2_{>0}$, $\sigma \in \Sigma^1$ and $\tilde{\sigma} \in \Sigma^{-1}$,
and recall that the (constant) $(D_x \tilde{T})$-invariant cone given by the matrix $P$ is
$K^u = \Lambda_P \cup -\Lambda_P$, with $\Lambda_P = P(\R^2_{>0})$,
and either $(D_x \tilde{T})(\Lambda_P) \subset \Lambda_P$ or $(D_x \tilde{T})(\Lambda_P) \subset -\Lambda_P$ (for all $x \in M$).
For $x \in M$, we define $N_x = P^{-1} (D_x \tilde{T}) P$, and observe that either
$N_x (\R^2_{\geq 0}) \subset \R^2_{>0} \cup \{0\}$ or
$N_x (\R^2_{\geq 0}) \subset \R^2_{<0} \cup \{0\}$, for all $x \in M$.

Applying Lemma \ref{lem:matrix} to $\{N_x: x\in M\}$, there exists $q' \in R^{\tilde \sigma}_{\tilde \delta}$
such that $N_x(q') \in R^\sigma_\delta$ for all $x \in M$. Using \eqref{eq:general_cone},
we obtain that $P q'\in P(R^{\tilde \sigma}_{\tilde \delta}) = \Lambda^{\tilde \sigma}_{P,\tilde \delta}$,
and $ (D_x \tilde{T}) (P q') \in P(R^\sigma_\delta) = \Lambda^\sigma_{P,\delta}$ for all $x \in M$,
finishing the proof with $q = Pq'$.
\end{proof}

Let $\pi(x_1, x_2) = (e^{ix_1}, e^{ix_2})$ be the canonical diffeomorphism from
$M$ to $\Ttwo$. We denote by $T$ the diffeomorphism on $\Ttwo$ uniquely determined
by the equation $T \circ \pi = \pi \circ \tilde{T}$.
We will say that $T \colon \T^2 \to \T^2$ satisfies the {\it (sec)} (or {\it (p-sec)}) condition, if the corresponding $\tilde T \colon M \to M$ does.
As the next (key) lemma will show, for
analytic toral Anosov diffeomorphisms with constant invariant cone fields, properties of the derivative $D \tilde{T}$ on the tangent bundle can
be translated into properties of the map $T$ in a small neighbourhood of $\Ttwo$.
With slight abuse of notation, we continue writing $T$ for its analytic extension to such small neighbourhood of $\Ttwo$.

\begin{defn}\label{def:orient_area}
We say that a diffeomorphism $T \colon \Ttwo \to \Ttwo$ is \emph{orientation-preserving} (\emph{orientation-reversing})
  if the determinant of $D_x \tilde T$ of the conjugated map $\tilde T \colon M \to M$
  is positive (negative) for all $x \in M$. We say $T$ is \emph{area-preserving} if $| \det D_x \tilde T | = 1$ for all $x \in M$.
\end{defn}

\begin{lem}\label{lem:lin_to_nonlin}
Let $\tilde{T}\colon M\to M$ be an analytic Anosov diffeomorphism with $P$-induced constant invariant cone fields for some $P \in \GLtwo(\R)$, and $T$
the conjugated diffeomorphism on $\Ttwo$.
If there are $\delta\in \R^2_{>0}$,
$\sigma \in \Sigma$, and $q \in \Rtwo$ such that
$(D_x \tilde{T}) q \in \Lambda^\sigma_{P, \delta}$ for all $x\in M$, then there exist $\epsilon > 0$ and $\delta' > \delta$ such that
for all $t\in(0, \epsilon)$ we have \[T(\mathbb{T}^2_{e^{tq}}) \subset D^{\sigma}_{P, t \delta'},\]
where $D^{\sigma}_{P, t \delta'}$ is the Reinhardt domain induced by the logarithmic base $\Lambda^{\sigma}_{P, t \delta'}.$
\end{lem}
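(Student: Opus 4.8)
The plan is to reduce the inclusion to a first-order computation in $t$: I will show that, to leading order as $t\to 0^+$, the map $T$ displaces the torus $\mathbb{T}^2_{e^{tq}}$ off $\Ttwo$ in the radial (modulus) direction by exactly $t\,(D_x\tilde T)q$, and then absorb the quadratic remainder using compactness of $M$. The natural setting is the logarithm of the modulus, because membership in the target domain is a pair of strict linear inequalities there: writing $W=(w_u,w_s)=(P^T)^{-1}$, one has $D^\sigma_{P,t\delta'}=\{z:\sigma_1\langle w_u,\log|z|\rangle>t\delta'_1,\ \sigma_2\langle w_s,\log|z|\rangle>t\delta'_2\}$. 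Thus it suffices to understand $\log|T(z)|$ for $z\in\mathbb{T}^2_{e^{tq}}$.

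First I would fix coordinates adapted to $\pi(x)=e^{ix}$. Using its holomorphic extension one has $|\pi(x+iy)|=e^{-y}$ (componentwise), so $\mathbb{T}^2_{e^{tq}}=\{\pi(x-itq):x\in M\}$; that is, moving radially off $\Ttwo$ corresponds to moving in the imaginary direction in $x$-coordinates. Since $\tilde T$ is analytic it extends holomorphically to a complex neighbourhood of $M$, and the conjugacy $T\circ\pi=\pi\circ\tilde T$, which holds on real points, extends there by the identity principle. Hence for $t$ small and $z=\pi(x-itq)$ I may write $T(z)=\pi(\tilde T(x-itq))$, and therefore $\log|T(z)|=-\Im\,\tilde T(x-itq)$.

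The key step is the Taylor expansion $\tilde T(x-itq)=\tilde T(x)-it\,(D_x\tilde T)q+O(t^2)$. As $x$ is real, both $\tilde T(x)$ and $(D_x\tilde T)q$ are real vectors, so taking imaginary parts gives
\[
\log|T(z)|=t\,(D_x\tilde T)q+O(t^2),
\]
where the remainder is uniform in $x\in M$. Establishing this clean identification of the first-order radial displacement with the real derivative $D_x\tilde T$ is the main obstacle: it requires careful bookkeeping between the radial direction in $z$-space, the imaginary direction in $x$-space via $|\pi(x+iy)|=e^{-y}$, and the Cauchy--Riemann relations (equivalently, the complex Jacobian of $\log\circ T\circ\exp$ along $i\R^2$ coincides with the real Jacobian $D_x\tilde T$); the uniformity of $O(t^2)$ then follows from compactness of $M$ and boundedness of the second derivatives of the holomorphic extension on a fixed neighbourhood.

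Finally I would conclude by feeding the expansion into the inequalities defining $D^\sigma_{P,t\delta'}$. Rewriting the hypothesis via \eqref{eq:shifted_general_cone} as $\sigma_1\langle w_u,(D_x\tilde T)q\rangle>\delta_1$ and $\sigma_2\langle w_s,(D_x\tilde T)q\rangle>\delta_2$ for all $x$, continuity and compactness upgrade these to uniform strict inequalities, so the infima
\[
\delta^*_1:=\inf_{x\in M}\sigma_1\langle w_u,(D_x\tilde T)q\rangle,\qquad \delta^*_2:=\inf_{x\in M}\sigma_2\langle w_s,(D_x\tilde T)q\rangle
\]
satisfy $\delta^*>\delta$ componentwise. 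I then pick any $\delta'$ with $\delta<\delta'<\delta^*$ (so automatically $\delta'>\delta$) and insert the expansion: for $z\in\mathbb{T}^2_{e^{tq}}$ with associated $x$,
\[
\sigma_1\langle w_u,\log|T(z)|\rangle\ge t\delta^*_1-Ct^2>t\delta'_1
\]
whenever $t<(\delta^*_1-\delta'_1)/C$, with $C$ a uniform bound on the remainder, and likewise for the $w_s$-component. Taking $\epsilon$ below both thresholds gives $T(\mathbb{T}^2_{e^{tq}})\subset D^\sigma_{P,t\delta'}$ for all $t\in(0,\epsilon)$, as claimed.
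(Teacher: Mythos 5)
Your proof is correct, and at its core it runs on the same engine as the paper's: membership in $D^\sigma_{P,t\delta'}$ is rewritten via \eqref{eq:shifted_general_cone} as two linear inequalities on $\log|T(z)|$, the first-order radial displacement of $\mathbb{T}^2_{e^{tq}}$ under $T$ is identified with $t\,(D_x\tilde T)q$, and compactness of $M$ upgrades the pointwise cone hypothesis to a uniform margin $\delta'>\delta$. The difference is in execution. The paper works downstairs with the holomorphic extension of $T$ itself: it sets $f_x^a(s)=\sum_l a_l\log|T_l(\xi_x(s))|$ on the radial rays $\xi_x(s)\in\mathbb{T}^2_{e^{sq}}$, computes $\partial_s f_x^{w^{(j)}}(0)$ via the chain-rule identity $e^{ix_k}\partial_k T_l/T_l=\partial\tilde T_l/\partial x_k$, fixes $\delta'>\delta$ so that $\partial_s f_x^{w^{(j)}}(s)>\delta'_j$ for all $x\in M$ and all $s$ in a short interval $(0,\epsilon'')$, and concludes by integrating: $f_x^{w^{(j)}}(t)=\int_0^t\partial_s f_x^{w^{(j)}}(s)\,ds>\delta'_j t$. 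This fundamental-theorem-of-calculus formulation needs only continuity of first derivatives near $\Ttwo$ and requires no quantitative remainder estimate. You instead work upstairs with the holomorphic extension of $\tilde T$, extend the conjugacy $T\circ\pi=\pi\circ\tilde T$ by the identity principle, and use a second-order Taylor bound with remainder uniform in $x$; this buys a completely explicit expansion $\log|T(z)|=t\,(D_x\tilde T)q+O(t^2)$, at the (harmless) cost of needing uniform second-derivative bounds on a fixed strip, and your threshold $t<(\delta_j^*-\delta'_j)/C$ plays the role of the paper's $\epsilon''$. Note also that the identity you single out as the main obstacle --- that the complex Jacobian of the extension of $\tilde T$ at real points is the real matrix $D_x\tilde T$, so that taking $-\Im$ of the expansion isolates $t\,(D_x\tilde T)q$ --- is precisely the same fact the paper uses, just packaged as the logarithmic-derivative identity above; both proofs stand or fall on it, and in both cases it follows from real-analyticity of $\tilde T$ on $M$.
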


\begin{proof}

We denote $w^{(1)} = \sigma_1 w_u$ and $w^{(2)} = \sigma_2 w_s$ (recalling $W = (w_u, w_s) = (P^T)^{-1}$).
Using \eqref{eq:shifted_general_cone}, we observe that $(D_x \tilde{T}) q \in \Lambda^\sigma_{P, \delta}$ translates to the inequalities
\[
\sum_{l=1,2} \sum_{k=1,2} \frac{\partial \tilde{T}_l}{\partial x_k} q_k w^{(j)}_l > \delta_j, \quad \text{ for } j=1, 2 \text { and all } x \in M.
\]
Next, for $x \in M$,  $s \in \mathbb{R}$ and $a \in \mathbb{R}^2$, we define
\[
f_x^a(s) = \sum_{l=1,2} a_l \log | T_l(\xi_x(s)) |
\]
with $\xi_x(s) = (e^{sq_1+ix_1}, e^{sq_2+ix_2}) \in \mathbb{T}^2_{e^{sq}}$.
We note that $f^a_x$ is continuously differentiable on
$(0, \epsilon')$ for sufficiently small $\epsilon'$, with
\[
\frac{\partial}{\partial s} f^a_x(s) =
\Re \left( \sum_{l=1,2} \sum_{k=1,2}
\frac{e^{ix_k}{\partial_k} {T_l (\xi_x(s))}}{T_l(\xi_x(s))} q_k e^{sq_k} a_l
\right),
\]
where $\partial_k T_l$ denotes the (complex) derivative of the $l$-th component of $T$ with respect to the $k$-th variable.
It follows that
\[
\frac{\partial}{\partial s} f^{w^{(j)}}_x(s)\vert_{s=0}
= \Re \left(\sum_{l=1,2} \sum_{k=1,2} \frac{e^{ix_k}{\partial_k} {T_l (e^{ix_1}, e^{ix_2})}}{T_l(e^{ix_1}, e^{ix_2})} q_k w^{(j)}_l \right)
= \Re \left(\sum_{l=1,2} \sum_{k=1,2} \frac{\partial \tilde{T}_l}{\partial x_k} q_k w^{(j)}_l \right) > \delta_j
\]
for $j=1, 2$ and all $x \in M$.
By compactness of $M$ and continuity, we can fix $\delta' > \delta$, so that $\frac{\partial}{\partial s} f^{w^{(j)}}_x(s) > \delta'_j$, $j=1,2$,
holds for all $x \in M$ and all
$s \in (0, \epsilon'')$ with a sufficiently small $\epsilon'' \in (0, \epsilon')$.
Since $f^a_x(0) = 0$, we obtain that
\[
\sum_{l=1,2} w_l^{(j)}\log|T_l(\xi_x(t))| = f_x^{w^{(j)}}(t) - f_x^{w^{(j)}}(0)
=  \int_0^{t}  \frac{\partial}{\partial s} f_x^{w^{(j)}}(s) \,ds > \delta'_j t,
\]
for $j=1,2$ and $t \in (0, \epsilon'')$. Exponentiating both sides, it follows that $T(\xi_x(t)) \in D^\sigma_{P,t\delta'}$
for all $x \in M$. Since $\{\xi_x(t): x \in M\} = \mathbb{T}^2_{e^{tq}}$, this completes the proof.
\end{proof}

If in addition $\tilde{T}$ satisfies the {\it (sec)} condition,
then one can establish all the required properties (for Theorem \ref{thm:thmA})
for $T$ in a neighbourhood of $\Ttwo$. First, we need to introduce corresponding notions on Reinhardt domains
induced by convex cones.
For $\delta \in \R^2$ and $P\in \GLtwo(\R)$, let $\mathcal{A}_{P, \delta}$ denote a two-dimensional `annulus' containing
$\Ttwo$, given by
\[\mathcal{A}_{P,\delta} =
\{z\in \Ctwo \colon -|\delta| < P^{-1} \log |z| < |\delta|\}.\]
For brevity we also write $\mathcal{A}_\delta = \mathcal{A}_{\mathbb{I},\delta} = \{ z \in \Ctwo \colon -|\delta| < \log |z| < |\delta|\}$.

\begin{defn}\label{defn:sem}
Let $T\colon\Ttwo \to \Ttwo$ be an analytic map with holomorphic extension to a neighbourhood of $\cl{\mathcal{A}_{P,\delta}}$ for
some $\delta \in \R^2_{>0}$ and $P\in \GLtwo(\R)$.
For $\ell\in\{1, -1\}$ and $\Delta\in\R^2_{>0}$ the map $T$
is said to have
the \textit{($\ell, \delta, \Delta, P$)-strongly expanding mapping} property if
one of the following two alternatives holds:
\begin{enumerate}[(EP)]
\item[(EP)] $T(\TsdP) \subseteq D^\sigma_{P,\Delta} \cc D^\sigma_{P, \delta}$ for all $\sigma \in \Sigma^\ell$
(``$T$ preserves expanding direction''),
\item[(ER)] $T(\TsdP) \subseteq D^{-\sigma}_{P,\Delta} \cc D^{-\sigma}_{P,\delta}$ for all $\sigma \in \Sigma^\ell$
(``$T$ reverses expanding direction'').
\end{enumerate}
\end{defn}

\begin{prop}\label{prop:strongexp}
Let $\tilde{T}\colon M \to M$ be an analytic Anosov diffeomorphism with
constant $P$-induced strongly expanding invariant cone fields for some $\delta, \tilde{\delta}\in \R^2_{>0}$ in \eqref{eq:secc}.
Then there exists $\eta \in \R^2_{>0}$ such that the corresponding diffeomorphisms $T$ and $T^{-1}$ on $\mathbb{T}^2$
can be analytically extended to $\mathcal{A}_{P,\eta}$, and there are $\Delta, \tilde{\Delta} \in \R^2_{>0}$
with $\Delta > \delta$ and $\tilde{\Delta} < \tilde{\delta}$, such that
\begin{enumerate}[(i)]
  \item $T$ is $(1, t\delta, t\Delta, P)$-strongly expanding for all sufficiently small $t>0$,
  \item $T^{-1}$ is $(-1, t\tilde{\Delta}, t\tilde{\delta}, P)$-strongly expanding for all sufficiently small $t>0$.
\end{enumerate}
Moreover, for any $\sigma \in \Sigma^1, \tilde{\sigma} \in \Sigma^{-1}$ and any $\tilde{\delta}, \Delta \in \R^2_{>0}$,
\begin{enumerate}[(i)]
  \setcounter{enumi}{2}
  \item there exists $q\in \Lambda^{\tilde{\sigma}}_{P,\tilde\delta}$ such that
  $\mathbb{T}^2_{e^{tq}} \subset D^{\tilde{\sigma}}_{P, t\tilde{\delta}}$ and
  $T(\mathbb{T}^2_{e^{tq}}) \subset D^{\sigma}_{P, t \Delta}$
 for all sufficiently small $t$.
\end{enumerate}
\end{prop}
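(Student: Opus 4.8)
The plan is to derive all three statements from the two preparatory Lemmas~\ref{lem:exists_q} and~\ref{lem:lin_to_nonlin} together with a compactness argument on $M$, after first disposing of the analytic extension. Since $\tilde T$ is analytic and $M$ is compact, the conjugated maps $T$ and $T^{-1}$ are real-analytic on $\Ttwo$ and hence extend holomorphically to some open neighbourhood $U \subset \Ctwohat$ of $\Ttwo$. As $\mathcal{A}_{P,\eta}$ shrinks to $\Ttwo$ as $\eta \to 0$, I would fix $\eta \in \R^2_{>0}$ small enough that $\cl{\mathcal{A}_{P,\eta}}$ lies in the common domain of holomorphy of $T$ and $T^{-1}$, giving the asserted extension. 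All tori appearing below are at a level scaled by a small $t$, hence sit inside $\mathcal{A}_{P,\eta}$, so applying $T$ or $T^{-1}$ to them is legitimate once $t$ is small.

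Next I would convert the infinitesimal condition \eqref{eq:secc} into a uniform cone condition with a gap. By hypothesis there are $\sigma \in \Sigma^1$, $\tilde\sigma \in \Sigma^{-1}$ with $(D_x\tilde T)\,v^\sigma_{P,\delta} \in K^u_\delta$ and $(D_x\tilde T^{-1})\,v^{\tilde\sigma}_{P,\tilde\delta} \in K^s_{\tilde\delta}$ for all $x$. Since $K^u_\delta = \Lambda^{++}_{P,\delta}\cup\Lambda^{--}_{P,\delta}$ is a disjoint union of two open convex cones and $x\mapsto (D_x\tilde T)\,v^\sigma_{P,\delta}$ is continuous on the connected set $M$, its image lands in a single component $\Lambda^{\hat\sigma}_{P,\delta}$ with $\hat\sigma\in\{\sigma,-\sigma\}$ independent of $x$; the case $\hat\sigma=\sigma$ will give the (EP) alternative and $\hat\sigma=-\sigma$ the (ER) alternative. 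By linearity $(D_x\tilde T)\,v^{-\sigma}_{P,\delta}=-(D_x\tilde T)\,v^{\sigma}_{P,\delta}\in\Lambda^{-\hat\sigma}_{P,\delta}$, so the same alternative holds for both members of $\Sigma^1$, and likewise for $\tilde T^{-1}$ on $\Sigma^{-1}$. Compactness of $M$ moreover upgrades the open containment to a uniform gap: there is $\rho\in\R^2_{>0}$ with $(D_x\tilde T^{-1})\,v^{\tilde\sigma}_{P,\tilde\delta}\in\Lambda^{\hat\sigma}_{P,\tilde\delta+2\rho}$ for all $x$, which I will need for (ii).

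For (i) I would apply Lemma~\ref{lem:lin_to_nonlin} directly with $q=v^\sigma_{P,\delta}$ and target cone $\Lambda^{\hat\sigma}_{P,\delta}$, obtaining $\delta'>\delta$ with $T(\mathbb{T}^\sigma_{P,t\delta})=T(\mathbb{T}^2_{e^{t v^\sigma_{P,\delta}}})\subset D^{\hat\sigma}_{P,t\delta'}$ for small $t$; setting $\Delta$ to the componentwise minimum over the two $\sigma\in\Sigma^1$ gives $\Delta>\delta$ and $T(\mathbb{T}^\sigma_{P,t\delta})\subset D^{\hat\sigma}_{P,t\Delta}\cc D^{\hat\sigma}_{P,t\delta}$, which is exactly (EP) when $\hat\sigma=\sigma$ and (ER) when $\hat\sigma=-\sigma$. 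For (ii) the image must land at the prescribed level $\tilde\delta$ while starting at a smaller level, so here I would use the gap: choosing a scalar $c\in(0,1)$ with $c(\tilde\delta+2\rho)\geq\tilde\delta$ and setting $\tilde\Delta=c\tilde\delta$, the scaling identity $\Lambda^{\hat\sigma}_{P,c(\tilde\delta+2\rho)}=c\,\Lambda^{\hat\sigma}_{P,\tilde\delta+2\rho}$ and linearity give $(D_x\tilde T^{-1})\,v^{\tilde\sigma}_{P,\tilde\Delta}=c\,(D_x\tilde T^{-1})\,v^{\tilde\sigma}_{P,\tilde\delta}\in\Lambda^{\hat\sigma}_{P,\tilde\delta}$. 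Lemma~\ref{lem:lin_to_nonlin} applied to $\tilde T^{-1}$ with $q=v^{\tilde\sigma}_{P,\tilde\Delta}$ and target level $\tilde\delta$ then yields $T^{-1}(\mathbb{T}^{\tilde\sigma}_{P,t\tilde\Delta})\subset D^{\hat\sigma}_{P,t\tilde\delta}\cc D^{\hat\sigma}_{P,t\tilde\Delta}$ (the $\cc$ holding since $\tilde\delta>\tilde\Delta$), which is the claimed property with $\tilde\Delta<\tilde\delta$. Finally (iii) is a direct combination: given $\tilde\delta,\Delta$, Lemma~\ref{lem:exists_q} with its target level taken to be $\Delta$ supplies $q\in\Lambda^{\tilde\sigma}_{P,\tilde\delta}$ with $(D_x\tilde T)q\in\Lambda^\sigma_{P,\Delta}$; the membership $q\in\Lambda^{\tilde\sigma}_{P,\tilde\delta}$ gives $\mathbb{T}^2_{e^{tq}}\subset D^{\tilde\sigma}_{P,t\tilde\delta}$ for all $t>0$ via the logarithmic-base description together with \eqref{eq:shifted_general_cone}, while Lemma~\ref{lem:lin_to_nonlin} gives $T(\mathbb{T}^2_{e^{tq}})\subset D^\sigma_{P,t\Delta}$ for small $t$.

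The main obstacle is the bookkeeping in (ii). Unlike (i), where the sec-level $\delta$ is the \emph{starting} level and Lemma~\ref{lem:lin_to_nonlin} freely produces a deeper image, in (ii) the sec-level $\tilde\delta$ is prescribed as the \emph{image} level, which forces the start level $\tilde\Delta$ to be strictly smaller. Carrying this out cleanly hinges on extracting the uniform expansion gap $2\rho$ from compactness and on the scaling relation $\Lambda^{\hat\sigma}_{P,c\tilde\delta}=c\,\Lambda^{\hat\sigma}_{P,\tilde\delta}$, since rescaling the apex vector by $c<1$ alone would only place its image in the \emph{larger} cone at level $c\tilde\delta<\tilde\delta$; the gap is exactly what compensates for this loss. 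Some care is also needed to confirm that the preserving/reversing dichotomy is consistent across both signs of $\Sigma^{\pm1}$, but this follows from the evenness of the whole cone configuration under $x\mapsto -x$.
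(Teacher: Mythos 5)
Your proposal is correct, and for parts (i) and (iii) it follows exactly the paper's route: apply Lemma \ref{lem:lin_to_nonlin} with $q = v^\sigma_{P,\delta}$ for (i), and combine Lemma \ref{lem:exists_q} (with target level $\Delta$) with Lemma \ref{lem:lin_to_nonlin} for (iii). Where you genuinely diverge is part (ii). The paper disposes of it in one line, saying it "follows analogously" by applying Lemma \ref{lem:lin_to_nonlin} to $T^{-1}$ with $\tilde q = v^{\tilde\sigma}_{P,\tilde\delta}$; but a literal application of the lemma at that apex yields the property at parameters $(-1, t\tilde\delta, t\tilde\delta'', P)$ with $\tilde\delta'' > \tilde\delta$, i.e.\ with the sec-level $\tilde\delta$ as the \emph{starting} level, whereas the statement prescribes $\tilde\delta$ as the \emph{image} level. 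Converting the paper's conclusion into the stated one requires an extra (unwritten) step, e.g.\ bounding $\tilde\delta'' \geq \kappa\tilde\delta$ for a scalar $\kappa > 1$ and rescaling $t \mapsto t/\kappa$, exploiting that the claim is quantified over all sufficiently small $t$. Your proposal resolves exactly this asymmetry, but at the level of the cone condition rather than of $t$: you extract a uniform gap $\rho \in \R^2_{>0}$ from compactness of $M$, so that $(D_x\tilde T^{-1})\,v^{\tilde\sigma}_{P,\tilde\delta} \in \Lambda^{\hat\sigma}_{P,\tilde\delta+2\rho}$, and then shrink the apex to $v^{\tilde\sigma}_{P,\tilde\Delta}$ with $\tilde\Delta = c\tilde\delta$, $c(\tilde\delta+2\rho)\geq\tilde\delta$, before invoking Lemma \ref{lem:lin_to_nonlin}; your observation that naive shrinking without the gap only lands in the larger cone $\Lambda^{\hat\sigma}_{P,c\tilde\delta}$ is precisely the right diagnosis of why the gap is needed. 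Both mechanisms are valid; yours has the advantage of making the parameter bookkeeping airtight, and your explicit treatment of the single-component dichotomy (connectedness of $M$ plus the evenness $v^{-\sigma}_{P,\delta} = -v^{\sigma}_{P,\delta}$, forcing (EP) or (ER) consistently across both signs) and of the holomorphic extension to $\mathcal{A}_{P,\eta}$ spells out points the paper leaves implicit.
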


\begin{proof}
To prove $(i)$, we apply Lemma \ref{lem:lin_to_nonlin} with $q = v^\sigma_{P,\delta}$, $\sigma \in \Sigma^1$.
Since $tq = t v^\sigma_{P,\delta} = v^\sigma_{P,t\delta}$ and therefore $\mathbb{T}^2_{e^{tq}} = \mathbb{T}^\sigma_{P, t\delta}$,
there exist $\epsilon > 0$ and $\Delta > \delta$, such that for all $t \in (0, \epsilon)$,
$T(\mathbb{T}^\sigma_{P, t\delta}) \subset D^\sigma_{P,t\Delta} \cc D^\sigma_{P,t \delta}$.
Item $(ii)$ follows analogously by applying Lemma \ref{lem:lin_to_nonlin} with $\tilde q = v^{\tilde\sigma}_{P,\tilde\delta}$,
$\tilde \sigma \in \Sigma^{-1}$, to the map $T^{-1}$.
Finally, for $(iii)$, by Lemma \ref{lem:exists_q} there exists $q \in \R^2$
satisfying $tq \in \Lambda^{\tilde\sigma}_{P,t\tilde\delta}$ (i.e. $\T^2_{e^{tq}} \subset D^{\tilde\sigma}_{P,t\tilde\delta}$) and
$D_x \tilde T (tq) \in \Lambda^\sigma_{P,t\Delta}$ for all $t > 0$ and  $x \in M$.
Then, by Lemma \ref{lem:lin_to_nonlin}, there exists $\Delta' > \Delta$ and $\epsilon' > 0$ such that
$T(\T^2_{e^{tq}}) \subset D^\sigma_{P,t\Delta'} \subset D^\sigma_{P,t\Delta}$ for all $t \in (0, \epsilon')$,
as required.
\end{proof}

\begin{cor}\label{cor:exp_const_cone_prop}
If an analytic Anosov diffeomorphism $\tilde{T}$ on $M$ satisfies the {\it (sec)} condition for some $P \in \GLtwo(\R)$,
then there exist $\alpha, A, \gamma, \Gamma, \eta \in \R^2_{>0}$ with $\alpha < A < \eta$ and $\Gamma < \gamma < \eta$,
such that the corresponding diffeomorphisms $T$ and $T^{-1}$
on $\Ttwo$ can be analytically extended to $\mathcal{A}_{P,\eta}$ and the following mapping properties hold:
\begin{enumerate}[(i)]
  \item $T$ is  $(1, \alpha, A, P)$-strongly expanding;
  \item $T^{-1}$ is $(-1, \Gamma, \gamma, P)$-strongly expanding;
  \item For any $\sigma \in \Sigma^1, \tilde{\sigma} \in \Sigma^{-1}$, there exists $q$ such that $\mathbb{T}^2_{e^q} \subset
  D^{\tilde{\sigma}}_{P, \gamma} \cap \mathcal{A}_{P, \eta}$ and $T(\mathbb{T}^2_{e^q}) \subset D^{\sigma}_{P, A}$.
\end{enumerate}
\end{cor}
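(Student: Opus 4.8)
The plan is to deduce the corollary from Proposition~\ref{prop:strongexp} by specialising the free parameter $t$ to a single sufficiently small value. Since $\tilde T$ satisfies the \textit{(sec)} condition, it has $P$-induced strongly expanding constant invariant cone fields for some $\delta,\tilde\delta\in\R^2_{>0}$ entering \eqref{eq:secc}, so Proposition~\ref{prop:strongexp} applies: it supplies $\eta\in\R^2_{>0}$ with $T,T^{-1}$ holomorphic on $\mathcal{A}_{P,\eta}$, together with $\Delta>\delta$ and $\tilde\Delta<\tilde\delta$ for which parts (i)--(iii) hold for all sufficiently small $t>0$. I would fix one such $t$ and set $\alpha=t\delta$, $A=t\Delta$, $\gamma=t\tilde\delta$, $\Gamma=t\tilde\Delta$, all in $\R^2_{>0}$.

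The orderings then come for free: $\alpha<A$ since $\delta<\Delta$, and $\Gamma<\gamma$ since $\tilde\Delta<\tilde\delta$, while $A<\eta$ and $\gamma<\eta$ hold for $t$ small because $\eta$ is fixed and $A,\gamma$ scale with $t$. With these identifications, corollary~(i) is exactly the assertion that $T$ is $(1,t\delta,t\Delta,P)$-strongly expanding, i.e.\ part~(i) of the proposition, and corollary~(ii) is part~(ii) written as $T^{-1}$ being $(-1,t\tilde\Delta,t\tilde\delta,P)$-strongly expanding. For~(iii) I would call on part~(iii) with this same $\tilde\delta$ and $\Delta$ (permissible since that part is stated for arbitrary $\tilde\delta,\Delta\in\R^2_{>0}$): it yields a fixed $q'\in\Lambda^{\tilde\sigma}_{P,\tilde\delta}$ with $\mathbb{T}^2_{e^{tq'}}\subset D^{\tilde\sigma}_{P,t\tilde\delta}=D^{\tilde\sigma}_{P,\gamma}$ and $T(\mathbb{T}^2_{e^{tq'}})\subset D^{\sigma}_{P,t\Delta}=D^{\sigma}_{P,A}$; taking $q=tq'$ gives the desired torus, and its membership in $\mathcal{A}_{P,\eta}$ follows since $\log|z|=tq'$ is small.

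The one thing to watch---and the step I would single out as the crux---is that a \emph{single} value of $t$ must meet all requirements at once: the finitely many smallness thresholds from parts (i)--(iii) of the proposition, the upper bounds $A,\gamma<\eta$ needed for $\alpha<A<\eta$ and $\Gamma<\gamma<\eta$, and the containments of the tori $\mathbb{T}^\sigma_{P,\alpha},\mathbb{T}^\sigma_{P,\Gamma}$ and $\mathbb{T}^2_{e^{q}}$ in $\mathcal{A}_{P,\eta}$ (the last ensuring that $T$, resp.\ $T^{-1}$, is holomorphic on the neighbourhoods demanded in Definition~\ref{defn:sem}). This is routine rather than hard: $\eta$ is determined by holomorphic extension alone and does not shrink as $t\to0$, so finitely many constraints each valid for all sufficiently small $t$ can be satisfied simultaneously. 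I expect no substantive obstacle beyond this bookkeeping.
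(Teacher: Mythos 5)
Your proposal is correct and is essentially identical to the paper's own proof, which likewise sets $\alpha = t\delta$, $A = t\Delta$, $\Gamma = t\tilde\Delta$, $\gamma = t\tilde\delta$ for a single sufficiently small $t>0$ and invokes Proposition~\ref{prop:strongexp}(i)--(iii). Your extra remarks on choosing one $t$ meeting all finitely many smallness constraints simultaneously are exactly the (routine) bookkeeping the paper leaves implicit.
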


\begin{proof}
Using notation from Proposition \ref{prop:strongexp}, $(i)$-$(iii)$ are satisfied with
$\alpha = t \delta$, $A = t \Delta$, $\Gamma = t \tilde \Delta$, $\gamma = t \tilde \delta$, for sufficiently small $t > 0$.
\end{proof}

\section{Composition operator for Anosov maps with constant invariant cone fields} \label{sec:comp_compact}

In this section, we shall consider (weighted) composition operators associated to analytic Anosov diffeomorphisms satisfying the {\it (sec)} condition.
In this setting, using Corollary \ref{cor:exp_const_cone_prop}, we will show that there exist
Hardy-Hilbert spaces induced by a suitable cone-wise exponential weight, such that the operator is trace-class.

\subsection{Hardy-Hilbert spaces on Reinhardt domains}

Before moving to the more general case of Reinhardt domains,
we present a few simplified examples in which the domains are chosen to be polydisks.

\begin{figure}[ht!]\
  \centering
  \includegraphics[width=0.49\textwidth]{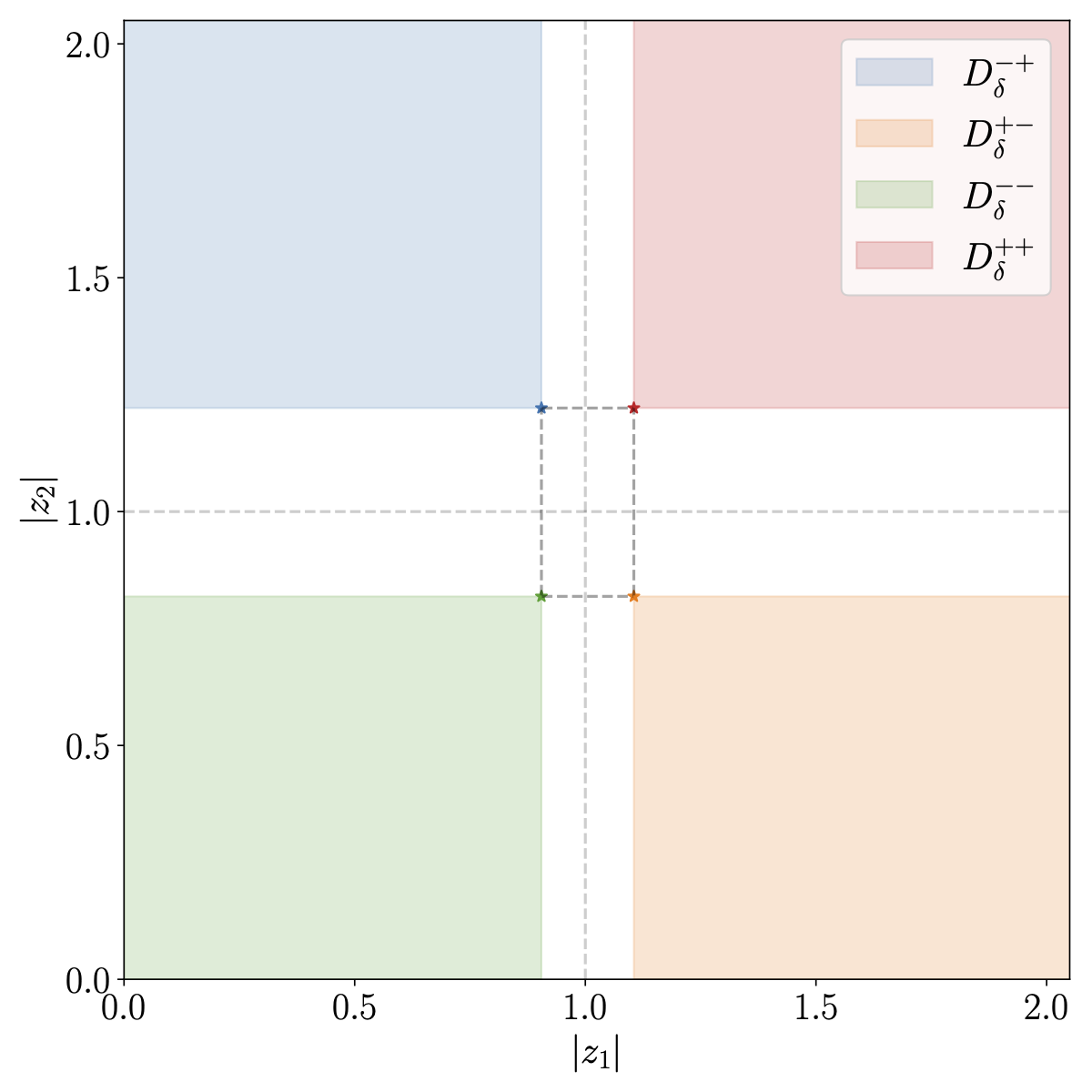}
  \includegraphics[width=0.49\textwidth]{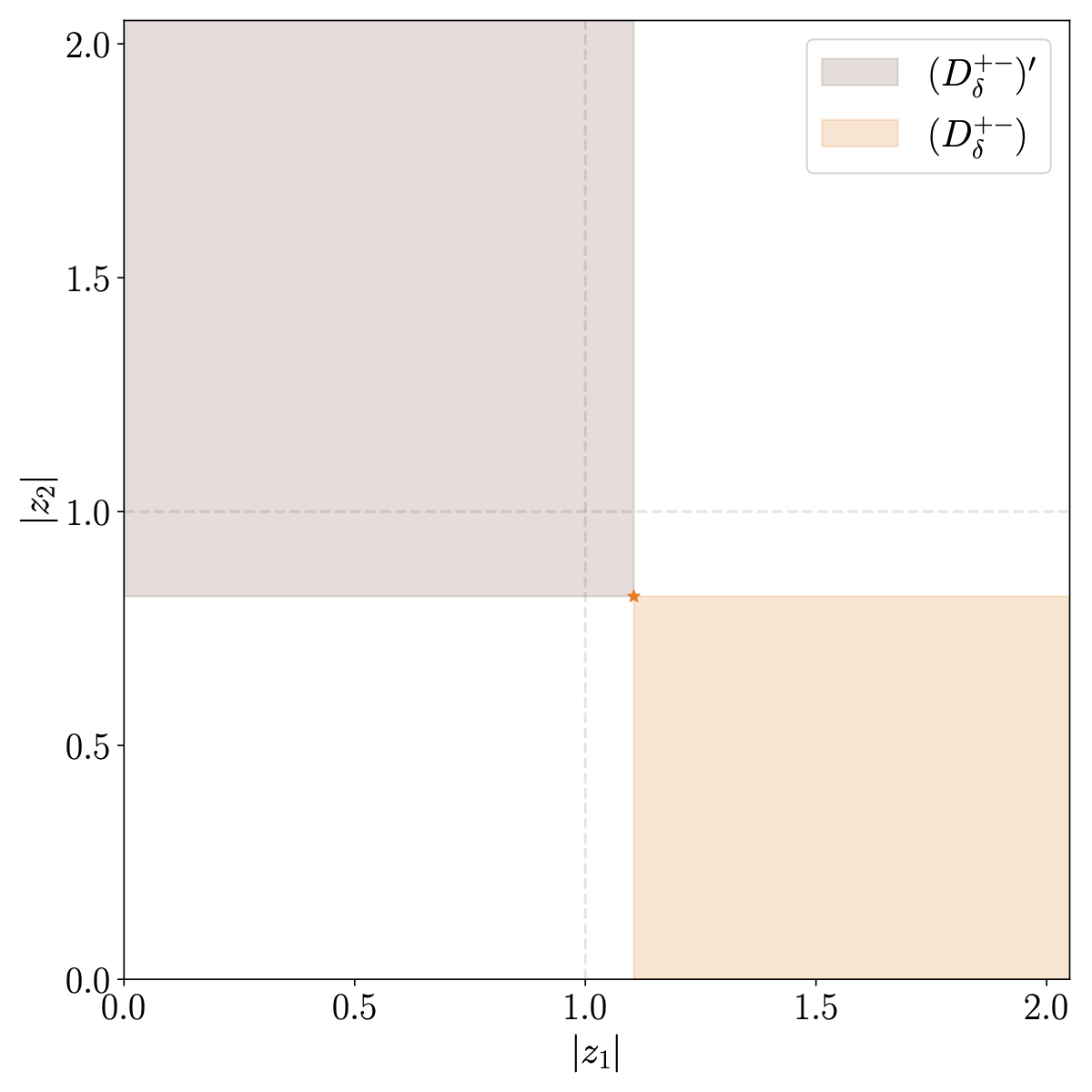}
  \caption{Absolute domains of the four polydisks $\Dsd$ for $\sigma \in \Sigma$ and $\delta = (0.1, 0.2)$ (left) and
  depiction of the dual domain $(D^{+-}_\delta)'$ of $D^{+-}_\delta$ (right).}
  \label{fig:all_domains_and_dual}
\end{figure}

\begin{example}
Let $P$ be the identity matrix $\mathbb{I}$. Then $\Lambda_{P, \delta}^\delta = R^{\sigma}_\delta$, that is, it is one
of the four  quadrants translated by $(\pm \delta_1, \pm \delta_2)$.
For simplicity we write $D^\sigma_\delta = D^\sigma_{\mathbb{I},\delta}$
for the disk induced by the cone $R^{\sigma}_\delta$.
Its distinguished boundary is given by
\[\partial^* D^\sigma_\delta = \mathbb{T}^\sigma_\delta = \{z\in \Ctwo\colon |z_1| = e^{\sigma_1 \delta_1}, |z_2| = e^{\sigma_2 \delta_2}\}.\]
\begin{enumerate}[(i)]
\item Fix $\sigma = (-1, -1)$. Then, for $\delta = (0, 0)$ we obtain a unit bidisk, that is,
$\Dsd = \mathbb{D}^2$. If $\delta \neq 0$, then $\Dsd$ is a bidisk centered at $(0,0)$ with radii
$e^{\sigma_1 \delta_1}$ and $e^{\sigma_2 \delta_2}$.
\item The four domains depicted in the left panel of Figure \ref{fig:all_domains_and_dual}
(reduced representation in the plane of absolute values $(|z_1|, |z_2|)$)
 corresponds to $\delta = (0.1, 0.2)$  for all $\sigma\in \Sigma$.
\end{enumerate}
\end{example}

To proceed to the general case of Reinhardt domains, we introduce some general notation and list simple facts about toral automorphisms.

\begin{defn}
For
\[
A = \begin{pmatrix} a_{11} & a_{12} \\ a_{21} & a_{22} \end{pmatrix} \in \GLtwo(\Z)
\]
 we define $\tau_A \colon \Ttwo \to \Ttwo$ to be the toral automorphism given by
 $\tau_A(z_1, z_2) = (z_1^{a_{11}} z_2^{a_{12}}, z_1^{a_{21}} z_2^{a_{22}})$. With slight abuse of notation we
 also write $\tau_A$ for the extension of the map to $\Chat^2$.
 \end{defn}
\begin{lem}
Let $A,B \in \GLtwo(\Z)$ and $f \in L^2(\T^2)$. Then:
\begin{enumerate}[(i)]
  \item $\tau_A \circ \tau_B = \tau_{A\cdot B}$, and in particular $\tau_A^{-1} = \tau_{A^{-1}}$.
  \item The map $\tau_A$ preserves $\T^2$ and is holomorphic in a neighbourhood.
  \item $f \circ \tau_A \in L^2(\T^2)$,
and $\| f \circ \tau_A \|_{L^2(\T^2)} = \| f \|_{L^2(\T^2)}$.
\end{enumerate}
\end{lem}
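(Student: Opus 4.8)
The plan is to reduce all three assertions to a single observation: $\tau_A$ acts on Laurent monomials by the linear action of $A$ on exponent vectors, so that both the semigroup law in (i) and the $L^2$-isometry in (iii) follow from elementary integer linear algebra, with $\det A = \pm 1$ ensuring that the relevant exponent maps preserve $\Z^2$.

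First I would establish (i) by a direct exponent computation. Writing $w = \tau_B(z)$ and substituting into the monomial formula for $\tau_A$, the exponent of $z_1$ in the first component of $\tau_A(\tau_B(z))$ is $a_{11}b_{11} + a_{12}b_{21} = (AB)_{11}$, and the remaining three exponents are computed identically, giving $\tau_A \circ \tau_B = \tau_{AB}$. Since the identity matrix yields $\tau_{\mathbb{I}} = \mathrm{id}$, the relation $\tau_A \circ \tau_{A^{-1}} = \tau_{\mathbb{I}} = \mathrm{id}$ (and its mirror image) gives $\tau_A^{-1} = \tau_{A^{-1}}$; here $A^{-1} \in \GLtwo(\Z)$ because $\det A = \pm 1$. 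Part (ii) is then immediate: each component of $\tau_A$ is a Laurent monomial $z_1^{a_{k1}} z_2^{a_{k2}}$ with integer (possibly negative) exponents, hence holomorphic on $(\C \setminus \{0\})^2$, which is an open neighbourhood of $\Ttwo$; and on $\Ttwo$ every coordinate has modulus one, so $|(\tau_A(z))_k| = 1$, showing $\tau_A(\Ttwo) \subseteq \Ttwo$, with equality by (i).

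For (iii), the key step is the monomial pull-back identity $z^n \circ \tau_A = z^{A^T n}$ for every $n \in \Z^2$: after substitution the exponent of $z_1$ is $n_1 a_{11} + n_2 a_{21} = (A^T n)_1$, and likewise for $z_2$. Since $A \in \GLtwo(\Z)$, the map $n \mapsto A^T n$ is a bijection of $\Z^2$, so $\tau_A$ permutes the orthonormal basis $\{z^n\}_{n \in \Z^2}$ of $L^2(\Ttwo)$ taken with respect to normalised Haar measure. Writing $f = \sum_n f_n z^n$, I would then conclude $f \circ \tau_A = \sum_n f_n z^{A^T n}$, whose sequence of Fourier coefficients is a permutation of $(f_n)$, so its $\ell^2$-norm is unchanged; this shows $f \circ \tau_A \in L^2(\Ttwo)$ with $\|f \circ \tau_A\|_{L^2(\Ttwo)} = \|f\|_{L^2(\Ttwo)}$.

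There is no substantive obstacle here; the only point requiring care is the bookkeeping — correctly tracking the transpose in the exponent action and invoking $\det A = \pm 1$ to guarantee that both $A^{-1}$ and $A^T$ preserve $\Z^2$.
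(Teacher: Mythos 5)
Your proof is correct. Note that the paper states this lemma without any proof at all, treating it as elementary background, so there is no in-paper argument to diverge from; moreover, your key identity $p_n \circ \tau_A = p_{A^T n}$ is precisely the computation the paper later performs explicitly in the proof of Proposition \ref{prop:isoiso}, so your route is the one the paper implicitly has in mind. The only step that deserves slightly more care is the final one in (iii): for a general $f \in L^2(\T^2)$ the Fourier series converges only in $L^2$, not pointwise, so writing $f \circ \tau_A = \sum_n f_n z^{A^T n}$ tacitly assumes that composition with $\tau_A$ is well defined on $L^2$ equivalence classes and commutes with the $L^2$ limit --- which is close to what is being proved. The clean way to close this is to observe that $\tau_A$ is induced, via $\pi(x_1,x_2) = (e^{ix_1}, e^{ix_2})$, by the linear map $x \mapsto Ax$ on $\R^2/2\pi\Z^2$, which preserves Haar measure since $|\det A| = 1$; then $f \mapsto f \circ \tau_A$ is an isometry of $L^2(\T^2)$ directly by change of variables, and your basis-permutation computation (exact for Laurent polynomials, which are dense) identifies this isometry with the unitary operator permuting the orthonormal basis $\{z^n\}_{n \in \Z^2}$. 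Alternatively, it suffices to remark that $\tau_A$ is a smooth diffeomorphism of $\T^2$, hence maps null sets to null sets, so that composition is well defined almost everywhere and your density argument goes through unchanged.
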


We note that by Lemma \ref{lem:paq} in the appendix, every Reinhardt domain
$D^\sigma_{P,\delta} \subset \hat{\C}^2$ is the image of a bounded Reinhardt domain in
$\C^2$ under a holomorphic map of the form $\tau_A, A \in \GLtwo(\Z)$.

\begin{rem}\label{rem:hol_ext}
In analogy to the case of the one-dimensional Riemann sphere,
here we extend the notion of holomorphic functions on domains in $\Ctwo$
to those on domains in $\Ctwohat$ in the obvious way.
For a domain $\hat D \subset \Ctwohat$, a function $f \colon \hat D \to \C$ is holomorphic if
there exists a domain $D \subset \Ctwo$ and a biholomorphic mapping $\phi \colon D \to \hat D$
such that $f \circ \phi \colon D \to \C$ is holomorphic.
\end{rem}

\begin{defn}
For $\delta \in \mathbb{R}^2$, $\sigma \in \Sigma$ and $P \in \GLtwo(\R)$,
the \textit{Hardy-Hilbert
space} $H^\sigma_{P,\delta} := H^2(D_{P,\delta}^\sigma)$ consists
of all holomorphic functions $f\colon D_{P,\delta}^\sigma \to\mathbb{C}$ such that
\[\sup_{r\in \Lambda^\sigma_{P,\delta}} \frac{1}{(2\pi)^2}\int_{[0, 2\pi]^2} |f(e^{r+it})|^2 \, dt < \infty.\]
In the case $P = \mathbb{I}$ (that is, $D^\sigma_{P,\delta}$ a polydisk),
we will write $H^\sigma_\delta = H^\sigma_{P,\delta}$ for brevity.
\end{defn}

\begin{rem}
As the definition indicates, the next results (Lemma \ref{lem:boundaryvalue} and Proposition \ref{prop:characterisations}) will establish
that the space $H^\sigma_{P,\delta}$ with the inner product \eqref{eq:h2product} indeed forms a Hilbert space.
\end{rem}

We recall that
$\partial^* D^\sigma_{P,\delta} = \T^\sigma_{P,\delta} = \T^2_{\exp(v^\sigma_{P,\delta})}$,
and proceed with the following generalization
of a classical result for the polydisk $\D^2 = D^{--}_{\mathbb{I},0}$.

\begin{lem}\label{lem:boundaryvalue}
For any $f \in H^\sigma_{P,\delta}$ there exists $f^* \in L^2(\T_{P,\delta}^\sigma)$
(the ``boundary value function'' of $f$), which satisfies
\[
\lim_{r \in \Lambda^\sigma_{P,\delta}, r\to v^\sigma_{P,\delta}} \int_{[0, 2\pi]^2} | f(e^{r + it}) - f^*(e^{v^\sigma_{P,\delta} + it}) |^2 \, dt = 0.
\]
\end{lem}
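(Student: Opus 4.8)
The plan is to reduce the statement about the log-conical Reinhardt domain $D^\sigma_{P,\delta}$ to the classical boundary-value theorem for the Hardy space $H^2(\D^2)$ on the unit bidisk, exploiting the remark immediately preceding the lemma: by Lemma~\ref{lem:paq} in the appendix, $D^\sigma_{P,\delta}$ is the image of a bounded Reinhardt domain under a map $\tau_A$ with $A \in \GLtwo(\Z)$. First I would pull the problem back to the standard bidisk. The domain $D^\sigma_{P,\delta}$ is log-conical with logarithmic base $\Lambda^\sigma_{P,\delta} = P(R^\sigma_\delta)$, and its distinguished boundary is the torus $\T^\sigma_{P,\delta} = \T^2_{\exp(v^\sigma_{P,\delta})}$. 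The claimed limit, taken as $r \to v^\sigma_{P,\delta}$ along $\Lambda^\sigma_{P,\delta}$, is exactly the statement that $f$ has $L^2$ radial boundary values on the distinguished boundary. So the heart of the matter is a change of variables that turns $D^\sigma_{P,\delta}$ into a polydisk and $\Lambda^\sigma_{P,\delta}$ into a translated quadrant.

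The key steps, in order, are as follows. First I would use $\tau_A$ (or, at the level of the logarithmic base, the integer linear map $A$ acting on $\R^2$) to biholomorphically identify $D^\sigma_{P,\delta}$ with a polydisk $D^{\sigma'}_\delta$; since $\tau_A$ is holomorphic in a neighbourhood, preserves $\T^2$, and is an $L^2(\T^2)$ isometry by the stated Lemma, it maps $H^\sigma_{P,\delta}$ isometrically onto the corresponding polydisk Hardy space and carries distinguished boundaries to distinguished boundaries. Second, on the polydisk the logarithmic base is a genuine product of half-lines, so a holomorphic $f$ expands in a Laurent/power series $f(z) = \sum_n c_n z^n$ convergent on $(D^\sigma_\delta)^+$, and the defining finiteness condition $\sup_r \frac{1}{(2\pi)^2}\int |f(e^{r+it})|^2\,dt < \infty$ becomes a weighted $\ell^2$ bound $\sup_r \sum_n |c_n|^2 e^{2\langle n, r\rangle} < \infty$ via Parseval on each torus $\T^2_{e^r}$. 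Third, using monotone convergence of the partial integrals as $r \to v^\sigma_{P,\delta}$, I would show $\sum_n |c_n|^2 e^{2\langle n, v^\sigma_{P,\delta}\rangle} < \infty$, which defines the candidate boundary function $f^*$ on $\T^\sigma_{P,\delta}$ through its Fourier coefficients $f^*_n = c_n e^{\langle n, v^\sigma_{P,\delta}\rangle}$ (suitably normalised). Fourth, the required $L^2$ convergence then follows from Parseval: the integral in the statement equals $(2\pi)^2 \sum_n |c_n|^2 (e^{\langle n, r\rangle} - e^{\langle n, v^\sigma_{P,\delta}\rangle})^2$, and dominated convergence over the index set $n$ (dominated by $4\sum_n |c_n|^2 e^{2\langle n, v^\sigma_{P,\delta}\rangle}$, which is finite, once $r$ is on the correct side) drives each term to zero.

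The main obstacle I anticipate is \emph{bookkeeping the direction of the cone}, i.e.\ keeping track of the signs $\sigma$ and of which monomials $z^n$ actually appear in the series. On the polydisk $D^\sigma_\delta$ the exponents $n$ are constrained (for instance, for $\sigma=(-1,-1)$ one gets genuine power series with $n \geq 0$, while other quadrants force mixed signs), so the weight $e^{2\langle n, r\rangle}$ is summable only when $\langle n, r\rangle$ is bounded for $r$ in the cone, and verifying that the limit $r \to v^\sigma_{P,\delta}$ is taken \emph{monotonically into the domain} so that $e^{\langle n, r\rangle} \le e^{\langle n, v^\sigma_{P,\delta}\rangle}$ (giving the integrable domination) requires care with the cone geometry. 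Once the correct monotonicity is established the analysis is the classical Hardy-space argument; I would suppress the routine Parseval and monotone-convergence computations and focus the written proof on the reduction via $\tau_A$ and on identifying the limiting Fourier data. I would also remark that uniqueness of $f^*$ is immediate, since its Fourier coefficients are determined by those of $f$.
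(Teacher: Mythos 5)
Your first step contains a genuine error: for general $P \in \GLtwo(\R)$ there is \emph{no} $A \in \GLtwo(\Z)$ identifying $D^\sigma_{P,\delta}$ with a polydisk. Lemma~\ref{lem:paq} only provides a factorisation $P = A\tilde P$ with $\tilde P$ non-negative, so $\tau_A$ identifies $D^\sigma_{P,\delta}$ with $D^\sigma_{\tilde P,\delta}$, which (for $\sigma=(-1,-1)$) is a \emph{bounded log-conical Reinhardt domain} but almost never a polydisk: at the level of logarithmic bases $\tau_A$ acts by the integer matrix $A$, and if the cone $\Lambda^\sigma_P = P(R^\sigma)$ has irrational boundary directions (e.g.\ $P$ a rotation by $\pi/10$, as in Figure~\ref{fig:Hnu_domains}), then $A\Lambda^\sigma_P$ again has irrational boundary directions for every $A \in \GLtwo(\Z)$ and can never be a translated quadrant. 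So the reduction to the classical bidisk theorem, as you state it, fails. This is precisely why the paper's proof reduces only to the non-negative factor $\tilde P$ and then invokes the classical boundary-value theory for bounded Reinhardt domains (\cite[Section 2.5]{U}, \cite{La}), which is strictly more general than the polydisk case.

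Fortunately, the analytic core of your proposal (your second through fourth steps) never actually uses the product structure of a quadrant, only the polar-cone inequality, and therefore runs verbatim on $D^\sigma_{P,\delta}$ itself with no reduction at all: any holomorphic $f$ on the connected Reinhardt domain $(D^\sigma_{P,\delta})^+$ has a Laurent expansion $f(z)=\sum_{n\in\Ztwo}c_n z^n$ converging normally; Parseval on each torus $\T^2_{e^r}$ gives $\frac{1}{(2\pi)^2}\int |f(e^{r+it})|^2\,dt = \sum_n |c_n|^2 e^{2\langle n,r\rangle}$; finiteness of the supremum over the affine cone $\Lambda^\sigma_{P,\delta}$ forces $c_n=0$ unless $n \in \Z^\sigma_P = \Ztwo\cap\Lambda^{\sigma,o}_P$, since otherwise $\langle n,r\rangle$ is unbounded above along a ray $r = v^\sigma_{P,\delta}+t r_0$, $r_0 \in \Lambda^\sigma_P$; Fatou (or monotone convergence along such a ray) then yields $\sum_n |c_n|^2 e^{2\langle n, v^\sigma_{P,\delta}\rangle} < \infty$, which defines $f^*$; and your dominated-convergence step, resting on $e^{\langle n,r\rangle}\le e^{\langle n,v^\sigma_{P,\delta}\rangle}$ for $n$ in the polar cone and $r$ in the affine cone, completes the argument. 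So your proposal becomes a correct, self-contained proof once the false polydisk identification is simply deleted — in effect you are inlining the Parseval computations that the paper defers to Proposition~\ref{prop:characterisations} — whereas the paper's proof is shorter but outsources the hard analytic content to the literature after the (correct) reduction to non-negative $\tilde P$.
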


\begin{proof}
Since $D^\sigma_{P,\delta} = D^{(-1,-1)}_{P',\delta}$ and for $P' = P I^\sigma \in \GLtwo(\R)$, we can assume
without loss of generality that $\sigma=(-1, -1)$.
The case of $P$ being a non-negative matrix and $\delta=0$
(i.e., $D^\sigma_{P,\delta} \subseteq \mathbb{D}^2$ a bounded Reinhardt domain) is classical,
see \cite[Section 2.5]{U} or \cite{La}, and the more general case of $\delta \in \R^2$ follows immediately.
For general $P \in \GLtwo(\R)$ and $f \in H^\sigma_{P,\delta}$, we write $P = A \tilde P$
with $A \in \GLtwo(\Z)$ and $\tilde P \in \GLtwo(\R)$ non-negative (Lemma \ref{lem:paq}).
We use $\tau_A \colon D_{\tilde P,\delta}^\sigma \to D_{P,\delta}^\sigma$
and consider the function $\tilde f = f \circ \tau_A \in H^\sigma_{\tilde P,\delta}$.
By the previous case,
there exists $\tilde f^* \in L^2(\mathbb{T}^\sigma_{\tilde P,\delta})$ such that
\[
\lim_{r\in \Lambda^\sigma_{\tilde P,\delta}, r\to v^\sigma_{\tilde P,\delta}}
\int_{[0, 2\pi]^2} | \tilde f (e^{r+it}) - \tilde f^* (e^{v^\sigma_{\tilde P,\delta} + it}) |^2 \, dt = 0.
\]
Writing $f^* = \tilde f^* \circ \tau_A^{-1}$, we obtain
\begin{align*}
&\lim_{r \in \Lambda^\sigma_{P,\delta}, r\to v^\sigma_{P,\delta}}
\int_{[0, 2\pi]^2} | f(e^{r + it}) - f^*(e^{v^\sigma_{P,\delta} + it}) |^2 \, dt \\
= &\lim_{r \in \Lambda^\sigma_{\tilde P,\delta}, r\to v^\sigma_{\tilde P,\delta}}
\int_{[0, 2\pi]^2} | f\circ \tau_A (e^{r + it}) - f^*\circ \tau_A(e^{v^\sigma_{\tilde P,\delta} + it}) |^2 \, dt = 0. \qedhere
\end{align*}

\end{proof}

We define an inner product on $H^\sigma_{P,\delta}$ by setting
\begin{equation}\label{eq:h2product}
(f,g)_{H^\sigma_{P,\delta}} := \langle f^*, \cj{g^*} \rangle_{\TsdP} := (f^*, g^*)_{L^2(\TsdP)} ,
\end{equation}
and we will omit the star notation for the boundary value function whenever there is no ambiguity.
Here the $L^2$ inner product between $f^*$ and $g^*$ is defined as
\[
(f^*, g^*)_{L^2(\TsdP)}
= \int_{[0, 2\pi]^2} f^*(e^{v^\sigma_{P,\delta}+it}) \cj{g^*(e^{v^\sigma_{P,\delta}+it})} \, \frac{dt}{(2\pi)^2}
= \int_{\TsdP} f^*(z) \cj{g^*(z)} \, dm(z),
\]
where $dm(z) = \frac{dz_1}{2\pi i z_1} \frac{dz_2}{2\pi i z_2}$ is the normalised Lebesgue measure on $\TsdP$.

\begin{notation}
We denote by $\Lambda^{\sigma,o}_P$ the polar cone of $\Lambda^\sigma_P = P(R^\sigma)$, and note that
$\Lambda^{\sigma,o}_P = (P^T)^{-1}(\cl{R^{-\sigma}})$. We write $\mathbb{Z}^\sigma_P = \Ztwo \cap \Lambda^{\sigma,o}_P$.
For $P = \mathbb{I}$, we will use the shorthand $\Z^\sigma = \Z^\sigma_\mathbb{I}$.
\end{notation}
~\newline

\hspace{0.1cm}
\begin{prop}[Alternative charaterisations]\label{prop:characterisations}
Let $\delta \in \Rtwo$, $\sigma \in \Sigma$ and $P \in \GLtwo(\R)$.
\begin{enumerate}[(i)]
  \item The function $p_n(z) =  z^n$ is in $H^\sigma_{P,\delta}$ iff $n \in \mathbb{Z}^\sigma_P$.
  \item  Let $f$ be given by $f(z) = \sum_{n\in\Ztwo} f_n z^n$. Then,
  $f\in H^\sigma_{P,\delta}$ iff $f_n = 0$ for $n\notin \mathbb{Z}^\sigma_P$ and
  $\sum_{n\in \Ztwo} |f_n|^2 e^{2 \langle n, v^\sigma_{P,\delta} \rangle} <\infty$.
  \item $H^\sigma_{P,\delta}$ is a Hilbert space with inner product given by \eqref{eq:h2product}.
 Moreover, for $f\in H^\sigma_{P,\delta}$ with $f(z) = \sum_{n\in\mathbb{Z}^\sigma_P} f_n z^n$, we have
  \[
  \|f\|^2_{H^\sigma_{P,\delta}}
  = \sup_{r\in \Lambda^\sigma_{P,\delta}} \frac{1}{(2\pi)^2}\int_{[0, 2\pi]^2} |f(e^{r+it})|^2 \, dt
  = \sum_{n\in \mathbb{Z}^\sigma_P} |f_n|^2 e^{2 \langle n, v^\sigma_{P,\delta} \rangle}.
  \]
  \item $H^\sigma_{P,\delta}$ is the closure of $\{p_n\colon n\in \mathbb{Z}^\sigma_P\}$ with respect to the norm
  $\|\cdot\|_{H^\sigma_{P,\delta}}$.
\end{enumerate}
\end{prop}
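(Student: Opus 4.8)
The plan is to prove the four characterisations in sequence, each building on the previous, with the heart of the argument being a reduction to the orthogonality of monomials on the distinguished boundary torus $\T^\sigma_{P,\delta}$. As in the proof of Lemma~\ref{lem:boundaryvalue}, I would first reduce to the case $\sigma = (-1,-1)$ by replacing $P$ with $P' = PI^\sigma$, and then reduce the general $P \in \GLtwo(\R)$ to the case of a non-negative matrix $\tilde P$ (and ultimately to a bounded Reinhardt domain inside $\D^2$) via the factorisation $P = A\tilde P$ with $A \in \GLtwo(\Z)$ from Lemma~\ref{lem:paq}, using that the change of variables $\tau_A$ is an isometry on $L^2$ of the relevant tori and permutes monomials according to the action of $A$ on $\Z^2$. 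This way every claim can be checked on a bounded Reinhardt domain where the classical Hardy-space theory applies.

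For item~(i), I would compute directly: on the torus $\T^2_{e^r}$ the monomial $p_n$ has $\frac{1}{(2\pi)^2}\int_{[0,2\pi]^2} |e^{\langle n, r\rangle}e^{i\langle n, t\rangle}|^2\,dt = e^{2\langle n, r\rangle}$, so the defining supremum over $r \in \Lambda^\sigma_{P,\delta}$ is finite precisely when $\langle n, r\rangle$ is bounded above as $r$ ranges over the affine cone $\Lambda^\sigma_{P,\delta}$. Since $\Lambda^\sigma_{P,\delta} = \Lambda^\sigma_P + v^\sigma_{P,\delta}$, boundedness is equivalent to $\langle n, x\rangle \le 0$ for all $x$ in the cone $\Lambda^\sigma_P$, that is, $n$ lies in the polar cone $\Lambda^{\sigma,o}_P$, which by the Notation preceding the proposition is exactly the condition $n \in \Z^\sigma_P$. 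In that case the supremum equals $e^{2\langle n, v^\sigma_{P,\delta}\rangle}$, attained in the limit $r \to v^\sigma_{P,\delta}$, which already anticipates the norm formula in~(iii).

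For items~(ii) and~(iii) I would use the boundary value function $f^*$ from Lemma~\ref{lem:boundaryvalue}: its Fourier expansion on $\T^\sigma_{P,\delta}$ has coefficients $f_n e^{\langle n, v^\sigma_{P,\delta}\rangle}$, so Parseval on $L^2(\T^\sigma_{P,\delta})$ gives $\|f\|^2_{H^\sigma_{P,\delta}} = \sum_n |f_n|^2 e^{2\langle n, v^\sigma_{P,\delta}\rangle}$. To pin down which $n$ can appear, I would show that the holomorphy of $f$ on $D^\sigma_{P,\delta}$ forces $f_n = 0$ for $n \notin \Z^\sigma_P$: for such $n$ the factor $e^{2\langle n, r\rangle}$ is unbounded over the cone, so a nonzero coefficient would contradict the finiteness of the defining supremum (after applying Parseval on each intermediate torus $\T^2_{e^r}$ and taking the supremum inside the sum of non-negative terms). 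Completeness, hence the Hilbert-space property, then follows because $H^\sigma_{P,\delta}$ is isometric via $f \mapsto (f_n e^{\langle n, v^\sigma_{P,\delta}\rangle})_{n \in \Z^\sigma_P}$ to the weighted $\ell^2$-sequence space, which is complete.

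Item~(iv) is then immediate: the monomials $\{p_n : n \in \Z^\sigma_P\}$ are orthogonal with $\|p_n\|^2 = e^{2\langle n, v^\sigma_{P,\delta}\rangle}$ by~(i) and~(iii), and any $f \in H^\sigma_{P,\delta}$ is approximated in norm by the partial sums of its expansion $\sum_{n \in \Z^\sigma_P} f_n p_n$ since the tail $\sum_{|n| > N} |f_n|^2 e^{2\langle n, v^\sigma_{P,\delta}\rangle}$ vanishes by~(iii). The main obstacle I anticipate is the careful interchange of the supremum over $r$ with the (infinite) Fourier sum needed to justify that the defining supremum equals the weighted $\ell^2$-norm and forces vanishing of the forbidden coefficients; monotone convergence applied to the non-negative terms $|f_n|^2 e^{2\langle n, r\rangle}$ as $r \to v^\sigma_{P,\delta}$ should resolve this cleanly, with the classical polydisk/bounded-Reinhardt case (cited from \cite{U,La}) supplying the existence of $f^*$ and the radial-limit control that legitimises the interchange.
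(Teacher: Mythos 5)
Your computational core for items (i)--(iii) --- the identity $\tfrac{1}{(2\pi)^2}\int_{[0,2\pi]^2}|f(e^{r+it})|^2\,dt=\sum_n|f_n|^2e^{2\langle n,r\rangle}$, the dichotomy between bounded and unbounded $e^{2\langle n,r\rangle}$ over the cone (polar-cone condition), and the bound $\langle n,r\rangle\le\langle n,v^\sigma_{P,\delta}\rangle$ for $n\in\Z^\sigma_P$ --- is exactly the paper's argument, so (i)--(iii) match in substance. There are two organizational differences. First, you route (ii)--(iii) through the boundary function $f^*$ of Lemma \ref{lem:boundaryvalue} and Parseval on $\T^\sigma_{P,\delta}$, whereas the paper works only with the intermediate tori $\T^2_{e^r}$ and brings in \eqref{eq:h2product} just for the final equality in (iii); both are fine, and your up-front reduction via $P=A\tilde P$ (Lemma \ref{lem:paq}) is legitimate and non-circular, since it uses only the $L^2$-isometry of $\tau_A$ on tori and the permutation of monomials, not Proposition \ref{prop:isoiso} (whose proof depends on this proposition). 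Second, and more substantively, the paper does not treat (iv) as a corollary of (i)--(iii): it reduces to $\sigma=(-1,-1)$, cites \cite[Proposition 3.6]{MX} for non-negative $P$, and transfers along $\tau_A$; you instead derive (iv) internally from tail convergence of the expansion.

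That internal derivation is where you should be careful: calling (iv) ``immediate'' conceals the one genuinely non-trivial point. Items (ii)--(iii), both in your write-up and in the paper, concern functions \emph{already given} by a Laurent series; but an arbitrary element of $H^\sigma_{P,\delta}$ is defined only as a holomorphic function on $D^\sigma_{P,\delta}$ with finite supremum, and for (iv) (and for your completeness argument, which needs the coefficient map defined on all of $H^\sigma_{P,\delta}$) one must first show such a function \emph{has} a Laurent expansion supported in $\Z^\sigma_P$ --- equivalently, that the Fourier coefficients $a_n(r)$ of $t\mapsto f(e^{r+it})$ satisfy $a_n(r)=f_ne^{\langle n,r\rangle}$ for an $r$-independent $f_n$, which requires a Cauchy/contour-deformation argument or an appeal to the Laurent expansion theorem on Reinhardt domains. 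This is precisely the content the paper outsources to \cite{MX}, and what your appeal to the classical bounded-Reinhardt theory of \cite{U,La} must be made to supply explicitly. Once that is stated, your density and completeness arguments close as described.
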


\begin{proof}
We begin by recalling that $\Lambda^\sigma_{P,\delta} = \Lambda^\sigma_P + v^\sigma_{P,\delta}$, and observe that
\[
\sup_{r\in \Lambda^\sigma_{P,\delta}} \frac{1}{(2\pi)^2}\int_{[0, 2\pi]^2} |p_n(e^{r+it})|^2 \, dt
= \sup_{r\in \Lambda^\sigma_{P,\delta}} e^{2\langle  n, r \rangle}
= e^{2\langle n, v^\sigma_{P,\delta} \rangle} \sup_{r \in \Lambda_P^\sigma} e^{2\langle  n, r \rangle}
\]
for any $n \in \Ztwo$. Since the supremum is finite if and only if $\langle n, r \rangle \leq 0$ for all $r \in \Lambda^\sigma_P$, that is, if and only if $n \in \mathbb{Z}^\sigma_P = \mathbb{Z}^2 \cap \Lambda_P^{\sigma,o}$, statement $(i)$ follows.
Next, we formally calculate for $f(z) = \sum_{n\in\mathbb{Z}^2} f_n z^n$ that
\begin{align*}
\frac{1}{(2\pi)^2}\int_{[0, 2\pi]^2} |f(e^{r+it})|^2 \,dt
&= \frac{1}{(2\pi)^2}\int_{[0, 2\pi]^2}  \sum_{n\in\mathbb{Z}^2} f_n e^{n(r+it)}\cj{\sum_{m\in\mathbb{Z}^2} f_m e^{m(r+it)}}  \,dt \\
&= \sum_{n\in\mathbb{Z}^2} |f_n|^2 \frac{1}{(2\pi)^2}\int_{[0, 2\pi]^2} |p_n(e^{r+it})|^2  \,dt \\
&= \sum_{n\in\mathbb{Z}^2} |f_n|^2 e^{2\langle n,r \rangle} .
\end{align*}
If $f_n \neq 0$ for some $n \notin \mathbb{Z}^\sigma_P$, by the above argument, the supremum over $r \in \Lambda^\sigma_{P,\delta}$ is infinite, and hence
$f \notin H^\sigma_{P,\delta}$.
On the other hand, for $n \in \mathbb{Z}^\sigma_P$ and $r \in \Lambda^\sigma_{P,\delta}$, we have
$\langle n, r \rangle = \langle P^T n, P^{-1} r \rangle = \langle I^\sigma P^T n , I^\sigma P^{-1}r \rangle$ with $I^\sigma P^T n \in \mathbb{R}^2_{\leq 0}$ and $I^\sigma P^{-1}r \in \mathbb{R}^2_{>0} + \delta$, so that
\[
\langle n,r \rangle
 \leq \langle I^\sigma P^T n, \delta \rangle
= \langle P^T n, I^\sigma \delta \rangle = \langle n, v^\sigma_{P,\delta} \rangle.
\]
Therefore, if $f_n = 0$ for all $n \notin \mathbb{Z}^\sigma_P$ and
$\sum_{n\in \Ztwo} |f_n|^2 e^{2 \langle n, v^\sigma_{P,\delta} \rangle} <\infty$, then
\[
\sup_{r\in \Lambda^\sigma_{P,\delta}} \frac{1}{(2\pi)^2}\int_{[0, 2\pi]^2} |f(e^{r+it})|^2 \,dt
=  \sup_{r\in \Lambda^\sigma_{P,\delta}} \sum_{n\in\Ztwo} |f_n|^2 e^{2\langle n,r \rangle}
= \sum_{n\in \Ztwo} |f_n|^2 e^{2 \langle n, v^\sigma_{P,\delta} \rangle}
\]
implies $f \in H^\sigma_{P,\delta}$, proving $(ii)$.
For $(iii)$, it remains to show the last equality, which follows immediately from the last calculation and \eqref{eq:h2product}, as
for any $f \in H^\sigma_{P,\delta}$ we have that
\[
\sup_{r\in \Lambda^\sigma_{P,\delta}} \frac{1}{(2\pi)^2} \int_{[0, 2\pi]^2} |f(e^{r+it})|^2 \, dt
= \sum_{n\in \mathbb{Z}^\sigma_P} |f_n|^2 e^{2 \langle n, v^\sigma_{P,\delta} \rangle}.
\]
Finally for $(iv)$, we assume without loss of generality that $\sigma = (-1, -1)$
(since $H^\sigma_{P,\delta} = H^{(-1,-1)}_{PI^\sigma,\delta}$).
The case of $P$ a non-negative matrix is proved in \cite[Proposition 3.6]{MX}.
For general $P \in \GLtwo(\R)$, we write $P = A \tilde P$ with $A \in \GLtwo(\Z)$ and $\tilde P \in \GLtwo(\R)$ non-negative,
and use the biholomorphic mapping $\tau_A \colon D^\sigma_{\tilde P,\delta} \to D^\sigma_{P,\delta}$,
noting that $\| f \circ \tau_A\|_{H^\sigma_{\tilde P,\delta}} = \| f \|_{H^{\sigma}_{P,\delta}}$
for any $f \in H^\sigma_{P,\delta}$.
Since $A$ maps $\Z^{\sigma}_{\tilde P}$ bijectively onto $\Z^{\sigma}_{P}$,
we have $\{p_n\colon n\in \Z^\sigma_P\} = \{p_n \circ \tau_A\colon n\in \Z^{\sigma}_{\tilde P}\}$, and the general case follows.
\end{proof}

We next show shows that two Hardy-Hilbert $H^\sigma_{P,\delta}$ and $H^\sigma_{P',\delta}$ are isomorphic,
whenever $P^{-1}P' \in \GLtwo(\Z)$.
For any $\delta \in \Rtwo$, $\sigma \in \Sigma$, $P \in \GLtwo(\R)$ and $A \in \GLtwo(\Z)$,
we observe that $A (\Lambda_{P,\delta}^\sigma) = (A P)(R^\sigma_\delta) = \Lambda_{AP,\delta}^\sigma$,
and $A P \in \GLtwo(\R)$, from which it follows that
\[
\tau_A(D_{P,\delta}^\sigma) = \tau_A(e^{\Lambda_{P,\delta}^\sigma} \Ttwo) = e^{\Lambda_{AP,\delta}^\sigma} \Ttwo =  D_{AP,\delta}^\sigma.
\]

\begin{prop}\label{prop:isoiso}
Let $\delta \in \Rtwo$, $\sigma \in \Sigma$, $P \in \GLtwo(\R)$ and $A \in \GLtwo(\Z)$. Then the operator $C_{\tau_A}$
given by $C_{\tau_A} f = f \circ \tau_A$ is an isometric isomorphism from $H^\sigma_{AP,\delta}$ to $H^\sigma_{P,\delta}$.
\end{prop}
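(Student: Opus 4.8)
The plan is to reduce everything to the Fourier-coefficient picture of the Hardy--Hilbert spaces supplied by Proposition~\ref{prop:characterisations}, and to show that $C_{\tau_A}$ merely relabels the monomial basis by a weight-preserving bijection of index sets. First I would record the action of $C_{\tau_A}$ on monomials. Since $\tau_A(z) = (z_1^{a_{11}}z_2^{a_{12}}, z_1^{a_{21}}z_2^{a_{22}})$, a direct computation gives $p_n \circ \tau_A = p_{A^T n}$ for every $n \in \Z^2$, so that $C_{\tau_A} p_n = p_{A^T n}$; because $A \in \GLtwo(\Z)$, the map $n \mapsto A^T n$ is a bijection of $\Z^2$. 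This is consistent with the biholomorphism $\tau_A \colon D^\sigma_{P,\delta} \to D^\sigma_{AP,\delta}$ noted just before the statement, and indeed it is the same mechanism already exploited in the proof of Proposition~\ref{prop:characterisations}(iv).

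Next I would track how this bijection interacts with the admissible index sets. Unwinding $\Z^\sigma_Q = \Z^2 \cap \Lambda^{\sigma,o}_Q$ via $\Lambda^{\sigma,o}_Q = (Q^T)^{-1}(\cl{R^{-\sigma}})$ gives the clean description $\Z^\sigma_Q = \{n \in \Z^2 \colon Q^T n \in \cl{R^{-\sigma}}\}$. Using the identity $P^T(A^T n) = (AP)^T n$ and the fact that $A^T$ preserves $\Z^2$, I then get $A^T n \in \Z^\sigma_P$ if and only if $n \in \Z^\sigma_{AP}$; hence $A^T$ restricts to a bijection $\Z^\sigma_{AP} \to \Z^\sigma_P$.

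Then I would verify that the weights match. By Definition~\ref{defn:cone_vertex} the apex satisfies $v^\sigma_{P,\delta} = P v^\sigma_\delta$, so $A v^\sigma_{P,\delta} = A P v^\sigma_\delta = v^\sigma_{AP,\delta}$, which yields $\langle A^T n, v^\sigma_{P,\delta}\rangle = \langle n, A v^\sigma_{P,\delta}\rangle = \langle n, v^\sigma_{AP,\delta}\rangle$ for all $n$. Combined with Proposition~\ref{prop:characterisations}(iii), this gives $\|p_{A^T n}\|_{H^\sigma_{P,\delta}} = \|p_n\|_{H^\sigma_{AP,\delta}}$ for every $n \in \Z^\sigma_{AP}$. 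Consequently $C_{\tau_A}$ carries the orthonormal basis $\{p_n/\|p_n\|\colon n \in \Z^\sigma_{AP}\}$ of $H^\sigma_{AP,\delta}$ bijectively onto the orthonormal basis $\{p_m/\|p_m\|\colon m \in \Z^\sigma_P\}$ of $H^\sigma_{P,\delta}$. For a general $f = \sum_{n\in\Z^\sigma_{AP}} f_n p_n$, summing the coefficientwise identity over the index set and invoking Proposition~\ref{prop:characterisations}(ii)--(iii) shows both that $C_{\tau_A} f \in H^\sigma_{P,\delta}$ (correct Fourier support, finite norm) and that $\|C_{\tau_A} f\|_{H^\sigma_{P,\delta}} = \|f\|_{H^\sigma_{AP,\delta}}$, so $C_{\tau_A}$ is a well-defined isometry. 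Surjectivity is then immediate, since an isometry mapping an orthonormal basis onto an orthonormal basis is onto; alternatively, using $\tau_A^{-1} = \tau_{A^{-1}}$ and the composition rule $C_{\tau_A} C_{\tau_B} = C_{\tau_{BA}}$, the operator $C_{\tau_{A^{-1}}} \colon H^\sigma_{P,\delta} \to H^\sigma_{AP,\delta}$ satisfies $C_{\tau_A} C_{\tau_{A^{-1}}} = C_{\tau_{A^{-1}A}} = \operatorname{Id}$ and $C_{\tau_{A^{-1}}} C_{\tau_A} = \operatorname{Id}$, exhibiting a two-sided inverse.

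The main obstacle is not analytic: Proposition~\ref{prop:characterisations} has already converted both spaces into weighted $\ell^2$ sequence spaces, so all that truly requires care is the two pieces of linear-algebraic bookkeeping -- that the correct index map is $A^T$ (rather than $A$), that it sends $\Z^\sigma_{AP}$ \emph{onto} $\Z^\sigma_P$, and that it preserves the exponential weight $e^{\langle \cdot,\, v^\sigma_{P,\delta}\rangle}$ via $v^\sigma_{AP,\delta} = A v^\sigma_{P,\delta}$. Once these transpose and apex identities are in place, the isometric-isomorphism conclusion follows at once.
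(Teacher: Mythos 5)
Your proposal is correct and follows essentially the same route as the paper's proof: both reduce to the Fourier-coefficient description of Proposition~\ref{prop:characterisations}, use $p_n \circ \tau_A = p_{A^T n}$ together with $\Lambda^{\sigma,o}_{AP} = (A^T)^{-1}(\Lambda^{\sigma,o}_P)$ to see that $A^T$ is a bijection $\Z^\sigma_{AP} \to \Z^\sigma_P$, and match the weights via $v^\sigma_{AP,\delta} = A v^\sigma_{P,\delta}$. The only difference is cosmetic: you make surjectivity explicit (ONB-onto-ONB, or the two-sided inverse $C_{\tau_{A^{-1}}}$), whereas the paper leaves it implicit in the bijective relabelling of coefficients.
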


\begin{proof}
Let $f = \sum_{n\in \Ztwo} f_n p_n \in H^\sigma_{AP,\delta}$, that is, by Proposition \ref{prop:characterisations},
$f_n = 0$ for all $n \notin \mathbb{Z}^\sigma_{AP}$, and $\sum_{n\in \Ztwo} |f_n|^2 e^{2 \langle n, v^\sigma_{AP,\delta} \rangle} < \infty$,
and write $g = C_{\tau_A} f = f \circ \tau_A$. For $n \in \Ztwo$ we have
\[
p_n \circ \tau_A(z) = (z_1^{a_{11}} z_2^{a_{12}})^{n_1} (z_1^{a_{21}} z_2^{a_{22}})^{n_2}
= z_1^{a_{11}n_1 + a_{21} n_2} z_2^{a_{21}n_1 + a_{22} n_2} = p_{A^T n}(z).
\]
Therefore, it holds that
\[
g = \sum_{n \in \Ztwo} f_n p_n \circ \tau_A = \sum_{n \in \Ztwo} f_n p_{A^T n} = \sum_{n \in \Ztwo} f_{(A^T)^{-1} n} p_{n},
\]
and hence $g = \sum_{n\in \Ztwo} g_n p_n$ with $g_n = f_{(A^T)^{-1} n}$.
Further, since
\[
\Lambda^{\sigma,o}_{AP}
= ((A P)^T)^{-1} (\cl{R^{-\sigma}})
= (A^T)^{-1} (P^T)^{-1} (\cl{R^{-\sigma}})
= (A^T)^{-1} (\Lambda^{\sigma,o}_{P}),
\]
it follows that $(A^T)^{-1} n \in \Lambda^{\sigma,o}_{AP}$ iff $n \in \Lambda^{\sigma,o}_P$. Therefore, $g_n = f_{(A^T)^{-1} n} = 0$ whenever $n \notin \mathbb{Z}^\sigma_{P}$. Moreover,
\begin{align*}
\|g\|^2_{H^\sigma_{P,\delta}}
&= \sum_{n\in \Ztwo} |g_n|^2 e^{2 \langle n, v^\sigma_{P,\delta} \rangle} \\
&= \sum_{n\in \Ztwo} |f_n|^2 e^{2 \langle A^T n, Pv^\sigma_\delta \rangle}
= \sum_{n\in \Ztwo} |f_n|^2 e^{2 \langle n, APv^\sigma_\delta \rangle}
= \sum_{n\in \Ztwo} |f_n|^2 e^{2 \langle n, v^\sigma_{AP,\delta} \rangle}
= \|f\|^2_{H^\sigma_{AP,\delta}},
\end{align*}
which by Proposition \ref{prop:characterisations}$(ii)$ implies that $g = C_{\tau_A} f \in H^\sigma_{P,\delta}$, and
$\|C_{\tau_A} f\|_{H^\sigma_{P,\delta}} =  \|f\|_{H^\sigma_{AP,\delta}}$ for all $f \in H^\sigma_{AP,\delta}$,
finishing the proof.
\end{proof}

The next two propositions are important ingredients for proving the main result of this section.

\begin{prop}\label{prop:sup_comp}
Let $K$ be a compact subset of $D_{P,\delta}^\sigma \subset \Chat^2$ and $f\in H^\sigma_{P,\delta} = H^2(D_{P,\delta}^\sigma)$.
Then, there is a $C_K > 0$ such that
\[\sup_{z\in K} |f(z)| \leq C_K  \|f\|_{H^\sigma_{P,\delta}}.\]
\end{prop}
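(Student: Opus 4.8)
The plan is to realise $H^\sigma_{P,\delta}$ as a reproducing kernel Hilbert space and to bound the diagonal of its kernel uniformly on $K$. By Proposition \ref{prop:characterisations}, any $f \in H^\sigma_{P,\delta}$ has an expansion $f = \sum_{n \in \mathbb{Z}^\sigma_P} f_n p_n$ with $\|f\|^2_{H^\sigma_{P,\delta}} = \sum_{n \in \mathbb{Z}^\sigma_P} |f_n|^2 e^{2\langle n, v^\sigma_{P,\delta}\rangle}$, so that $\{p_n e^{-\langle n, v^\sigma_{P,\delta}\rangle} : n \in \mathbb{Z}^\sigma_P\}$ is an orthonormal basis. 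Applying the Cauchy--Schwarz inequality to the (polynomial) partial sums, written as $\sum_n \bigl(f_n e^{\langle n,v^\sigma_{P,\delta}\rangle}\bigr)\bigl(z^n e^{-\langle n,v^\sigma_{P,\delta}\rangle}\bigr)$, gives for every $z \in D^\sigma_{P,\delta}$
\[
|f(z)| \le \|f\|_{H^\sigma_{P,\delta}}\, S(z)^{1/2}, \qquad
S(z) := \sum_{n \in \mathbb{Z}^\sigma_P} |z^n|^2\, e^{-2\langle n, v^\sigma_{P,\delta}\rangle},
\]
the partial sums converging uniformly on $K$ once the bound below is in place, hence to $f$. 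Thus the proposition reduces to showing $\sup_{z \in K} S(z) < \infty$.

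The key step is to extract geometric decay of the summands from the polar-cone description of the index set. Recall $\mathbb{Z}^\sigma_P = \Ztwo \cap \Lambda^{\sigma,o}_P$ and that $\Lambda^{\sigma,o}_P$ is positively spanned by $-\sigma_1 w_u$ and $-\sigma_2 w_s$; hence every $n \in \mathbb{Z}^\sigma_P$ can be written as $n = -\sigma_1 s_1 w_u - \sigma_2 s_2 w_s$ with $s_1, s_2 \ge 0$. Using the identities $\sigma_1\langle w_u, v^\sigma_{P,\delta}\rangle = \delta_1$ and $\sigma_2\langle w_s, v^\sigma_{P,\delta}\rangle = \delta_2$ established after Definition \ref{defn:cone_vertex}, and writing $|z^n|^2 e^{-2\langle n,v^\sigma_{P,\delta}\rangle} = e^{2\langle n, \log|z| - v^\sigma_{P,\delta}\rangle}$ (with the conventions $e^{-\infty}=0$ at coordinate points $0$ or $\infty$), the exponent becomes
\[
\langle n, \log|z| - v^\sigma_{P,\delta}\rangle
= -s_1\bigl(\sigma_1\langle w_u, \log|z|\rangle - \delta_1\bigr)
 - s_2\bigl(\sigma_2\langle w_s, \log|z|\rangle - \delta_2\bigr).
\]
By the defining inequalities of $D^\sigma_{P,\delta}$ the two bracketed functions $g_1, g_2$ are strictly positive on the domain; as each is continuous as a map $D^\sigma_{P,\delta} \to (0,+\infty]$ and $K$ is compact, there is $m>0$ with $g_1, g_2 \ge m$ on $K$. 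Since $(s_1,s_2)\mapsto n$ is a linear isomorphism, $s_1+s_2 \ge c'\|n\|$ for some $c'>0$ by equivalence of norms, whence $\langle n, \log|z| - v^\sigma_{P,\delta}\rangle \le -mc'\|n\|$ uniformly in $z \in K$. Summing $e^{-2mc'\|n\|}$ over $n \in \Ztwo$ yields a finite $C_K$ independent of $z \in K$, completing the estimate.

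The main obstacle is precisely the uniform positive lower bound $m$, and in particular the behaviour of $K$ at points of $\Ctwohat$ with a coordinate equal to $0$ or $\infty$, where $\log|z|$ acquires an infinite entry; the resolution above rests on viewing $g_1, g_2$ as continuous $(0,+\infty]$-valued functions so that their minimum over the compact $K$ is still positive, while the corresponding summands $|z^n|^2 e^{-2\langle n,v^\sigma_{P,\delta}\rangle}$ vanish exactly when the pairing is $-\infty$ (the monomials $p_n$, $n \in \mathbb{Z}^\sigma_P$, being holomorphic hence bounded on $D^\sigma_{P,\delta}$ by Proposition \ref{prop:characterisations}$(i)$). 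Should one prefer to avoid the extended-real bookkeeping, I would instead normalise $\sigma = (-1,-1)$ via $H^\sigma_{P,\delta} = H^{(-1,-1)}_{PI^\sigma,\delta}$, factor $P = A\tilde P$ with $A \in \GLtwo(\Z)$ and $\tilde P$ non-negative (Lemma \ref{lem:paq}), and use that $C_{\tau_A}$ is an isometric isomorphism $H^\sigma_{\tilde P,\delta}\to H^\sigma_{P,\delta}$ (Proposition \ref{prop:isoiso}) mapping $\tau_A^{-1}(K)$ to a compact subset of a \emph{bounded} Reinhardt domain; since $\sup_{z\in K}|f(z)| = \sup_{w\in\tau_A^{-1}(K)}|f\circ\tau_A(w)|$ and norms are preserved, it suffices to treat that case, where in the model example of the bidisk the mechanism is transparent, $S(z) = \prod_{i} (1-|z_i|^2)^{-1}$.
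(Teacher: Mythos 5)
Your proof is correct in substance, but it takes a genuinely different route from the paper. The paper's own proof is a two-step reduction: for the polydisk case ($P=\mathbb{I}$, $\sigma=(-1,-1)$) it cites an external result (\cite[Lemma 2.9]{BJ}: the embedding of $H^{--}_{\mathbb{I},\delta}$ into the sup-normed space $U(D')$, $K\subset D'\cc D^{--}_{\mathbb{I},\delta}$, is bounded), and then transports the statement to general $(\sigma,P)$ through the isometric isomorphism of Proposition \ref{prop:isoiso} --- essentially the reduction you sketch as your fallback. By contrast, your main argument is self-contained: you exhibit $H^\sigma_{P,\delta}$ as a reproducing kernel Hilbert space and bound the kernel diagonal $S(z)$ on $K$, using the polar-cone coordinates $n=-\sigma_1 s_1 w_u-\sigma_2 s_2 w_s$ (which is the correct description of $\Lambda^{\sigma,o}_P$) together with the identities $\sigma_j\langle w^{(j)}, v^\sigma_{P,\delta}\rangle=\delta_j$ to convert the defining inequalities of the domain into uniform geometric decay $e^{-2mc'\|n\|}$ of the summands. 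What your route buys: an explicit constant $C_K=\bigl(\sum_{n\in\Ztwo}e^{-2mc'\|n\|}\bigr)^{1/2}$ quantified by the cone-distance of $K$ to the boundary, the strictly stronger conclusion that point evaluations are bounded functionals with locally uniform norm (i.e.\ $H^\sigma_{P,\delta}$ is an RKHS), and independence from the literature. What the paper's route buys: brevity, and a clean disposal of the points of $\Ctwohat$ with a coordinate $0$ or $\infty$, since after the change of variables these become ordinary interior points of a bounded domain in $\Ctwo$.

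Two soft spots in your write-up, both minor and both essentially flagged by you. First, identifying the pointwise sum $\sum_n f_n z^n$ with $f(z)$: your Cauchy--Schwarz bound gives absolute convergence of the Laurent series at every $z\in(D^\sigma_{P,\delta})^+$, and uniqueness of Laurent expansions of holomorphic functions on Reinhardt domains identifies the limit with $f(z)$ there; at degenerate points you should instead pass to the limit using continuity of $f$ and of the uniform limit, rather than evaluating the series --- say this explicitly. Second, continuity of $g_1,g_2$ as $(0,+\infty]$-valued functions is unproblematic only where the products $|z|^{\sigma_j w^{(j)}}$ are not of indeterminate form $0\cdot\infty$, and the paper itself is ambiguous about whether such points belong to $D^\sigma_{P,\delta}$; so your fallback (normalize $\sigma$, factor $P=A\tilde P$ via Lemma \ref{lem:paq}, transport with Proposition \ref{prop:isoiso}) is the cleaner formulation. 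It is in fact slightly more careful than the paper's proof as written, which invokes $C_{\psi\circ\tau_P}$ even though $\tau_P$ is a well-defined map on $\Ctwohat$ only when $P$ has integer entries.
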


\begin{proof}
For $P = \mathbb{I}$ and $\sigma = (-1,-1)$ (that is, $D_{P,\delta}^\sigma$
a polydisk with radii $(e^{-\delta_1}, e^{-\delta_2})$) the result follows directly from \cite[Lemma 2.9]{BJ}.
In particular, with $D' \cc D_{P,\delta}^\sigma$ a domain containing $K$, and $U(D')$ the space of functions holomorphic
on $D'$ and continuous on $\cl{D'}$ endowed with the supremum norm,
 the natural embedding $H^{--}_{\mathbb{I},\delta} \hookrightarrow U(D')$ is bounded, and its operator norm
 provides the constant $C_K$.

For the general case, let $\psi(z) = z^{-\sigma} = (z_1^{-\sigma_1}, z_2^{-\sigma_2})$.
By Proposition \ref{prop:isoiso}, the operator $C_{\psi \circ \tau_P} \colon H^{--}_{\mathbb{I},\delta} \to H^\sigma_{P,\delta}$
is an isometric isomorphism, and the result follows by observing that
\[\sup_{z\in K\subset D_{P,\delta}^\sigma} |f(z)| = \sup_{z \in (\psi \circ \tau_P)^{-1}(K)} | f \circ \psi \circ \tau_P (z) | \]
with $(\psi \circ \tau_P)^{-1}(K) \cc D^{--}_\delta$.
\end{proof}

\begin{defn}[Dual domains and dual space]~
\begin{enumerate}[(i)]
\item We denote by $(H^\sigma_{P,\delta})'$ the dual space of
$H^\sigma_{P,\delta} = H^2(D_{P,\delta}^\sigma)$, that is the space of all
continuous linear functionals on $H^\sigma_{P,\delta}$.
\item We denote by $(D_{P,\delta}^\sigma)'$ the dual domain of
$D_{P,\delta}^\sigma$, which is given by $(D_{P,\delta}^\sigma)' = D_{P,-\delta}^{-\sigma}$
(see the right panel of Figure \ref{fig:all_domains_and_dual} for an illustration of the case $P = \mathbb{I}$).
\end{enumerate}
\end{defn}
Clearly, $((D_{P,\delta}^\sigma)')' = D_{P,\delta}^\sigma$ and
$\T_{P, \delta}^{\sigma} = \T^{-\sigma}_{P, -\delta}$.

\begin{prop}\label{prop:iso}
$(H^\sigma_{P,\delta})' = (H^2(D_{P,\delta}^\sigma))'$ is isometrically isomorphic to $H^{-\sigma}_{P,-\delta} = H^2((D_{P,\delta}^\sigma)')$,
via the isomorphism $J\colon H^{-\sigma}_{P,-\delta} \to (H^\sigma_{P,\delta})'$ given by
\[g \mapsto \langle \cdot, g \rangle_{\TsdP}.\]
\end{prop}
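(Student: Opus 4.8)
The plan is to reduce everything to the coefficient description of the two Hardy--Hilbert spaces provided by Proposition~\ref{prop:characterisations}, which turns the analytic pairing $\langle\cdot,\cdot\rangle_{\TsdP}$ into an explicit bilinear form on weighted sequence spaces. The first point I would record is that both spaces live over the \emph{same} distinguished boundary: since $\T^\sigma_{P,\delta} = \T^{-\sigma}_{P,-\delta}$ we also have $v^{-\sigma}_{P,-\delta} = v^\sigma_{P,\delta}$, so that for $f\in H^\sigma_{P,\delta}$ and $g\in H^{-\sigma}_{P,-\delta}$ the boundary values $f^*,g^*$ (Lemma~\ref{lem:boundaryvalue}) both lie in $L^2(\TsdP)$ and their product is integrable by Cauchy--Schwarz. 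Hence $J(g)=\langle\cdot,g\rangle_{\TsdP}$ really is a bounded linear functional, and $J$ itself is linear in $g$ because the pairing is bilinear.

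Next I would compute $J$ on monomials. Writing $z=e^{v^\sigma_{P,\delta}+it}$ on $\TsdP$, a one-line orthogonality calculation gives $\langle p_n,p_m\rangle_{\TsdP} = \delta_{n,-m}$; combined with Parseval on $\T^2$ this yields, for $f=\sum_{n\in\mathbb{Z}^\sigma_P} f_n p_n$ and $g=\sum_{m\in\mathbb{Z}^{-\sigma}_P} g_m p_m$, the closed form
\[ J(g)(f) = \langle f,g\rangle_{\TsdP} = \sum_{n\in\mathbb{Z}^\sigma_P} f_n\, g_{-n}. \]
Here I would also observe that $n\mapsto -n$ is a bijection $\mathbb{Z}^\sigma_P\to\mathbb{Z}^{-\sigma}_P$, immediate from $R^{-\sigma}=-R^\sigma$ and the description $\mathbb{Z}^\sigma_P = \Ztwo\cap (P^T)^{-1}(\cl{R^{-\sigma}})$, so the sum is correctly indexed.

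With this formula in hand, the remaining assertions become weighted-$\ell^2$ statements with weight $e^{\langle n, v\rangle}$, where $v=v^\sigma_{P,\delta}$. Grouping the exponentials as $f_n g_{-n} = \bigl(f_n e^{\langle n,v\rangle}\bigr)\bigl(g_{-n} e^{-\langle n,v\rangle}\bigr)$ and applying Cauchy--Schwarz gives $|J(g)(f)|\le \|f\|_{H^\sigma_{P,\delta}}\,\|g\|_{H^{-\sigma}_{P,-\delta}}$, using that Proposition~\ref{prop:characterisations}$(iii)$ together with $v^{-\sigma}_{P,-\delta}=v$ yields $\|g\|^2_{H^{-\sigma}_{P,-\delta}} = \sum_n |g_{-n}|^2 e^{-2\langle n,v\rangle}$; thus $\|J(g)\|\le\|g\|$. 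For the reverse inequality I would exhibit the extremiser $f_n = \overline{g_{-n}}\,e^{-2\langle n,v\rangle}$, verify $f\in H^\sigma_{P,\delta}$ with $\|f\|=\|g\|$, and note $J(g)(f)=\|g\|^2$, forcing equality and hence isometry. Surjectivity I would obtain from the Riesz representation theorem: given $\phi\in(H^\sigma_{P,\delta})'$ with Riesz vector $h$, so $\phi(f)=\sum_n f_n\overline{h_n}e^{2\langle n,v\rangle}$, the element $g$ defined by $g_m = \overline{h_{-m}}\,e^{-2\langle m,v\rangle}$ lies in $H^{-\sigma}_{P,-\delta}$ and satisfies $J(g)=\phi$; isometry already gives injectivity, so $J$ is an isometric isomorphism.

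The only genuinely analytic step, and the one I would treat most carefully, is the passage from the integral pairing over $\TsdP$ to the bilinear form on Fourier coefficients: it rests on the existence and $L^2$-convergence of boundary values from Lemma~\ref{lem:boundaryvalue} and on Parseval's identity, which together justify interchanging the infinite sums with the boundary integral. Everything afterwards is bookkeeping with the weights $e^{\langle n,v\rangle}$ and the index bijection $n\mapsto -n$.
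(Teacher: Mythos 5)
Your proposal is correct and follows essentially the same route as the paper: both reduce the pairing to Fourier coefficients via the orthogonality $\langle p_n,p_m\rangle_{\TsdP}=\delta_{n,-m}$, and both obtain surjectivity from the Riesz representation theorem using the identical conjugate-reflected-coefficient construction $g_m=\cj{h_{-m}}e^{-2\langle m,v\rangle}$. The only (immaterial) difference is organisational: you establish isometry directly via Cauchy--Schwarz plus an extremiser and deduce injectivity from it, whereas the paper proves injectivity by testing against monomials and reads off the isometry from the Riesz identity $\|\ell_g\|=\|g\|$ combined with the norm computation.
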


\begin{proof}
We begin by showing that $J$ is injective. Assume $J(g) = 0$ for some $g \in H^{-\sigma}_{P,-\delta}$,
$g(z) = \sum_{n\in \mathbb{Z}_P^{-\sigma}} g_n z^n$, that is,
$\langle f, g \rangle_{\TsdP} = 0$
for all $f \in H^\sigma_{P,\delta}$. In particular,
for every $m \in \mathbb{Z}_P^\sigma$ we have
\[
0 = \langle p_m, g \rangle_{\TsdP}
= \sum_{n\in \mathbb{Z}_P^{-\sigma}} g_n \langle p_m, p_n \rangle_{\TsdP}
= \sum_{n\in \mathbb{Z}_P^{-\sigma}} g_n e^{\langle m+n, v^\sigma_{P,\delta} \rangle} \delta_{m,-n}
= g_{-m}.
\]
It follows that $g_n = 0$ for all $n \in \mathbb{Z}_P^{-\sigma}$ and hence $g = 0$, proving injectivity of $J$.

To show that $J$ is surjective, let $\ell \in (H^\sigma_{P,\delta})'$. By the Riesz representation theorem,
there exists $g \in H^\sigma_{P,\delta}$, so that $\ell  = \ell_g = (\cdot, g)_{H^\sigma_{P,\delta}}$, and we can write
$g(z) =\sum_{n\in\mathbb{Z}_P^\sigma} g_n z^n$. We define
$h(z) = \sum_{n\in\mathbb{Z}_P^{-\sigma}} \cj{g_{-n}} e^{-2 \langle n, r \rangle} z^n$ with $r = v^\sigma_{P,\delta}$,
and observe that $h \in H^2((D_{P,\delta}^\sigma)') =  H^{-\sigma}_{P,-\delta}$. For $z = e^r e^{it}\in \TsdP$, $t \in [0, 2\pi)^2$, we have
$e^{-2r} z = e^{-r} e^{it} = 1 / \cj{z}$, and hence
\[
\cj{h(z)}
= \sum_{n\in\mathbb{Z}_P^{-\sigma}} g_{-n} \cj{e^{-2\langle n, r \rangle} z^n}
= \sum_{n\in\mathbb{Z}_P^{-\sigma}} g_{-n} z^{-n}
= \sum_{n\in\mathbb{Z}_P^{\sigma}} g_n z^n = g(z).
\]
By \eqref{eq:h2product} this implies
\[
(J(h))(f) = \langle f, h \rangle_{\TsdP} = (f, g)_{H^\sigma_{P,\delta}} = \ell(f),
\]
for any $f \in H^\sigma_{P,\delta}$. Hence $J(h) = \ell$, proving surjectivity of $J$.

Note that the Riesz representation theorem also yields
$\|\ell_g\|_{(H^\sigma_{P,\delta})'} = \|g\|_{H^\sigma_{P,\delta}}$.
On the other hand, for $J(h) = \ell_g$ as above, we have
\[
\| h \|^2_{H^{-\sigma}_{P,-\delta}}
= \sum_{n\in\mathbb{Z}_P^{-\sigma}} |h_n|^2 |e^{2\langle n, r\rangle}|
= \sum_{n\in\mathbb{Z}_P^{-\sigma}} |g_{-n} e^{-2\langle n, r  \rangle}|^2 |e^{2\langle n, r\rangle}|
= \sum_{n\in\mathbb{Z}_P^\sigma} |g_n |^2 |e^{2\langle n, r\rangle}| = \| g \|^2_{H^\sigma_{P,\delta}},
\]
and so we obtain $\|J(h)\|_{(H^\sigma_{P,\delta})'} = \| h \|_{H^{-\sigma}_{P,-\delta}}$, proving that $J$ is an isometry.
\end{proof}

\begin{rem}
Using the above proposition we have the following reformulation:
if $f\in H^\sigma_{P,\delta}$ then
\begin{equation}\label{eq:Dsd_norm_dual}
\|f\|_{H^\sigma_{P,\delta}} = \sup \left\{ \left| \langle f, g \rangle_{\TsdP} \right|
\colon g\in H^2((D_{P,\delta}^\sigma)') \cap \mathcal{P}\, , \|g\|_{H^2( (D_{P,\delta}^\sigma)')} \leq 1 \right\},
\end{equation}
where as before, $\mathcal{P}$ denotes the space of Laurent polynomials.
\end{rem}

\subsection{Hardy-Hilbert spaces for toral diffeomorphisms}

For convenience, here we introduce notation to succinctly express monomial bases for various Hilbert spaces which we will require below.

\begin{notation}\label{not:zhat}
Let $R^{\sigma, o}$ be the (closed) polar cone of $R^{\sigma}$ for any $\sigma\in \Sigma$.
Define
\begin{equation*}
\hat{R}^{\sigma, o} =
\begin{cases}
R^{\sigma,o} \text{ if } \sigma=(-1,-1),\\
R^{\sigma,o}\setminus{\{(0,0)\}} \text{ if } \sigma=(+1,+1),\\
\inter{R^{\sigma,o}} \text{ if } \sigma\in \Sigma^{-1},
\end{cases}
\end{equation*}
and let $\hat{\mathbb{Z}}^\sigma_P = \Ztwo \cap P(\hat{R}^{\sigma, o})$, noting that the $\hat{R}^{\sigma,o}$, $\sigma \in \Sigma$, form a partition of $\R^2$.
In analogy to the characterization of $H^\sigma_{P,\delta}$ in Proposition \ref{prop:characterisations}$(ii)$,
for $\sigma \in \Sigma$, $\delta \in \R^2$ and $P \in \GLtwo(\R)$ we define
\[
\hat{H}^\sigma_{P,\delta} := \hat{H}^2(D^\sigma_{P,\delta}) := \{f \in H^\sigma_{P,\delta}: (f, p_n)_{H^\sigma_{P,\delta}} = 0 \text{ for } n \notin \hat{\Z}^\sigma_P\}.
\]
Writing $e_n = \frac{p_n}{ \nu(n)}$ with $\nu(n) = \|p_n\|_{H^\sigma_{P,\delta}} = e^{\langle n, v^\sigma_{P,\delta}\rangle}$,
we note that $\{e_n : n \in \hat{\Z}^\sigma_P \}$ forms an orthonormal basis for $\hat{H}^\sigma_{P,\delta}$,
which is a Hilbert space with the same norm $\| \cdot \|_{H^\sigma_{P,\delta}}$.
Furthermore, for conveniently handling dual spaces, we set $\check{\mathbb{Z}}^\sigma_P = -\hat{\mathbb{Z}}^\sigma_P$, and
\[\check{H}^\sigma_{P,\delta} = \{f\in H^2(D^\sigma_{P,\delta})\colon \langle f, p_n \rangle = 0, n\notin \check{\Z}^\sigma_P\}.\]
\end{notation}

\begin{rem}
With the above notation, the isomorphism defined in Proposition \ref{prop:iso} also forms an isometric isomorphism
between $\check{H}^{-\sigma}_{P,-\delta}$ and $(\hat{H}^\sigma_{P,\delta})'$.
\end{rem}

\begin{notation}
For $\ell \in \{1, -1\}$, we write
\[
\mathcal{D}^\ell_{P,\delta} := \bigcup_{\sigma \in \Sigma^{\ell}} D^\sigma_{P,\delta},
\qquad \text{ and } \qquad
(\mathcal{D}^\ell_{P,\delta})' := \bigcup_{\sigma \in \Sigma^{\ell}} (D^\sigma_{P,\delta})' =
\bigcup_{\sigma \in \Sigma^{\ell}} D^{-\sigma}_{P,-\delta},
\]
and note that the distinguished boundary
of $\mathcal{D}^\ell_{P,\delta}$ is $\partial^*\mathcal{D}^\ell_{P,\delta} = \bigcup_{\sigma \in \Sigma^\ell} \T^{\sigma}_{P,\delta}$.
\end{notation}
See the left panel of Figure \ref{fig:all_domains_and_dual} for an illustration in the case $P = \mathbb{I}$:
the green and pink rectangles represent $\mathcal{D}^1_{P,\delta}$,
whereas the orange and blue rectangles represent $\mathcal{D}^{-1}_{P,\delta}$,
with correspondingly coloured stars representing the respective distinguished boundaries.

\begin{defn}\label{def:combined_spaces}
For $\ell \in \{1, -1\}$, define
$\mathcal{H}^\ell_{P,\delta} = \bigoplus_{\sigma \in \Sigma^\ell} \hat{H}^\sigma_{P,\delta}$,
which is a Hilbert space with the
inner product of $f = (f^\sigma)_{\sigma \in \Sigma^\ell}, g = (g^\sigma)_{\sigma \in \Sigma^\ell} \in \mathcal{H}^\ell_{P,\delta}$ given by
\[
(f,g)_{\mathcal{H}^\ell_{P,\delta}} = \sum_{\sigma \in \Sigma^\ell} (f^\sigma, g^\sigma)_{H^\sigma_{P,\delta}}.
\]
\end{defn}

As before, for the case $P = \mathbb{I}$ where the domains $D^\sigma_{P,\delta}$ are polydisks,
we will use the shorthands
$\hat{\mathbb{Z}}^\sigma = \hat{\mathbb{Z}}^\sigma_{\mathbb{I}}$, $D^\sigma_\delta = D^\sigma_{\mathbb{I}, \delta}$,
$\mathcal{D}^\ell_\delta = \mathcal{D}^\ell_{P,\delta}$,
$\hat{H}^\sigma_\delta = \hat{H}^\sigma_{\mathbb{I},\delta}$, and $\mathcal{H}^\ell_\delta = \mathcal{H}^\ell_{\mathbb{I},\delta}$.

\begin{rem}\label{rem:tuplespace}
Nominally, an $f \in \mathcal{H}^\ell_{P,\delta}$ is a tuple
$f = (f^{\sigma})_{\sigma \in \Sigma^\ell}$ of two holomorphic functions with distinct domains,
$f^\sigma \in \hat{H}^\sigma_{P,\delta} = \hat{H}^2(D^\sigma_{P,\delta})$.
It will be useful to alternatively consider $\mathcal{H}^\ell_{P,\delta}$ as (isomorphic to) a function space,
which requires us to distinguish two cases:
\begin{enumerate}[(i)]
  \item If $\delta \in \R^2_{<0}$, then $\mathcal{A}_{P,\delta} = D^\sigma_{P,\delta} \cap D^{-\sigma}_{P,\delta} \neq \emptyset$,
  and for any $f = (f^\sigma, f^{-\sigma}) \in \mathcal{H}^\ell_{P,\delta}$
  we can define a holomorphic function $\tilde f$ on $\mathcal{A}_{P,\delta}$
  by $\tilde f(z) = f^{\sigma}(z) + f^{-\sigma}(z)$, yielding an isometric isomorphism between
  $\mathcal{H}^\ell_{P,\delta} = \hat{H}^2(D^\sigma_{P,\delta}) \oplus \hat{H}^2(D^{-\sigma}_{P,\delta})$
  and (a subspace of) $H^2(\mathcal{A}_{P,\delta})$.

  \item If $\delta_k \geq 0$ for some $k \in  \{1, 2\}$, then $D^\sigma_{P,\delta} \cap  D^{-\sigma}_{P,\delta} = \emptyset$,
  and for any
  $f = (f^\sigma, f^{-\sigma}) \in \mathcal{H}^\ell_{P,\delta}$ we can define a holomorphic function $\tilde f$ on
  $\mathcal{D}^\ell_{P,\delta} = D^\sigma_{P,\delta} \cup D^{-\sigma}_{P,\delta}$ by
  $\tilde f(z) = f^\sigma(z)$ for $z \in D^\sigma_{P,\delta}$, $\sigma \in \Sigma^\ell$,
  yielding an isomorphism between $\mathcal{H}^\ell_{P,\delta}$, and (a subspace of) the space of
  holomorphic functions on $\mathcal{D}^\ell_{P,\delta}$ which extend to
  square-integrable functions on
  $\partial^* \mathcal{D}^\ell_{P,\delta} = \partial^* D^\sigma_{P,\delta} \cup \partial^* D^{-\sigma}_{P,\delta}$.
\end{enumerate}
\end{rem}

\begin{figure}[ht!]
  \centering
  \includegraphics[width=0.49\textwidth]{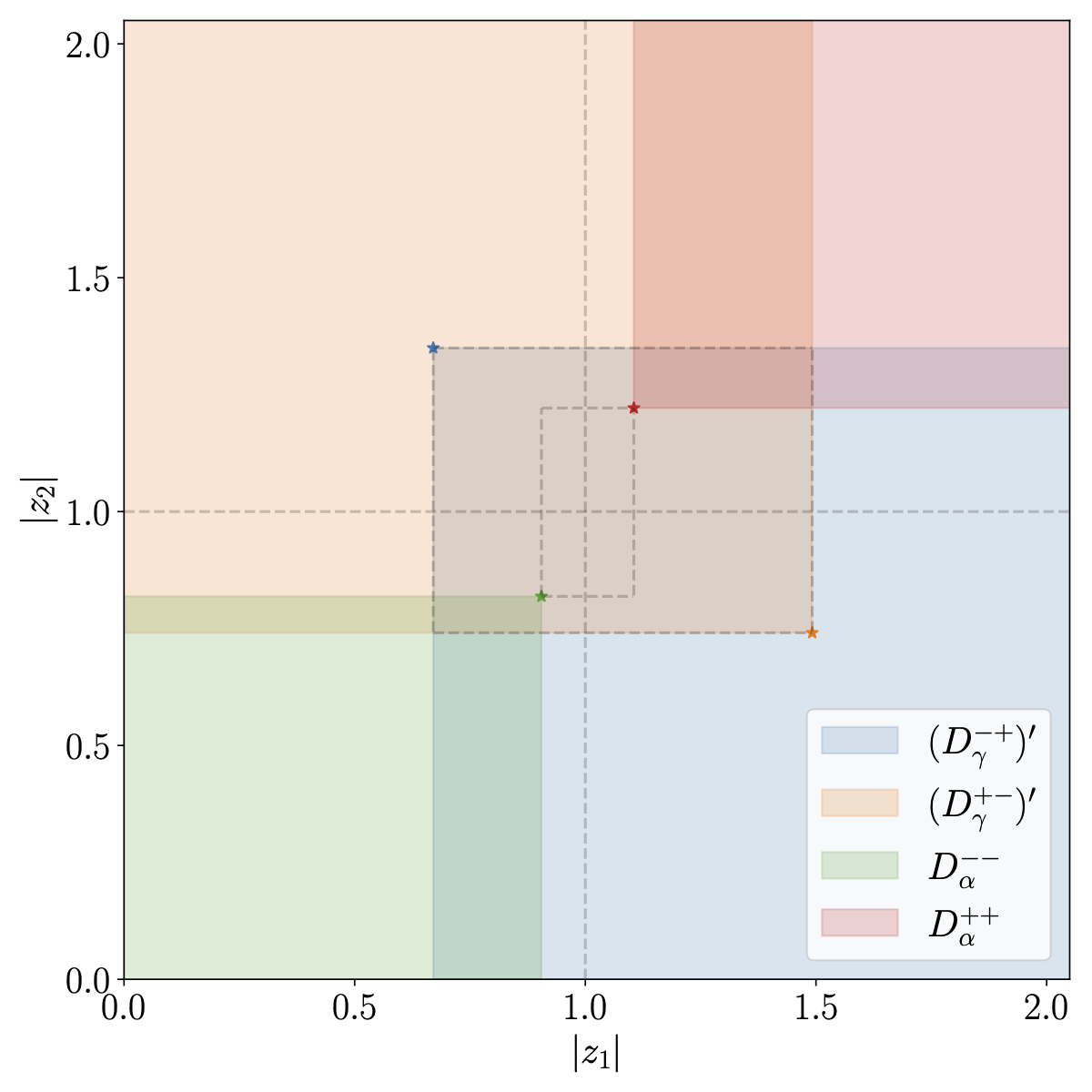}
  \includegraphics[width=0.49\textwidth]{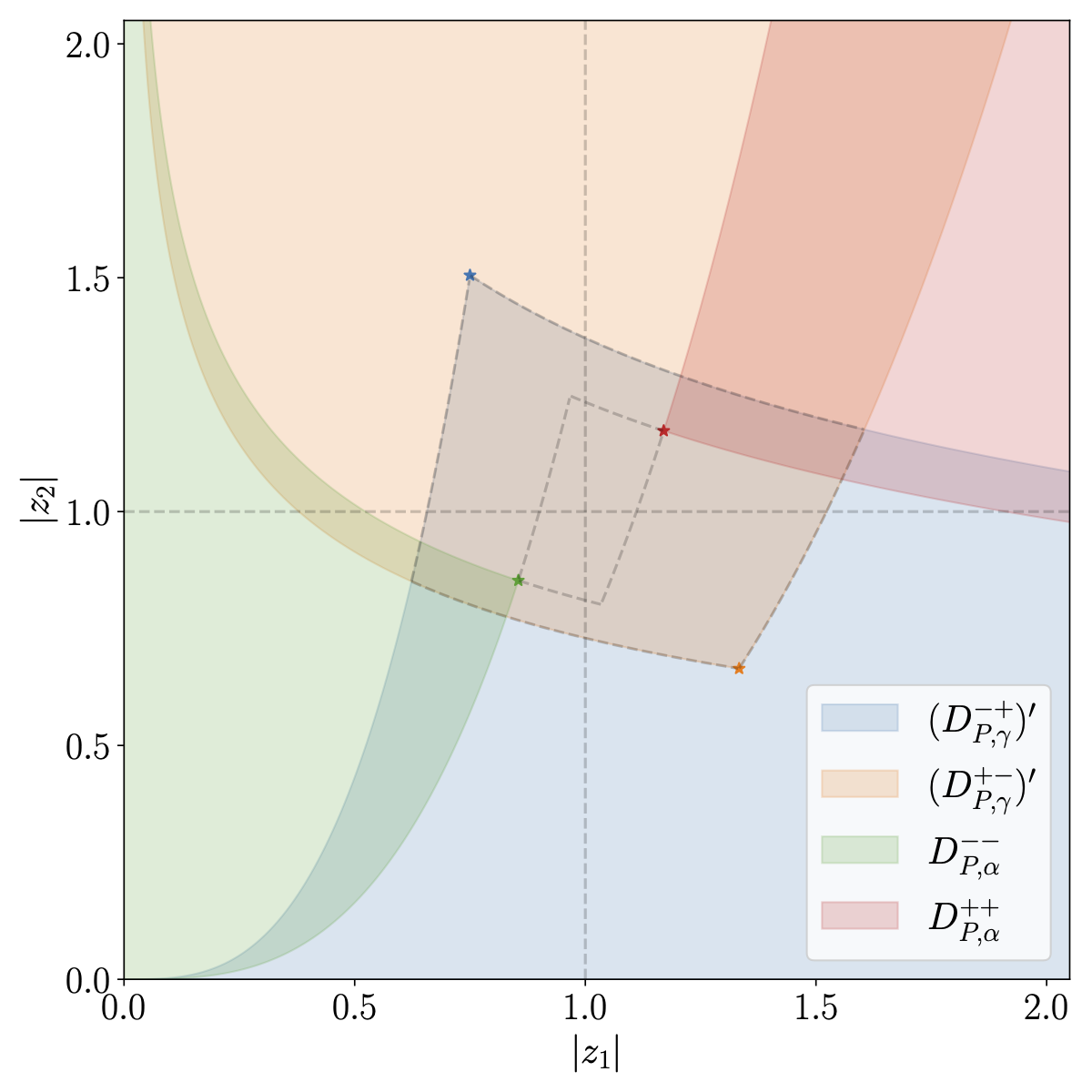}
  \caption{Absolute domains for the Reinhardt domains underlying $H_\nu = H_{P, \alpha, -\gamma}$ with $\alpha = (0.1, 0.2)$, $\gamma=(0.4, 0.3)$, for $P=\mathbb{I}$ (left) and the rotation matrix with rotation angle $\pi/10$ (right).}
  \label{fig:Hnu_domains}
\end{figure}

We can now express the weighted function spaces with respect to quadrant-wise exponential weights considered
in Section \ref{sec:pedestrian} as direct sums of Hardy-Hilbert spaces on certain disks, see Figure \ref{fig:Hnu_domains} (left).
For this, let $\alpha, \gamma \in \mathbb{R}^2$, $\nu = \nu_{\alpha, -\gamma}$ the associated quadrant-wise exponential weight
(see Definition \ref{defn:weight}) and $H_\nu = H_{\alpha, -\gamma}$ the resulting Hilbert space, given as the
completion of $\mathcal{P}$ with respect to the norm $\|\cdot\|_\nu$ (see Definition \ref{defn:Hnu}).
Then
\begin{align*}
H_{\alpha, -\gamma}
\cong \mathcal{H}^1_{\alpha} \oplus \mathcal{H}^{-1}_{-\gamma}
&= \hat{H}^{--}_{\alpha} \oplus \hat{H}^{++}_{\alpha} \oplus \hat{H}^{-+}_{-\gamma} \oplus \hat{H}^{+-}_{-\gamma}\\
&= \hat{H}^2(D^{--}_\alpha) \oplus \hat{H}^2(D^{++}_{\alpha}) \oplus \hat{H}^2((D^{+-}_{\gamma})') \oplus \hat{H}^2((D^{-+}_{\gamma})')
\end{align*}
with the isometric isomorphism given by
\[
f = \sum_{n\in\mathbb{Z}^2} f_n p_n \mapsto ( f^\sigma )_{\sigma \in \Sigma}, \qquad f^\sigma = \sum_{n\in\hat{\Z}^\sigma} f_n p_n.
\]
With the obvious generalization of the weight function $\nu = \nu_{P,\alpha,-\gamma}$
one can also define this for $P \neq \mathbb{I}$,
obtaining the more general space $H_\nu = H_{P,\alpha,-\gamma} \cong \mathcal{H}^1_{P,\alpha} \oplus \mathcal{H}^{-1}_{P,-\gamma}$, see Figure \ref{fig:Hnu_domains} (right).

\begin{rem}
The above isomorphic representation of $H_\nu$ reveals an intuitive structure of this Hilbert space.
For $\alpha, \gamma \in \R^2_{>0}$, by Remark \ref{rem:tuplespace}, the first part $\mathcal{H}^1_{P,\alpha}$ can be
viewed as (isomorphic to) ``$H^2(\mathcal{D}^1_{P,\alpha})$'', the space of functions holomorphic on $\mathcal{D}^1_{P,\alpha}$
extending to square-integrable functions on $\partial^* \mathcal{D}^1_{P,\alpha}$. The second part $\mathcal{H}^{-1}_{P,-\gamma}$, on the other hand,
can be seen to be isomorphic to the dual $(\mathcal{H}^{-1}_{P,\gamma})'$ (see Lemma \ref{lem:tuple_isom}), with $\mathcal{H}^{-1}_{P,\gamma}$
 isomorphic to a space ``$H^2(\mathcal{D}^{-1}_{P,\gamma})$'':
the space of holomorphic functions on $\mathcal{D}^{-1}_{P,\gamma}$ extending to
square-integrable functions on $\partial^* \mathcal{D}^{-1}_{P,\gamma}$.
\end{rem}

\begin{notation}
For $f \in \mathcal{H}^\ell_{P,\delta}$ for some $\ell \in \{1, -1\}$, or $f \in \mathcal{H}^1_{P,\alpha} \oplus \mathcal{H}^{-1}_{P,-\gamma}$,
we denote the canonical projection onto one of the components as $\hat{\Pi}^\sigma$ (omitting the dependence on $P$), given by
\[ \hat{\Pi}^\sigma p_n =
\begin{cases}
p_n & \text{ if } n\in \hat{\Z}^\sigma_P, \\
0 &\text{ otherwise,}
\end{cases}\]
so that if $f = \sum_{n\in\Z^2} f_n p_n$ (in the sense of Remark \ref{rem:tuplespace}),
then $\hat{\Pi}^\sigma f = \sum_{n\in \hat{\Z}^\sigma_P} f_n p_n$, where the operator's domain and range
can be inferred from context. Similarly, we write $\check{\Pi}^\sigma f = \sum_{n\in \check{\Z}^\sigma_P} f_n p_n$.
\end{notation}

\begin{rem}\label{rem:tuple2func}
With the isometric isomorphism $\Phi \colon \mathcal{H}^1_{P, \alpha} \oplus \mathcal{H}^{-1}_{P, -\gamma} \to H_{P, \alpha,-\gamma}$,
we can now use any well-defined operator $L$ on $H_{P, \alpha,-\gamma}$ to define a
conjugated operator on the respective space of function tuples. With slight abuse of notation,
in such cases we will continue denoting the respective operator on $\mathcal{H}^1_{P,\alpha} \oplus \mathcal{H}^{-1}_{P,-\gamma}$
by the same symbol $L$.
Furthermore, every toral automorphism $\tau_Q$, $Q \in \GLtwo(\Z)$,
yields an isometric isomorphism
$C_{\tau_Q}\colon H^\sigma_{QP,\delta} \to H^\sigma_{P,\delta}$
for any $P \in \GLtwo(\R)$, $\delta \in \Rtwo$, $\sigma \in \Sigma$ (Proposition \ref{prop:isoiso}), which can be used to define an operator $\mathcal{H}^1_{QP,\alpha} \oplus \mathcal{H}^{-1}_{QP,-\gamma} \to \mathcal{H}^1_{P,\alpha} \oplus \mathcal{H}^{-1}_{P,-\gamma}$ given by $(f^\sigma)_{\sigma \in \Sigma} \mapsto (\hat{\Pi}^\sigma(f^\sigma \circ \tau_Q))_{\sigma \in \Sigma}$, conjugated to the composition operator $f \mapsto f \circ \tau_Q$
viewed as an operator from $H_{QP,\alpha,-\gamma}$ to $H_{P,\alpha,-\gamma}$.
We will refer to all three of these operators as $C_{\tau_Q}$.
\end{rem}

The above decomposition of the space $H_\nu$ will allow us to prove the main result of this section
(Theorem \ref{thm:boundedness_large2small}) for holomorphic maps on the torus
satisfying the strongly expanding mapping property from Definition \ref{defn:sem}, by adapting
a method previously used in the one-dimensional setting
of analytic expanding circle maps $\tau\colon \T \to \T$, see \cite{BJS}.
To summarize, writing $U_r = \{z\in \C \colon |z|<r\}$
and $\mathcal{U}_r = U_r \cup (\Chat\setminus \cl{U_{1/r}})$ with $r \in (0, 1)$,
analyticity and expansivity of $\tau$ imply that there exists $r \in (0,1)$
such that $\tau$ extends holomorphically to a suitable neighbourhood of $\T$, and  $\tau(\partial\mathcal{U}_r)\cc \mathcal{U}_r$.
This in turn guarantees compactness of $C_\tau$ on $H^2(U_r) \oplus H_0^2(\Chat\setminus \cl{U_{1/r}})$.
In the same spirit, for $T\colon \Ttwo \to \Ttwo$ an analytic Anosov map, if
$T^{\ell}$ is $(\ell, \delta_\ell, \Delta_\ell, P)$-strongly expanding for
$\ell \in \{1, -1\}$ and suitable $\delta_{\ell}, \Delta_{\ell} \in \R^2_{>0}$, then
\[T \left(\partial^* \mathcal{D}^{\ell}_{P,\delta_{\ell}}\right)
\subset \mathcal{D}^{\ell}_{P,\Delta_\ell} \cc \mathcal{D}^\ell_{P,\delta_{\ell}},\]
which will be used to prove compactness of $C_T$ on $\mathcal{H}^{1}_{\delta_1} \oplus (\mathcal{H}^{-1}_{\delta_{-1}})'$
with similar techniques as in~\cite{BJS}.

\begin{lem}\label{lem:contour_deform}
Let $T$ be a map with $(\ell, \delta, \Delta, P)$-strongly expanding mapping property, then
for every $\sigma, \tilde{\sigma} \in \Sigma^\ell$ there is $\hat{\sigma} \in \Sigma^\ell$ such that
\[\mathbb{T}^{\hat{\sigma}}_{P,\delta} \subset \cl{(D^{\sigma}_{P,\delta})'} \text{ and }\,
T(\mathbb{T}^{\hat{\sigma}}_{P,\delta}) \subset D^{\tilde{\sigma}}_{P,\Delta} \cc D^{\tilde{\sigma}}_{P,\delta}.\]
\end{lem}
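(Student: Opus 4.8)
The plan is to reduce everything to the logarithmic (absolute-value) picture, where each distinguished-boundary torus $\mathbb{T}^{\hat\sigma}_{P,\delta}$ collapses to the single apex point $v^{\hat\sigma}_{P,\delta}=PI^{\hat\sigma}\delta$, and each domain $D^\tau_{P,\mu}$ becomes the affine cone $\Lambda^\tau_{P,\mu}$ of \eqref{eq:shifted_general_cone}. Since $\mathbb{T}^{\hat\sigma}_{P,\delta}$ consists of points in $(\C\setminus\{0\})^2$, its inclusion in the $\Ctwohat$-closure of a Reinhardt domain is equivalent to the inclusion of its apex in the closure of the corresponding cone. Recalling $(D^\sigma_{P,\delta})'=D^{-\sigma}_{P,-\delta}$, the two assertions of the lemma thus translate into: (1) $v^{\hat\sigma}_{P,\delta}\in\cl{\Lambda^{-\sigma}_{P,-\delta}}$, and (2) the mapping inclusion, which I read directly off Definition \ref{defn:sem}.

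First I would fix $\sigma,\tilde\sigma\in\Sigma^\ell$ and split according to the dichotomy of Definition \ref{defn:sem}. In the expansion-preserving case (EP) I set $\hat\sigma=\tilde\sigma$; then (EP) gives $T(\mathbb{T}^{\hat\sigma}_{P,\delta})\subseteq D^{\tilde\sigma}_{P,\Delta}\cc D^{\tilde\sigma}_{P,\delta}$ at once. In the expansion-reversing case (ER) I set $\hat\sigma=-\tilde\sigma$; applying (ER) to the index $\hat\sigma$ yields $T(\mathbb{T}^{\hat\sigma}_{P,\delta})\subseteq D^{-\hat\sigma}_{P,\Delta}=D^{\tilde\sigma}_{P,\Delta}\cc D^{\tilde\sigma}_{P,\delta}$. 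In both cases $\hat\sigma\in\Sigma^\ell$, since $-\tilde\sigma\in\Sigma^\ell$ (negating both signs leaves the product $\tilde\sigma_1\tilde\sigma_2=\ell$ unchanged), and the compact containment is inherited verbatim from the defining inclusion. This settles assertion (2).

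It remains to verify assertion (1), for which the key computation is that, writing $W=(w_u,w_s)=(P^T)^{-1}$, one has $P^Tw_u=(1,0)^T$ and $P^Tw_s=(0,1)^T$, so that $\langle w_u, v^{\hat\sigma}_{P,\delta}\rangle=\langle P^Tw_u, I^{\hat\sigma}\delta\rangle=\hat\sigma_1\delta_1$ and similarly $\langle w_s, v^{\hat\sigma}_{P,\delta}\rangle=\hat\sigma_2\delta_2$ (the same computation performed after Definition \ref{defn:cone_vertex}). Using the description of $\cl{\Lambda^{-\sigma}_{P,-\delta}}$ from \eqref{eq:shifted_general_cone} with the strict inequalities relaxed to $\ge$, membership $v^{\hat\sigma}_{P,\delta}\in\cl{\Lambda^{-\sigma}_{P,-\delta}}$ amounts to $\sigma_1\hat\sigma_1\delta_1\le\delta_1$ and $\sigma_2\hat\sigma_2\delta_2\le\delta_2$; since $\delta\in\R^2_{>0}$ and each $\sigma_i\hat\sigma_i\in\{\pm1\}$ satisfies $\sigma_i\hat\sigma_i\le 1$, both inequalities hold irrespective of the relation between $\sigma$ and $\hat\sigma$. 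Hence assertion (1) holds for every admissible $\hat\sigma$, in particular for the one chosen above, completing the argument.

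I do not expect a serious obstacle: the content is essentially combinatorial bookkeeping of the four sign patterns together with the EP/ER alternative. The points requiring care are the passage from the $\Ctwohat$-closure to the cone closure (justified by noting that the torus lies in $(\C\setminus\{0\})^2$, so only $(D^{-\sigma}_{P,-\delta})^+$ matters) and the mildly surprising observation that the first inclusion is \emph{automatic} and imposes no constraint on $\hat\sigma$, so that the choice of $\hat\sigma$ is dictated solely by the strongly expanding mapping condition.
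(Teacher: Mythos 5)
Your proposal is correct and follows essentially the same route as the paper: the paper also picks $\hat\sigma=\tilde\sigma$ in case (EP) and $\hat\sigma=-\tilde\sigma$ in case (ER), and treats the first inclusion $\mathbb{T}^{\hat\sigma}_{P,\delta}\subset\cl{(D^{\sigma}_{P,\delta})'}$ as holding for \emph{all} $\hat\sigma\in\Sigma^\ell$ (the paper simply calls this obvious, whereas you verify it via the apex computation $\sigma_i\hat\sigma_i\delta_i\le\delta_i$, which is a correct and welcome elaboration).
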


\begin{proof}
The first property is obvious as $\T^{\hat{\sigma}}_{P,\delta} \subset \cl{(D^{\sigma}_{P,\delta})'}$
for all $\hat{\sigma} \in \Sigma^\ell$. Next, if $T$ satisfies the (EP) property, pick $\hat{\sigma} = \tilde{\sigma}$,
whereas if $T$ satisfies (ER) pick $\hat{\sigma} = - \tilde{\sigma}$.
\end{proof}

\begin{theorem}\label{thm:boundedness_large2small}
Let $T$ be an analytic diffeomorphism of $\Ttwo$. Further assume that
there are $\alpha,\gamma, A, \Gamma, \eta \in \mathbb{R}^2_{>0}$ with $\alpha, \gamma < \eta$, and $P \in \GLtwo(\R)$, such
that $T$ and $T^{-1}$ can be analytically extended to $\mathcal{A}_{P,\eta}$ and
the following mapping properties hold:
\begin{enumerate}[(i)]
  \item $T$ is  $(1, \alpha, A, P)$-strongly expanding;
  \item $T^{-1}$ is $(-1, \Gamma, \gamma, P)$-strongly expanding;
  \item For any $\tilde{\sigma} \in \Sigma^1, \sigma \in \Sigma^{-1}$, there exist $\mathbb{T}^2_q \subset
  D^{\sigma}_{P,\gamma} \cap \mathcal{A}_{P,\eta}$ with $T(\mathbb{T}^2_{q}) \cc D^{\tilde{\sigma}}_{P,A}$.
\end{enumerate}
Then, the composition operator $C_T$  given by \[f \mapsto  f\circ T\] maps
$H_{P, A, -\Gamma}$ continuously to $H_{P, \alpha, -\gamma}$.
\end{theorem}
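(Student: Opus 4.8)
The plan is to use the direct-sum decompositions $H_{P,A,-\Gamma}\cong\mathcal{H}^1_{P,A}\oplus\mathcal{H}^{-1}_{P,-\Gamma}$ and $H_{P,\alpha,-\gamma}\cong\mathcal{H}^1_{P,\alpha}\oplus\mathcal{H}^{-1}_{P,-\gamma}$ and to bound the four ``matrix blocks'' of $C_T$ relative to these splittings. Because the Laurent polynomials $\mathcal{P}$ are dense, it suffices to prove a uniform estimate $\|C_T f\|_{H_{P,\alpha,-\gamma}}\le c\,\|f\|_{H_{P,A,-\Gamma}}$ for $f\in\mathcal{P}$ and then extend by continuity; restricting to polynomials is convenient because, $T$ being holomorphic on $\mathcal{A}_{P,\eta}$, the function $f\circ T$ is then a genuine holomorphic function there, so the boundary pairings below are honest contour integrals that may be deformed. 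I would write $f=f_++f_-$ with $f_+=\sum_{\sigma\in\Sigma^1}\hat{\Pi}^\sigma f\in\mathcal{H}^1_{P,A}$ and $f_-=\sum_{\sigma\in\Sigma^{-1}}\hat{\Pi}^\sigma f\in\mathcal{H}^{-1}_{P,-\Gamma}$, decompose the target norm as a finite sum over the components $\hat{\Pi}^{\tilde\sigma}(C_T f)$, $\tilde\sigma\in\Sigma$, and bound the contribution of $f_\pm$ to each.

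The estimate for a single component is the heart of the matter and follows the one-dimensional scheme of \cite{BJS}. For a target index $\tilde\sigma$, Proposition \ref{prop:iso} and \eqref{eq:Dsd_norm_dual} give $\|\hat{\Pi}^{\tilde\sigma}(C_T f)\|=\sup_g\big|\int_{\mathbb{T}^{\tilde\sigma}_{P,\cdot}}(f\circ T)\,g\,dm\big|$, the supremum running over polynomials $g$ in the unit ball of the relevant dual space $H^2((D^{\tilde\sigma}_{P,\cdot})')$. The idea is to deform this integration torus to a polyradius $\mathbb{T}^2_q$ on which \emph{both} factors are controlled: the piece of $f$ being composed with $T$ should be Hardy-bounded on a domain containing the compact set $T(\mathbb{T}^2_q)$, while $g$ should lie in a Hardy space whose distinguished boundary is $\mathbb{T}^2_q$. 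Once the torus is correctly placed I would estimate the integral by $\sup_{T(\mathbb{T}^2_q)}|f^{\hat\sigma}|\cdot\|g\|_{L^1(\mathbb{T}^2_q)}$, invoke Proposition \ref{prop:sup_comp} to replace the supremum by $C_K\|f^{\hat\sigma}\|\le C_K\|f\|_{H_{P,A,-\Gamma}}$ (with $K=T(\mathbb{T}^2_q)$ compact in the large domain on which the composed component $f^{\hat\sigma}$ is holomorphic), and use Cauchy--Schwarz together with the coincidence $\mathbb{T}^\sigma_{P,\delta}=\mathbb{T}^{-\sigma}_{P,-\delta}$ of the distinguished boundaries of a domain and its dual to bound $\|g\|_{L^1(\mathbb{T}^2_q)}\le\|g\|_{L^2(\mathbb{T}^2_q)}\le\|g\|_{H^2((D^{\tilde\sigma}_{P,\cdot})')}\le1$.

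It remains to supply the correct deformation torus in each of the four cases. When source and target lie in the same family, Lemma \ref{lem:contour_deform} does exactly this: for $f_+$ contributing to an expanding target it produces $\hat\sigma\in\Sigma^1$ with $\mathbb{T}^{\hat\sigma}_{P,\alpha}\subset\overline{(D^\sigma_{P,\alpha})'}$ (where $g$ is controlled) and $T(\mathbb{T}^{\hat\sigma}_{P,\alpha})\cc D^{\tilde\sigma}_{P,A}$ (where $f_+$ is controlled), using hypothesis (i); the contracting-to-contracting block is identical after replacing $T$ by $T^{-1}$ and invoking hypothesis (ii). The two cross-family blocks are the crux: there the piece of $f$ being composed lives on a domain disjoint from the naive integration torus, and it is precisely hypothesis (iii) (together with its $T^{-1}$-counterpart, which holds by applying the same reasoning to $T^{-1}$) that furnishes the auxiliary torus $\mathbb{T}^2_q\subset D^{\sigma}_{P,\gamma}\cap\mathcal{A}_{P,\eta}$ with $T(\mathbb{T}^2_q)\cc D^{\tilde\sigma}_{P,A}$, simultaneously matching $g$'s domain to $\mathbb{T}^2_q$ and $f_+$'s domain to $T(\mathbb{T}^2_q)$.

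The main obstacle is justifying the contour deformations and keeping the domain bookkeeping straight. Since $f\circ T$ and $g$ are holomorphic throughout the annular neighbourhood $\mathcal{A}_{P,\eta}$ --- this is where the polynomial reduction and the holomorphy of $T$ on $\mathcal{A}_{P,\eta}$ are used --- the Reinhardt integral $\int(f\circ T)\,g\,dm$ depends only on the homotopy class of the integration torus within $\mathcal{A}_{P,\eta}$, hence is unchanged when $\mathbb{T}^{\tilde\sigma}_{P,\cdot}$ is replaced by $\mathbb{T}^2_q$, both lying in $\mathcal{A}_{P,\eta}$. The delicate point is to verify, case by case, that the required tori and their images land in the domains dictated by hypotheses (i)--(iii), so that the product estimate with Proposition \ref{prop:sup_comp} actually closes; once this is in place, summing the finitely many block bounds produces the constant $c$, and density of $\mathcal{P}$ yields the asserted continuity of $C_T\colon H_{P,A,-\Gamma}\to H_{P,\alpha,-\gamma}$.
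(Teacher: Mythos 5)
Your overall architecture coincides with the paper's: the same block decomposition of $C_T$ with respect to $H_{P,A,-\Gamma}\cong\mathcal{H}^1_{P,A}\oplus\mathcal{H}^{-1}_{P,-\Gamma}$, reduction to Laurent polynomials, the dual pairing \eqref{eq:Dsd_norm_dual}, contour deformation combined with Proposition~\ref{prop:sup_comp} and Cauchy--Schwarz, Lemma~\ref{lem:contour_deform} for the expanding-to-expanding blocks, and hypothesis (iii) for the block with contracting target and expanding source. The genuine gap is the contracting-to-contracting block, which you dispose of with ``identical after replacing $T$ by $T^{-1}$ and invoking hypothesis (ii)''. That phrase does not describe an argument: the pairing in that block is still $\int (f\circ T)\,g\,dm$ --- the operator is $C_T$, not $C_{T^{-1}}$ --- so your template requires a \emph{round} torus $\mathbb{T}^2_q\subset\cl{D'}$, with $D'=D^{\sigma}_{P,\gamma}$, $\sigma\in\Sigma^{-1}$, whose image $T(\mathbb{T}^2_q)$ lies compactly in $\tilde D=(D^{\tilde\sigma}_{P,\Gamma})'$. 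Neither hypothesis supplies one, and you give no derivation: hypothesis (ii) controls the $T^{-1}$-images of the round tori $\mathbb{T}^{\sigma'}_{P,\Gamma}$, and those images are \emph{not} round tori, so they cannot serve as integration contours in your product estimate without further work; hypothesis (iii) produces tori whose $T$-images land compactly in the expanding polydisks $D^{\tilde\sigma}_{P,A}$, $\tilde\sigma\in\Sigma^1$, which are not contained in $(D^{\tilde\sigma'}_{P,\Gamma})'$, so those tori are useless here.

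The missing idea is the change of variables that converts the pairing into one to which hypothesis (ii) can be applied. Since after the polynomial reduction both integrands are holomorphic on $\mathcal{A}_{P,\eta}$, one first deforms the contour to $\Ttwo$ and then uses that $T$ restricts to a diffeomorphism of $\Ttwo$ to write
\[
\int_{\Ttwo}(f\circ T)\,g\,dm=\int_{\Ttwo}f\,(g\circ T^{-1})\,w\,dm,
\qquad w(z)=\omega_T\,\det\bigl(DT^{-1}(z)\bigr)\cdot z/T^{-1}(z),
\]
and finally deforms once more to the torus $\mathbb{T}^{\hat\sigma}_{P,\Gamma}$ furnished by Lemma~\ref{lem:contour_deform} applied to $T^{-1}$, which is legitimate precisely because of hypothesis (ii). Note that this swaps the roles of the two factors relative to your template: the sup-estimate of Proposition~\ref{prop:sup_comp} is now applied to $g\circ T^{-1}$ on the compact set $T^{-1}(\mathbb{T}^{\hat\sigma}_{P,\Gamma})\cc D'$, while $f$ is only estimated in $L^2$ on $\mathbb{T}^{\hat\sigma}_{P,\Gamma}\subset\cl{\tilde D}$; and a new ingredient, the Jacobian-type weight $w$ (bounded on the relevant torus by holomorphy of $T^{-1}$), appears and must be accounted for. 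None of this is in your write-up. A secondary, more harmless, misstep: you assert that both cross-family blocks rest on hypothesis (iii) plus a ``$T^{-1}$-counterpart'' that ``holds by applying the same reasoning to $T^{-1}$''. No such counterpart is assumed in the theorem, and it does not follow from (i)--(iii); fortunately the block with expanding target and contracting source does not need it, since there $\Ttwo\cc D'\cap\tilde D$ and $T(\Ttwo)=\Ttwo$, so taking $\Ttwo$ itself as the deformation torus closes that case outright.
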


\begin{figure}[ht!]
  \centering
  \includegraphics[width=\textwidth]{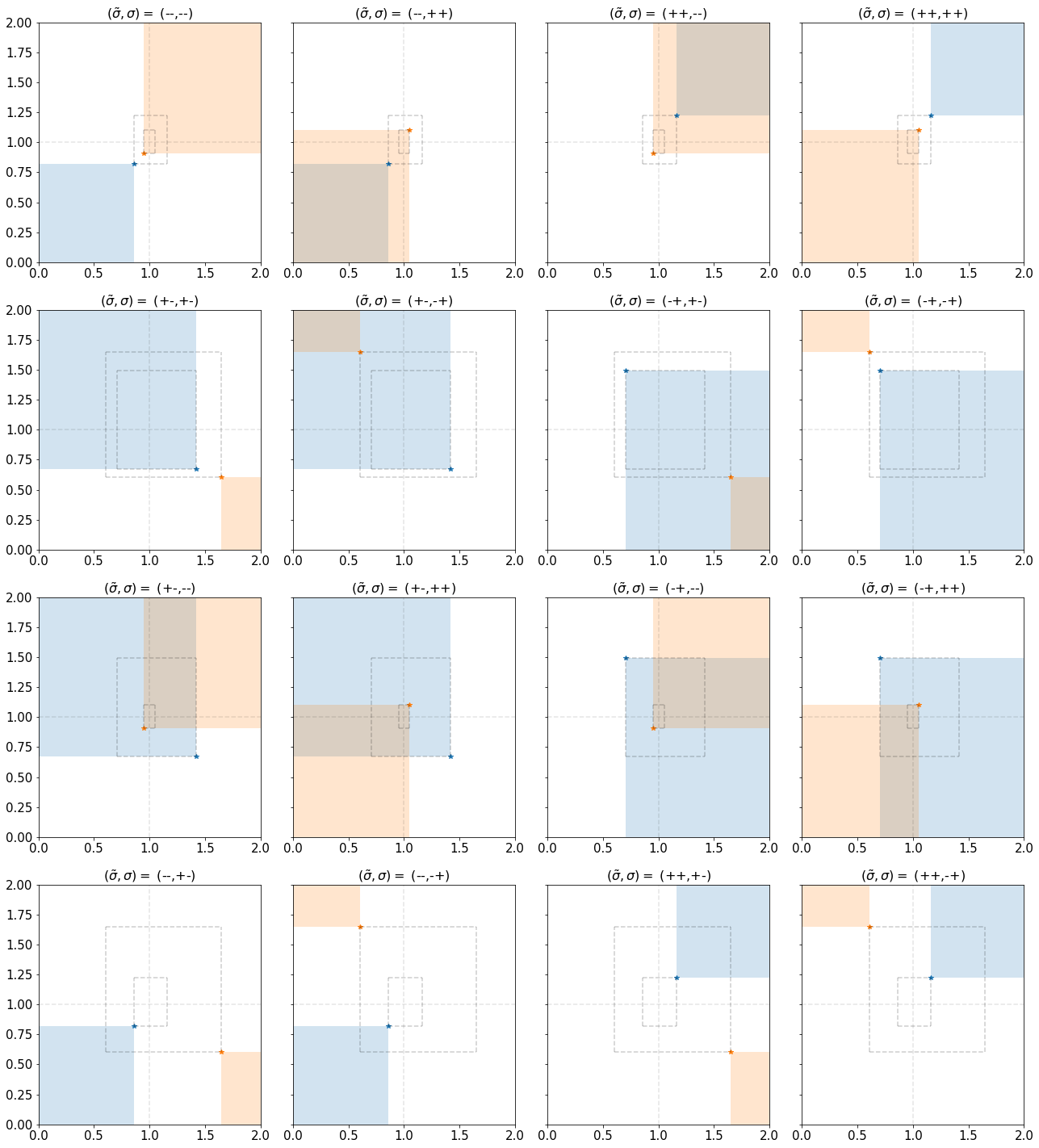}
  \caption{Depiction of the $16$ different cases for the absolute domains in the proof of Theorem \ref{thm:boundedness_large2small}.
  Each row corresponds to $\tilde{\sigma} \in \Sigma^{k}$ and $\sigma\in \Sigma^l$, for $k,l \in \{1, -1\}$. The blue rectangle represents the
  domain of holomorphicity for the function $f$, and the orange rectangle the domain
  of holomorphicity for the function $g$. We chose
  $P=\mathbb{I}$, $\alpha = (0.05, 0.1), A =(0.15, 0.2), \Gamma=(0.35, 0.4)$ and $\gamma=(0.5, 0.5)$.}
  \label{fig:proof_domains}
\end{figure}

\begin{proof}
Let $\mathcal{S}_{\delta, \delta'} = \{\hat{H}^\sigma_{\delta}: \sigma \in \Sigma^1\} \cup \{\hat{H}^\sigma_{\delta'}: \sigma \in \Sigma^{-1}\}$
be the collection of four spaces such that $H_{A,-\Gamma} = \bigoplus \mathcal{S}_{A,-\Gamma}$ and
$H_{\alpha,-\gamma} = \bigoplus \mathcal{S}_{\alpha,-\gamma}$.
By definition of the norm on $H_{A, -\Gamma}$ it is enough to prove that there is a constant $K > 0$ such that
\begin{equation}
\| \Pi C_T \tilde{\Pi}f\|_{H}  \leq K \|f\|_{\tilde{H}}, \qquad f \in \tilde{H},
\end{equation}
for any $H \in \mathcal{S}_{\alpha,-\gamma}$ and  $\tilde{H} \in \mathcal{S}_{A,-\Gamma}$, where
$\Pi\colon H_{\alpha,-\gamma} \to H$ and $\tilde{\Pi}\colon H_{A,-\Gamma}\to \tilde{H}$
are the respective projection operators.
We will denote by $D$, $\tilde{D}$ the domains of holomorphicity of $H$ and $\tilde{H}$, respectively,
and we will write $D'$ for the dual
domain
of $D$.
Using \eqref{eq:Dsd_norm_dual}, for any $f \in \tilde{H}$ we have
\[
\| \Pi C_T f\|_{H}
= \sup \left\{ \left| \langle C_T f, g \rangle_{\partial^*D} \right|
\colon g\in H^2(D') \cap \mathcal{P}, \|g\|_{H^2(D')} \leq 1 \right\}.
\]
(Note that this holds for all $H \in \mathcal{S}_{A,-\Gamma}$ without the need to adapt the function space for $g$.)
By the density of Laurent polynomials $\mathcal{P}$ in $H_{A,-\Gamma}$, see Proposition \ref{prop:characterisations}$(iv)$,
it is enough to establish
\[
\left| \langle C_T f, g \rangle_{\partial^* D} \right| \leq \tilde{C} \|f\|_{\tilde{H}} \|g\|_{H^2(D')}
\]
 for some $\tilde{C} > 0$, for all $f\in \tilde{H} \cap \mathcal{P}$ and
$g\in H^2(D') \cap \mathcal{P}$.
We shall prove this by breaking the 16 possible different configurations (see Figure \ref{fig:proof_domains})
into several cases.

Fix $H \in \mathcal{S}_{\alpha,-\gamma}$ and $\tilde H \in \mathcal{S}_{A,-\Gamma}$
(with corresponding domains $D$ and $\tilde{D})$ and let $g\in H^2(D')$.

\begin{enumerate}[(i)]
\item We first consider the case $D = D^\sigma_{P,\alpha}$ with $\sigma \in \Sigma^1$ (so that $\partial^* D = \mathbb{T}^\sigma_{P,\alpha}$) and $\tilde D =  D^{\tilde \sigma}_{P,A}$ with $\tilde \sigma \in \Sigma^1$.
Since $T$ is $(1,\alpha,A,P)$-strongly expanding,
Lemma \ref{lem:contour_deform} yields that there is a $\hat \sigma \in \Sigma^1$ such that $\mathbb{T}^{\hat\sigma}_{P,\alpha} \subset \cl{D'}$
and $T(\mathbb{T}^{\hat\sigma}_{P,\alpha})$ is a compact subset of $\tilde D$.

We obtain
\begin{align*}
  | \langle C_T f, g \rangle_{\mathbb{T}^\sigma_{P,\alpha}}| &= \left|\int_{\mathbb{T}^\sigma_{P,\alpha}} (f \circ T) g \, dm\right| =
  \left|\int_{\mathbb{T}^{\hat{\sigma}}_{P,\alpha}} (f \circ T) g \, dm\right| \\
&\leq \left( \int_{\mathbb{T}^{\hat{\sigma}}_{P, \alpha}}|f \circ T|^2 \,dm \right)^{1/2}
\left( \int_{\mathbb{T}^{\hat{\sigma}}_{P,\alpha}}|g|^2 \,dm \right)^{1/2},
\end{align*}
where the integral equality (in the case $\hat \sigma \neq \sigma$)
follows by Cauchy's Theorem and the holomorphicity of $T$ on $\cl{\mathcal{A}_{P,\alpha}}$,
and the last step is the Cauchy-Schwarz inequality.

As $T(\mathbb{T}^{\hat{\sigma}}_{P,\alpha}) \subset \tilde{D}$ is compact, Proposition \ref{prop:sup_comp}
  yields a $C_1 > 0$ such that $\sup_{z\in T(\mathbb{T}^{\tilde{\sigma}}_{P,\alpha})} |f(z)| \leq C_1 \|f\|_{\tilde{H}}$.
  Since $\mathbb{T}^{\hat{\sigma}}_{P,\alpha} \in \cl{D'}$ and $g\in H^2(D')$, we obtain
  \[| \langle C_T f, g \rangle_{\mathbb{T}^\sigma_{P,\alpha}}|  \leq C_1 \|f\|_{\tilde{H}} \|g\|_{H^2(D')}.\]

\item The case $D = D^\sigma_{P,\alpha}$ with $\sigma \in \Sigma^1$ and
  $\tilde D = (D^{\tilde \sigma}_{P,\Gamma})'$ with $\tilde \sigma \in \Sigma^{-1}$ is similar to the previous one.
  In this case, it holds that $T(\mathbb{T}^2) = \mathbb{T}^2 \cc D' \cap \tilde D$, and using holomorphicity of $T$ on
  $\cl{\mathcal{A}_{P,\alpha}}$ and the Cauchy-Schwarz inequality, as before we obtain
  \begin{equation*}
  | \langle C_T f, g \rangle_{\mathbb{T}^\sigma_{P,\alpha}}| =
  \left|\int_{\mathbb{T}^2} (f \circ T) g \, dm\right|
\leq \left( \int_{\mathbb{T}^2}|f \circ T|^2 \,dm \right)^{1/2}
\left( \int_{\mathbb{T}^2}|g|^2 \,dm \right)^{1/2}.
\end{equation*}
By Proposition \ref{prop:sup_comp}, we again have that
$| \langle C_T f, g \rangle_{\mathbb{T}^\sigma_{P,\alpha}}|  \leq C_2 \|f\|_{\tilde{H}} \|g\|_{H^2(D')}$
for some $C_2 > 0$.

\item Next we consider the case $D = (D^\sigma_{P,\gamma})', \sigma \in \Sigma^{-1}$ and
  $\tilde D =  D^{\tilde \sigma}_{P,A}, \tilde \sigma \in \Sigma^1$. From assumption $(iii)$ we have that there exists a torus
  $\mathbb{T}_q^2 \subset D^{\sigma}_{P,\gamma} \cap \mathcal{A}_{P,\eta}$ such that $T(\mathbb{T}^2_q)$ is a compact subset of $\tilde D$.
  As in case (i), we obtain
 \begin{align*}
  | \langle C_T f, g \rangle_{\mathbb{T}^\sigma_{P,\gamma}}|
  &\leq \left( \int_{\mathbb{T}^{2}_q}|f \circ T|^2 \,dm \right)^{1/2}
\left( \int_{\mathbb{T}^{2}_q}|g|^2 \,dm \right)^{1/2}
\leq C_3 \| f \|_{\tilde H} \| g \|_{H^2(D')}
\end{align*}
for some $C_3 > 0$.

\item Finally we consider the case $D = (D^\sigma_{P,\gamma})',\tilde{D} = (D^{\tilde \sigma}_{P,\Gamma})'$
with $\sigma, \tilde \sigma \in \Sigma^{-1}$. As $T$ and $T^{-1}$ are
holomorphic on a neighbourhood of $\cl{\mathcal{A}_{P,\gamma}}$, by Cauchy's Theorem we have
\begin{align*}
  \langle C_T f, g \rangle_{\mathbb{T}^\sigma_{P,\gamma}} &= \int_{\mathbb{T}^\sigma_{P,\gamma}} (f \circ T) g \, dm =
  \int_{\Ttwo} (f \circ T) g \, dm = \int_{\Ttwo} f (g \circ T^{-1}) w \, dm \\
 & = \int_{ \mathbb{T}^\sigma_{P,\Gamma}} f (g \circ T^{-1}) w \, dm,
\end{align*}
where $w(z) = \omega_T \det (DT^{-1}(z)) \cdot z / T^{-1}(z)$ with
$\omega_T = 1$ if $T$ is orientation-preserving and $\omega_T = -1$ otherwise.
As $T^{-1}$ is $(-1, \Gamma, \gamma, P)$-strongly expanding, by Lemma \ref{lem:contour_deform}, there is
$\hat{\sigma}\in\Sigma^{-1}$ such that $ \mathbb{T}^{\hat{\sigma}}_\Gamma \in \cl{(D^{\tilde{\sigma}}_{P,\Gamma})'} = \cl{\tilde D}$ and
$T^{-1}(\mathbb{T}^{\hat{\sigma}}_{P,\Gamma})$ is a compact set in $D^\sigma_{P,\gamma} = D'$. By the same argument as before
we obtain a $C_4 > 0$ such that
\begin{align*}
  | \langle C_T f, g \rangle_{\mathbb{T}^\sigma_{P,\gamma}}|
  &\leq  \sup_{z\in\T^\sigma_{P,\Gamma}} | w(z) | \left(\int_{ \mathbb{T}^\sigma_{P,\Gamma}} |f|^2 \,dm\right)^{1/2}
  \left(\int_{ \mathbb{T}^\sigma_{P,\Gamma}} |g \circ T^{-1}|^2\, dm\right)^{1/2} \\
  &\leq C_4 \|f\|_{\tilde{H}} \|g\|_{H^2(D')}.
\end{align*}
\end{enumerate}

Setting $\tilde C = \max\{C_1, C_2, C_3, C_4\}$, we obtain the required inequality.
\end{proof}

\begin{cor}
Let $T$ be an analytic Anosov diffeomorphism of $\T^2$
satisfying the {\it (sec)} condition for some $P \in \GLtwo(\R)$.
Then there are $\alpha, A, \gamma, \Gamma \in \R^2_{>0}$ with
$\alpha < A$ and $\Gamma < \gamma$ such that
the associated composition operator $C_T$ is a well-defined and bounded operator from $H_{P, A, -\Gamma}$ to~$H_{P,\alpha, -\gamma}$.
\end{cor}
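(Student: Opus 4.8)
The plan is to obtain the statement by directly chaining the two preceding results: Corollary~\ref{cor:exp_const_cone_prop} furnishes precisely the geometric data needed to invoke Theorem~\ref{thm:boundedness_large2small}, so no fresh estimate is required.

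First I would apply Corollary~\ref{cor:exp_const_cone_prop}. As $T$ satisfies the \textit{(sec)} condition, meaning its lift $\tilde T \colon M \to M$ does, the corollary yields parameters $\alpha, A, \gamma, \Gamma, \eta \in \R^2_{>0}$ with $\alpha < A < \eta$ and $\Gamma < \gamma < \eta$, analytic extensions of $T$ and $T^{-1}$ to $\mathcal{A}_{P,\eta}$, and the three mapping properties stated there: $T$ is $(1,\alpha,A,P)$-strongly expanding, $T^{-1}$ is $(-1,\Gamma,\gamma,P)$-strongly expanding, and for every $\sigma \in \Sigma^1$, $\tilde\sigma \in \Sigma^{-1}$ there is a torus $\mathbb{T}^2_{e^q} \subset D^{\tilde\sigma}_{P,\gamma} \cap \mathcal{A}_{P,\eta}$ with $T(\mathbb{T}^2_{e^q}) \subset D^\sigma_{P,A}$. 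In particular the required constraints $\alpha < A$ and $\Gamma < \gamma$ already hold.

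Next I would verify that this is exactly the hypothesis package of Theorem~\ref{thm:boundedness_large2small}. The ordering condition $\alpha, \gamma < \eta$ there is immediate from $\alpha < A < \eta$ and $\gamma < \eta$; the extension assumption and conditions (i) and (ii) are word-for-word identical in the two statements. For condition (iii) the only difference is notational: the corollary takes the target quadrant to be $\sigma \in \Sigma^1$ and the source torus to be indexed by $\tilde\sigma \in \Sigma^{-1}$, whereas the theorem swaps these two labels. Since both are quantified over all admissible pairs, the relabelling is vacuous; and because $\mathbb{T}^2_{e^q}$ is compact while $D^\sigma_{P,A}$ is open, the inclusion $T(\mathbb{T}^2_{e^q}) \subset D^\sigma_{P,A}$ is automatically the compact containment $\cc$ that the theorem requires. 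Invoking Theorem~\ref{thm:boundedness_large2small} then gives that $C_T \colon f \mapsto f \circ T$ maps $H_{P,A,-\Gamma}$ continuously into $H_{P,\alpha,-\gamma}$, as claimed.

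Because the corollary is simply the composition of two already-established results, I do not anticipate a genuine obstacle; the only item demanding any attention is the bookkeeping in condition (iii), namely recognising that the interchanged $\sigma$ and $\tilde\sigma$ conventions describe the same quantified statement and that compact-into-open yields $\cc$ for free.
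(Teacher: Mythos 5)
Your proposal is correct and is exactly the argument the paper intends: the corollary is stated without proof precisely because it follows by feeding the output of Corollary~\ref{cor:exp_const_cone_prop} into Theorem~\ref{thm:boundedness_large2small}. Your two bookkeeping observations (the swapped roles of $\sigma$ and $\tilde\sigma$ in condition (iii) are immaterial under the universal quantification, and the continuous image of the compact torus $\mathbb{T}^2_{e^q}$ inside the open domain $D^{\sigma}_{P,A}$ automatically gives the compact containment $\cc$) are both valid and dispose of the only points where the hypotheses do not match verbatim.
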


\begin{rem} \label{rem:no_sepcond}
Note that the {\it (sec)} condition is sufficient, but not necessary for the above corollary. For example, take $P = \mathbb{I}$ and $\tilde{T}(x, y) = (x+y, x)$, then
the union of the first and third quadrants of $\Rtwo$ is not an expanding invariant cone, as
$D_x\tilde{T} (0, 1)^T = (1, 0)^T$ for all $x\in M$. However, it is not difficult to find
$\alpha, A, \gamma, \Gamma$ satisfying the assumptions of Theorem \ref{thm:boundedness_large2small} (for $P = \mathbb{I}$)
for the corresponding map $T \colon (z_1, z_2) \mapsto (z_1 z_2, z_1)$ on~$\Ttwo$.
\end{rem}

Using a standard factorisation argument we can now deduce that the composition
operator from the above theorem is trace-class when considered as an operator on $H_{A, -\Gamma}$.
We defer the proof of the following lemma to the appendix.

\begin{lem}\label{lem:J_stretched}
For $\alpha, A, \gamma, \Gamma \in \R^2_{>0}$ with $\alpha < A$ and $\Gamma < \gamma$ let $J\colon H_{P, \alpha, -\gamma} \to H_{P, A, -\Gamma}$ be the
canonical embedding operator. For $n \in \N$, denote by $s_n(J)$ the $n$-th singular value
of $J$. Then
\[
\lim_{n\to\infty} \frac{- \log s_n(J)}{n^{1/2}} = \eta,
\]
where $\eta = \left( \frac{1}{\log (A_1 - \alpha_1) \log (A_2 - \alpha_2)} + \frac{1}{\log (\gamma_1 - \Gamma_1) \log (\gamma_2 - \Gamma_2)}\right)^{-1/2}$.
\end{lem}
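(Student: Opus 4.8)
The plan is to use that the embedding $J$ is diagonal with respect to the monomial basis, so that its singular values can be read off directly and the assertion reduces to a Weyl-type lattice-point count.

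First I would record the spectral picture. By Proposition~\ref{prop:characterisations} the monomials $\{p_k\}_{k\in\Z^2}$ are pairwise orthogonal in each of the Hardy--Hilbert spaces, and both $H_{P,\alpha,-\gamma}$ and $H_{P,A,-\Gamma}$ split as the orthogonal sum of the four pieces $\hat H^\sigma_{P,\cdot}$ over $\sigma\in\Sigma$, with parameter $\alpha$ (resp.\ $A$) on $\Sigma^1$ and $-\gamma$ (resp.\ $-\Gamma$) on $\Sigma^{-1}$. Since $J$ preserves Fourier coefficients it maps each monomial direction to itself, so the normalized monomials are singular vectors and the singular values of $J$ are exactly the norm ratios
\[
s(k)=\frac{\|p_k\|_{H_{P,A,-\Gamma}}}{\|p_k\|_{H_{P,\alpha,-\gamma}}}, \qquad k\in\Z^2 .
\]
A direct evaluation of these ratios via Proposition~\ref{prop:characterisations}(iii) shows that $-\log s(k)$ is, on each cone-region, a positive linear functional of $|k|$: on the same-sign cones ($k\in\hat\Z^\sigma_P$ with $\sigma\in\Sigma^1$) it equals $|k_1|\log(A_1-\alpha_1)+|k_2|\log(A_2-\alpha_2)$, while on the opposite-sign cones ($\sigma\in\Sigma^{-1}$) it equals $|k_1|\log(\gamma_1-\Gamma_1)+|k_2|\log(\gamma_2-\Gamma_2)$. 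In particular $s(k)\to 0$, so $J$ is compact and the $s(k)$ are its singular values. I would carry this out first for $P=\mathbb{I}$, where the four index sets $\hat\Z^\sigma$ are precisely the four integer quadrants partitioning $\Z^2$; the case of general $P$ then follows using the isometric isomorphisms $C_{\tau_A}$ of Proposition~\ref{prop:isoiso}, which leave all singular values invariant.

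Next I would convert the decay of the ordered singular values into a counting problem. Introduce the spectral counting function
\[
\mathcal N(t)=\#\{\,k\in\Z^2 : -\log s(k)\le t\,\}.
\]
On each quadrant-cone the set $\{-\log s(k)\le t\}$ consists of the lattice points inside a dilated right triangle of area $t^2/(2\,b_1 b_2)$, with $(b_1,b_2)$ the pair of logarithmic rates identified above. Summing Gauss's lattice-point estimate over the two same-sign quadrants and the two opposite-sign quadrants gives
\[
\mathcal N(t)=c\,t^2+O(t),\qquad
c=\frac{1}{\log(A_1-\alpha_1)\log(A_2-\alpha_2)}+\frac{1}{\log(\gamma_1-\Gamma_1)\log(\gamma_2-\Gamma_2)} .
\]
Finally, the nondecreasing rearrangement of the values $\{-\log s(k)\}$ is precisely $(-\log s_n(J))_{n\in\N}$, so $-\log s_n(J)=t_n$, where $t_n$ is the $n$-th smallest value and satisfies $\mathcal N(t)<n$ for $t<t_n$ and $\mathcal N(t_n)\ge n$. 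Inserting $\mathcal N(t)=c\,t^2(1+o(1))$ yields $t_n=\sqrt{n/c}\,(1+o(1))$ and hence
\[
\lim_{n\to\infty}\frac{-\log s_n(J)}{n^{1/2}}=\frac{1}{\sqrt c}=\eta,
\]
as claimed.

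The hard part will be the two bookkeeping steps rather than any single deep estimate: (a) the clean factorization of the norm ratios together with the reduction of arbitrary $P$ to the quadrant picture, so that the per-direction rates come out as exactly the logarithms above; and (b) verifying that the $O(t)$ remainder in the lattice count stays uniform when the two quadrant families are merged, so that the leading coefficient $c$ of $\mathcal N(t)$ --- and therefore the passage to a genuine limit rather than merely a $\limsup$/$\liminf$ --- is unaffected. Once $\mathcal N(t)=c\,t^2+O(t)$ is in hand, the inversion to the ordered sequence $s_n(J)$ is routine.
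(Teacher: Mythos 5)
Your proposal is correct and takes essentially the same route as the paper: there too, one observes that $J^*J$ is diagonal in the monomial basis, so the singular values of $J$ are exactly the weight ratios $\nu_{P,A,-\Gamma}(n)/\nu_{P,\alpha,-\gamma}(n)$, which form a cone-wise exponential family over the quadrant decomposition, and the asymptotics are then obtained from Lemma \ref{lem:cwexp_decay}$(i)$, whose appendix proof is precisely your lattice-point count $\mathcal{N}(t)=c\,t^{2}+O(t)$ on dilated triangles followed by the inversion $t_n\sim\sqrt{n/c}$. The only cosmetic differences are that you inline that counting argument where the paper cites the lemma, and that you make the reduction of general $P$ to the quadrant picture (via Proposition \ref{prop:isoiso}) explicit, which the paper leaves implicit.
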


\begin{cor} \label{cor:traceclass}
Let $T$ be as in Theorem \ref{thm:boundedness_large2small}.
Then the are $\alpha, \gamma \in \R^2_{>0}$ such that
the composition operator $C_T$ associated to $T$ is well-defined as an operator from $H_{P, \alpha, -\gamma}$ to $H_{P, \alpha, -\gamma}$.
Moreover, there are constants $\tilde{c}_1, \tilde{c}_2, \hat{c}_1, \hat{c}_2 >0$ such that
\[s_n(C_T) \leq \tilde{c}_1 e^{-\tilde{c}_2 n^{1/2}} \quad (n\in \mathbb{N}),\]
and
\[ |\lambda_n(C_T)| \leq \hat{c}_1 e^{-\hat{c}_2 n^{1/2}}\quad (n\in \mathbb{N}),\]
where $s_n(C_T)$ and $\lambda_n(C_T)$ are the $n-$th largest (counted with multiplicity) singular values and eigenvalues of $C_T$, respectively.
In particular, $C_T: H_{P, \alpha, -\gamma} \to H_{P, \alpha, -\gamma}$ is trace-class.
\end{cor}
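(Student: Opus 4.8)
The plan is to realise $C_T$ on $H_{P,\alpha,-\gamma}$ as the composition of the bounded operator supplied by Theorem \ref{thm:boundedness_large2small} with the canonical embedding of Lemma \ref{lem:J_stretched}, to read off the singular value decay from the product inequality for singular values, and then to convert this into an eigenvalue bound by Weyl's inequality. First I would record the factorisation. Theorem \ref{thm:boundedness_large2small} (via the preceding corollary, using the \textit{(sec)} condition) furnishes parameters with $\alpha < A$ and $\Gamma < \gamma$ together with a bounded operator $C_T \colon H_{P,A,-\Gamma} \to H_{P,\alpha,-\gamma}$, $f \mapsto f\circ T$. Lemma \ref{lem:J_stretched} provides the canonical embedding $J \colon H_{P,\alpha,-\gamma} \to H_{P,A,-\Gamma}$. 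This is a genuine contractive inclusion because $\alpha < A$ and $\Gamma < \gamma$ force $\nu_{P,A,-\Gamma}(n) \leq \nu_{P,\alpha,-\gamma}(n)$ for every $n \in \Z^2$: on the $\Sigma^1$-cones the weight is $\exp(-\langle A,|n|\rangle) \leq \exp(-\langle \alpha,|n|\rangle)$, and on the $\Sigma^{-1}$-cones it is $\exp(\langle \Gamma,|n|\rangle) \leq \exp(\langle \gamma,|n|\rangle)$. Since $J$ acts as the identity on the dense subspace $\mathcal{P}$ of Laurent polynomials, the composite $C_T \circ J \colon H_{P,\alpha,-\gamma} \to H_{P,\alpha,-\gamma}$ is precisely the map $f \mapsto f\circ T$. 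This simultaneously shows that $C_T$ is well-defined on $H_{P,\alpha,-\gamma}$ and exhibits it as the product $C_T J$.

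Next I would extract the singular value bound. Using the standard inequality $s_n(C_T J) \leq \|C_T\| \, s_n(J)$ together with the asymptotic $\lim_{n\to\infty} (-\log s_n(J))/n^{1/2} = \eta$ from Lemma \ref{lem:J_stretched}, I fix any $c \in (0,\eta)$, so that $s_n(J) \leq C_0 \, e^{-c n^{1/2}}$ holds for all $n$ (the finitely many small-$n$ terms being absorbed into $C_0$). This gives $s_n(C_T) \leq \tilde{c}_1 \, e^{-\tilde{c}_2 n^{1/2}}$ with $\tilde{c}_1 = \|C_T\| C_0$ and $\tilde{c}_2 = c$. Since the series $\sum_n e^{-\tilde{c}_2 n^{1/2}}$ converges, $\sum_n s_n(C_T) < \infty$, so $C_T$ is trace-class.

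Finally, for the eigenvalue bound I would invoke Weyl's inequality $\prod_{k=1}^{n} |\lambda_k(C_T)| \leq \prod_{k=1}^{n} s_k(C_T)$. As the eigenvalues are ordered by decreasing modulus, $|\lambda_n(C_T)|^n \leq \prod_{k=1}^{n} |\lambda_k(C_T)| \leq \tilde{c}_1^{\,n}\exp\!\big(-\tilde{c}_2 \sum_{k=1}^{n} k^{1/2}\big)$. Bounding $\sum_{k=1}^{n} k^{1/2} \geq \tfrac{2}{3} n^{3/2}$ and taking $n$-th roots yields $|\lambda_n(C_T)| \leq \hat{c}_1 \, e^{-\hat{c}_2 n^{1/2}}$ with $\hat{c}_1 = \tilde{c}_1$ and $\hat{c}_2 = \tfrac{2}{3}\tilde{c}_2$, as required.

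Essentially all the analytic content has been front-loaded into Theorem \ref{thm:boundedness_large2small} and Lemma \ref{lem:J_stretched}, so the corollary is a clean factorisation argument. The only points demanding genuine care are verifying that $J$ is literally the inclusion (so that $C_T J$ coincides with the self-map $f\mapsto f\circ T$ rather than an unrelated composite), and correctly tracking the loss of the constant factor $\tfrac{2}{3}$ when passing from singular values to eigenvalues through the estimate $\sum_{k\leq n}\sqrt{k} \sim \tfrac{2}{3} n^{3/2}$; I expect the latter to be the main thing to get right, since it is where the stretched-exponential exponent $n^{1/2}$ is preserved.
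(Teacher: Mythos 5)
Your proposal is correct and follows essentially the same route as the paper: factorise $C_T = \tilde{C}_T J$ with $\tilde{C}_T \colon H_{P,A,-\Gamma} \to H_{P,\alpha,-\gamma}$ bounded by Theorem \ref{thm:boundedness_large2small} and $J$ the trace-class embedding of Lemma \ref{lem:J_stretched}, then apply $s_n(C_T) \leq \|\tilde{C}_T\|\, s_n(J)$ and the multiplicative Weyl inequality, losing exactly the factor $2/3$ in the exponent. Your explicit handling of the limit in Lemma \ref{lem:J_stretched} (fixing $c < \eta$ and absorbing finitely many terms) and of the inclusion $J$ being contractive is, if anything, slightly more careful than the paper's own write-up.
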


\begin{proof}
By Theorem \ref{thm:boundedness_large2small} the composition operator lifts to a continuous operator
 $\tilde{C}_T\colon H_{P, A, -\Gamma} \to H_{P, \alpha, -\gamma}$. Let $J$ be the embedding operator from
Lemma~\ref{lem:J_stretched}, then $C_T = \tilde{C}_T J$ is a well-defined trace-class operator from $H_{P, \alpha, -\gamma}$ to
$H_{P, \alpha, -\gamma}$, as $\tilde{C}_T$ is bounded and $J$ is trace-class. By \cite[2.2]{Pi} we have
$s_n(C_T) \leq C s_n(J)$ with $C = \|\tilde{C}_T\|_{H_{P, A, -\Gamma}\to H_{P, \alpha, -\gamma}}$,
and by Lemma~\ref{lem:J_stretched} we have $s_n(J) \leq c_1 e^{-\eta \cdot n^{1/2}}$ for some $c_1 > 0$.
Using the multiplicative Weyl inequality \cite[3.5.1]{Pi} we obtain
$|\lambda_n(C_T)| \leq \hat{c}_1 e^{-\hat{c}_2 n^{1/2}}$ with  $\hat{c}_1 = C c_1, \hat{c}_2 = 2/3 \eta$
(see, for example, \cite[Lemma 5.11]{BJ2}).
\end{proof}

We are now ready to precisely state and prove our first main theorem.

\begin{reptheorem}{thm:thmA}
Let $T$ be an analytic Anosov diffeomorphism of $\T^2$
satisfying the {\it (sec)} condition for some $P \in \GLtwo(\R)$,
and let $w \colon \T^2 \to \C$ be analytic. Then there exist $\alpha, \gamma \in \R^2_{>0}$ such that the weighted composition operator
\[ f \mapsto w \cdot f \circ T\]
is a well-defined trace-class operator on $H_{P, \alpha,-\gamma}$.
\end{reptheorem}

\begin{proof}
Corollary \ref{cor:traceclass} proves the theorem for $w \equiv 1$, and remains valid
if the operator $C_T$ is replaced by $M_w C_T$, with $M_w$ the multiplication operator with a weight function
that is holomorphic on $\mathcal{A}_{P,\eta}$ for some $\eta > \alpha, \gamma$. Since $w$ is analytic on $\T^2$, for $\eta$ sufficiently small
we can assume without loss of generality that $w$ holomorphically extends to $\mathcal{A}_{P,\eta}$,
proving the general case.
\end{proof}

\subsection{Relation to transfer operator}

The following lemma is analogous to Proposition \ref{prop:iso},
replacing the Hilbert space $H^\sigma_{P,\delta}$ by $\mathcal{H}^1_{P,\delta}$, $\mathcal{H}^{-1}_{P,\delta}$ or
$\mathcal{H}^1_{P,\alpha} \oplus \mathcal{H}^{-1}_{P,-\gamma}$, with the respective inner products
as in Definition \ref{def:combined_spaces}.

\begin{lem}\label{lem:tuple_isom}~
\begin{enumerate}[(i)]
\item For $\ell\in\{1, -1\}$ and $\delta\in \Rtwo$, the dual
$(\mathcal{H}^\ell_{P,\delta})' = (\bigoplus_{\sigma \in \Sigma^\ell} \hat{H}^\sigma_{P,\delta})'$ is
isometrically isomorphic to
$\bigoplus_{\sigma \in \Sigma^\ell} \check{H}^{-\sigma}_{P,-\delta}$
via the isomorphism
$\mathcal{J}^{\ell}_{P,\delta}\colon \bigoplus_{\sigma \in \Sigma^\ell} \check{H}^{-\sigma}_{P,-\delta} \to (\mathcal{H}^\ell_{P,\delta})'$, $g \mapsto l_g$, given by
\[l_g(f) = \sum_{\sigma \in \Sigma^\ell} \langle \hat{\Pi}^\sigma f,  \check{\Pi}^{-\sigma} g \rangle_{\T^\sigma_{P,\delta}}.\]

\item For $\alpha, \gamma \in \Rtwo$, the dual
$(\mathcal{H}^1_{P,\alpha} \oplus \mathcal{H}^{-1}_{P,-\gamma})'
= (\hat{H}^{--}_{P,\alpha} \oplus \hat{H}^{++}_{P,\alpha} \oplus \hat{H}^{-+}_{P,-\gamma} \oplus \hat{H}^{+-}_{P,-\gamma})'$
is isometrically isomorphic to
$\check{H}^{++}_{P,-\alpha} \oplus \check{H}^{--}_{P,-\alpha} \oplus \check{H}^{+-}_{P,\gamma} \oplus \check{H}^{-+}_{P,\gamma}$
via the isomorphism
$\mathcal{J}_{P, \alpha, \gamma}\colon g \mapsto l_g$,
given by
\[l_g(f) =  \sum_{\sigma \in \Sigma^1} \langle \hat{\Pi}^\sigma f, \check{\Pi}^{-\sigma} g \rangle_{\mathbb{T}^\sigma_{P,\alpha}}
+  \sum_{\sigma \in \Sigma^{-1}} \langle \hat{\Pi}^{\sigma} f,  \check{\Pi}^{-\sigma} g \rangle_{\mathbb{T}^\sigma_{P,-\gamma}}.\]
\end{enumerate}
\end{lem}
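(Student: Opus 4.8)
The plan is to reduce both parts to the single-component duality already available, namely Proposition~\ref{prop:iso} in the restricted form recorded in the remark following Notation~\ref{not:zhat} (that $g\mapsto\langle\cdot,g\rangle_{\T^\sigma_{P,\delta}}$ is an isometric isomorphism $\check{H}^{-\sigma}_{P,-\delta}\to(\hat{H}^\sigma_{P,\delta})'$), and then to assemble these summand-wise isomorphisms. The only extra ingredient is the elementary duality of a finite orthogonal direct sum of Hilbert spaces: if $\mathcal{H}=\bigoplus_i\mathcal{H}_i$, then every $\ell\in\mathcal{H}'$ decomposes uniquely as $\ell(f)=\sum_i\ell_i(\hat{\Pi}_if)$ with $\ell_i\in\mathcal{H}_i'$, and $\|\ell\|^2=\sum_i\|\ell_i\|^2$ by the Pythagorean identity.

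For part~(i), I would begin with $\mathcal{H}^\ell_{P,\delta}=\bigoplus_{\sigma\in\Sigma^\ell}\hat{H}^\sigma_{P,\delta}$ (Definition~\ref{def:combined_spaces}) and write an arbitrary $\ell\in(\mathcal{H}^\ell_{P,\delta})'$ as $\ell(f)=\sum_{\sigma\in\Sigma^\ell}\ell_\sigma(\hat{\Pi}^\sigma f)$ with $\ell_\sigma\in(\hat{H}^\sigma_{P,\delta})'$. Applying the single-component statement to each $\ell_\sigma$ produces a unique $g^\sigma\in\check{H}^{-\sigma}_{P,-\delta}$ with $\ell_\sigma=\langle\cdot,g^\sigma\rangle_{\T^\sigma_{P,\delta}}$ and $\|\ell_\sigma\|=\|g^\sigma\|$. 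Collecting the $g^\sigma$ into $g=(g^\sigma)_{\sigma\in\Sigma^\ell}\in\bigoplus_{\sigma\in\Sigma^\ell}\check{H}^{-\sigma}_{P,-\delta}$ and noting that $\check{\Pi}^{-\sigma}g=g^\sigma$ — since the index sets $\check{\Z}^{-\sigma}_P=-\hat{\Z}^{-\sigma}_P$, $\sigma\in\Sigma^\ell$, are pairwise disjoint (as $\Sigma^\ell$ is closed under $\sigma\mapsto-\sigma$ and the $\hat{\Z}^\rho_P$ partition $\Z^2$) — yields exactly $\ell=l_g$ with the stated formula. Injectivity, surjectivity and the isometry $\|l_g\|^2=\sum_\sigma\|g^\sigma\|^2=\|g\|^2$ then follow termwise, so $\mathcal{J}^\ell_{P,\delta}$ is the claimed isometric isomorphism.

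Part~(ii) is identical once the four-fold splitting $\mathcal{H}^1_{P,\alpha}\oplus\mathcal{H}^{-1}_{P,-\gamma}=\hat{H}^{--}_{P,\alpha}\oplus\hat{H}^{++}_{P,\alpha}\oplus\hat{H}^{-+}_{P,-\gamma}\oplus\hat{H}^{+-}_{P,-\gamma}$ is written out. I would apply the single-component duality to the two summands carrying weight $\alpha$ (here $\sigma\in\Sigma^1$, so the dual of $\hat{H}^\sigma_{P,\alpha}$ is $\check{H}^{-\sigma}_{P,-\alpha}$) and to the two carrying weight $-\gamma$ (here $\sigma\in\Sigma^{-1}$, so the dual of $\hat{H}^\sigma_{P,-\gamma}$ is $\check{H}^{-\sigma}_{P,-(-\gamma)}=\check{H}^{-\sigma}_{P,\gamma}$). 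Tracking $\sigma\mapsto-\sigma$ on each summand reproduces precisely the target $\check{H}^{++}_{P,-\alpha}\oplus\check{H}^{--}_{P,-\alpha}\oplus\check{H}^{+-}_{P,\gamma}\oplus\check{H}^{-+}_{P,\gamma}$, while the functional splits into the sum over $\Sigma^1$ paired on $\T^\sigma_{P,\alpha}$ and the sum over $\Sigma^{-1}$ paired on $\T^\sigma_{P,-\gamma}$, exactly as displayed; the isometry again follows by summing the four component identities.

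The direct-sum duality is routine, so the step needing care is the index bookkeeping: one must confirm that $\check{\Pi}^{-\sigma}$ selects precisely the summand dual to $\hat{\Pi}^\sigma f$ (the disjointness argument above) and that the weight flips $\delta\mapsto-\delta$ are tracked correctly across the two families $\Sigma^1$ and $\Sigma^{-1}$ in part~(ii). Well-definedness and nondegeneracy of each pairing $\langle\hat{\Pi}^\sigma f,\check{\Pi}^{-\sigma}g\rangle_{\T^\sigma_{P,\delta}}$ are not re-proved here but are exactly the content of the single-component isomorphism of Proposition~\ref{prop:iso}; everything else mirrors that proposition verbatim on each summand.
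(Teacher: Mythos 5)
Your proposal is correct and matches the route the paper intends: the paper gives no explicit proof of Lemma \ref{lem:tuple_isom}, presenting it as ``analogous to Proposition \ref{prop:iso}'' via the remark following Notation \ref{not:zhat}, which is exactly the componentwise single-space duality you assemble through the standard duality of finite orthogonal direct sums. Your added care with the index bookkeeping (disjointness of the sets $\check{\Z}^{-\sigma}_P$, $\sigma \in \Sigma^\ell$, and the sign flips $\delta \mapsto -\delta$, $-\gamma \mapsto \gamma$) correctly fills in the details the paper leaves implicit.
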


\begin{rem}
As before we can identify the Hardy-Hilbert space $H_{P, -\alpha, \gamma}$
associated to a cone-wise exponential weight (similar to Definition \ref{defn:weight}), with a topological direct sum
of Hardy-Hilbert spaces on corresponding Reinhardt domains, that is,
\[
H_{P, -\alpha, \gamma}
\cong \check{H}^{++}_{P,-\alpha} \oplus \check{H}^{--}_{P,-\alpha} \oplus \check{H}^{+-}_{P,\gamma} \oplus \check{H}^{-+}_{P,\gamma} \\
\]
Note that in particular this implies $(H_{P, \alpha, -\gamma})' \cong H_{P, -\alpha, \gamma}$.
\end{rem}

Let $\tilde{T}\colon M \to M$ be a smooth diffeomorphism of $M$, then the associated
Perron-Frobenius operator $\mathcal{L}_{\tilde{T}}$ given by
$g \mapsto | \det {D \tilde{T}^{-1}} | \cdot (g\circ\tilde{T}^{-1})$ is a well-defined operator on $L^2(M)$.
The respective operator on $L^2(\Ttwo)$ is given by\footnote{Use the relation $\pi \circ \tilde{T} = T \circ \pi$
with $\pi(x,y) = (\exp(i x), \exp(i y))$. Also observe that
$\det {D \tilde{T}^{-1}}(x, y) =
(\det {DT^{-1}} (z)) \cdot \frac{z_1 z_2}{(T^{-1}(z))_1(T^{-1}(z))_2}$
for $z=(e^{ix}, e^{iy})$.}
\begin{equation}\label{eq:PF_op}
(\mathcal{L}_T g)(z) = w(z) \cdot (g\circ T^{-1})(z),
\end{equation}
with $w(z) = \omega_T \cdot (\det {DT^{-1}} (z)) \cdot \frac{p_{1,1}(z)}{p_{1,1}(T^{-1}(z))}$,
where $p_{1,1}(z) = z_1 \cdot z_2$ and
$\omega_T = 1$ if $T$ is orientation-preserving and $\omega_T = -1$ otherwise.

\begin{prop}
Let $T$ satisfy the assumptions of Theorem \ref{thm:boundedness_large2small} and let $C_T$ be
the respective operator on a suitable space $H_{P, \alpha, -\gamma}$. Then the isomorphism $\mathcal{J} = \mathcal{J}_{P, \alpha, \gamma}$ from Lemma \ref{lem:tuple_isom}
conjugates the adjoint $(C_T)^*$ of $C_T$ to the operator $\mathcal{L}_T$ given by \eqref{eq:PF_op}, which is well defined and bounded{} as an operator on $H_{P, -\alpha, \gamma}$.
\end{prop}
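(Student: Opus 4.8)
The plan is to deduce the statement from the boundedness of $C_T$ together with the change-of-variables identity already isolated in case (iv) of the proof of Theorem~\ref{thm:boundedness_large2small}. First I would record the purely functional-analytic part. By Corollary~\ref{cor:traceclass} the operator $C_T$ is bounded (indeed trace-class) on $H_{P,\alpha,-\gamma}$ for suitable $\alpha,\gamma\in\R^2_{>0}$, so its transpose $(C_T)^*$, understood as the dual map acting on $(H_{P,\alpha,-\gamma})'$, is automatically bounded with the same norm. Since $\mathcal{J}=\mathcal{J}_{P,\alpha,\gamma}$ is an isometric isomorphism from $H_{P,-\alpha,\gamma}\cong\check H^{++}_{P,-\alpha}\oplus\check H^{--}_{P,-\alpha}\oplus\check H^{+-}_{P,\gamma}\oplus\check H^{-+}_{P,\gamma}$ onto $(H_{P,\alpha,-\gamma})'$ (Lemma~\ref{lem:tuple_isom}(ii)), the conjugated operator $\mathcal{J}^{-1}(C_T)^*\mathcal{J}$ is a well-defined bounded operator on $H_{P,-\alpha,\gamma}$. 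Thus the entire content of the proposition reduces to the operator identity $\mathcal{J}^{-1}(C_T)^*\mathcal{J}=\mathcal{L}_T$; once this is verified, both the boundedness and the well-definedness of $\mathcal{L}_T$ claimed in the statement come for free.

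To verify this identity I would use density. By Proposition~\ref{prop:characterisations}(iv) the Laurent polynomials $\mathcal{P}$ are dense in each Hardy component, hence in $H_{P,\alpha,-\gamma}$ and $H_{P,-\alpha,\gamma}$, so it suffices to show that for all $f,g\in\mathcal{P}$ one has $l_g(C_T f)=l_{\mathcal{L}_T g}(f)$, with $l_\bullet$ the pairing of Lemma~\ref{lem:tuple_isom}(ii). Unwinding its definition, $l_g(C_T f)$ is a sum over $\sigma$ of bilinear torus pairings $\langle\hat\Pi^\sigma(f\circ T),\check\Pi^{-\sigma}g\rangle_{\mathbb{T}^\sigma_{P,\delta_\ell}}$ (with $\delta_1=\alpha$, $\delta_{-1}=-\gamma$), each of the form $\int (f\circ T)\,g\,dm$ over a distinguished-boundary torus. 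For each such term I would deform the contour to $\Ttwo$ using Cauchy's theorem and the holomorphicity of $T$ and $T^{-1}$ on a neighbourhood of $\cl{\mathcal{A}_{P,\eta}}$, then substitute $z\mapsto T^{-1}(z)$. The Jacobian of this substitution for the form $d\log z_1\wedge d\log z_2$ is exactly $\omega_T\,\det DT^{-1}(z)\cdot z_1z_2/\big((T^{-1}z)_1(T^{-1}z)_2\big)$, i.e. the weight $w$ of \eqref{eq:PF_op}, so that
\[
\int_{\Ttwo}(f\circ T)\,g\,dm=\int_{\Ttwo} f\cdot\big(w\cdot (g\circ T^{-1})\big)\,dm=\int_{\Ttwo}f\cdot\mathcal{L}_T g\,dm.
\]
This is precisely the computation carried out for one configuration in case (iv) of the proof of Theorem~\ref{thm:boundedness_large2small}, which I would simply repeat component by component.

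It then remains to deform back: since $g\in\mathcal{P}$, the function $\mathcal{L}_T g=w\cdot(g\circ T^{-1})$ extends holomorphically to a neighbourhood of $\cl{\mathcal{A}_{P,\eta}}$, so its Laurent series splits over the four cones $\check\Z^\sigma_P$, $\sigma\in\Sigma$, and represents an element of $H_{P,-\alpha,\gamma}$; deforming $\Ttwo$ back to the appropriate tori $\mathbb{T}^\sigma_{P,\delta_\ell}$ identifies $\int_{\Ttwo}f\cdot\mathcal{L}_T g\,dm$ with $l_{\mathcal{L}_T g}(f)$. This yields $l_g(C_T f)=l_{\mathcal{L}_T g}(f)$ for all $f,g\in\mathcal{P}$, hence $(C_T)^*\mathcal{J}g=\mathcal{J}\mathcal{L}_T g$ on the dense set of polynomial $g$, and by continuity of the bounded operator $\mathcal{J}^{-1}(C_T)^*\mathcal{J}$ the identity $\mathcal{J}^{-1}(C_T)^*\mathcal{J}=\mathcal{L}_T$ extends to all of $H_{P,-\alpha,\gamma}$.

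The \emph{main obstacle} is the bookkeeping of the contour deformations: as in the sixteen-case analysis of Theorem~\ref{thm:boundedness_large2small}, for each pair $(\sigma,-\sigma)$ and each $\ell\in\{1,-1\}$ one must check that the intermediate tori used in passing from $\mathbb{T}^\sigma_{P,\delta_\ell}$ to $\Ttwo$ and then (after the substitution) to the target torus all lie in the common domain of holomorphicity of the relevant integrand — which is controlled precisely by the strongly-expanding mapping properties (i)--(iii) assumed in Theorem~\ref{thm:boundedness_large2small}. A secondary point requiring care is confirming that the four Laurent components of $\mathcal{L}_T g$ land in the correct summands $\check H^{\sigma}_{P,\cdot}$, so that the back-deformation produces exactly the pairing $l_{\mathcal{L}_T g}$ and not a mismatched one; this again follows from the mapping properties but should be stated explicitly.
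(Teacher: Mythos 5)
Your proposal is correct and follows essentially the same route as the paper: reduce by density to Laurent data, rewrite the dual pairing as distinguished-boundary integrals, deform contours to $\T^2$ by Cauchy's theorem, change variables via $z \mapsto T^{-1}(z)$ to produce the weight $w$ of \eqref{eq:PF_op}, deform back, and inherit well-definedness and boundedness of $\mathcal{L}_T$ through the conjugation by $\mathcal{J}$. The only cosmetic difference is that the paper tests against monomials $p_n$, $p_m$ (so orthogonality collapses the sum over $\sigma$ to a single torus pairing on each side), whereas you keep general Laurent polynomials and recombine the four projected components on $\T^2$ — the bookkeeping you correctly flag as the main point requiring care.
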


\begin{proof}
For notational simplicity, we assume $P = \mathbb{I}$, the proof for general $P \in \GLtwo(\R)$ being identical.
We want to show that $C_T^* \mathcal{J} = \mathcal{J} \mathcal{L}_T$.
By the density of Laurent polynomials in $H_{P, -\alpha, \gamma}$ and $H_{P, \alpha, -\gamma}$, it suffices to show this for monomials, i.e.
$(C_T^* \mathcal{J}(p_n))(p_m) = (\mathcal{J} \mathcal{L}_T(p_n))(p_m)$ for all $n, m \in \Z^2$.
For any $n, m \in \mathbb{Z}^2$ we have that
\[
(C_T^* \mathcal{J}(p_n))(p_m)
= (\mathcal{J}(p_n)) (C_T p_m)
= \sum_{\sigma \in \Sigma} \langle \hat{\Pi}^\sigma (C_T p_m), \check{\Pi}^{-\sigma} p_n \rangle_{\mathbb{T}^\sigma_{\delta(\sigma)}}
\]
with $\delta(\sigma) = \alpha$ for $\sigma \in \Sigma^1$, and $\delta(\sigma) = -\gamma$ otherwise.
We note that $\mathbb{Z}^2 = \bigcup_{\sigma \in \Sigma} \check{\mathbb{Z}}^{\sigma}$
is a disjoint union, so that
for every $n\in \mathbb{Z}^2$ there exists exactly one $\sigma' \in \Sigma$ such that
$\check{\Pi}^{- \sigma'} p_n = p_n$ and $\check{\Pi}^{- \sigma} p_n = 0$ for all $\sigma \neq \sigma'$. Moreover we have
$\langle \hat{\Pi}^\sigma f, \check{\Pi}^{\sigma'} g \rangle_{\mathbb{T}^\sigma_\delta} = 0$
whenever $\sigma' \neq -\sigma$. It follows that
\[
\sum_{\sigma \in \Sigma} \langle \hat{\Pi}^\sigma (C_T p_m),
\check{\Pi}^{-\sigma} p_n\rangle_{\mathbb{T}^\sigma_{\delta(\sigma)}}
= \langle \hat{\Pi}^{\sigma'}(C_T p_m), \check{\Pi}^{- \sigma'} p_n\rangle_{\mathbb{T}^{\sigma'}_{\delta(\sigma')}}
= \langle C_T p_m, p_n\rangle_{\mathbb{T}^{\sigma'}_{\delta(\sigma')}},
\]
where the second step follows from $\sum_{\sigma \in \Sigma} \hat{\Pi}^\sigma(C_T p_m) = C_T p_m$.
Analogously,
\[
(\mathcal{J} \mathcal{L}_T(p_n))(p_m)
= \sum_{\sigma \in \Sigma} \langle \hat{\Pi}^\sigma p_m, \check{\Pi}^{-\sigma} (\mathcal{L}_T(p_n))\rangle_{\mathbb{T}_{\delta(\sigma)}^\sigma}
= \langle p_m, \mathcal{L}_T(p_n)\rangle_{\mathbb{T}_{\delta(\sigma'')}^{\sigma''}},
\]
with suitable $\sigma''$.
Finally, we have that
\begin{align*}
&\langle C_T p_m, p_n\rangle_{\mathbb{T}_{\delta(\sigma')}^{\sigma'}}
= \int_{\mathbb{T}_{\delta(\sigma')}^{\sigma'}} (p_m \circ T) p_n \,dm
= \int_{\mathbb{T}^2} (p_m \circ T) p_n \,dm \\
= &\int_{\mathbb{T}^2} p_m  (p_n \circ T^{-1}) w \,dm
= \int_{\mathbb{T}_{\delta(\sigma'')}^{\sigma''}} p_m  (p_n \circ T^{-1}) w \,dm
= \langle p_m, \mathcal{L}_T(p_n)\rangle_{\mathbb{T}_{\delta(\sigma'')}^{\sigma''}},
\end{align*}
where $w(z) = \omega_T \det (DT^{-1}(z))\cdot z/T^{-1}(z)$ and we have used that the integrands are holomorphic
on a neighbourhood of $\cl{A_{\gamma}}$. Combining the above, we obtain the claim of the proposition.
\end{proof}

\begin{rem}
Using Theorem \ref{thm:thmA} with the weight function being (the complex version of) the determinant of $DT$ gives rise to a transfer operator corresponding
to the map $T^{-1}$, which is well defined and trace-class on a suitable $H_{P, \alpha, -\gamma}$. Now, using the previous proposition,
it follows that the operator $f \mapsto f\circ T^{-1}$ is well-defined and trace-class on $H_{P,-\alpha, \gamma}$.
\end{rem}

\section{Resonances for certain rational Anosov maps}\label{sec:resonances_rational}

This section is devoted to proving Theorem \ref{thm:thmB}, that is, determining
the explicit form of eigenvalues of composition operators associated to analytic maps
with holomorphic extensions to certain domains of $\Ctwohat$. In order to capture all the
intricacies of the resonances in this result, we will use a fundamental result by
Rudin and Stout \cite[Theorem 5.2.5]{R} characterising inner functions on polydisks.
An inner function on $\D^n$ is a function $f\in H^\infty(\D^n)$ whose radial
boundary values satisfy $|f^*(z)|=1$ almost everywhere\footnote{
Knese \cite[Cor.~14.6]{Kn} proved that the radial limit exists and is unimodular at every point $z\in \T^n$.
However, in general $f^*$ need not be continuous on $\T^n$.} on $\T^n = \partial \D^n$.
We denote by $U(\D^n)$ the
class of all continuous complex functions on $\cl{\D^n}$
whose restriction to $\D^n$ is holomorphic.

\begin{theorem}\label{thm:Rud}
Every rational inner
function $f$ on $\D^n$, $n \in \N$, has the form
\begin{equation}\label{eq:Rud}
f(z) = \frac{M(z) \tilde{Q}(1/z)}{Q(z)},
\end{equation}
where $M$ is a monomial, $Q$ a polynomial with no
zeros in $\D^n$, and $\tilde{Q}$ is the polynomial whose
coefficients are the complex conjugates of the coefficients of $Q$.
Moreover, every function
$f\in U(\D^n)$ which is inner is
rational, and in this case $Q$ has no zeros in $\cl{\D^n}$.
\end{theorem}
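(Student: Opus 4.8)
The plan is to prove the two assertions separately: first the explicit form \eqref{eq:Rud} for a given \emph{rational} inner function, then the rationality of an \emph{arbitrary} inner $f\in U(\D^n)$.

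For the first assertion, write $f=P/Q$ in lowest terms, so that $\gcd(P,Q)=1$ in the unique factorisation domain $\C[z_1,\dots,z_n]$. Since $f$ is inner it lies in $H^\infty(\D^n)$ with $\|f\|_\infty\le 1$, and coprimality forces $Q$ to be zero-free on $\D^n$ (otherwise $f$ would be unbounded near a zero of $Q$ at which $P$ does not vanish). The key step is to convert the boundary condition $|f|=1$ a.e.\ on $\T^n$ into a polynomial identity. On $\T^n$ one has $\cj{z_j}=1/z_j$, so for any polynomial $R(z)=\sum_\alpha r_\alpha z^\alpha$ the boundary value of its conjugate is $\cj{R(z)}=\tilde R(1/z)$, where $\tilde R$ has the conjugated coefficients. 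Hence $|f|^2=1$ on $\T^n$ reads
\[ P(z)\,\tilde P(1/z) = Q(z)\,\tilde Q(1/z) \qquad (z\in\T^n). \]
Both sides are Laurent polynomials, and a Laurent polynomial vanishing on $\T^n$ has all its Fourier coefficients equal to zero; thus $\T^n$ is a set of uniqueness and the identity holds for all $z\in(\C\setminus\{0\})^n$.

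To extract the form \eqref{eq:Rud}, I would multiply through by a suitable monomial to clear negative powers, obtaining the genuine polynomial identity $z^{d_Q}\,P\,P^{\sharp}=z^{d_P}\,Q\,Q^{\sharp}$, where $R^{\sharp}(z):=z^{d_R}\tilde R(1/z)$ denotes the reflected polynomial and $d_R$ the multidegree. Since $Q$ is zero-free on $\D^n$ we have $Q(0)\ne 0$, so $Q$ carries no monomial factor; combined with $\gcd(P,Q)=1$, comparison of irreducible factors in the UFD $\C[z_1,\dots,z_n]$ shows that every non-monomial irreducible factor of $P$ divides $Q^{\sharp}$ and conversely, forcing $P$ to agree with $Q^{\sharp}=z^{d_Q}\tilde Q(1/z)$ up to a constant and a monomial. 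Absorbing these into a single monomial $M$ yields $P=M\,\tilde Q(1/z)$, and hence the claimed form $f=M\,\tilde Q(1/z)/Q$.

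For the second assertion the principal obstacle is to prove that a continuous inner $f\in U(\D^n)$ is rational at all. By continuity, $|f|\equiv 1$ on the entire distinguished boundary $\T^n$. On the exterior polydisk $E=\{z\in\Chat^n:|z_j|>1\ \forall j\}$ define the reflected function $G(z)=1/\cj{f(1/\cj{z_1},\dots,1/\cj{z_n})}$, which is holomorphic near $\T^n$ from outside and, since $1/\cj{z_j}=z_j$ and $1/\cj f=f$ there, agrees continuously with $f$ along $\T^n$. I would then invoke a reflection principle across the totally real $n$-torus $\T^n$ to glue $f$ and $G$ into a single function holomorphic across $\T^n$; making this several-variable reflection rigorous — carried out one variable at a time while controlling the joint extension — is the technical heart of the Rudin--Stout argument and is where I expect the real work to lie. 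Granting the extension, $f$ becomes meromorphic on the compact manifold $\Chat^n$, and meromorphic functions on $\Chat^n$ are rational. Finally, with $f=P/Q$ rational in lowest terms and continuous (hence finite) on $\cl{\D^n}$, the factor $Q$ cannot vanish on $\cl{\D^n}$: the hypersurface $\{Q=0\}$ would contain points at which $P\ne 0$ (by coprimality), near which $f$ would be unbounded, contradicting continuity. This establishes the last claim.
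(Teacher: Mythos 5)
The paper offers no proof of this statement to compare against: Theorem \ref{thm:Rud} is imported verbatim from Rudin's book (\cite[Theorem 5.2.5]{R}), so your attempt has to be measured against the classical Rudin--Stout argument. Your first part (the form \eqref{eq:Rud} for a \emph{rational} inner function) is essentially that classical argument and is correct modulo routine bookkeeping: in the UFD comparison you must track multiplicities, check that the reflection $R \mapsto R^{\sharp} = z^{d_R}\tilde R(1/z)$ is multiplicative and involutive on polynomials without monomial factors, and that it preserves coprimality; with that, the mutual divisibility of $P$ and $Q^{\sharp}$ gives $P = c\,z^{a}Q^{\sharp}$ as you claim.

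The second half, however, contains genuine gaps. First, the step you defer (gluing $f$ with $G(z)=1/\cj{f(1/\cj{z_1},\dots,1/\cj{z_n})}$ across the totally real torus $\T^n$) is a real theorem -- after writing $z_j = e^{iw_j}$ it is exactly Bogoliubov's edge-of-the-wedge theorem -- and declaring it ``the technical heart'' without proof means rationality is not established. Second, and more seriously because you do not flag it: even granting the gluing, you obtain meromorphy only on $\D^n \cup N \cup E$, where $N$ is a neighbourhood of $\T^n$ and $E$ the exterior polydisk. For $n\geq 2$ this set omits all mixed regions of $\Chat^n$, e.g.\ $\{|z_1|<1<|z_2|\}$ and points with some but not all coordinates at $\infty$, so the principle ``meromorphic on the compact manifold $\Chat^n$ implies rational'' simply does not apply; your picture is the $n=1$ one, where $\D \cup N \cup E$ \emph{is} all of $\Chat$. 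Filling this requires substantial extra machinery (envelope of holomorphy of the Reinhardt set $\D^n\cup N\cup E$ together with Levi's meromorphic extension theorem, or the classical slice argument: for $w\in\T^{n-1}$ each slice $f(\cdot,w)$ is a finite Blaschke product of constant degree, from which joint rationality is assembled). Third, your final argument that $Q$ cannot vanish on $\cl{\D^n}$ is incorrect as stated: the points of $\{Q=0\}$ where $P\neq 0$ need not lie in $\cl{\D^n}$, and an inner function satisfies $|f|\leq 1$ on $\D^n$, so unboundedness can never be the contradiction. The standard example $f(z) = (2z_1z_2-z_1-z_2)/(2-z_1-z_2)$ is rational inner with $Q(1,1)=0$ and also $P(1,1)=0$ (indeed $|P|=|Q|$ on $\T^n$ forces zeros of $Q$ on $\T^n$ to be zeros of $P$), yet $|f|\leq 1$ throughout $\D^2$; what fails at $(1,1)$ is \emph{continuity} -- $f\to 1$ along $(e^{is},e^{-is})$ but $f\to -1$ along $(r,r)$ -- not boundedness. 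Proving that a boundary zero of $Q$ forces such a discontinuity is precisely the content of the last assertion, and it needs an argument of this kind rather than a blow-up argument.
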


A direct consequence of this theorem is a characterization of the form that
analytic Anosov diffeomorphisms on $\T^2$ from Theorem \ref{thm:thmB} can take,
namely that each component is necessarily a rational function satisfying a certain set of properties.
For for $k,l\in\{0,1\}$, we denote by $I_{kl}\colon \Ttwo \to \Ttwo$ the map
\[I_{kl}(z_1, z_2)=(z_1^{1-2k}, z_2^{1-2l}),\]
noting that $I_{kl} = \tau_{I^\sigma}$ with $\sigma = (1-2k, 1-2l)$.

\begin{cor}\label{cor:TA}
Let $T$ be an analytic diffeomorphism of $\T^2$ with holomorphic extension to a neighbourhood of $\T^2$.
Assume there exist $\sigma,\sigma' \in \Sigma$ so that $T$ holomorphically extends to $D^\sigma$ with $T(D^\sigma) \subset D^{\sigma'}$.
Then each component of $T$ can be written as a rational function. Moreover, writing $T = T_A$ with $A = (a_1, \ldots, a_n)$ the collection of
all coefficients occurring in $T$ (in any order), and denoting $\cj{A} = (\cj{a_1}, \ldots, \cj{a_n})$,
we have the following properties:
\begin{enumerate}[(i)]
  \item $I_{11} \circ T_A \circ I_{11} = T_{\cj{A}}$,
  \item $T_{\cj{A}}(\cj{z}) = \cj{T_A(z)}$ for any $z \in \Ctwohat$.
\end{enumerate}
\end{cor}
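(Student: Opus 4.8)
The plan is to reduce to the bidisk, apply the Rudin--Stout characterisation (Theorem~\ref{thm:Rud}) to each component, and then read off the symmetry relations (i)--(ii) from the resulting rational structure. First I would normalise the two polydisks: since every coordinate inversion $I_{kl}=\tau_{I^\rho}$ is a rational biholomorphism permuting the $D^\sigma$, I can choose $\Phi,\Phi'\in\{I_{kl}\colon k,l\in\{0,1\}\}$ with $\Phi(D^\sigma)=\Dtwo=D^{(-1,-1)}$ and $\Phi'(D^{\sigma'})=\Dtwo$ (taking exponent $-\sigma_j$ in the $j$-th coordinate), and set $\tilde T=\Phi'\circ T\circ\Phi^{-1}$. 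Then $\tilde T$ is holomorphic on $\Dtwo$ with $\tilde T(\Dtwo)=\Phi'(T(D^\sigma))\subseteq\Phi'(D^{\sigma'})=\Dtwo$; as $\Phi,\Phi'$ and their inverses are rational, $T$ is rational if and only if $\tilde T$ is.

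Next I would show each component $\tilde T_j$ is a rational inner function. Since $T$ is a diffeomorphism of $\Ttwo$ and the coordinate inversions $\Phi,\Phi'$ preserve $\Ttwo=\partial^*\Dtwo$, the map $\tilde T$ sends the distinguished boundary $\Ttwo$ into $\Ttwo$; hence each $\tilde T_j\in H^\infty(\Dtwo)$ has unimodular boundary values on $\Ttwo$ and is therefore inner. Moreover $T$, and thus $\tilde T_j$, extends holomorphically across $\Ttwo$ by hypothesis. The main obstacle is to upgrade this regularity at the distinguished boundary to membership $\tilde T_j\in U(\Dtwo)$, i.e.\ continuity on the whole closed bidisk $\cl{\Dtwo}$, as Theorem~\ref{thm:Rud} requires: holomorphy near $\Ttwo$ controls $\tilde T_j$ only on a neighbourhood of the distinguished boundary, not on the faces $\{|z_1|=1,\,|z_2|<1\}$ and $\{|z_1|<1,\,|z_2|=1\}$ of $\partial\Dtwo$. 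I would resolve this by a boundary-regularity argument mirroring the one-variable fact that an inner function holomorphic across $\T$ is a finite Blaschke product: holomorphy and unimodularity across $\Ttwo$ preclude the singular boundary behaviour that would obstruct continuity on the faces, so $\tilde T_j$ attains its boundary values continuously, giving $\tilde T_j\in U(\Dtwo)$. Theorem~\ref{thm:Rud} then yields $\tilde T_j(z)=M(z)\tilde Q(1/z)/Q(z)$ with $Q$ nonvanishing on $\cl{\Dtwo}$; transporting back through the rational maps $\Phi,\Phi'$ shows each component of $T$ is rational, so we may write $T=T_A$.

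Finally I would deduce the symmetry relations, which are now short. Property (ii) is purely formal: writing each component of $T_A$ as a ratio of polynomials in $z$ with coefficients from $A$, conjugating the output conjugates every coefficient and replaces $z^\alpha$ by $\cj z^\alpha$, so $\cj{T_A(z)}=T_{\cj A}(\cj z)$ holds as an identity of rational functions on $\Ctwohat$. For (i) I would first note that (ii) forces $T_{\cj A}(\Ttwo)\subseteq\Ttwo$: for $z\in\Ttwo$ one has $\cj z=1/z\in\Ttwo$, so $T_{\cj A}(z)=\cj{T_A(\cj z)}$ has modulus one because $T_A(\Ttwo)\subseteq\Ttwo$. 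Now compare the rational maps $z\mapsto I_{11}\circ T_A\circ I_{11}(z)$, whose $j$-th component is $1/T_{A,j}(1/z)$, and $z\mapsto T_{\cj A}(z)$: on $\Ttwo$ we have $1/z=\cj z$ and, by (ii), $T_{A,j}(\cj z)=\cj{T_{\cj A,j}(z)}$, whence $1/T_{A,j}(1/z)=1/\cj{T_{\cj A,j}(z)}=T_{\cj A,j}(z)$ using $|T_{\cj A,j}|=1$ there. Both sides being rational and holomorphic near $\Ttwo$, and $\Ttwo$ being a uniqueness set for functions holomorphic in a neighbourhood of it, the two maps agree on all of $\Ctwohat$, proving (i). The only genuinely difficult step is the continuity claim $\tilde T_j\in U(\Dtwo)$ flagged above; the reduction and the two identities are routine.
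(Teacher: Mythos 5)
Your route is the same as the paper's: conjugate by the coordinate inversions $\phi^\sigma(z)=z^{-\sigma}$ to get a self-map $\tilde T$ of $\Dtwo$, note that its components are inner, apply Theorem~\ref{thm:Rud}, transport back, and obtain (i)--(ii) from the rational normal form; your derivation of (i) from (ii) together with unimodularity on $\Ttwo$ and the identity theorem is a harmless variant of the paper's argument, which instead verifies (i) directly for functions of the form \eqref{eq:Rud} and uses that $I_{11}$ commutes with $\phi^\sigma$. The genuine gap is at the step you yourself flag as the crux, namely $\tilde T_j\in U(\Dtwo)$. What you offer there is not a proof: the one-variable fact you want to mirror (inner $+$ holomorphic across $\T$ $\Rightarrow$ finite Blaschke product) rests on the Blaschke/singular/outer factorization, which has no analogue on the bidisk, and ``unimodularity precludes singular behaviour on the faces'' is not the right mechanism --- on a face $\{|z_1|=1,\ |z_2|<1\}$ the modulus of a component need not be $1$ at all (for the cat map the first component $z_1^2z_2$ has modulus $|z_2|<1$ there). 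The difficulty is that a priori you have no information whatsoever on the faces: the hypotheses give holomorphy only on $\Dtwo$ and near the distinguished boundary $\Ttwo$, and your sketch never produces control there.

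The claim you need is nevertheless true, but for a genuinely two-variable reason (forced analytic continuation), which is presumably what the paper takes as standard when it asserts continuity on $\cl{\Dtwo}$. Each $\tilde T_j$ is holomorphic on $\Dtwo\cup N$, where $N\supseteq\{z:\ 1-\epsilon<|z_i|<1+\epsilon,\ i=1,2\}$ is a Reinhardt neighbourhood of $\Ttwo$. On $N$ write the Laurent expansion $\tilde T_j(z)=\sum_{n\in\Z^2}c_nz^n$; comparing coefficients with the Taylor expansion of $\tilde T_j$ on $\Dtwo$ over the overlap $\{1-\epsilon<|z_i|<1\}$ (both families of coefficients are computed by integration over tori $\T^2_r$ with $r$ in the overlap) forces $c_n=0$ for $n\notin\N_0^2$. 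Convergence of the Laurent series on the torus $\{|z_1|=|z_2|=1+\epsilon/2\}$ then gives Cauchy estimates $|c_n|\leq C(1+\epsilon/2)^{-(n_1+n_2)}$ for $n\geq 0$, so $\sum_{n\geq 0}c_nz^n$ converges on the polydisk of radius $1+\epsilon/2$ and extends $\tilde T_j$ holomorphically to a neighbourhood of $\cl{\Dtwo}$. In particular $\tilde T_j\in U(\Dtwo)$ and is inner, so Theorem~\ref{thm:Rud} applies and the rest of your argument goes through; without this (or an equivalent) substitution, the hypothesis of the Rudin--Stout theorem is simply unverified.
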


\begin{proof}
We define $\phi^\sigma \colon \D^2 \to D^\sigma$ by $\phi^\sigma(z) = z^{-\sigma} = (z_1^{-\sigma_1}, z_2^{-\sigma_2})$, then
each component of $\hat T = (\phi^{\sigma'})^{-1} \circ T \circ \phi^\sigma \colon \D^2 \to \D^2$ is a rational inner function in $\D^2$ continuous
on all of $\cl{\D^2}$. By Theorem \ref{thm:Rud}, each component is a rational function of the form \eqref{eq:Rud},
and this property is preserved under composition with $\phi^\sigma$.
Furthermore, property $(i)$ holds for maps whose components are of the form \eqref{eq:Rud}.
Since $I_{11}$ and $\phi^\sigma$ commute for any $\sigma \in \Sigma$, we have that
\[
I_{11} \circ T_A \circ I_{11}
 = \phi^{\sigma'} \circ I_{11} \circ \hat{T}_A \circ I_{11} \circ (\phi^\sigma)^{-1}
= \phi^{\sigma'} \circ \hat{T}_{\cj{A}} \circ (\phi^\sigma)^{-1}
= T_{\cj{A}},
\]
proving that property $(i)$ is preserved under composition with $\phi^\sigma$. Lastly, $(ii)$ holds for all maps whose components
are rational functions, and hence for the given map $T = T_A$.
\end{proof}

We shall require the following lemma, a direct consequence of the maximum modulus principle.
\begin{lem}\label{lem:max_mod}
Fix $\sigma, \sigma' \in \Sigma$, $a \in \R^2_{>0}$, and let $T \colon D^\sigma \to D^{\sigma'}$ be holomorphic with
$T(\T^\sigma_a) \subset D^{\sigma'}_a$. Then $T(D^\sigma_a) \cc D^{\sigma'}_a$.
\end{lem}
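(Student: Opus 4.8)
The plan is to reduce the statement to the classical maximum modulus principle on a polydisk by pulling everything back through the biholomorphisms $\phi^\sigma(z) = z^{-\sigma} = (z_1^{-\sigma_1}, z_2^{-\sigma_2})$ introduced in the proof of Corollary~\ref{cor:TA}, where $\phi^\sigma \colon \D^2 \to D^\sigma$ is a biholomorphism in the extended sense of Remark~\ref{rem:hol_ext}. First I would record how $\phi^\sigma$ transports the relevant objects. Writing $w = \phi^\sigma(z)$, so that $|w_j| = |z_j|^{-\sigma_j}$ and hence $|w_j|^{\sigma_j} = |z_j|^{-1}$, one checks directly that $\phi^\sigma$ maps the sub-polydisk $\Delta_a := \{z \in \D^2 \colon |z_1| < e^{-a_1},\ |z_2| < e^{-a_2}\}$ onto $D^\sigma_a$, and the distinguished boundary torus $\partial^*\Delta_a = \{|z_1| = e^{-a_1},\ |z_2| = e^{-a_2}\}$ onto $\T^\sigma_a$; the analogous statements hold for $\phi^{\sigma'}$ and $D^{\sigma'}_a$. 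Since $a \in \R^2_{>0}$ we have $e^{-a_j} < 1$, so $\cl{\Delta_a} \subset \D^2$.

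Next I would pass to the conjugated map $\hat T := (\phi^{\sigma'})^{-1} \circ T \circ \phi^\sigma \colon \D^2 \to \D^2$, holomorphic as a composition of holomorphic maps, whose components $\hat T_j$ are therefore bounded holomorphic functions on $\D^2$. Translating the hypothesis through the biholomorphisms, $T(\T^\sigma_a) \subset D^{\sigma'}_a$ becomes $\hat T(\partial^*\Delta_a) \subset \Delta_a$; in particular $|\hat T_j(z)| < e^{-a_j}$ for all $z \in \partial^*\Delta_a$ and $j = 1,2$.

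The analytic core is then the maximum modulus principle for the polydisk $\Delta_a$: since each $\hat T_j$ is holomorphic on $\D^2 \supset \cl{\Delta_a}$, its modulus on $\cl{\Delta_a}$ is attained on the distinguished boundary, so that $\sup_{\cl{\Delta_a}} |\hat T_j| = \max_{\partial^*\Delta_a} |\hat T_j|$. By compactness of $\partial^*\Delta_a$ and continuity, the right-hand side equals some $M_j < e^{-a_j}$. Consequently $\hat T(\Delta_a) \subseteq \{|w_1| \le M_1,\ |w_2| \le M_2\}$, which is a compact subset of $\Delta_a$, giving $\hat T(\Delta_a) \cc \Delta_a$.

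Finally I would transport this back: applying the homeomorphism $\phi^{\sigma'}$, which maps $\Delta_a$ onto $D^{\sigma'}_a$ and sends the compact set $\cl{\hat T(\Delta_a)} \subset \Delta_a$ to a compact subset of $D^{\sigma'}_a$, yields $T(D^\sigma_a) = \phi^{\sigma'}(\hat T(\Delta_a)) \cc \phi^{\sigma'}(\Delta_a) = D^{\sigma'}_a$, as required. I expect the only genuinely delicate point to be the bookkeeping of the correspondences under $\phi^\sigma$ and $\phi^{\sigma'}$ — in particular verifying that the distinguished boundary torus maps exactly to $\T^\sigma_a$, and keeping all closures in $\Chat^2$ so that the points at infinity appearing when some $\sigma_j = +1$ cause no trouble; the mathematical content is otherwise entirely the one-variable maximum principle applied iteratively.
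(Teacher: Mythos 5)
Your proof is correct and is essentially the paper's own argument: both reduce to the polydisk via the conjugation $\hat T = (\phi^{\sigma'})^{-1} \circ T \circ \phi^\sigma$ with $\phi^\sigma(z) = z^{-\sigma}$, and then apply the maximum modulus principle on the distinguished boundary, which the paper derives by applying the one-variable principle in each variable separately — exactly the iteration you acknowledge at the end. The only difference is ordering (the paper proves the case $\sigma = \sigma' = (-1,-1)$ first and then transports the general case to it, while you conjugate first), which is immaterial.
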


\begin{proof}
We write $\hat \sigma = (-1, -1)$, and begin with the case $\sigma = \sigma' = \hat \sigma$
(i.e.~$D^\sigma = D^{\sigma'} = \D^2$), whose proof is a direct application of the
(multivariate) maximum modulus principle. By compactness,
there exists $z^* = (z_1^*, z_2^*) \in \cl{D^\sigma_a}$ such that $| T_1 (z^*) | = \max_{z \in \cl{D^\sigma_a}} | T_1 (z) |$.
Defining $f \colon \D \to \D$ by $f(z_1) = T_1(z_1, z^*_2)$, the maximum modulus principle implies that
\[ | T_1(z^*) | = \max_{|z_1| \leq e^{-a_1}} | f(z_1) | = \max_{|z_1| = e^{-a_1}} | f(z_1) |,
\]
and hence without loss of generality we can assume $|z_1^*| = e^{-a_1}$.
Analogously, $z_2^*$ can be assumed to have modulus $e^{-a_2}$.
It follows that $\max_{z \in \cl{D^\sigma_a}} | T_1(z) | = \max_{z \in \T^\sigma_a} | T_1(z) | < e^{-a_1}$
(using $T(\T^\sigma_a) \subset D^\sigma_a$).
Repeating the argument for $T_2$ yields $\max_{z \in \cl{D^\sigma_a}} | T_2(z) | < e^{-a_2}$,
and hence $T(D^\sigma_a) \cc D^{\sigma'}_a$.

For general $\sigma, \sigma' \in \Sigma$, let $\phi^\sigma \colon \D^2 \to D^\sigma$
be given by $\phi^\sigma(z) = z^{-\sigma} = (z_1^{-\sigma_1}, z_2^{-\sigma_2})$,
so that $\phi(D^{\hat\sigma}_a) = D^\sigma_a$ and $\phi(\T^{\hat \sigma}_a) = \T^\sigma_a$.
Then $\tilde T = (\phi^{\sigma'})^{-1} \circ T \circ \phi^\sigma \colon \D^2 \to \D^2$ is holomorphic
and satisfies $\tilde T(\T^{\hat \sigma}_a) \subset D^{\hat \sigma}_a$.
It follows that $\tilde T(D^{\hat \sigma}_a) \cc D^{\hat \sigma}_a$, and
the assertion $T(D^\sigma_a) \cc D^{\sigma'}_a$ follows.
\end{proof}

The explicit form of the spectral determinant in Theorem \ref{thm:thmB} will follow by computing
traces of certain trace-class operators.
If $L\colon H\to H$ is a trace-class operator on a separable Hilbert space $(H, (\cdot, \cdot))$ and
$\{e_n\}_{n\in \mathcal{I}}$ is an orthonormal basis of $H$ with some index set $\mathcal{I}$, then the trace
of $L$ is
\begin{equation}\label{eq:tr_def}
\operatorname{Tr}(L) = \sum_{n \in \mathcal{I}} (L e_n, e_n),
\end{equation}
and its determinant is given by
\begin{equation}\label{eq:det_def}
\det{(\operatorname{Id} - z L)} =
\exp \left( -\sum_{k=1}^{\infty} \frac{z^k}{k} \operatorname{Tr} (L^k)\right),
\end{equation}
for all $z\in \C$ in a suitable neighbourhood of $0$. Moreover, both
$\operatorname{Tr}$ and $\det$ are spectral, that is,
$\operatorname{Tr}(L) = \sum_{k=1}^\infty \lambda_k(L)$, and counting multiplicities,
the zeros of the entire function $z \mapsto \det{(\operatorname{Id} - z L)}$ are precisely the
reciprocals of the eigenvalues $\lambda_k(L)$ of $L$ (see \cite[4.62, 4.7.14, 4.7.15]{Pi}).

\begin{notation}
The multipliers at a fixed point $z^*\in\Ctwohat$ of a rational map $T\colon \Ctwohat \to \Ctwohat$
are given by the eigenvalues of $D T(z^*)$ if $z^*\in\Ctwo$, and
by the eigenvalues of $D \hat{T}(I_{kl} (z^*))$ with $(k,l) = (1,1), (1,0)$ or $(0,1)$
for $z^* = (\infty, \infty), (\infty, w)$ or $(w, \infty)$ respectively, where $w\in\C$ and
$\hat{T} = I_{kl} \circ T \circ I_{kl}$.
\end{notation}

\begin{lem}\label{lem:trace_fp}
For $\sigma\in\Sigma$ and $\delta \in \Rtwo$ let $\varphi\colon \Dsd \to \Dsd$ be a holomorphic map
such that $\varphi(\Dsd) \cc \Dsd$. Let $C_\varphi$ denote the corresponding composition operator
and $M_w$ the multiplication operator with $w$ a holomorphic function on a neighbourhood of $\cl{\Dsd}$.
Then $M_w C_\varphi$ is trace-class on $H^\sigma_\delta$ and
\[\operatorname{Tr} ((M_w C_\varphi)^k) = \frac{w(z^*)^k}{\det(\mathbb{I} - D\varphi^k(z^*))} = \frac{w(z^*)^k}{(1-\mu^k_1)(1-\mu^k_2)},\]
with $\mu_1, \mu_2$ the (not necessarily distinct) multipliers at the unique attracting fixed point $z^*\in \Dsd$ of $\varphi$.
\end{lem}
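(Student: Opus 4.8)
The plan is to reduce the statement to the case $k=1$ on the unit bidisk, and then to evaluate the trace by recognising it as a two-dimensional contour integral that localises at the fixed point. \emph{Reductions.} First I would observe that $(M_w C_\varphi)^k = M_{w_k} C_{\varphi^k}$, where $w_k = \prod_{j=0}^{k-1} w\circ\varphi^j$ and $\varphi^k$ is the $k$-fold iterate. Since $\varphi(\Dsd)\cc\Dsd$ we have $\varphi^k(\Dsd)\subseteq\varphi(\Dsd)\cc\Dsd$, the weight $w_k$ is holomorphic on a neighbourhood of $\cl{\Dsd}$, the unique fixed point of $\varphi^k$ is again $z^*$ with multipliers $\mu_1^k,\mu_2^k$ (chain rule), and $w_k(z^*)=w(z^*)^k$ because $\varphi^j(z^*)=z^*$. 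Hence it suffices to prove $\operatorname{Tr}(M_w C_\varphi)=w(z^*)/\det(\mathbb{I}-D\varphi(z^*))$ for an arbitrary admissible pair $(w,\varphi)$. Next, the map $\psi(z)=e^{v^\sigma_\delta}z^{-\sigma}$ is a biholomorphism $\D^2\to\Dsd$, and $C_\psi$ is an isometric isomorphism $H^\sigma_\delta\to H^2(\D^2)=H^{--}_{\mathbb{I},0}$ (as in Proposition~\ref{prop:sup_comp}, Proposition~\ref{prop:isoiso}) conjugating $M_w C_\varphi$ to $M_{\tilde w}C_{\tilde\varphi}$ with $\tilde\varphi=\psi^{-1}\circ\varphi\circ\psi$ and $\tilde w=w\circ\psi$. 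I would therefore assume without loss of generality $\sigma=(-1,-1)$, $\delta=0$. As the trace is a similarity invariant, $\tilde w(\psi^{-1}(z^*))=w(z^*)$, and the eigenvalues of $D\tilde\varphi(\psi^{-1}(z^*))$ coincide with $\mu_1,\mu_2$ (this is precisely what the multiplier definition encodes at points with a coordinate equal to $\infty$, since $\psi$ incorporates the inversion $z\mapsto z^{-\sigma}$).

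\emph{Fixed point and trace-class.} On $\D^2$ the map $\tilde\varphi$ sends $\D^2$ into a smaller polydisk $U_{\rho'}\times U_{\rho'}\cc U_\rho\times U_\rho$ with $\rho'<\rho<1$. By the Earle--Hamilton fixed point theorem it has a unique fixed point $\tilde z^*\in U_{\rho'}\times U_{\rho'}$, which is attracting; thus $|\mu_j|<1$ and $\det(\mathbb{I}-D\tilde\varphi(\tilde z^*))=(1-\mu_1)(1-\mu_2)\neq 0$, so the fixed point is non-degenerate. Trace-classness follows by factoring $C_{\tilde\varphi}$ as the restriction $\mathrm{Res}\colon H^2(\D^2)\to H^2(U_\rho\times U_\rho)$ followed by composition $C_{\tilde\varphi_0}\colon H^2(U_\rho\times U_\rho)\to H^2(\D^2)$: the restriction is diagonal with singular values $\rho^{n_1+n_2}$ (hence trace-class, as $\sum_{n\in\mathbb{N}^2_0}\rho^{n_1+n_2}<\infty$), while $C_{\tilde\varphi_0}$ is bounded by Proposition~\ref{prop:sup_comp}; multiplying by the bounded operator $M_{\tilde w}$ preserves trace-classness.

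\emph{Trace via a contour integral.} Using the orthonormal monomial basis $\{z^n:n\in\mathbb{N}^2_0\}$ of $H^2(\D^2)$, I would write $\operatorname{Tr}(M_{\tilde w}C_{\tilde\varphi})=\sum_{n\in\mathbb{N}^2_0}(M_{\tilde w}C_{\tilde\varphi}z^n,z^n)$ and note that each term is the $z^n$-Taylor coefficient of $\tilde w\,\tilde\varphi_1^{n_1}\tilde\varphi_2^{n_2}$, i.e.
\[
(M_{\tilde w}C_{\tilde\varphi}z^n,z^n)=\frac{1}{(2\pi i)^2}\oint_{|z_1|=r}\oint_{|z_2|=r}\frac{\tilde w(z)\,\tilde\varphi_1(z)^{n_1}\tilde\varphi_2(z)^{n_2}}{z_1^{n_1+1}z_2^{n_2+1}}\,dz_1\,dz_2
\]
for any $r\in(\rho,1)$. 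On these contours $|\tilde\varphi_j(z)/z_j|\le\rho/r<1$, so the two geometric series sum uniformly and may be interchanged with the integral, giving
\[
\operatorname{Tr}(M_{\tilde w}C_{\tilde\varphi})=\frac{1}{(2\pi i)^2}\oint_{|z_1|=r}\oint_{|z_2|=r}\frac{\tilde w(z)}{(z_1-\tilde\varphi_1(z))(z_2-\tilde\varphi_2(z))}\,dz_1\,dz_2.
\]

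\emph{Residue evaluation (the main obstacle).} It remains to evaluate this integral, which I expect to be the crux. Setting $F(z)=(z_1-\tilde\varphi_1(z),\,z_2-\tilde\varphi_2(z))$, the integrand is $\tilde w\,dz/(F_1F_2)$ over the distinguished boundary of $U_r\times U_r$. Here $F$ has no zero on this torus (since $|z_j|=r>\rho\ge|\tilde\varphi_j|$ forces $F_j\neq 0$) and exactly one common zero of $F_1,F_2$ inside, namely the fixed point $\tilde z^*$, which is non-degenerate as $\det DF(\tilde z^*)=\det(\mathbb{I}-D\tilde\varphi(\tilde z^*))\neq 0$. By the multidimensional (Grothendieck) residue theorem the integral equals the sum of local residues over the interior common zeros of $F_1,F_2$, and at a simple zero this residue is $\tilde w(\tilde z^*)/\det DF(\tilde z^*)$. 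Hence
\[
\operatorname{Tr}(M_{\tilde w}C_{\tilde\varphi})=\frac{\tilde w(\tilde z^*)}{\det(\mathbb{I}-D\tilde\varphi(\tilde z^*))}=\frac{w(z^*)}{(1-\mu_1)(1-\mu_2)},
\]
which is the case $k=1$; the general $k$ then follows from the first reduction. The step requiring the most care is verifying the hypotheses of the residue theorem---that the only interior common zero of $F_1,F_2$ is the simple fixed point---and correctly invoking the several-complex-variables residue formula; the reductions, the geometric-series interchange and the trace-class factorisation are routine.
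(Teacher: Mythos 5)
Your proof is correct, but it takes a genuinely different route from the paper. The paper's proof consists only of the conjugation step: it maps $\Dsd$ biholomorphically to the polydisk $D^{(-1,-1)}_\delta$ via $z \mapsto z^{-\sigma}$ (using Proposition \ref{prop:isoiso} to get an isometric isomorphism of the Hardy spaces), and then cites the general machinery of Bandtlow--Jenkinson \cite{BJ}: $H^{(-1,-1)}_\delta$ is a ``favourable Hilbert space'', and both the trace-class property and the fixed-point trace formula follow from \cite[Proposition 2.10 and Theorem 4.2]{BJ}. You perform essentially the same reduction (going one step further, to the unit bidisk, which forces you to also check that the dilation $z \mapsto e^{v^\sigma_\delta}z$ is an isometry --- a point not literally covered by Proposition \ref{prop:isoiso} but immediate on monomials), and then you \emph{re-prove} the content of the cited results from scratch: trace-class via factorisation through a restriction operator with summable singular values, and the trace formula via the monomial-basis sum, geometric series, and the Grothendieck residue theorem on the polydisk. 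What the paper's approach buys is brevity and generality (the favourable-space framework applies verbatim to many other Hilbert spaces of holomorphic functions); what your approach buys is self-containedness and transparency --- it exhibits explicitly why the trace localises at the fixed point, and your reduction $(M_w C_\varphi)^k = M_{w_k}C_{\varphi^k}$ cleanly handles all powers $k$ at once.

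Two small imprecisions, both fixable and neither a real gap. First, $w_k = \prod_{j=0}^{k-1} w\circ\varphi^j$ is generally \emph{not} holomorphic on a neighbourhood of $\cl{\Dsd}$, because $\varphi$ itself is only defined on $\Dsd$; but $w_k$ is holomorphic and bounded on $\Dsd$, and that is all your argument ever uses (the contour $|z_j|=r$, $r<1$, lies inside the domain), so you should state the $k=1$ case under that weaker hypothesis. Second, for the residue theorem on a polydisk the hypothesis is not merely that $(F_1,F_2)$ has no common zero on the distinguished boundary, but that $F_1 \neq 0$ on the whole face $\{|z_1|=r\}\cap\cl{U_r\times U_r}$ and $F_2 \neq 0$ on $\{|z_2|=r\}\cap\cl{U_r\times U_r}$ (so that the distinguished boundary is a separating cycle); your estimate $|F_j(z)| \geq r - \rho > 0$ whenever $|z_j|=r$ in fact proves exactly this stronger face-wise statement, so you only need to say so.
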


\begin{proof}
Let $\hat \sigma = (-1, -1)$ and consider the case $\sigma=\hat\sigma$.
As $H^{\hat{\sigma}}_\delta $ is a `favourable Hilbert space' (see \cite[Definition 2.7]{BJ}),
the result follows by \cite[Proposition 2.10 and Theorem 4.2]{BJ}.

For general $\sigma \in \Sigma$, $\Dsd$ is biholomorphically equivalent to $D^{\hat\sigma}_\delta$
under the map $\phi^\sigma\colon D^{\hat\sigma}_\delta \to \Dsd$ given by $\phi^\sigma(z) = z^{-\sigma}$,
and $\hat\varphi = (\phi^\sigma)^{-1} \circ \varphi \circ \phi^\sigma\colon D^{\hat\sigma}_\delta \to D^{\hat\sigma}_\delta$
satisfies $\hat\varphi(D^{\hat\sigma}_\delta) \cc D^{\hat\sigma}_\delta$.
Defining $\hat w = w \circ \phi^\sigma$ on a neighbourhood of $\cl{D^{\hat\sigma}_\delta}$, the first case implies
the statement of the lemma for the operator $M_{\hat{w}} C_{\hat{\varphi}}$ on $H^{\hat{\sigma}}_\delta$.
By Proposition~\ref{prop:isoiso} the operator $C_{\phi^\sigma}\colon H^\sigma_\delta \to H^{\hat{\sigma}}_\delta$ is an isometric isomorphism, which conjugates $M_w C_{\varphi}$ to $M_{\hat{w}} C_{\hat{\varphi}}$.
The statement for $M_w C_{\varphi}$ follows,
as the multipliers of $z^* \in D^{\sigma}_\delta$ for $\varphi$ coincide with those of the
unique attracting fixed point $(\phi^\sigma)^{-1} (z^*) \in  D^{\hat\sigma}_\delta$ for $\hat \varphi$.
\end{proof}

\begin{lem}\label{lem:off_conjugation}
Let $T$ be a smooth diffeomorphism of $\Ttwo$, and let $\omega_T = 1$ if $T$ is orientation-preserving,
and $\omega_T = -1$ otherwise. Let $r, s \in \{0, 1\}$, and write
$\hat{T} = I_{rs} \circ T^{-1} \circ I_{rs}$ and $\hat{w} = \omega_T \det D\hat{T}$. Then for any $n \in \mathbb{Z}^2$and $k \in \N$ we have
\[
(C_T^k p_n, p_n)_{L^2(\Ttwo)} = ((M_{\hat{w}} C_{\hat{T}})^k p_m, p_m)_{L^2(\Ttwo)}  \text{ with } m = ((-1)^{1-r} n_1-1, (-1)^{1-s} n_2-1).
\]
\end{lem}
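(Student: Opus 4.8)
The plan is to rewrite both diagonal matrix elements as ordinary integrals over $\Ttwo$, transport the right-hand side by the measure-preserving involution $I_{rs}$, recognise the outcome as a matrix element of the transfer operator $\mathcal{L}_{T^k}=\mathcal{L}_T^k$, and then use the bilinear duality $\langle C_{T^k}f,g\rangle=\langle f,\mathcal{L}_{T^k}g\rangle$ coming from \eqref{eq:PF_op} to return to $C_T^k$. First I would record the two cocycle facts $C_T^k=C_{T^k}$ and $(M_{\hat w}C_{\hat T})^k=M_{\hat w_k}C_{\hat T^k}$, where $\hat w_k=\prod_{j=0}^{k-1}\hat w\circ\hat T^j=\omega_T^k\det D\hat T^k$ and $\hat T^k=I_{rs}\circ T^{-k}\circ I_{rs}$ (using $I_{rs}^2=\mathrm{Id}$). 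Since $\overline{p_a}=p_{-a}$ on $\Ttwo$, the identity to prove reads
\[
\int_{\Ttwo} p_n(T^kz)\,p_{-n}(z)\,dm(z) = \int_{\Ttwo}\hat w_k(z)\,p_m(\hat T^kz)\,p_{-m}(z)\,dm(z).
\]

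Next I would substitute $z=I_{rs}(u)$ in the right-hand integral. As $I_{rs}$ is a $dm$-preserving involution with $\hat T^k\circ I_{rs}=I_{rs}\circ T^{-k}$, and $p_a\circ I_{rs}=p_{I^\sigma a}$ with $\sigma=(1-2r,1-2s)$, this turns $p_m(\hat T^kz)$ into $p_{I^\sigma m}(T^{-k}u)$ and $p_{-m}(z)$ into $p_{-I^\sigma m}(u)$. The chain rule applied to $\hat T^k=I_{rs}T^{-k}I_{rs}$, together with $\det DI_{rs}(w)=\sigma_1\sigma_2\,w_1^{\sigma_1-1}w_2^{\sigma_2-1}$ and the identity $\sigma_j(\sigma_j-1)=1-\sigma_j$, gives
\[
\hat w_k(I_{rs}u)=\omega_T^k\det DT^{-k}(u)\,(T^{-k}u)_1^{\sigma_1-1}(T^{-k}u)_2^{\sigma_2-1}\,u_1^{1-\sigma_1}u_2^{1-\sigma_2},
\]
where the two factors $\sigma_1\sigma_2$ cancel. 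Collecting exponents and writing $\nu=(\sigma_1(1+m_1),\sigma_2(1+m_2))$, the integrand reorganises into $w_k(u)\,p_\nu(T^{-k}u)\,p_{-\nu}(u)$, where $w_k(u)=\omega_T^k\det DT^{-k}(u)\,u_1u_2/((T^{-k}u)_1(T^{-k}u)_2)$ is exactly the weight of $\mathcal{L}_{T^k}=(M_wC_{T^{-1}})^k$ obtained by iterating \eqref{eq:PF_op}.

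It follows that the right-hand side equals $\langle\mathcal{L}_{T^k}p_\nu,p_{-\nu}\rangle$, which by symmetry of $\langle\cdot,\cdot\rangle$ and the duality $\langle C_{T^k}f,g\rangle=\langle f,\mathcal{L}_{T^k}g\rangle$ equals $\langle C_{T^k}p_{-\nu},p_\nu\rangle=(C_T^kp_{-\nu},p_{-\nu})_{L^2}$. Comparing this with the left-hand side $(C_T^kp_n,p_n)_{L^2}$ forces $-\nu=n$, that is $m_1=-\sigma_1n_1-1=(-1)^{1-r}n_1-1$ and $m_2=-\sigma_2n_2-1=(-1)^{1-s}n_2-1$, which is precisely the asserted shift.

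The delicate part is the bookkeeping of the monomial Jacobian factors: one must track the powers produced by $\det DI_{rs}$ evaluated at both $T^{-k}u$ and $I_{rs}u$ and verify that, once combined with the hidden factor $u_1u_2/((T^{-k}u)_1(T^{-k}u)_2)$ in the transfer-operator weight and with the exponents $\sigma_jm_j$ from $I^\sigma m$, everything collapses to exactly the $-1$ shift in each coordinate. Getting these signs and the off-by-one correct is the whole content of the lemma; the remaining ingredients are the elementary change of variables $z=I_{rs}(u)$ and the composition–transfer duality already established in the excerpt.
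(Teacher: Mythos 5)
Your proof is correct and is essentially the paper's own argument traversed in the opposite direction: the paper starts from $(C_T^k p_n, p_n)_{L^2(\Ttwo)}$ and applies the substitutions $w = T^k(z)$ and then $w = I_{rs}(z)$ directly, whereas you pull the right-hand side back through $I_{rs}$ and invoke the composition--transfer duality $\langle C_{T^k}f, g\rangle = \langle f, \mathcal{L}_{T^k}g\rangle$ (which is the same change of variables, carried out in the paper's subsection relating $C_T^*$ to the transfer operator) to return to $C_T^k$. The essential content --- the cocycle identities, the Jacobian of $I_{rs}$ with the cancelling $\sigma_1\sigma_2$ factors, and the resulting $-1$ shift in each coordinate --- is identical in both arguments.
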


\begin{proof}
All the steps follow by change of variables. For any $n \in \mathbb{Z}^2$, we have
\begin{align*}
(C_T^k p_n, p_n)_{L^2(\Ttwo)}
&= \frac{1}{(2\pi)^2}\int_\Ttwo p_n(T^k(z)) p_{-n}(z) \, \frac{dz_1}{z_1} \frac{dz_2}{z_2} \\
&= \frac{\omega_T^k}{(2\pi)^2}\int_\Ttwo p_n(w) p_{-n}(T^{-k}(w)) \det{D T^{-k} (w)} \,
\frac{dw_1}{(T^{-k}(w))_1} \frac{dw_2}{(T^{-k}(w))_2} \\
&= \frac{\omega_T^k}{(2\pi)^2}\int_\Ttwo p_{n+1}(w) p_{-(n+1)}(T^{-k}(w)) \det{D T^{-k} (w)} \,
\frac{dw_1}{w_1} \frac{dw_2}{w_2},
\end{align*}
using the shorthand $n+1 = (n_1+1, n_2+1)$. Next, observe that
\[
\det{D(T^{-k} \circ I_{rs})(z)} = \det{D (I_{rs} \circ \hat{T}^{k})(z)}
= (-1)^{r+s} p_{-2r,-2s}(\hat{T}^k(z)) \cdot \det{D \hat{T}^k(z)} .
\]
Substituting $w = I_{rs}(z)$ and using that $I_{rs}$ is orientation-preserving iff $r+s$ is even, we obtain
\begin{align*}
(C_T^k p_n, p_n)_{L^2(\Ttwo)}
&= \frac{(-1)^{r+s}\omega_T^k}{(2\pi)^2}\int_\Ttwo p_{n+1}(I_{rs}(z)) p_{-(n+1)}(T^{-k}(I_{rs}(z))) \det{D (T^{-k} \circ I_{rs})(z)} \,
\frac{dz_1}{z_1^{1-2r}} \frac{dz_2}{z_2^{1-2s}}, \\
&= \frac{\omega_T^k}{(2\pi)^2} \int_\Ttwo p_{-m}(z) p_m(\hat{T}^k(z)) \det{D \hat{T}^k (z)} \,
\frac{dz_1}{z_1} \frac{dz_2}{z_2} \\
&= ((\omega_T^k \det{D \hat{T}^k}) \cdot C_{\hat{T}}^k p_{m}, p_{m})_{L^2(\Ttwo)},
\end{align*}
with $m = (-(-1)^r n_1 -1, -(-1)^s n_2 -1)$, as claimed.
\end{proof}

\begin{rem}\label{rem:en_pn_equiv}
If $T\colon \Ttwo \to \Ttwo$ has an analytic extension to a neighbourhood of $\cl{\mathcal{A}_\delta}$ for some $\delta$, then
one can check that $(C_T p_n, p_n)_{L^2(\Ttwo)} =\langle C_T e_n, \cj{e_n} \rangle_{\Tsd} = (C_T e_n, e_n)_{H^\sigma_\delta}$
for any $n\in \Ztwo, \sigma\in\Sigma$.
\end{rem}

We recall that $\mathcal{H}^{1}_\delta = \hat{H}^{--}_{\delta} \oplus \hat{H}^{++}_{\delta}$
and $\mathcal{H}^{-1}_\delta = \hat{H}^{-+}_{\delta} \oplus \hat{H}^{+-}_{\delta}$,
and that $\{e_n : n \in \hat{\Z}^\sigma\}$ forms an orthonormal
basis for $\hat{H}^\sigma_\delta$ for $\sigma\in \Sigma$.

\begin{lem}\label{lem:tr_1}
Let $\delta \in \mathbb{R}^2$ and $T \colon \Ctwohat \to \Ctwohat$
be holomorphic on a neighbourhood of $\cl{\mathcal{A}_{\delta}}$. Assume additionally
that $T$ is holomorphic on $\Dsd$ and $T(\Dsd) \cc D^{-\sigma}_\delta$ for every $\sigma\in \Sigma^1$.
Let $w$ be a holomorphic function on a neighbourhood of $\cl{\mathcal{A}_{\delta}}$.
Then $M_w C_T$ is a well-defined and trace-class operator on $\mathcal{H}^1_\delta$
with trace
$\operatorname{Tr} (M_w C_T) =
\sum_{\sigma \in \Sigma^1} \sum_{n\in \hat\Z^\sigma} \langle M_w C_T e_n, e_n\rangle_{\T^\sigma_\delta}$.
Moreover,
\begin{enumerate}[(i)]
    \item if $w \equiv 1$, then $\operatorname{Tr} (M_w C_T) = \operatorname{Tr} (C_T) = 1$,
    \item if $w = \det DT$, then $\operatorname{Tr} (M_w C_T) = 0$.
\end{enumerate}
\end{lem}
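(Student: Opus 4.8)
The plan is to dispose of the trace-class assertion and the trace formula first, and then to compute the two traces separately, the case $w = \det DT$ being the genuinely delicate one. For the operator-theoretic part I would note that the hypothesis $T(\Dsd) \cc D^{-\sigma}_\delta$ for $\sigma \in \Sigma^1$ is exactly the expansion-reversing alternative (ER) of the strongly expanding mapping property with $\ell = 1$, so that $M_w C_T$ carries $\mathcal H^1_\delta = \hat H^{--}_\delta \oplus \hat H^{++}_\delta$ into itself with each block sending a Hardy space into one on a compactly contained domain. The factorisation of Corollary \ref{cor:traceclass} (a bounded map into $\mathcal H^1_{\Delta}$ with $\Delta > \delta$, followed by the trace-class embedding $\mathcal H^1_{\Delta} \hookrightarrow \mathcal H^1_\delta$ supplied by Lemma \ref{lem:J_stretched}) then yields trace-class, and the trace formula is just \eqref{eq:tr_def} on the orthonormal basis $\{e_n : n \in \hat{\Z}^\sigma,\ \sigma \in \Sigma^1\}$. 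The reduction I would use throughout is that, for $n \in \hat{\Z}^\sigma$, the diagonal entry $(M_w C_T e_n, e_n)_{\mathcal H^1_\delta}$ is the $n$-th Laurent coefficient of $w \cdot (e_n \circ T)$, and since $w$ and $T$ are holomorphic near $\cl{\mathcal A_\delta}$ this coefficient may be computed by deforming the contour to $\Ttwo$, so $(M_w C_T e_n, e_n)_{\mathcal H^1_\delta} = (M_w C_T p_n, p_n)_{L^2(\Ttwo)}$, as in Remark \ref{rem:en_pn_equiv}.

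For (i), with $w \equiv 1$, I would exploit that $C_T e_n = e_n \circ T$ is genuinely holomorphic on the \emph{opposite} domain $D^{-\sigma}_\delta$: for $\sigma = (-1,-1)$ it lies in $\hat H^{++}_\delta$, hence has Laurent support in $\R^2_{\le 0}$, while for $\sigma = (+1,+1)$ it lies in $\hat H^{--}_\delta$ with support in $\R^2_{\ge 0}$. Applying $\hat\Pi^\sigma$ and pairing with $e_n$ therefore annihilates every mode except the constant one, which survives only for $n = 0$ in the $\hat H^{--}_\delta$-summand, where $C_T 1 = 1$ contributes exactly $1$. This is the Hilbert-space shadow of the fact that the constant function is the unique relevant eigenvector with eigenvalue $1$, giving $\operatorname{Tr}(C_T) = 1$.

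For (ii) the clean structural input is $(M_{\det DT} C_T)^2 = M_{\det D(T^2)} C_{T^2}$, which, because $T^2$ preserves each $\Dsd$, is block diagonal with trace-class blocks whose spectra are governed by Lemma \ref{lem:trace_fp} at the attracting fixed point of $T^2$, the two blocks being interchanged by the conjugation symmetry $z^*_\sigma = \cj{1/z^*_{-\sigma}}$. Since $M_{\det DT} C_T$ itself swaps the two summands, its non-zero eigenvalues are forced into pairs $\pm\mu$, and the decisive difference from (i) is that the weight $\det DT$ destroys the invariance of the constant function, so that no unpaired eigenvalue $1$ remains and $\operatorname{Tr}(M_{\det DT}C_T) = \sum(\mu-\mu) = 0$. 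To turn this into a calculation I would run the same coefficient bookkeeping as in (i): the zero of the affine Jacobian $\det DT$ at the corner $\infty$ of $D^{++}_\delta$ kills the entire $\hat H^{--}_\delta$-diagonal block, while the surviving $\hat H^{++}_\delta$-diagonal contribution, after the change of variables of Lemma \ref{lem:off_conjugation}, becomes a contour integral near the corner $0$; as $T$ swaps corners it has no fixed point there, so the integrand's only singularity is the pole of $\det DT$ at $0$, and a residue computation should show that this contributes nothing.

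The main obstacle is precisely this last vanishing. Unlike in (i), the weight $\det DT$ has a pole at the corner $0$, so $M_{\det DT}C_T$ fails to be exactly off-diagonal and there is genuine diagonal leakage in the $\hat H^{++}_\delta$-block; the naive contribution of the constant mode is cancelled only by these corner terms. Establishing that the associated residue vanishes --- equivalently, that the transfer-operator (pushforward) trace of the fixed-point-free map $T$ is $0$, whereas the Koopman trace in (i) retains the extra $+1$ --- is where the care is needed, and it will hinge on the behaviour of the Jacobian at the distinguished-boundary corners $0$ and $\infty$.
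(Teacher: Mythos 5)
Your handling of the trace-class property, the basis trace formula, and part (i) is essentially sound and close in spirit to the paper's: part (i) is exactly the paper's Laurent-support bookkeeping (coefficient of $p_n$ in $p_n\circ T$, which has modes in $\Z^2_{\le 0}$ resp.\ $\Z^2_{\ge 0}$, so only $n=0$ survives). For trace-class the paper argues differently — it writes $\tilde T = I_{11}\circ T$, notes $\tilde T(\Dsd)\cc\Dsd$ for $\sigma\in\Sigma^1$, and invokes Lemma \ref{lem:trace_fp} plus boundedness of $M_w$ — but your factorisation route also works, except that you have it backwards: the trace-class map supplied by Lemma \ref{lem:J_stretched} is the restriction embedding $\mathcal{H}^1_\delta\hookrightarrow\mathcal{H}^1_\Delta$ for $\Delta>\delta$ (from larger domains $D^\sigma_\delta$ into smaller $D^\sigma_\Delta$), applied \emph{before} the bounded operator $C_T\colon\mathcal{H}^1_\Delta\to\mathcal{H}^1_\delta$; the embedding ``$\mathcal{H}^1_\Delta\hookrightarrow\mathcal{H}^1_\delta$'' you compose with does not exist.

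Part (ii) is where there is a genuine gap, and it is worse than an unexecuted residue computation: the vanishing you defer to that computation is false. Take $T = I_{11}\circ\tau_A$ with $\tau_A$ the cat map, so $T(z) = (z_1^{-2}z_2^{-1},\, z_1^{-1}z_2^{-1})$; this satisfies every hypothesis of the lemma, and $w=\det DT = p_{(-4,-3)}$. Then $w\cdot p_{(-1,-1)}\circ T = p_{(-4,-3)}\,p_{(3,2)} = p_{(-1,-1)}$, so $e_{(-1,-1)}$ is a \emph{fixed vector} of $M_wC_T$, and $(-1,-1)\in\hat{\Z}^{++}$, so the $\hat H^{++}_\delta$-block contributes a diagonal entry equal to $1$ (for this example all other diagonal entries vanish, so the basis sum is $1$, not $0$). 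This is not an artifact of linearity: whenever $T$ is area- and orientation-preserving, $\det DT(z)\cdot z_1z_2/(T_1(z)T_2(z))\equiv 1$ (it is the complexification of $\det D\tilde T$), so $p_{(-1,-1)}$ is always fixed by $M_{\det DT}C_T$ — this is precisely the invariant-density eigenvalue of the transfer-type operator. Consequently your $\pm$-pairing heuristic cannot work either (the eigenvalue $1$ at $p_{(-1,-1)}$ is unpaired, because the operator is not exactly off-diagonal), and the ``corner residue'' does not vanish. You should also know that the paper's own proof stumbles at exactly the point you flagged: it disposes of the $\hat{\Z}^{--}$ block correctly (the support of $\det DT$ in $\Z^2_{<0}$ kills those entries), but its reduction of the $\hat{\Z}^{++}$ block via the change of variables $y=I_{11}(z)$ rests on the claim $-(n+2)\in\hat{\Z}^{--}$, which fails whenever $n_1\in\{0,-1\}$ or $n_2\in\{0,-1\}$ — in particular at $n=(-1,-1)$ above. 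The statement that is both true and all that the proof of Theorem \ref{thm:thmB} (case (2$'$)) actually uses is the vanishing of the $\hat{\Z}^{--}$-block sum, $\sum_{n\in\hat{\Z}^{--}}(M_wC_Te_n,e_n)=0$; the honest repair of both your argument and the lemma is to restrict assertion (ii) to that block rather than to the full trace.
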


\begin{proof}
We define $\tilde T = I_{11} \circ T$,
so that $\tilde T$ is holomorphic on a neighbourhood of $\cl{\mathcal{A}_\delta}$,
and moreover, for every $\sigma \in \Sigma^1$ it is holomorphic on $\Dsd$ with $\tilde T(\Dsd) \cc \Dsd$.
By Lemma \ref{lem:trace_fp} the composition operator $C_{\tilde T}$ is trace-class on $\hat{H}^\sigma_{\delta}$,
and hence $C_T$ is well-defined and trace-class viewed as an operator on $\mathcal{H}^1_\delta$.
Since $M_w$ is well-defined and bounded as an operator on $\mathcal{H}^1_\delta$,
it follows that $M_w C_T$ is also trace-class.
Since $\cj{e}_n(z) = p_{-n} (z) \nu(n)$ for $z\in \Tsd$ and $n \in \Ztwo$, we have that
\[
(M_w C_T e_n, e_n)_{\hat{H}^\sigma_\delta}
= \langle M_w C_T e_n, \cj{e}_n \rangle_{\T^\sigma_\delta}
= \langle w \cdot p_n \circ T, p_{-n} \rangle_{\T^\sigma_\delta}
= \langle w \cdot p_{-n} \circ \tilde T, p_{-n} \rangle_{\T^\sigma_\delta}.
\]
Moreover, $p_{-n} \circ \tilde{T} \in H^\sigma_\delta$ for any $\sigma \in \Sigma^1$ and $-n \in \hat{\mathbb{Z}}^\sigma$,
that is, $p_{-n} \circ \tilde{T} = \sum_{m \in \Z^\sigma} c_m p_m $.
For $w \equiv 1$, recalling that $\langle p_k, p_l \rangle_{\Tsd} = 0$ whenever $k \neq -l$,
and noting $(-\hat{\Z}^\sigma) \cap \Z^\sigma \subset \{(0,0)\}$,
it follows that
\[
(M_w C_T e_n, e_n)_{\hat{H}^\sigma_\delta} = \delta_{n,0} \cdot \langle p_0 \circ \tilde{T}, p_0 \rangle_{\T^\sigma_\delta} = \delta_{n,0},
\]
which yields $\operatorname{Tr}(C_T) = 1$.

For $w = w_T = \det DT$, it is easy to see that $w = \sum_{m \in \Z^2_{<0}} w_m p_m$ for suitable $w_m \in \C$.
We consider first the case $n \in \hat{\Z}^{--}$. In this case we have $p_n \circ T = \sum_{m \in \Z^2_{\leq 0}} c_m p_m$,
and hence $w \cdot p_n \circ T = \sum_{m \in \Z^2_{<0}} d_m p_m$, for suitable coefficients $c_m, d_m \in \C$. It follows
that $\langle w_T \cdot p_n \circ T, p_{-n} \rangle_{\T^\sigma_\delta} = 0$.
For $n \in \hat{\Z}^{++}$, let $\tilde{T} = I_{11} \circ T \circ I_{11}$. Direct calculation using a change of
variables $y = I_{11}(z)$  yields
\[
\langle w_T \cdot p_n \circ T, p_{-n} \rangle_{\T^\sigma_\delta}
= \langle w_{\tilde T} \cdot p_{-(n+2)} \circ \tilde T, p_{n+2} \rangle_{\T^\sigma_\delta}.
\]
Noting that $-(n+2) \in \hat{Z}^{--}$ and that $\tilde T$ and $w_{\tilde T}$ also satisfy the assumptions of the lemma,
we can apply the previous case and obtain that again $\langle w_T \cdot p_n \circ T, p_{-n} \rangle_{\T^\sigma_\delta} = 0$.
The conclusion $\operatorname{Tr}(M_w C_T) = 0$ follows.
\end{proof}

\begin{lem}\label{lem:detTA}
Let $\ell \in \{\pm 1\}$, $\delta \in \R^2$, and let $T$ be an analytic diffeomorphism of $\T^2$, holomorphic on a neighbourhood
of $\cl{\mathcal{A}_\delta}$. Assume additionally that $T$ extends holomorphically to
 $D^\sigma_\delta$ with $T(D^\sigma_\delta) \cc D^{-\sigma}_\delta$ for every $\sigma \in \Sigma^\ell$.
Then for any $\sigma \in \Sigma^\ell$, $T \circ T$ has a unique fixed point $z^\sigma \in D^\sigma_{\delta}$, and
\[D(T \circ T) (z^\sigma) = \cj{D \hat{T}(z^\sigma)} D \hat{T} (z^\sigma),\]
where $ \hat{T} = I_{11} \circ T$.
\end{lem}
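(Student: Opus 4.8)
The plan is to first locate the fixed point together with a reflection symmetry relating it to $T(z^\sigma)$, and then to read off the Jacobian identity by differentiating the structural relations for $T$ provided by Corollary~\ref{cor:TA}. I begin by noting that $\Sigma^\ell$ is invariant under $\sigma \mapsto -\sigma$, so the hypothesis $T(D^\sigma_\delta) \cc D^{-\sigma}_\delta$ holds for every $\sigma \in \Sigma^\ell$, and composing gives $(T\circ T)(D^\sigma_\delta) \cc D^\sigma_\delta$ for each such $\sigma$. Conjugating by the biholomorphism $\phi^\sigma(z) = z^{-\sigma}$ onto a bidisk, exactly as in Lemma~\ref{lem:trace_fp}, and invoking the existence and uniqueness of an attracting fixed point for a holomorphic self-map that is compactly contained in its domain, I obtain a unique fixed point $z^\sigma \in D^\sigma_\delta$ of $T\circ T$, and likewise a unique fixed point in $D^{-\sigma}_\delta$.

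The conceptual crux is a reflection symmetry. By Corollary~\ref{cor:TA} the components of $T=T_A$ are rational and satisfy $I_{11}\circ T_A \circ I_{11} = T_{\cj A}$ and $T_{\cj A}(\cj z) = \cj{T_A(z)}$. Setting $R(z) = I_{11}(\cj z) = (1/\cj{z_1}, 1/\cj{z_2})$, a short computation combining these two identities with $I_{11}\circ I_{11} = \operatorname{id}$ yields $R\circ T = T \circ R$; moreover $R$ is an involution interchanging $D^\sigma_\delta$ and $D^{-\sigma}_\delta$. Writing $w^\sigma = T(z^\sigma) \in D^{-\sigma}_\delta$, so that $T(w^\sigma) = z^\sigma$, both $w^\sigma$ and $R(z^\sigma)$ are fixed points of $T\circ T$ lying in $D^{-\sigma}_\delta$, and uniqueness forces the key relation $w^\sigma = R(z^\sigma) = I_{11}(\cj{z^\sigma})$, equivalently $w^\sigma_j = 1/\cj{z^\sigma_j}$.

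For the Jacobian identity I differentiate the two relations of Corollary~\ref{cor:TA}. Differentiating $T_{\cj A}(\cj z) = \cj{T_A(z)}$ gives $DT_{\cj A}(\cj z) = \cj{DT(z)}$, while the chain rule applied to $T_{\cj A} = I_{11}\circ T \circ I_{11}$ at $\cj{z^\sigma}$, using $I_{11}(\cj{z^\sigma}) = w^\sigma$ and $T(w^\sigma) = z^\sigma$, gives $DT_{\cj A}(\cj{z^\sigma}) = DI_{11}(z^\sigma)\, DT(w^\sigma)\, DI_{11}(\cj{z^\sigma})$; together these produce $\cj{DT(z^\sigma)} = DI_{11}(z^\sigma)\, DT(w^\sigma)\, DI_{11}(\cj{z^\sigma})$. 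Since $\hat T = I_{11}\circ T$, the chain rule gives $D\hat T(z^\sigma) = DI_{11}(w^\sigma)\, DT(z^\sigma)$ and $D(T\circ T)(z^\sigma) = DT(w^\sigma)\, DT(z^\sigma)$. Substituting the expression for $\cj{DT(z^\sigma)}$ into $\cj{D\hat T(z^\sigma)}\, D\hat T(z^\sigma)$ and using that every Jacobian of $I_{11}$ is diagonal (indeed $DI_{11}(z) = \operatorname{diag}(-z_j^{-2})$), together with the elementary identities $\cj{DI_{11}(w^\sigma)} = DI_{11}(z^\sigma)^{-1}$ and $DI_{11}(\cj{z^\sigma})\, DI_{11}(w^\sigma) = \mathbb{I}$ (both immediate from $w^\sigma_j = 1/\cj{z^\sigma_j}$), the intermediate factors cancel and everything telescopes to $DT(w^\sigma)\, DT(z^\sigma) = D(T\circ T)(z^\sigma)$, as required. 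I expect the main obstacle to be the bookkeeping in this last step, namely keeping precise track of the conjugations and of the evaluation points $z^\sigma$, $\cj{z^\sigma}$ and $w^\sigma$, and placing the diagonal $I_{11}$-Jacobians correctly; the diagonality is what makes all the factors commute and cancel cleanly, so once the relation $w^\sigma = I_{11}(\cj{z^\sigma})$ is in hand the computation is essentially forced.
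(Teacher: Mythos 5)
Your proof is correct, and it reaches the paper's pivotal identity by a somewhat different route. The paper applies Corollary~\ref{cor:TA} not to $T$ but to $\hat T = I_{11}\circ T$ (which maps each $D^\sigma_\delta$, $\sigma\in\Sigma^\ell$, compactly into itself), writes $T\circ T = \hat T_{\cj{A}}\circ \hat T_A$, and observes that $\hat T_A(z^\sigma)$ and $\cj{z^\sigma}$ are both fixed points of the reversed composition $\hat T_A\circ \hat T_{\cj{A}}$ in $D^\sigma_\delta$; uniqueness then gives $\hat T_A(z^\sigma) = \cj{z^\sigma}$, which is exactly your relation $T(z^\sigma) = I_{11}(\cj{z^\sigma})$. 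You instead apply the corollary to $T$ itself, package its two identities as the commutation $R\circ T = T\circ R$ with the anti-holomorphic reflection $R(z) = I_{11}(\cj{z})$, and compare the two fixed points $T(z^\sigma)$ and $R(z^\sigma)$ of $T\circ T$ inside $D^{-\sigma}_\delta$. Your use of uniqueness is marginally more economical: it invokes only the already-established unique fixed point of $T\circ T$ in $D^{-\sigma}_\delta$, whereas the paper tacitly also needs that $\hat T_A\circ \hat T_{\cj{A}}$ (not just $\hat T_{\cj{A}}\circ \hat T_A$) has a unique fixed point in $D^\sigma_\delta$, which requires the unstated observation that $\hat T_{\cj{A}}$ maps $D^\sigma_\delta$ compactly into itself. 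The trade-off comes at the end: the paper's factorization makes the Jacobian identity a one-liner, $D(T\circ T)(z^\sigma) = D\hat T_{\cj{A}}(\cj{z^\sigma})\, D\hat T_A(z^\sigma) = \cj{D\hat T_A(z^\sigma)}\, D\hat T_A(z^\sigma)$, with no $DI_{11}$ bookkeeping, while your route pays with the explicit telescoping of the diagonal factors $DI_{11}(z^\sigma)$, $DI_{11}(\cj{z^\sigma})$, $DI_{11}(w^\sigma)$ --- which you carry out correctly. One shared caveat: both arguments invoke Corollary~\ref{cor:TA}, stated for the unshifted polydisks $D^\sigma$, on the $\delta$-shifted domains $D^\sigma_\delta$; this is glossed over equally in your write-up and in the paper's proof.
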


\begin{proof}
We first note that $(T \circ T)(\Dsd) \cc \Dsd$ for $\sigma \in \Sigma^\ell$, implying the existence
of a unique fixed point $z^\sigma \in D^\sigma_\delta$ for $\sigma \in \Sigma^\ell$.
The map $\hat{T} = I_{11} \circ T$ also
satisfies $\hat{T}(\Dsd) \cc \Dsd$ for $\sigma\in\Sigma^\ell$, so
by Corollary \ref{cor:TA} each component of $\hat{T}$ is a rational function,
and we write $\hat{T}=\hat{T}_A$ with $A = (a_1, \ldots, a_n)$ the collection
of all coefficients occurring in $T$ (in any order).
Using Corollary \ref{cor:TA}$(i)$ we obtain
\[T \circ T = I_{11} \circ \hat{T}_A \circ I_{11} \circ \hat{T}_A = \hat{T}_{\cj{A}} \circ \hat{T}_A.\]
Next, observe that on the one hand, we have
\[(\hat{T}_A \circ \hat{T}_{\cj{A}})(\hat{T}_A(z^\sigma)) = \hat{T}_A ( \hat{T}_{\cj{A}} \circ \hat{T}_A (z^\sigma) ) = \hat{T}_A (z^\sigma),\]
and on the other hand, using Corollary \ref{cor:TA}$(ii)$, we have
\[(\hat{T}_A \circ \hat{T}_{\cj{A}}) \left(\cj{z^\sigma}\right) = \hat{T}_A\left( \cj{\hat{T}_A(z^\sigma)} \right)
= \cj{(\hat{T}_{\cj{A}} \circ \hat{T}_A) (z^\sigma)} = \cj{z^\sigma},\]
so that $\hat{T}_A (z^\sigma) = \cj{z^\sigma}$.
It follows that
\[
D(T \circ T) (z^\sigma)
= D(\hat{T}_{\cj{A}} \circ \hat{T}_A)(z^\sigma)
= D \hat{T}_{\cj{A}} \left(\cj{z^\sigma}\right) D \hat{T}_A (z^\sigma)
= \cj{D \hat{T}_A(z^\sigma)} D \hat{T}_A (z^\sigma).\qedhere.
\]
\end{proof}

\begin{rem}\label{rem:antiblaschke}
The above lemma implies that the two multipliers of a fixed point $z^\sigma$ of $T \circ T$ are either
real or complex conjugates of each other, as
\[\det(D (T \circ T) (z^\sigma)) = \cj{\det (D \hat{T}_A(z^\sigma))} \cdot \det (D \hat{T}_A(z^\sigma))
= | \det (D \hat{T}_A(z^\sigma)) |^2\] and
$\operatorname{Tr}(D (T \circ T) (z^\sigma)) = \operatorname{Tr}\left(\cj{D \hat{T}_A(z^\sigma)} D \hat{T}_A (z^\sigma)\right) \in \mathbb{R}$.
In contrast to the one-dimensional setting of anti-Blaschke products \cite{BN}, examples of orientation-reversing
circle maps allowing for explicit determination of resonances, the multipliers are no longer necessarily real.
Note also that under the assumptions of the lemma for $\ell = 1$ or $\ell = -1$
the two attracting fixed points of $T \circ T$ in $\mathcal{D}^\ell_\delta$
($z^\sigma$, $\sigma \in \Sigma^\ell$) have identical sets of multipliers.
\end{rem}

We are now ready to prove our second main theorem.

\begin{proof}[Proof of Theorem \ref{thm:thmB}]
By Theorem \ref{thm:thmA}, $C_T$ is trace-class on $H_{\alpha, -\gamma}$ for suitable $\alpha, \gamma \in \R_{>0}^2$,
and its trace is given by $\operatorname{Tr} C_T = \sum_{n\in \Ztwo} \langle C_T e_n, e_n \rangle_{\nu_{\alpha, -\gamma}}$.
Using the isometric isomorphism between $H_{\alpha,-\gamma}$ and $\mathcal{H}^1_{\alpha} \oplus \mathcal{H}^{-1}_{-\gamma}$
and the fact that $C^k_T = C_{T^k}$, for every $k \in \N$ we have
\begin{equation}\label{eq:trace_ag}
\operatorname{Tr}(C_T^k) =
\sum_{\sigma \in \Sigma^1} \sum_{n\in \hat \Z^\sigma} ( C_T^k e_n, e_n )_{\hat H^\sigma_\alpha} +
\sum_{\sigma \in \Sigma^{-1}} \sum_{n\in \hat \Z^{-\sigma}} ( C_T^k e_n, e_n )_{\hat H^{-\sigma}_{-\gamma}} =: S_1(k) + S_{-1}(k),
\end{equation}
as well as
\begin{equation}\label{eq:logdet}
\log \det (\operatorname{Id} - z C_T) = - \sum_{k=1}^\infty \frac{z^k}{k} S_1(k) - \sum_{k=1}^\infty \frac{z^k}{k} S_{-1}(k).
\end{equation}
We note that the assumptions and Lemma \ref{lem:max_mod} imply that for every $\ell \in \{\pm 1 \}$
and $\sigma \in \Sigma^\ell$, $T^\ell$ is holomorphic in a neighbourhood of $\cl{\Ds{\delta}}$
and $T^\ell(D^\sigma_\delta) \cc D^{\pm \sigma}_\delta$,
where $\delta=\alpha$ for $\ell=1$ and $\delta=-\gamma$ for $\ell=-1$.
We will calculate \eqref{eq:logdet} by handling the two sums $S_\ell, \ell \in \{ \pm 1 \}$, separately,
considering for each the two possible cases $T^\ell(D^\sigma_\delta) \subseteq D^\sigma_\delta$ and
$T^\ell(D^\sigma_\delta) \subseteq D^{-\sigma}_\delta$ for all $\sigma \in \Sigma^\ell$.
The claim of the theorem will follow with
$(1-z) \chi_T^1(z) = \exp (- \sum_{k=1}^\infty \frac{z^k}{k} S_1(k))$
and $\chi_T^{-1}(z) = \exp (- \sum_{k=1}^\infty \frac{z^k}{k} S_{-1}(k))$.
We first calculate $S_1(k)$:

\begin{enumerate}[(1)]
\item Consider first the case $T(D^\sigma_\alpha) \cc D^\sigma_\alpha$ for $\sigma \in \Sigma^1$.
For $\sigma=(-1, -1) \in \Sigma^1$, the composition operator $\tilde C_T$ associated to
$T$ on $H^\sigma_\alpha = \hat H^\sigma_\alpha$ is trace-class,
and its trace, computed using \eqref{eq:tr_def}, coincides with the term in \eqref{eq:trace_ag} corresponding to $\sigma$
(note that $\tilde{C}^k_T = \tilde{C}_{T^k}$).
By Lemma \ref{lem:trace_fp}, we obtain the value $((1-\lambda_1^k) (1-\lambda_2^k))^{-1}$,
where $\lambda_1, \lambda_2$ are the multipliers of the unique fixed point $z^\sigma \in \Ds{\alpha}$.
Similarly, for $\sigma=(+1, +1) \in \Sigma^1$, the associated composition operator $\tilde{C}_T$ is trace-class on $H^\sigma_\alpha$.
Writing $T=T_{A}$ for some
$A\in \C^m$, $m\in \N_0$, by Corollary \ref{cor:TA}$(i)$ we have
$T_A  = I_{11} \circ T_{\cj{A}}  \circ I_{11}$,
where $T_{\cj{A}}$ is a holomorphic map on $D^{-\sigma}_\alpha$ with unique attracting
fixed point $\cj{z^\sigma}$ (see Corollary \ref{cor:TA}$(ii)$). Moreover, it follows that
$DT_{\cj{A}}(\cj{z^\sigma}) = \cj{D T_{A}(z^\sigma)}$, thus by Lemma \ref{lem:trace_fp} we have
$\operatorname{Tr} \tilde{C}^k_T = ((1-\cj{\lambda_1}^k)(1-\cj{\lambda_2}^k))^{-1}$.
Since
\[\operatorname{Tr} \tilde{C}^k_T = {( \tilde{C}^k_T e_0, e_0 )_{H^\sigma_\alpha} +
\sum_{n\in \hat{\mathbb{Z}}^{++}}} ( \tilde{C}^k_T e_n, e_n )_{H^\sigma_\alpha} =
1 +  \sum_{n\in \hat{\mathbb{Z}}^{++}} ( C^k_T e_n, e_n )_{\hat H^\sigma_\alpha},\]
we obtain
\[
S_1(k) = 1 + D(\lambda_1^k, \lambda_2^k) + D(\cj{\lambda_1}^k, \cj{\lambda_2}^k),
\]
where $D(a, b) := \frac{1}{(1-a)(1-b)} - 1 = \sum_{n \in \mathcal{N}^1} a^{n_1} b^{n_2}$ for $a, b \in \D$,
$\mathcal{N}^{1} = \mathbb{N}^2_0\setminus\{(0,0)\}$. Calculating
\[
- \sum_{k=1}^\infty \frac{z^k}{k} D(a^k, b^k)
= - \sum_{n \in \mathcal{N}^1} \sum_{k=1}^\infty \frac{z^k a^{kn_1} b^{kn_2}}{k}
= \sum_{n \in \mathcal{N}^1} \log ( 1 - z a^{n_1} b^{n_2} ),
\]
we finally obtain
\[
-\sum_{k=1}^\infty \frac{z^k}{k} S_1(k) = \log(1-z) + \sum_{\sigma \in \Sigma^1} \sum_{n \in \mathcal{N}^1} \log (1 - z \lambda^n_\sigma).
\]

\item Next we consider the case $T(D^\sigma_\alpha) \cc D^{-\sigma}_\alpha$ for $\sigma \in \Sigma^1$.
For $k \in \N$ odd, $T^k$ satisfies the assumptions of Lemma \ref{lem:tr_1}, and the trace of the
composition operator associated to $T^k$ on $\mathcal{H}^1_\alpha$ exactly corresponds to the first sum in
\eqref{eq:trace_ag}, yielding $S_1(k) = 1$.
For $k$ even, $T^k$ satisfies $T^k(D^\sigma_\alpha) \cc D^\sigma_\alpha$ for $\sigma \in \Sigma^1$, and
so we can apply case $(1)$. Moreover, by Remark \ref{rem:antiblaschke} in this case the fixed point multipliers
$\lambda_1, \lambda_2$ of $T^2$ are either real or complex conjugates of each other, and hence
\[
S_1(k) = \begin{cases}
1 \qquad \qquad \qquad \qquad \;\; \text{for $k$ odd}, \\
1 + 2 D(\lambda_1^{k/2}, \lambda_2^{k/2}) \quad \text{for $k$ even}.
\end{cases}
\]
A straightforward calculation using the fact that $\lambda_\sigma = \lambda_{-\sigma}$ now yields
\[
-\sum_{k=1}^\infty \frac{z^k}{k} S_1(k) = \log(1-z) + \frac{1}{2} \sum_{\sigma \in \Sigma^1} \sum_{n \in \mathcal{N}^1} \log (1 - z^2 \lambda^n_\sigma).
\]
\end{enumerate}

Next, we proceed to calculate $S_{-1}(k)$. The approach to calculating $S_1(k)$ does not immediately translate to this case,
as the bidisks $D^\sigma_\delta, \sigma \in \Sigma^{-1},$ are not invariant under the map $T$, and so do not directly give
rise to trace-class composition operators on the respective spaces $H^\sigma_\delta$. Instead, we will show that the sums in
$S_{-1}(k)$ correspond to the traces of certain weighted composition operators $M_{\hat{w}} C_{\hat{T}}$ on $H^{--}_\delta$,
where $\hat{T}$ will be a map conjugated to $T^{-1}$ from Lemma \ref{lem:off_conjugation}.
Combining Remark \ref{rem:en_pn_equiv} with Lemma \ref{lem:off_conjugation}, we calculate
for $\sigma = (-1, +1)$ that
\[
\sum_{n \in \hat{\Z}^{-\sigma}} (C^k_T e_n, e_n)_{\hat{H}^{-\sigma}_{-\gamma}}
= \sum_{n \in \hat{\Z}^{-\sigma}} (C^k_T p_n, p_n)_{L^2(\T^2)}
= \sum_{n \in \hat{\Z}^{--}} ((M_{\hat{w}_\sigma} C_{\hat{T}_\sigma})^k p_n, p_n)_{L^2(\T^2)},
\]
with $\hat{w}_\sigma = \omega_T \det D\hat{T}_\sigma$ and $\hat{T}_\sigma = I_{01} \circ T^{-1} \circ I_{01}$.
For $\sigma = (+1, -1)$, Lemma \ref{lem:off_conjugation} yields the exact same equality
 with $\hat{T}_\sigma = I_{10} \circ T^{-1} \circ I_{10}$.
 We now consider two cases again.

\begin{enumerate}[(1')]
\item If $T^{-1}(D^\sigma_\gamma) \cc D^\sigma_\gamma$ for $\sigma \in \Sigma^{-1}$,
then $\hat{T}_\sigma(D^{--}_{\gamma}) \cc D^{--}_{\gamma}$.
We can then apply the same argument as in the above case (1), using that $\hat{w}_\sigma(\zeta^*) = \omega_T \mu_1 \mu_2$
for $\mu_1, \mu_2$ the multipliers of the unique fixed point $\zeta^* \in D^{--}_\gamma$ of $\hat{T}_\sigma$,
which corresponds to the unique fixed point $z^\sigma \in D^\sigma_\gamma$ of $T^{-1}$.
By the same argument as before, the multipliers of the respective fixed points in $D^\sigma_\gamma$ and $D^{-\sigma}_\gamma$
are complex conjugates of each other, and we obtain
\[
S_{-1}(k) = \frac{(\omega_T \mu_1 \mu_2)^k}{(1-\mu_1^k)(1-\mu_2^k)} +
\frac{(\omega_T \cj{\mu_1} \cj{\mu_2})^k}{(1-\cj{\mu_1}^k)(1-\cj{\mu_2}^k)}
= (\omega_T)^k \sum_{n \in \mathcal{N}^{-1}} ((\mu_1^{n_1}\mu_2^{n_1})^k + (\cj{\mu_1}^{n_1}\cj{\mu_2}^{n_2})^k),
\]
where $\mathcal{N}^{-1} = \mathbb{N}^2$. A similar calculation to above yields
\[
-\sum_{k=1}^\infty \frac{z^k}{k} S_{-1}(k) = \sum_{\sigma \in \Sigma^{-1}} \sum_{n \in \mathcal{N}^{-1}} \log (1-z \omega_T \lambda_\sigma^n).
\]

\item Finally, we consider the case $T^{-1}(D^\sigma_\gamma) \cc D^{-\sigma}_\gamma$ for $\sigma \in \Sigma^{-1}$, which
implies $\hat{T}_\sigma(D^\sigma_\gamma) \cc D^{-\sigma}_\gamma$ for $\sigma \in \Sigma^1$.
If $k \in \N$ is odd, we can apply Lemma \ref{lem:tr_1} to $\hat{T}_\sigma^{k}$ and the weight function $\hat{w}_\sigma$.
The trace of $(M_{\hat{w}_\sigma} C_{\hat{T}_\sigma})^k$ on $\mathcal{H}^1_\gamma$ in the lemma exactly coincides with $S_{-1}(k)$,
yielding $S_{-1}(k) = 0$. For $k$ even, we can apply case (1') to $T^{-2}$ instead of $T^{-1}$, again using the fact
that the fixed point multipliers $\mu_1, \mu_2$ are either both real or complex conjugates of each other, which yields
\[
S_{-1}(k) =
\begin{cases}
0 \quad \qquad \qquad \qquad \; \text{for $k$ odd}, \\
\frac{2 (\mu_1 \mu_2)^{k/2}}{(1 - \mu^{k/2}_1)(1 - \mu^{k/2}_2)}\quad \text{for $k$ even},
\end{cases}
\]
and again using $\lambda_\sigma = \lambda_{-\sigma}$ we obtain
\[
-\sum_{k=1}^\infty \frac{z^k}{k} S_{-1}(k) =  \frac{1}{2} \sum_{\sigma \in \Sigma^{-1}} \sum_{n \in \mathcal{N}^{-1}} \log (1-z^2 \lambda_\sigma^n).
\]
\end{enumerate}
Claims $(i)$ and $(ii)$ of the theorem now follow by combining the cases $(1)$-$(2)$ and $(1')$-$(2')$ for $\ell=1$ and $\ell=-1$, respectively.
\end{proof}

Using the explicit form of the zeros of $\det (\operatorname{Id} - z C_T)$ obtained in Theorem \ref{thm:thmB},
we can calculate the decay rate of their reciprocals, the eigenvalues of $C_T$.
For a map $T$ satisfying the assumptions of Theorem \ref{thm:thmB}, for any $\ell \in \{\pm 1\}$ and $\sigma \in \Sigma^\ell$,
we denote by $\lambda_\sigma = (\lambda_{\sigma,1}, \lambda_{\sigma,2})$ the multipliers of the unique attracting fixed point in $D^\sigma$
of $T^\ell$ if $T^\ell(D^\sigma) \subseteq D^\sigma$, and of $T^{2\ell}$ otherwise.

\begin{cor}
Let $T$ satisfy the assumptions of Theorem \ref{thm:thmB},
and let $\lambda = \lambda_{(-1,-1)}$
and $\mu = \lambda_{(-1,+1)}$. Let $\omega_T = 1$
if $T$ is orientation-preserving\footnote{$T$ is orientation-preserving exactly if either both,
or neither of $T$ and $T^{-1}$ satisfy the case $(i)$ in Theorem \ref{thm:thmB}.}, and $\omega_T = -1$
otherwise.
Then the nonzero eigenvalues of $C_T$ on $H_{\alpha,-\gamma}$
are $\{1\} \cup \mathcal{E}_1 \cup \mathcal{E}_{-1}$, where
\begin{align*}
\mathcal{E}_1 &= \begin{cases}
\{ \lambda^n, \cj{\lambda}^n : n \in \mathcal{N}^1\}, \qquad \qquad \; \; &\text{ if } T(D^\sigma) \subseteq D^\sigma, \sigma \in \Sigma^1,  \\
\{ \pm \lambda^{n/2} : n \in \mathcal{N}^1 \}, &\text{ if } T(D^\sigma) \subseteq D^{-\sigma}, \sigma \in \Sigma^1,
\end{cases}\\
\mathcal{E}_{-1} &= \begin{cases}
\{ \omega_T \cdot \mu^n, \omega_T \cdot \cj{\mu}^n : n \in \mathcal{N}^{-1} \}, &\text{ if } T^{-1}(D^\sigma) \subseteq D^\sigma, \sigma \in \Sigma^{-1},  \\
\{ \pm \mu^{n/2} : n \in \mathcal{N}^{-1} \}, &\text{ if } T^{-1}(D^\sigma) \subseteq D^{-\sigma}, \sigma \in \Sigma^{-1}.
\end{cases}
\end{align*}
Moreover, the algebraic multiplicity of each nonzero eigenvalue is exactly the number of its occurrences in the above sets.
\end{cor}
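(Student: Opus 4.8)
The plan is to read the corollary straight off Theorem~\ref{thm:thmB} using the spectral property of the Fredholm determinant. Since $C_T$ is trace-class (Theorem~\ref{thm:thmA}), formula \eqref{eq:det_def} applies and, by \cite{Pi}, the nonzero eigenvalues of $C_T$ counted with algebraic multiplicity are exactly the reciprocals of the zeros of the entire function $z\mapsto \det(\operatorname{Id}-zC_T)$, the algebraic multiplicity of an eigenvalue equalling the order of the associated zero. Thus it suffices to locate the zeros of
\[
\det(\operatorname{Id}-zC_T) = (1-z)\,\chi_T^{1}(z)\,\chi_T^{-1}(z),
\]
invert them, and match orders. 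The factor $(1-z)$ contributes the simple zero $z=1$, hence the eigenvalue $1$.

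Next I would treat each factor $\chi_T^{\ell}$, $\ell\in\{\pm1\}$, separately. In case~(i) of Theorem~\ref{thm:thmB} we have $\chi_T^{\ell}(z)=\prod_{n\in\mathcal N^{\ell}}\prod_{\sigma\in\Sigma^{\ell}}(1-z\ell^{s}\lambda_\sigma^{n})$, a product of linear factors, so the eigenvalues are the numbers $\ell^{s}\lambda_\sigma^{n}$. For $\ell=1$ one has $\ell^{s}=1$, and the relation $\lambda_\sigma=\cj{\lambda_{-\sigma}}$ with $\lambda=\lambda_{(-1,-1)}$ (so $\lambda_{(+1,+1)}=\cj\lambda$) yields exactly $\{\lambda^{n},\cj\lambda^{n}:n\in\mathcal N^{1}\}$. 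For $\ell=-1$ one has $\ell^{s}=(-1)^{s}=\omega_T$, and with $\mu=\lambda_{(-1,+1)}$, $\lambda_{(+1,-1)}=\cj\mu$, the eigenvalues are $\{\omega_T\mu^{n},\omega_T\cj\mu^{n}:n\in\mathcal N^{-1}\}$, as claimed for $\mathcal E_{-1}$. In case~(ii) I would use $\lambda_\sigma=\lambda_{-\sigma}$ to combine the two half-power factors over $\sigma\in\Sigma^{\ell}$ into the single integer-power product $\prod_{n}(1-z^{2}\lambda^{n})$ (respectively $\prod_{n}(1-z^{2}\mu^{n})$), which is genuinely entire; factoring $1-z^{2}\lambda^{n}=(1-z\lambda^{n/2})(1+z\lambda^{n/2})$ gives the eigenvalues $\pm\lambda^{n/2}$ (resp.\ $\pm\mu^{n/2}$), matching the second lines of $\mathcal E_1$ and $\mathcal E_{-1}$.

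Finally, the multiplicity assertion follows because each $\chi_T^{\ell}$ is a locally uniformly convergent product of linear factors, so the order of its zero at any point is the sum of the orders of the vanishing individual factors; under reciprocation this is precisely the number of occurrences of the corresponding value in the listed multisets, and coincidental collisions among the $\lambda^{n},\cj\lambda^{n},\mu^{n},\dots$ simply add their orders. The point requiring the most care is the sign and conjugation bookkeeping: one must check that the orientation-dependent exponent $s$ of Theorem~\ref{thm:thmB} feeds into $\omega_T$ exactly in the $\ell=-1$, case~(i) branch (since $1^{s}\equiv1$ suppresses it for $\ell=1$, and case~(ii) carries no such exponent), and that the collapse of the half-powers in case~(ii) produces honest entire factors so that each square-root eigenvalue $\pm\lambda^{n/2}$, $\pm\mu^{n/2}$ appears once per $n$. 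With these identifications the eigenvalue multiset is exactly $\{1\}\cup\mathcal E_1\cup\mathcal E_{-1}$ with the stated multiplicities.
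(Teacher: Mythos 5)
Your proposal is correct and is exactly the derivation the paper intends: the corollary is stated without proof as an immediate consequence of Theorem \ref{thm:thmB} together with the spectral property of the Fredholm determinant (zeros of $z\mapsto\det(\operatorname{Id}-zC_T)$, with orders, are reciprocals of the nonzero eigenvalues with algebraic multiplicities, as quoted from \cite{Pi}). Your bookkeeping of the $\ell^s$ sign, the conjugation relations $\lambda_{(+1,+1)}=\cj{\lambda}$, $\lambda_{(+1,-1)}=\cj{\mu}$, and the collapse of the half-power factors in case (ii) via $\lambda_\sigma=\lambda_{-\sigma}$ into honest entire linear factors all match the statement.
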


\begin{cor} \label{cor:limrate}
Let $T$ satisfy the assumptions of Theorem \ref{thm:thmB},
 $(\lambda_n)_{n\in\N}$ be the sequence of eigenvalues of $C_T$ sorted in order of decreasing modulus,
and $N_T(r) = \#\{n \in \N: |\lambda_n| \geq r \}$.
Then
\[
\lim_{r \to 0} \frac{\log N_T(r)}{\log |\log r|} = d,
\]
where
\begin{enumerate}[(i)]
 \item if $\lambda_{\sigma,1} \cdot \lambda_{\sigma,2} \neq 0$ for some $\sigma \in \Sigma$, then
 $d = 2$ (stretched-exponential decay), and
 \[
\lim_{n \to \infty} \frac{-\log|\lambda_n|}{n^{1/2}} = \eta_2
\]
with $\eta_2 = \left(1/2 \sum_{\sigma \in \Sigma: \lambda_{\sigma,1}\cdot \lambda_{\sigma,2} \neq 0}
(\log |\lambda_{\sigma,1}|\cdot \log |\lambda_{\sigma,2}|)^{-1}\right)^{-1/2}$.

\item if $\lambda_{\sigma,1} \cdot \lambda_{\sigma,2} = 0$ for all $\sigma \in \Sigma$, and $\lambda_{\sigma,k} \neq 0$
for some $\sigma \in \Sigma^1$ and $k \in \{1, 2\}$, then $d = 1$ (exponential decay), and
\[
\lim_{n \to \infty} \frac{-\log|\lambda_n|}{n} = \eta_1,
\]
with $\eta_1 = \left(\sum_{\sigma \in \Sigma^1} \sum_{k: \lambda_{\sigma,k} \neq 0} (\log |\lambda_{\sigma,k}|)^{-1}\right)^{-1}$.

\item if $\lambda_\sigma = 0$ for all $\sigma \in \Sigma^1$,
and $\lambda_{\sigma,1}\cdot \lambda_{\sigma,2} = 0$ for all $\sigma \in \Sigma^{-1}$,
then $d = 0$ (super-exponential decay). In this case the set of eigenvalues
 of $C_T$ is trivial, and $\operatorname{spec}(C_T) = \{0, 1\}$.
\end{enumerate}
\end{cor}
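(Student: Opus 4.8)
The plan is to read everything off the explicit list of nonzero eigenvalues established immediately above, thereby reducing the statement to an elementary lattice-point count. Recall from that description that, apart from the fixed eigenvalue $1$, the nonzero eigenvalues of $C_T$ are indexed by $\sigma\in\Sigma$ together with $n$ ranging over $\mathcal{N}^{1}=\mathbb{N}_0^2\setminus\{(0,0)\}$ for $\sigma\in\Sigma^1$ and over $\mathcal{N}^{-1}=\mathbb{N}^2$ for $\sigma\in\Sigma^{-1}$, their moduli being $|\lambda_{\sigma,1}|^{n_1}|\lambda_{\sigma,2}|^{n_2}$ when $T^\ell(D^\sigma)\subseteq D^\sigma$, and $|\lambda_{\sigma,1}|^{n_1/2}|\lambda_{\sigma,2}|^{n_2/2}$ when $T^\ell(D^\sigma)\subseteq D^{-\sigma}$. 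Writing $R=|\log r|$ and $a_{\sigma,j}=-\log|\lambda_{\sigma,j}|\in(0,+\infty]$ (with the convention that a vanishing multiplier gives $a_{\sigma,j}=+\infty$, and the components halved in the expansion-reversing case), such an eigenvalue satisfies $|\lambda_\sigma^n|\ge r$ exactly when $\langle a_\sigma,n\rangle\le R$. Hence $N_T(r)$ is, up to the fixed contribution of the eigenvalue $1$, a finite sum of counts of lattice points $n$ in dilated simplices $\{x\in C_\sigma:\langle a_\sigma,x\rangle\le R\}$, where $C_\sigma$ is the appropriate quadrant cone.

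The analytic engine is then the standard asymptotic for lattice points in a dilated region. For a vector $a$ with both components finite and positive, $\#\{n\in\mathbb{Z}^2\cap C_\sigma:\langle a,n\rangle\le R\}$ equals the area of the simplex up to a boundary error $O(R)$, hence is $\sim R^2/(2a_1a_2)$; if exactly one component of $a$ is infinite the simplex degenerates to a segment and the count is $\sim R/a_j$, \emph{provided} the cone $C_\sigma$ meets the surviving coordinate axis; and if $C_\sigma$ avoids both axes, a single infinite component forces the count to vanish identically. This last dichotomy is precisely why $\mathcal{N}^{-1}=\mathbb{N}^2$ (which never meets an axis, as $n_1,n_2\ge 1$) behaves differently from $\mathcal{N}^1=\mathbb{N}_0^2\setminus\{(0,0)\}$ (which does), and it is what singles out $\Sigma^1$ in the intermediate regime.

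I would then split according to the vanishing pattern of the multipliers. In case $(i)$ some $\lambda_{\sigma,1}\lambda_{\sigma,2}\neq0$, so at least one $a_\sigma$ has two finite components and the two-dimensional area term dominates; collecting the simplex areas over all contributing $\sigma$ gives $N_T(r)\sim R^2/\eta_2^2$, whence $\log N_T(r)/\log|\log r|\to 2$. The refined limit $-\log|\lambda_n|/n^{1/2}\to\eta_2$ then follows by inverting this asymptotic: since $|\lambda_{N_T(r)}|\approx r$, putting $m=N_T(r)$ yields $R\sim\eta_2\,m^{1/2}$. In case $(ii)$ every $a_\sigma$ has an infinite component, so all area terms vanish; by the axis remark only the $\Sigma^1$-indices with a surviving finite component contribute, each giving a linear count $R/a_{\sigma,k}$, so $N_T(r)\sim R/\eta_1$, giving $\log N_T(r)/\log|\log r|\to 1$ and, by the same inversion, $-\log|\lambda_n|/n\to\eta_1$. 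In case $(iii)$ every $\sigma\in\Sigma^1$ has $\lambda_\sigma=0$ and every $\sigma\in\Sigma^{-1}$ has a zero component, so by the axis remark no nonzero eigenvalue survives: $N_T(r)=1$ for all $r\in(0,1]$, the ratio tends to $0$, $\operatorname{spec}(C_T)=\{0,1\}$, and $\chi_T\equiv1$, consistently with Theorem \ref{thm:thmB}.

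The delicate part, and the one I expect to require the most care, is the multiplicity bookkeeping that pins down the exact constants in case $(i)$: one must combine the two indices $\sigma\in\Sigma^\ell$, the complex-conjugate duplication $\lambda_\sigma=\overline{\lambda_{-\sigma}}$, and — in the expansion-reversing subcase — both the doubling coming from the $\pm\lambda^{n/2}$ branches and the effect of the half-powers, which rescale the defining inequality to $\langle a_\sigma,n\rangle\le 2R$; these factors must be tracked consistently so that the assembled areas reproduce exactly the stated sum $\tfrac12\sum_{\sigma}(\log|\lambda_{\sigma,1}|\log|\lambda_{\sigma,2}|)^{-1}$ (and analogously for $\eta_1$). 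The only genuine analytic input beyond this is the $O(R)$ boundary bound for lattice points in a dilated simplex, which is elementary and comfortably lower order than the $R^2$ (respectively $R$) main term, so it does not affect any of the three limits.
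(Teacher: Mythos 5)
Your proposal is correct and is essentially the paper's own argument: the paper merely packages your lattice-point count as an abstract statement about cone-wise exponential functions (Lemma \ref{lem:cwexp_decay}), whose appendix proof is exactly your dilated-simplex count --- the area term $\lfloor\ell_p(r)\rfloor\lfloor\ell_q(r)\rfloor/2$ plus an $O(R)$ boundary error, with the axis points attached only to the $\Sigma^1$ quadrants --- followed by the same sandwich-and-inversion step to pass from $N_T(r)\sim (R/\eta)^d$ to the limit of $-\log|\lambda_n|/n^{1/d}$. The reversing-case bookkeeping (the $\pm$ doubling and the half-power rescaling $R\mapsto 2R$) that you single out as the delicate point is precisely the part the paper itself elides with ``the other cases are similar,'' so your plan matches the paper's treatment both in substance and in where the remaining care is required.
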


\begin{proof}
This follows directly from Lemma \ref{lem:cwexp_decay} applied to the eigenvalues of $C_T$
written as the values of a cone-wise exponential function $f \colon \Z^2 \to \C$.
In the case when both $T$ and $T^{-1}$ satisfy the case $(i)$ in Theorem \ref{thm:thmB},
this function is given by $f(n) = \lambda_{\sigma}^{|n|}$ with $\sigma = \sigma(n) \in \Sigma$
such that either $n \in \Z^2 \cap R^{\sigma}$, $\sigma \in \Sigma^1$,
or
$n \in (\Z \setminus \{0\})^2 \cap R^{\sigma}$, $\sigma \in \Sigma^{-1}$.
The other cases are similar.
\end{proof}

\section{Anosov maps with different decay rates for resonances}\label{sec:resonances_blaschke}

Based on the results of the previous section, in this section we shall prove our last main result, Theorem \ref{thm:thmC}.
The proof  will use the classical result that every toral Anosov diffeomorphism is homotopic to a toral automorphism,
and exploit the algebraic structure of $\GLtwo(\Z)$, which is isomorphic to the group of toral automorphisms $\Aut(\T^2)$.
To explicitly construct diffeomorphisms
whose corresponding composition operators have resonances exhibiting a desired decay rate,
we shall introduce a special group $\F$ of toral diffeomorphisms, containing the automorphisms as a subgroup.
The extension will consist of diffeomorphisms each of which is homotopic to an automorphism in an explicit way.

Beyond its immediate usefulness for our proof, the group $\F$ provides a rich source of explicit examples of toral diffeomorphisms
whose resonances can often be explicitly computed, and which includes both area-presering and non-area-preserving,
orientation-preserving and -reversing examples, as well as examples satisfying various symmetries.
As we believe this might be of broader interest, we include in Appendix \ref{sec:ap_class_f} a more comprehensive discussion and
illustrative set of examples, while constraining ourselves to a minimal introduction in this section.

\subsection{A special group of toral diffemorphisms}\label{sec:blaschke_examples}

The group $\Aut(\T^2)$ of linear diffeomorphisms of $\T^2$ is isomorphic to $\GLtwo(\Z)$,
with any $A = (a_{ij}) \in \GLtwo(\Z)$ giving rise to the toral automorphism
$\tau_A(z_1,z_2) = (z_1^{a_{11}}z_2^{a_{12}}, z_1^{a_{21}} z_2^{a_{22}})$.
For our purposes it will be convenient to view $\Aut(\T^2)$ as generated by the following automorphisms,
which can also be viewed as rational maps of $\Chat^2$:
\begin{enumerate}[(i)]
  \item a map $F$ given by $F(z_1,z_2) = (z_1 z_2, z_2)$, with $F^{-1}(z_1, z_2) = (z_1/z_2, z_2)$,
  \item an involution $R$ given by $R(z_1, z_2) = (z_2, z_1)$,
  \item involutions $I_{kl}$ for $k,l\in \{0,1\}$ given by $I_{kl}(z_1, z_2) = (z_1^{1-2k}, z_2^{1-2l})$.
\end{enumerate}
The set $\Gamma = \{F, R, I_{01}\}$ generates $\Aut(\T^2)$.
To create a richer group of toral diffeomorphisms, we extend the above by a continuous family of maps.
Utilising automorphisms of $\D$, the so-called Moebius maps
$b_a\colon \Chat \to \Chat, a \in \D$, given by
\[b_a(z) = \frac{z-a}{1-\cj{a}z}, \]
we define the additional set of generators as
\begin{enumerate}[(i)]
\setcounter{enumi}{3}
  \item a family $\mathcal{G} = \{G_{a,b}: a,b\in \D\}$ of maps given by
  \[G_{a,b}(z_1, z_2) = (b_a(z_1), b_b(z_2)),\]
  satisfying $G^{-1}_{a,b} = G_{-a,-b}$.
\end{enumerate}

\begin{defn}
Denote by $\F$ the group of diffeomorphisms of $\T^2$ generated by the set $\Gamma \cup \mathcal{G}$.
\end{defn}

\begin{rem}
The proof of Theorem \ref{thm:thmC} will be based on Theorem \ref{thm:thmB}, in particular we will require
all constructed maps $T$ to analytically extend to a neighbourhood of $\T^2$,
and to satisfy that
\begin{equation}\label{eq:all_invariance}
T^\ell \text{ extends holomorpically to } D^\sigma \text{ and }
T^\ell(D^\sigma) \subseteq D^{\pm \sigma} \quad (\sigma \in \Sigma^\ell, \ell \in \{\pm 1\}).
\end{equation}
A convenient class of maps satisfying \eqref{eq:all_invariance} is the semigroup of finite compositions of
$\{F, R, I_{11}\} \cup \mathcal{G}$.

We remark that the choice of generators $\mathcal{G}$ is not the only possible, though arguably the simplest choice
of non-linear maps satisfying \eqref{eq:all_invariance}.
More generally, this property is satisfied by a certain class of rational inner skew products, see \cite{ST}, taking the form
\[(z_1, z_2) \mapsto ( e^{i \theta} \frac{\tilde{p}(z_1, z_2)}{p(z_1, z_2)} , z_2),\]
with $\theta \in \mathbb{R}$ and $p$ a polynomial of
bidegree $(1, k)$ for $k\in \mathbb{N}_0$,
that is, of degree $1$ in $z_1$ and degree $k$ in $z_2$. Here, $\tilde{p}$ is the reflection of $p$ defined
as $\tilde{p}(z_1, z_2) = z_1 z_2^k \cj{p(1/\cj{z_1}, 1/\cj{z_2})}$.
The map $G_{a,0}(z) =(b_a(z_1), z_2)$ corresponds
to the polynomial $p(z) = 1-\cj{a}z_1$ of bidegree $(1, 0)$, and a general $G_{a,b}$
can be written as $G_{a,b} = G_{a,0} \circ R \circ G_{b,0}$.
\end{rem}

\subsection{Explicit homotopies of Anosov diffeomorphisms}\label{sec:red_maps}

We proceed by stating an algebraic fact about conjugacy classes of $\GLtwo(\Z)$,
which will allow us to establish an analytic conjugacy between an arbitary hyperbolic automorphism of $\T^2$
and an element of $\F$. While our proof is based on results from \cite{H}, for variants of this result
see, e.g., \cite{K,BR} and references therein. We defer the proof of the lemma to Appendix \ref{sec:ap_proofs}.

\begin{lem}\label{lem:conjstandard}
Every hyperbolic matrix $M \in \GLtwo(\Z)$ is similar (in $\GLtwo(\Z)$)
to a matrix of the form
\begin{equation}\label{eq:factoredform}
\pm
\begin{pmatrix} k_1 & 1 \\ 1 & 0 \end{pmatrix}
\begin{pmatrix} k_2 & 1 \\ 1 & 0 \end{pmatrix}
\cdots
\begin{pmatrix} k_n & 1 \\ 1 & 0 \end{pmatrix}, \qquad k_1,\ldots,k_n \in \N, n \geq 1.
\end{equation}
\end{lem}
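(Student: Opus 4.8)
The plan is to realise the conjugacy class of a hyperbolic $M \in \GLtwo(\Z)$ through the reduction theory of quadratic irrationals, exploiting that $\GLtwo(\Z)$ acts by conjugation on $M$ in a way mirrored by the M\"obius action on its fixed points. First I would record the dictionary: writing $g$ for the M\"obius map $z \mapsto (az+b)/(cz+d)$ attached to $\begin{pmatrix} a & b \\ c & d\end{pmatrix}$, the conjugation $M \mapsto gMg^{-1}$ transports the two fixed points of $M$ by $g$, and the elementary identity $\begin{pmatrix}k_1 & 1\\ 1 & 0\end{pmatrix}\begin{pmatrix}k_2 & 1\\ 1 & 0\end{pmatrix} = \begin{pmatrix}1+k_1k_2 & k_1 \\ k_2 & 1\end{pmatrix}$ identifies products of the standard blocks with the generators of continued-fraction type. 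The attracting fixed point of $\begin{pmatrix}k&1\\1&0\end{pmatrix}$ is the purely periodic surd $[k;k,k,\dots]$, and more generally the attracting fixed point of $\begin{pmatrix}k_1&1\\1&0\end{pmatrix}\cdots\begin{pmatrix}k_n&1\\1&0\end{pmatrix}$ is the reduced quadratic irrational with purely periodic expansion $\overline{[k_1,\dots,k_n]}$.

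Next, since $M$ is hyperbolic, its eigenvalues are real with moduli different from $1$ (equivalently $|\operatorname{tr} M|>2$ when $\det M=1$, and $\operatorname{tr} M\neq 0$ when $\det M=-1$); a short check shows the two fixed points are distinct real quadratic irrationals, since a rational fixed point would force an eigenvalue $\pm 1$. By Lagrange's theorem the continued fraction of the attracting fixed point $\alpha$ is eventually periodic, and applying the continued-fraction (Gauss) algorithm --- which is realised by an element $g \in \GLtwo(\Z)$ --- I would move $\alpha$ to a reduced surd $\tilde\alpha > 1$ with $-1 < \tilde\alpha' < 0$; by Galois' theorem $\tilde\alpha$ then has a purely periodic expansion $\overline{[k_1,\dots,k_p]}$ with all $k_i \in \N$. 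Replacing $M$ by $gMg^{-1}$, I may thus assume $M$ has reduced attracting fixed point $\alpha = \overline{[k_1,\dots,k_p]}$, and I set $M_0 = \begin{pmatrix}k_1&1\\1&0\end{pmatrix}\cdots\begin{pmatrix}k_p&1\\1&0\end{pmatrix}$, the standard automorph of $\alpha$ over one period.

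To conclude I would compare $M$ with $M_0$. Both fix the pair $\{\alpha,\alpha'\}$ pointwise, these being their common eigendirections, so both lie in the stabiliser of the geodesic joining them; by Dirichlet's unit theorem this stabiliser is $\{\pm I\}$ times an infinite cyclic group generated by the fundamental automorph, which for a reduced $\alpha$ is exactly $M_0$ once $p$ is taken minimal. Hence $M = \pm M_0^m$ for some integer $m$, and since $\alpha$ is the attracting fixed point of both $M$ and $M_0$ (the latter having positive dominant eigenvalue $>1$ by nonnegativity of entries), the expansion directions agree and $m \geq 1$. Finally $M_0^m$ is literally the product of $mp$ standard blocks, so $M = \pm\,\prod_{j=1}^{mp}\begin{pmatrix}k_j&1\\1&0\end{pmatrix}$ in the required shape; here $\det M = (-1)^{mp}$ fixes the parity of the number of factors, while the overall sign accommodates $\operatorname{tr} M \lessgtr 0$, as $M_0^m$ always has positive trace.

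I expect the main obstacle to be the reduction step together with the precise identification of the automorph group: proving rigorously that every hyperbolic matrix is $\GLtwo(\Z)$-conjugate to one with reduced attracting fixed point (Galois reduction), and that the standard product over the minimal period generates the stabiliser of the axis, so that $M$ itself --- not merely its pair of fixed points --- is recovered as $\pm M_0^m$. These are the classical facts I would draw from the reduction theory of binary quadratic forms and continued fractions (cf.\ the cited references), with the determinant, trace-sign, and positivity-of-$m$ bookkeeping being routine once the dictionary is in place.
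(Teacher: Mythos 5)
Your proposal is correct, but it takes a genuinely different route from the paper. The paper's proof is matrix-theoretic: after reducing to $\operatorname{Tr} M \geq 0$, it invokes Henniger's similarity theorem \cite[Theorem 3]{H} to bring $M$ to a ``standard'' form, disposes of the main case ($c>0$) by Henniger's factorization theorem \cite[Theorem 4]{H}, and handles the one remaining hyperbolic family $\begin{pmatrix} a & 1 \\ -1 & 0 \end{pmatrix}$, $a\geq 3$, by an explicit conjugation into the shape \eqref{eq:factoredform}. You instead run the classical reduction theory of quadratic irrationals: hyperbolicity forces the attracting fixed point $\alpha$ to be a quadratic surd, Lagrange/Galois reduction conjugates $M$ within $\GLtwo(\Z)$ so that $\alpha = \overline{[k_1,\dots,k_p]}$ is reduced and purely periodic, and the fundamental-automorph theorem identifies the pointwise stabilizer of $\{\alpha,\alpha'\}$ in $\GLtwo(\Z)$ as $\{\pm M_0^{m} : m\in\Z\}$ with $M_0$ the minimal-period matrix, whence $M=\pm M_0^{m}$ with $m\geq 1$ by the attracting/repelling bookkeeping. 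This is precisely the continued-fraction route the paper gestures at through the references \cite{K,BR} cited just before the lemma. Both proofs outsource their heaviest step (the paper to Henniger, you to the automorph theorem); what yours buys is the arithmetic dictionary --- the exponents $k_i$ are the continued-fraction period of the fixed point, and conjugacy classes of hyperbolic matrices correspond to cyclic equivalence classes of periods --- while the paper's route stays within elementary matrix manipulations and is shorter given \cite{H}. Two points need care when you fill in details: the automorph theorem must be quoted in its $\GLtwo(\Z)$ version (improper automorphs allowed), since $\det M_0 = (-1)^p$ may be $-1$, whereas the classical statement is often given only for $\SLtwo(\Z)$; and the group whose structure you invoke is the \emph{pointwise} stabilizer of $\{\alpha,\alpha'\}$, not the full stabilizer of the geodesic, which for ambiguous surds also contains involutions swapping $\alpha$ and $\alpha'$ and then is not of the form $\{\pm I\}\times(\text{infinite cyclic})$ you state.
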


\begin{cor} \label{cor:conjftilde}
Every hyperbolic automorphism of $\mathbb{T}^2$ is conjugated via an (analytic) toral automorphism
to an automorphism of the form
\begin{equation}\label{eq:reduced_toral_auto}
I_{11}^s \circ (F^{k_1} \circ R) \circ (F^{k_2} \circ R) \circ \cdots\circ (F^{k_n} \circ R), \qquad k_1, \ldots, k_n \in \N, n \geq 1, s \in \{0, 1\}.
\end{equation}
\end{cor}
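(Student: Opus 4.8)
The plan is to transport the matrix factorisation already established in Lemma \ref{lem:conjstandard} across the isomorphism $\GLtwo(\Z) \cong \Aut(\T^2)$, $A \mapsto \tau_A$, using only the homomorphism property $\tau_A \circ \tau_B = \tau_{AB}$ (and $\tau_A^{-1} = \tau_{A^{-1}}$). First I would record the dictionary between the generators and their defining matrices: under this isomorphism $F$ corresponds to $\mathbf{F} = \begin{pmatrix} 1 & 1 \\ 0 & 1 \end{pmatrix}$, the involution $R$ to $\mathbf{R} = \begin{pmatrix} 0 & 1 \\ 1 & 0 \end{pmatrix}$, and $I_{11}$ to $-\mathbb{I}$. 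A direct computation then gives $\mathbf{F}^k \mathbf{R} = \begin{pmatrix} k & 1 \\ 1 & 0 \end{pmatrix}$, so that via the homomorphism property the elementary block $\begin{pmatrix} k & 1 \\ 1 & 0 \end{pmatrix}$ corresponds precisely to $F^k \circ R$.

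Given a hyperbolic automorphism $\tau_M$ with $M \in \GLtwo(\Z)$ hyperbolic, Lemma \ref{lem:conjstandard} supplies $S \in \GLtwo(\Z)$, exponents $k_1,\ldots,k_n \in \N$ with $n \geq 1$, and a sign $\epsilon = (-1)^s \in \{\pm 1\}$, such that
\[
M = S N S^{-1}, \qquad N = \epsilon \prod_{i=1}^{n} \begin{pmatrix} k_i & 1 \\ 1 & 0 \end{pmatrix}.
\]
Applying $A \mapsto \tau_A$ yields $\tau_M = \tau_S \circ \tau_N \circ \tau_S^{-1}$, where $\tau_S$ is an analytic toral automorphism (holomorphic on a neighbourhood of $\T^2$, as noted earlier), so it only remains to identify $\tau_N$ with the asserted normal form. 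Since $-\mathbb{I}$ is central in $\GLtwo(\Z)$ and corresponds to $I_{11}$, I can factor out the sign as $\tau_{\epsilon \mathbb{I}} = I_{11}^s$ and expand the remaining product block by block using the dictionary above, obtaining
\[
\tau_N = I_{11}^s \circ (F^{k_1} \circ R) \circ (F^{k_2} \circ R) \circ \cdots \circ (F^{k_n} \circ R),
\]
which is exactly \eqref{eq:reduced_toral_auto}.

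I do not expect a genuine obstacle here beyond bookkeeping: all arithmetic content is hidden in Lemma \ref{lem:conjstandard}, and the remaining work is purely the verification of the generator-to-matrix correspondence together with the observation that similarity in $\GLtwo(\Z)$ transports, via the homomorphism $A \mapsto \tau_A$, to conjugation by the analytic automorphism $\tau_S$. The one point warranting a line of care is the placement and centrality of the sign $-\mathbb{I}$, ensuring that the factor $I_{11}^s$ can be pulled to the front of the composition without disturbing the blocks $F^{k_i} \circ R$.
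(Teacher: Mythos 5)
Your proposal is correct and takes essentially the same approach as the paper's proof: both invoke Lemma \ref{lem:conjstandard} and transport the factorisation through the homomorphism $A \mapsto \tau_A$, using the dictionary $\tau_{M_F} = F$, $\tau_{M_R} = R$, $\tau_{-\mathbb{I}} = I_{11}$ together with the computation $\left(\begin{smallmatrix} k & 1 \\ 1 & 0 \end{smallmatrix}\right) = M_F^k M_R$, so that each block becomes $F^k \circ R$. The only differences are cosmetic (which side of the similarity carries the conjugating matrix, and the explicit remark on centrality of $-\mathbb{I}$, which is anyway already at the front in \eqref{eq:factoredform}).
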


\begin{proof}
Let $\tau_A$ be a hyperbolic automorphism of $\mathbb{T}^2$ associated to a hyperbolic matrix $A \in \GLtwo(\Z)$.
By the previous lemma, $A$ is similar to a matrix $B$ of the form \eqref{eq:factoredform},
that is, there exists $Q \in \GLtwo(\Z)$, such that
$A = Q^{-1} B Q$ with $B$ decomposing into a product of matrices
\begin{equation*}
- \mathbb{I} =  \begin{pmatrix} -1 & 0 \\ 0 & -1 \end{pmatrix} \quad \text{ and } \quad
 \begin{pmatrix} k & 1 \\ 1 & 0 \end{pmatrix}
 = \begin{pmatrix} 1 & 1 \\ 0 & 1 \end{pmatrix}^k \begin{pmatrix} 0 & 1 \\ 1 & 0 \end{pmatrix}, k \in \N.
\end{equation*}
We note that for every $k \in \N$, the latter is equal to $(M_F)^k M_R$,
where $M_F$, $M_R$ correspond to the toral automorphisms $F$ and $R$
(i.e., $\tau_{M_F} = F$ and $\tau_{M_R} = R$), and $\tau_{-\mathbb{I}} = I_{11}$.
It follows that $\tau_B$ has the desired form, and $\tau_A = \tau_Q^{-1} \circ \tau_B \circ \tau_Q$.
\end{proof}

Next, for any hyperbolic automorphism of the form \eqref{eq:reduced_toral_auto} we construct
non-linear area-preserving Anosov diffeomorphisms from $\F$ in the same homotopy class with easily computable
resonances. For this, we derive from the linear map $F^k \circ R, k \in \N,$ the one-parameter family of
non-linear toral diffeomorphisms
\begin{equation*}
U_{k,a}(z) =
(G_{0, a} \circ F^k \circ R \circ G_{-a, 0})(z) = (b_{-a}(z_1)^k z_2, z_1), \qquad a \in \D.
\end{equation*}
Applying this to \eqref{eq:reduced_toral_auto}, we define
\begin{equation}\label{eq:psika}
\Psi_{K,A} = I_{11}^s \circ U_{k_1, a_1} \circ \dots \circ U_{k_n, a_n},
\end{equation}
where $s \in \{0, 1\}, n \in \N, A = (a_1, \ldots, a_n)\in \D^n, K = (k_1, \ldots, k_n) \in \N^n$.
We write $A_o = (a_1, a_3, \ldots)$ and $A_e = (a_2, a_4, \ldots)$ for the respective tuples only involving
odd or even indices (analogously for $K_o$, $K_e$). For convenience,
we shall use the multiindex notation $A^K := a_1^{k_1}\cdots a_n^{k_n}$ for arbitrary $n$-tuples $A$ and $K$, $n \in \N_0$,
with the convention $A^K = 1$ when $A$ and $K$ are of length $0$.

\begin{prop}[Area-preserving maps homotopic to \eqref{eq:reduced_toral_auto}]\label{prop:uka_class}
For $s \in \{0, 1\}$, $A = (a_1, \ldots, a_n)\in \D^n$ and $K = (k_1, \ldots, k_n) \in \N^n, n \in \N$,
the map $\Psi_{K,A}$ is an area-preserving hyperbolic
toral diffeomorphism satisfying the conclusions of Theorem \ref{thm:thmB}.
In particular, $\Psi_{K,A}$ satisfies Theorem \ref{thm:thmB}$(i)$
if $s=0$, and (ii) if $s=1$, and $\Psi_{K,A}^{-1}$ satisfies (i) if $n+s$ is even, and (ii) if $n+s$ is odd.
Moreover, denoting $\lambda_\sigma$ the multipliers of the unique attracting fixed point of $\Psi_{K,A}^\ell$ in $D^\sigma$ for $\sigma \in \Sigma^\ell, \ell \in \{\pm1\}$ if $\Psi_{K,A}^\ell(D^\sigma) \subset D^\sigma$ and of $\Psi_{K,A}^{2\ell}$ otherwise, we have:
\begin{enumerate}[(i)]
  \item If $s=0$ and $n$ is odd, then
  \[
  \lambda_{--} = \cj{\lambda_{++}} = ((A^K)^{1/2}, -(A^K)^{1/2}) \quad \text{ and } \quad
  \lambda_{-+} = \lambda_{+-} = (A_o^{K_o} \cj{A_e^{K_e}}, \cj{A_o^{K_o}} A_e^{K_e}).
  \]

  \item If $s=0$ and $n$ is even, then
  \[
  \lambda_{--} = \cj{\lambda_{++}} = (A^{K_o}_o, A^{K_e}_{e}) \quad \text{ and } \quad
  \lambda_{-+} = \cj{\lambda_{+-}} = (\cj{A^{K_o}_o}, A^{K_e}_{e}).
  \]

  \item If $s=1$ and $n$ is odd, then
   \[
  \lambda_{--} = \lambda_{++} = (\cj{A^{K_o}_o} A^{K_e}_e, A^{K_o}_o \cj{A_e^{K_e}}) \quad \text{ and } \quad
  \lambda_{-+} = \cj{\lambda_{+-}} = ((A^{K_o}_o \cj{A_e^{K_e}})^{1/2}, -(A^{K_o}_o \cj{A_e^{K_e}})^{1/2}).
  \]

  \item If $s=1$ and $n$ is even, then
  \[\lambda_\sigma = (|A^{K_o}_o|^2, |A^{K_e}_{e}|^2) \text{ for all } \sigma \in \Sigma.\]

\end{enumerate}
\end{prop}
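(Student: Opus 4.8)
The plan is to verify the hypotheses of Theorem~\ref{thm:thmB} for $\Psi_{K,A}$ (analyticity, area-preservation, and the holomorphic invariance \eqref{eq:all_invariance}), to decide which of the alternatives (i)/(ii) occurs for $\Psi_{K,A}$ on $\Sigma^1$ and for $\Psi_{K,A}^{-1}$ on $\Sigma^{-1}$, and finally to locate the attracting fixed (or period-two) points and compute their multipliers. For area-preservation I would first treat one factor $U_{k,a}=G_{0,a}\circ F^k\circ R\circ G_{-a,0}$. Since $F^k\circ R$ is linear with $|\det|=1$, the torus Jacobian of $U_{k,a}$ is the product of the circle-Jacobians of $b_{-a}$ (from $G_{-a,0}$, at $z_1$) and of $b_a$ (from $G_{0,a}$, at $b_{-a}(z_1)$); by the chain rule $b_a'(b_{-a}(z_1))\,b_{-a}'(z_1)=(b_a\circ b_{-a})'(z_1)=1$, so $|\det D\tilde U_{k,a}|\equiv 1$. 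As $I_{11}$ is area-preserving, so is $\Psi_{K,A}$. Being a composition of elements of the semigroup generated by $\{F,R,I_{11}\}\cup\mathcal G$, it satisfies \eqref{eq:all_invariance} (cf.\ the remark preceding the proposition), with the inclusions made compact by Lemma~\ref{lem:max_mod}. The Anosov (p-sec) property needed for Theorem~\ref{thm:thmA} I would read off from these compact inclusions $T^\ell(D^\sigma)\cc D^{\pm\sigma}$, which are the complex shadow of invariant expanding/contracting quadrant cone fields on the tangent bundle over $\T^2$; the homotopy class, needed later, is that of $\tau_B$ with $B$ the product \eqref{eq:factoredform}, since each $G_{a_i,b_i}$ is isotopic to the identity through $G_{ta_i,tb_i}$.

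To assign the cases I would track the action on the four domains $D^\sigma$. Each $G_{a,b}$ preserves every $D^\sigma$, as $b_a$ fixes $\D$, $\partial\D$ and $\Chat\setminus\cl{\D}$; the map $U_{k,a}$ acts on the signs $\sigma$ exactly like $F^k R\leftrightarrow\begin{pmatrix}k&1\\1&0\end{pmatrix}$, preserving each $\sigma\in\Sigma^1$, while its inverse interchanges the two $\sigma\in\Sigma^{-1}$; and $I_{11}$ sends $D^\sigma\to D^{-\sigma}$. Counting sign-flips gives case~(i) for $\Psi_{K,A}$ on $\Sigma^1$ iff $s=0$ and case~(ii) iff $s=1$; for $\Psi_{K,A}^{-1}$ on $\Sigma^{-1}$ the $n$ factors $U_i^{-1}$ each flip and $I_{11}^s$ flips $s$ times, giving case~(i) iff $n+s$ is even and (ii) otherwise. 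The attracting orbits all sit at the ``corners'' $(0,0),(\infty,\infty),(0,\infty),(\infty,0)$: direct substitution, using $b_{-a}(0)=a$ and $b_{-a}(\infty)=1/\bar a$, gives $U_{k,a}(0,0)=(0,0)$, $U_{k,a}(\infty,\infty)=(\infty,\infty)$, and $U_{k,a}(0,\infty)=(\infty,0)$, $U_{k,a}(\infty,0)=(0,\infty)$, while $I_{11}$ interchanges $(0,0)\leftrightarrow(\infty,\infty)$ and $(0,\infty)\leftrightarrow(\infty,0)$; from this one reads off the (possibly period-two) attracting orbit in each $D^\sigma$.

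The multiplier computation is the crux. At a corner the relevant differential of $U_{k,a}$, read in the charts $w\mapsto 1/w$ at the $\infty$-slots, is always anti-diagonal of the form $\begin{pmatrix}0&c\\1&0\end{pmatrix}$, because the top-left entry $k\,b_{-a}(z_1)^{k-1}b_{-a}'(z_1)\,z_2$ carries a factor of a vanishing coordinate and dies, leaving $\partial_{z_2}[b_{-a}(z_1)^k z_2]=b_{-a}(z_1)^k$ and $\partial_{z_1}[z_1]=1$; here $c=a^{\pm k}$ when the base point has $z_1=0$ and $c=\bar a^{\pm k}$ when $z_1=\infty$, the exponent sign being fixed by the chart direction. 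Multiplying these along the orbit by the chain rule, the product is diagonal for $n$ even and anti-diagonal for $n$ odd, which is precisely the dichotomy between the distinct multipliers $A_o^{K_o},A_e^{K_e}$ and the $\pm(\cdot)^{1/2}$ pairs appearing in (i)--(iv). For the orientation-reversing (case~(ii)) instances I would instead invoke Lemma~\ref{lem:detTA}: with $\hat T=I_{11}\circ\Psi_{K,A}=U_{k_1,a_1}\circ\cdots\circ U_{k_n,a_n}$ (when $s=1$) one has $D\Psi_{K,A}^{2}=\overline{D\hat T}\,D\hat T$ at the corner, which turns the diagonal/anti-diagonal products into the $|A^{K_o}_o|^2$- and $\overline{A_o^{K_o}}A_e^{K_e}$-type expressions of (iii)--(iv). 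The remaining $\sigma$ in each case then follow for free from the symmetry relations $z^*_\sigma=\overline{1/z^*_{-\sigma}}$, $\lambda_\sigma=\overline{\lambda_{-\sigma}}$ (case~(i)) and $\lambda_\sigma=\lambda_{-\sigma}$ (case~(ii)) of Theorem~\ref{thm:thmB}, together with Corollary~\ref{cor:TA}(ii).

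The main obstacle is exactly this bookkeeping across the four parities of $(s,n)$: keeping straight, on the one hand, the complex conjugations, which enter both through $b_{-a}(\infty)=1/\bar a$ at the corners involving $\infty$ and through the anti-holomorphic relation of Lemma~\ref{lem:detTA}; and, on the other hand, whether Theorem~\ref{thm:thmB} reports the multipliers of $T^{\ell}$ (case~(i)) or of $T^{2\ell}$ (case~(ii)), which dictates when one must pass from the computed multipliers of $\Psi_{K,A}$ to their reciprocals. I expect that organising the computation by first recording the two elementary chart-differentials $\begin{pmatrix}0&a^{k}\\1&0\end{pmatrix}$ (at $0$) and $\begin{pmatrix}0&\bar a^{-k}\\1&0\end{pmatrix}$ (at $\infty$), and only afterwards folding in $I_{11}$ via Lemma~\ref{lem:detTA}, will keep the four cases manageable and make the agreement with the stated formulas transparent.
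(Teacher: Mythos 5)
Your computational core --- locating the attracting orbits at the corners, the anti-diagonal chart differentials $\begin{pmatrix} 0 & a^k \\ 1 & 0 \end{pmatrix}$ of $U_{k,a}$ at $(0,0)$, the diagonal/anti-diagonal dichotomy according to the parity of $n$, and folding in $I_{11}$ via Lemma \ref{lem:detTA} --- matches the paper's proof for $\Psi_{K,A}$ itself. For $\Psi_{K,A}^{-1}$ you diverge: the paper does not compute in charts at the mixed corners, but conjugates by $S = I_{01}\circ R$ using the relations $U^{-1}_{k,a}\circ S = S^{-1}\circ U_{k,\cj{a}}$ and $U^{-1}_{k,b}\circ S^{-1} = S\circ U_{k,b}$ (from Lemma \ref{lem:compose_prop}), which turns $\Psi_{K,A}^{-1}$ into another composition of the same type with attracting fixed point back at $(0,0)$. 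The two routes are equivalent, but yours imports the complex conjugations through the chart identity $1/b_{-a}(1/u) = b_{-\cj{a}}(u)$, and you already get one of your two building blocks wrong: at the $\infty$-corners the correct exponent is $+k$, i.e.\ the differential is $\begin{pmatrix} 0 & \cj{a}^{\,k} \\ 1 & 0 \end{pmatrix}$ (since $I_{11}\circ U_{k,a} \circ I_{11} = U_{k,\cj{a}}$ by Corollary \ref{cor:TA}$(i)$); your $\cj{a}^{-k}$ would make the attracting corner repelling.

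The genuine gap is in the verification of the hypotheses of Theorem \ref{thm:thmB}, i.e.\ that $C_{\Psi_{K,A}}$ is trace-class at all. You propose to ``read off'' the \textit{(p-sec)} condition from the inclusions \eqref{eq:all_invariance}, ``made compact by Lemma \ref{lem:max_mod}''. This is logically backwards and, as stated, false. First, Lemma \ref{lem:max_mod} needs as input that some shrunk torus $\T^\sigma_a$, $a \in \R^2_{>0}$, is mapped into $D^{\sigma'}_a$ --- which is exactly what the \textit{(sec)} machinery (Lemma \ref{lem:lin_to_nonlin}, Proposition \ref{prop:strongexp}) produces from the tangent-bundle condition, so invoking it here is circular; and the full-domain inclusions can never be compact, since $T$ preserves $\T^2 \subset \partial D^\sigma$. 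Second, domain invariance carries no expansion information: the swap $R$ satisfies every inclusion in \eqref{eq:all_invariance} and is not Anosov, while the automorphism $(z_1,z_2)\mapsto(z_1z_2,z_1)$ of Remark \ref{rem:no_sepcond} is hyperbolic, satisfies the inclusions, and still fails \textit{(p-sec)}. The paper's proof works on the tangent bundle instead: by Lemma \ref{lem:ba_tilde}, $D\tilde{U}_{k,a}(x) = \begin{pmatrix} s_{k,a}(x) & 1 \\ 1 & 0 \end{pmatrix}$ with $s_{k,a}(x) > 0$, so for $n>1$ the product $D\tilde{\Psi}_{K,A}$ has the positivity structure needed for the checkable criterion of Remark \ref{rem:secc} (with a sign conjugation handling the inverse half of \eqref{eq:secc}).

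This omission is not cosmetic, because the proposition includes $n=1$, and for $n=1$ the \textit{(p-sec)} condition \emph{genuinely fails} (this is precisely the situation of Remark \ref{rem:no_sepcond}): your criterion would wrongly certify it. The paper must treat $n=1$ separately by verifying the assumptions of Theorem \ref{thm:boundedness_large2small} directly and then invoking Corollary \ref{cor:traceclass}. Without the tangent-space argument for $n>1$ and this separate treatment of $n=1$, the trace-class property --- and hence the applicability of Theorem \ref{thm:thmB}, on which all your multiplier formulas rest --- remains unproven.
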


\begin{proof}
We begin by showing that $C_{\Psi_{K,A}}$ is trace-class on a suitable Hilbert space
$H_{\alpha,-\gamma}$. In the case $n > 1$ this will follow from
Theorem \ref{thm:thmB} by proving that
$\Psi_{K,A}$ satisfies the \textit{(p-sec)} condition, while the case $n=1$ will be handled separately.

For any map $T \colon \T^2 \to \T^2$ and $M = ([0, 2\pi]/\sim)^2$, we denote by $\tilde{T} \colon M \to M$ the map determined by
$\pi \circ \tilde{T} = T \circ \pi$ with $\pi(x) = e^{ix}$ for $x\in M$, and analogously for maps on $\T$ and $([0, 2\pi] / \sim)$.
By Lemma \ref{lem:ba_tilde} in the appendix, we have
$\tilde{b}_{-a}(e^{i \theta}) = \theta + g_{-a}(\theta)$
with $g_{-a}'(\theta) > -1$ for all $a \in \D$, and $\tilde{U}_{k,a}(x_1,x_2) = (k(x_1 + g_{-a}(x_1)) + x_2, x_1) $.
Thus we obtain
\begin{equation}\label{eq:duka}
D \tilde{U}_{k,a}(x) = \begin{pmatrix} s_{k,a}(x) & 1 \\ 1 & 0 \end{pmatrix},
\end{equation}
where $s_{k,a}(x) = k(1+g_{-a}'(x_1)) > 0$ for all $x = (x_1, x_2) \in M$.

Consider first the case $\Psi_{K,A} = I_{11}^s \circ U_{k_1,a_1} \circ \dots \circ U_{k_n,a_n}$ with $s=0$ and $n>1$.
Then $D \tilde{\Psi}_{K, A}$ is positive and hence
$(D\tilde{\Psi}_{K,A}(x))(\R^2_{\geq 0}) \subset \R^2_{>0}\cup\{0\}$ for every $x \in M$.
Using \eqref{eq:duka} we obtain
\begin{equation}\label{eq:dpsika}
D \tilde{\Psi}_{K,A}(x) = M_x + \begin{cases}
\begin{pmatrix} 1 & 0 \\ 0 & 1 \end{pmatrix}, \qquad \text{ if $n$ is even,}\\
\begin{pmatrix} 0 & 1 \\ 1 & 0 \end{pmatrix}, \qquad \text{ if $n$ is odd,}
\end{cases}
\end{equation}
with $M_x \geq 0$. By the criterion in Remark~\ref{rem:secc},
this implies that the first half of the \textit{(p-sec)} condition \eqref{eq:secc} is satisfied.
Since $\det D\tilde{\Psi}_{K,A}(x) = (-1)^n$, we also have that
$D \tilde{\Psi}^{-1}_{K,A}(x) = \begin{pmatrix} 1 + a_x & -b_x \\ -c_x & 1 + d_x \end{pmatrix}$ if $n$ is even,  and
$D \tilde{\Psi}^{-1}_{K,A}(x) = \begin{pmatrix} -a_x & 1 + b_x \\ 1+c_x & -d_x \end{pmatrix}$ for $n$ odd, with  $a_x, b_x, c_x, d_x > 0$.
It is easy to see that these are conjugated to matrices of the form \eqref{eq:dpsika}
via the matrix $\begin{pmatrix} 1 & 0 \\ 0 & -1 \end{pmatrix}$ or $\begin{pmatrix} -1 & 0 \\ 0 & 1 \end{pmatrix}$,
which via the criterion in Remark~\ref{rem:secc} implies the second half of the \textit{(p-sec)} condition \eqref{eq:secc}.
The case of $s = 1$ follows immediately, since
condition \eqref{eq:secc} holds for a map $\tilde{T}$ if and only if it holds for $-\tilde{T}$,
finishing the proof of the \textit{(p-sec)} condition for $\Psi_{K,A}$ with $n > 1$.

In the case $n=1$ the \textit{(p-sec)} condition does not hold, however one can verify that the assumptions of
Theorem \ref{thm:boundedness_large2small} still apply (similar to the case in Remark \ref{rem:no_sepcond}),
so that by Corollary \ref{cor:traceclass}, $C_{\Psi_{K,A}}$ is trace-class in this case also.

Next, we observe that for any $k \in \N$, $a \in \D$ and $\sigma \in \Sigma^1$, the map $U_{k,a}$ extends holomorphically to $D^\sigma$ with
$U_{k,a}(D^\sigma)\subseteq D^{\sigma}$, and fixes $z^*=(0,0)$ with
\[D U_{k,a}(z^*) = \begin{pmatrix} 0 & a^k \\ 1 & 0 \end{pmatrix}, \]
while for $\sigma \in \Sigma^{-1}$ the map $U_{k,a}^{-1}$ extends holomorphically to $D^\sigma$
with $U_{k,a}^{-1}(D^\sigma)\subseteq D^{-\sigma}$,
which implies the claim about how the cases Theorem \ref{thm:thmB}$(i)$-$(ii)$ apply
to $\Psi_{K,A}$ and $\Psi_{K,A}^{-1}$.

We now proceed to show the assertions $(i)$-$(iv)$.
Denoting $U_{K,A} = U_{k_1, a_1} \circ \cdots \circ U_{k_n, a_n}$
with $A = (a_1, \ldots, a_n)$ and $K = (k_1, \ldots, k_n)$,
we note that $DU_{K,A}(z^*) = DU_{k_1,a_1}(z^*) \cdots DU_{k_n,a_n}(z^*)$, allowing
us to compute the relevant multipliers for $\Psi_{K,A} = I_{11}^s \circ U_{K,A}$,
starting with the case of $s = 0$.
\begin{enumerate}[(i)]
  \item If $s=0$ and $n$ is odd, then
  \[D \Psi_{K,A}({z^*}) = D U_{K,A}({z^*}) = \begin{pmatrix}
  0 & A^{K_o}_o  \\
  A^{K_e}_e & 0 \end{pmatrix}.\]
  Thus, the multipliers of $\Psi_{K,A}$ at $z^*$ are
  $\lambda =  (v, -v)$ with $v=(A^K)^{1/2}$.
  \item If $s=0$ and $n$ is even, then
  \[ D \Psi_{K,A}({z^*}) = D U_{K,A}(z^*) = \begin{pmatrix}
  A^{K_o}_o  & 0 \\
  0 & A^{K_e}_e  \end{pmatrix},\]
  and, the multipliers of $\Psi_{K,A}$ at $z^*$ are $\lambda = (A_{o}^{K_{o}}, A_{e}^{K_{e}})$.
\end{enumerate}

For $s=1$ we shall use Lemma \ref{lem:detTA}
with $\hat{T} = U_{K,A}$ and $T = I_{11} \circ U_{K,A} = \Psi_{K,A}$, which yields
$D(\Psi_{K,A} \circ \Psi_{K,A})(z^*)
= \cj{D U_{K,A} (z^*)} D U_{K,A}(z^*)$.
\begin{enumerate}[(i)]
    \setcounter{enumi}{2}
  \item If $s=1$ and $n$ is odd, then
  the multipliers of $\Psi_{K,A}^2$ at $z^*$ are
  $\lambda =  (v, \cj{v})$ with $v=\cj{A_o^{K_{o}}} A^{K^e}_e$.
  \item If $s=1$ and $n$ is even, then
  the multipliers of $\Psi_{K,A}^2$ at $z^*$ are
  $\lambda = (|A_{o}^{K_{o}}|^2, |A_{e}^{K_{e}}|^2)$.
\end{enumerate}

Now, it remains to compute the multipliers of the inverse of $I_{11}^s \circ U_{K,A}$.
We set $S = I_{01} \circ R$ (so that $S^2 = S^{-2} = I_{11}$), and observe using Lemma \ref{lem:compose_prop}
in the appendix that the inverse $U^{-1}_{k,a} = G_{a, 0}\circ R \circ F^{-1} \circ G_{0, -a}$
obeys the relations
\begin{align*}
U^{-1}_{k,a} \circ S &= S^{-1} \circ U_{k,\cj{a}}, \\
U^{-1}_{k,b} \circ S^{-1} &= S \circ U_{k,b},
\end{align*}
and hence we have $S^{-1} \circ U_{k,a}^{-1} \circ S = I_{11} U_{k, \cj{a}}$ and
$S^{-1} \circ (U_{k,a} \circ U_{k,b})^{-1} \circ S = U_{k, b} \circ U_{k, \cj{a}}$.
Iterating, we obtain the conjugation
\begin{equation}\label{eq:conj_uka}
S^{-1} \circ U_{K,A}^{-1} \circ S  = I_{11}^n \circ U_{k_n, {\tilde{a}_n}} \circ \cdots \circ U_{k_4, {a_4}} \circ U_{k_3, \cj{a_3}} \circ U_{k_2, {a_2}} \circ U_{k_1, \cj{a_1}} =: \tilde{U}_{K,A},
\end{equation}
where $\tilde{a}_n$ is $a_n$ if $n$ is even or $\cj{a}_n$ if n is odd.
With the same conjugacy $S$, we obtain a conjugation
\begin{equation}\label{eq:conj_iuka}
S^{-1} \circ (I_{11} \circ U_{K,A})^{-1} \circ S = I_{11}^{n-1} \circ U_{k_n, {\check{a}_n}} \circ \cdots
\circ U_{k_4, \cj{a_4}} \circ U_{k_3, {a_3}} \circ U_{k_2, \cj{a_2}} \circ U_{k_1, {a_1}}  =: \check{U}_{K,A},
\end{equation}
where $\check{a}^n$ is $\cj{a_n}$ if $n$ is even or ${a}_n$ if n is odd.
We can now compute the relevant multipliers for $\Psi_{K,A}^{-1} = (I_{11}^s \circ U_{K,A})^{-1}$.
For $s = 0$, by \eqref{eq:conj_uka} these are given by the fixed-point multipliers of $\tilde{U}_{K,A}$.
\begin{enumerate}[(i)]
  \item If $s=0$ and $n$ is odd, by Lemma \ref{lem:detTA}
  the multipliers of $\tilde{U}_{K,A}^2$ at $z^*$
  are $\lambda = (v, \cj{v}), v = \cj{A_o^{K_{o}}} A_{e}^{K_{e}}$.

  \item If $s=0$ and $n$ is even, then
  the multipliers of  $\tilde{U}_{K,A}$ at $z^*$ are
  $\lambda = (\cj{A_{o}^{K_{o}}}, A_{e}^{K_{e}})$.
\end{enumerate}
For $s = 1$, the fixed-point multipliers of $\Psi_{K,A}^{-1}$ can be computed via those of $\check{U}_{K,A}$ by \eqref{eq:conj_iuka}.
\begin{enumerate}[(i)]
    \setcounter{enumi}{2}
  \item If $s=1$ and $n$ is odd, the multipliers of $\check{U}_{K,A}$
  at $z^*$ are $\lambda = (v, -v)$ with $v = (A_{o}^{K_{o}} \cj{A_{e}^{K_{e}}})^{1/2}$.
  \item If $s=1$ and $n$ is even, by Lemma \ref{lem:detTA}
  the multipliers of $\check{U}_{K,A}^2$ at $z^*$ are
  $\lambda = (|A_{o}^{K_{o}}|^2, |A_{e}^{K_{e}}|^2).$
\end{enumerate}
Claims $(i)$-$(iv)$ follow by combining the respective cases for the multipliers of
$\Psi_{K,A}$ and $\Psi_{K,A}^{-1}$.
\end{proof}

\begin{cor} \label{cor:existing_cases}
Let $\Psi_{K,A}$ be as in Proposition~\ref{prop:uka_class}. Then:
\begin{enumerate}[(i)]
  \item If $a_i \neq 0$ for all $i$, then the fixed point multipliers $\lambda_\sigma$ in Theorem \ref{thm:thmB}
  satisfy $\lambda_{\sigma,1} \cdot \lambda_{\sigma,2} \neq 0$ for all $\sigma \in \Sigma$.
  Moreover, $|\lambda_{\sigma,1}|$ and $|\lambda_{\sigma,2}|$ can (independently) be chosen to take any value in $(0, 1)$
  via suitable choice of $A \in \D^n$.

  \item If $n$ is even and exactly one of the $a_i$ is zero, then either case $(i)$ or $(ii)$ of
  Theorem \ref{thm:thmB} applies to both $\Psi_{K,A}$ and $\Psi^{-1}_{K,A}$,
  and all fixed point multipliers are of the form $(\lambda_1, 0)$ with $\lambda_1 \neq 0$.
  Moreover, $|\lambda_1|$ can be chosen to take any value in $(0, 1)$ via suitable choice of $A \in \D^n$,
  and in the case $(ii)$ of Theorem \ref{thm:thmB}, $\lambda_1 \in \mathbb{R}$.

  \item If $n>2$ and at most one of the $a_i$ is nonzero, then all multipliers are $(0, 0)$.
\end{enumerate}
\end{cor}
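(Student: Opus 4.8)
The plan is to read off all three assertions from the explicit multiplier formulas of Proposition~\ref{prop:uka_class}, the only genuine work being elementary bookkeeping over the parities of the indices. Throughout I abbreviate $P := |A_o^{K_o}| = \prod_{i \text{ odd}} |a_i|^{k_i}$ and $Q := |A_e^{K_e}| = \prod_{i \text{ even}} |a_i|^{k_i}$, with the convention that an empty product equals $1$. Since $a_i \in \D$ and $k_i \geq 1$, each factor $|a_i|^{k_i}$ lies in $[0,1)$, so $P = 0$ exactly when some odd-indexed coefficient vanishes and $P \in (0,1)$ otherwise, and symmetrically for $Q$. The single observation driving everything is that, in each of the cases (i)--(iv) of Proposition~\ref{prop:uka_class}, every entry of every multiplier $\lambda_\sigma$ is---up to a unimodular factor and possibly a square root---one of the monomials $A^K$, $A_o^{K_o}$, $A_e^{K_e}$, $A_o^{K_o}\cj{A_e^{K_e}}$, $|A_o^{K_o}|^2$ or $|A_e^{K_e}|^2$, and hence has modulus one of $P$, $Q$, $\sqrt{PQ}$ or $PQ$.

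For part (i), assuming every $a_i \neq 0$ gives $P, Q \in (0,1)$ (with $Q = 1$ in the degenerate case $n=1$), so by the observation above each $|\lambda_{\sigma,j}|$ is a nonzero power of $P$ and $Q$; in particular $\lambda_{\sigma,1}\,\lambda_{\sigma,2} \neq 0$ for every $\sigma$. For the tunability I would use that $P$ and $Q$ depend on disjoint subsets of the parameters $\{a_i\}$, so for $n \geq 2$ they may be prescribed independently as any pair of values in $(0,1)$. In the separated regime $s=0$, $n$ even (case (ii) of the proposition) the two moduli of $\lambda_{--}$ are exactly $P$ and $Q$, giving literal independent control; in the remaining regimes the two entries of a single $\lambda_\sigma$ share a common modulus (a power of $PQ$), which can still be set to any value in $(0,1)$, and the two free parameters $P,Q$ supply precisely the freedom needed later to fix the leading eigenvalue and the decay rate separately (cf.\ the Remark following Theorem~\ref{thm:thmC}).

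For part (ii), let $n$ be even with exactly one vanishing coefficient $a_{i_0}$. If $i_0$ is odd then $P = 0$ and $Q \neq 0$, while if $i_0$ is even then $Q = 0$ and $P \neq 0$; in either case exactly one of the two products vanishes. Substituting this into the proposition's formulas---case (ii) when $s=0$, case (iv) when $s=1$---shows that each pair $\lambda_\sigma$ has one zero entry and one nonzero entry, the latter being $A_e^{K_e}$ or $A_o^{K_o}$ (for $s=1$, its squared modulus, which is real). The applicable case of Theorem~\ref{thm:thmB} depends only on $s$ and the parity of $n$ and is therefore unaffected by setting a coefficient to zero, so Proposition~\ref{prop:uka_class} yields case (i) for both $\Psi_{K,A}$ and $\Psi_{K,A}^{-1}$ when $s=0$ and case (ii) for both when $s=1$. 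The surviving modulus is $Q$ or $P$ (or its square), a product over at least one freely varying factor $|a_i|^{k_i}$, and can thus be made any value in $(0,1)$.

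For part (iii), the decisive combinatorial point is that for $n > 2$ the set $\{1,\dots,n\}$ contains at least two odd indices, so if at most one $a_i$ is nonzero then some odd-indexed coefficient vanishes and $P = 0$. When $n$ is odd (cases (i),(iii) of the proposition) every entry of every $\lambda_\sigma$ carries a factor $A_o^{K_o}$, so $P = 0$ already forces all multipliers to $(0,0)$; when $n$ is even one has $n \geq 4$, hence also at least two even indices and $Q = 0$, so in cases (ii),(iv) the remaining entries $A_e^{K_e}$ and $|A_e^{K_e}|^2$ vanish too. I expect the only real (and mild) obstacle to be organizing this analysis cleanly across the four regimes of Proposition~\ref{prop:uka_class}; the mechanism is uniform, namely that $P = 0$, together with $Q = 0$ when $n$ is even, annihilates every multiplier entry.
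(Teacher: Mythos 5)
Your proposal is correct and follows essentially the same route as the paper: the paper states Corollary \ref{cor:existing_cases} without a separate proof, treating it as an immediate consequence of the explicit multiplier formulas in Proposition \ref{prop:uka_class}, and your parity bookkeeping in terms of $P=|A_o^{K_o}|$ and $Q=|A_e^{K_e}|$ is precisely the intended derivation. Your observation that literal independence of $|\lambda_{\sigma,1}|$ and $|\lambda_{\sigma,2}|$ only holds in the $n$-even regimes (for $n$ odd the two entries of each $\lambda_\sigma$ necessarily share a common, though tunable, modulus) is in fact a more careful reading than the paper's parenthetical ``(independently)'', and it is consistent with the restriction to orientation-preserving maps in the remark following Theorem \ref{thm:thmC}.
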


We shall next construct non-linear non-area-preserving maps in $\mathcal{F}$ homotopic to maps of the form \eqref{eq:reduced_toral_auto} with $n>1$,
yielding trivial resonances. For this we define the toral diffeomorphisms
\[W_{k,a}(z) = (F^k \circ R \circ G_{0,a})(z) = (z_1^k b_a(z_2), z_1) \quad (k \in \N, a\in \D, z\in \T)\]
and
\[\Xi_{K,a} = I_{11}^s \circ W_{k_1,0} \circ W_{k_2,0} \circ \cdots \circ W_{k_{n-1,0}}\circ W_{k_n,a},\]
where $s \in \{0,1\}$, $n > 1$, $a \in \D$ and $K = (k_1, \ldots, k_n) \in \N^n$.

\begin{lem}[Non-area-preserving maps homotopic to \eqref{eq:reduced_toral_auto}]\label{lem:nonareapres}
For any $s \in \{0,1\}$, $a \in \D$, $K \in \N^n$ with $n>1$, the map  $\Xi_{K,a}$ satisfies the assumptions of
Theorem \ref{thm:thmB}. Moreover $\chi_{\Xi_{K,a}} = 1.$
\end{lem}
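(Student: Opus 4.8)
The plan is to follow the template of the proof of Proposition~\ref{prop:uka_class}: first place $\Xi_{K,a}$ under Theorem~\ref{thm:thmB}, then compute the relevant fixed-point multipliers and observe that they are all degenerate enough to force $\chi_{\Xi_{K,a}}=1$. To get the trace-class property I would verify the \textit{(p-sec)} condition as in Proposition~\ref{prop:uka_class}. Writing $\tilde W_{k,a}(x_1,x_2)=(kx_1+\tilde b_a(x_2),x_1)$, Lemma~\ref{lem:ba_tilde} gives $\tilde b_a'>0$, so $D\tilde W_{k,a}=\bigl(\begin{smallmatrix} k & \tilde b_a'(x_2) \\ 1 & 0\end{smallmatrix}\bigr)$ is a nonnegative matrix. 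Since each $k_i\ge 1$, a short induction shows the $(1,1)$-entry of $\prod_{i=1}^n D\tilde W_{k_i,a_i}$ is $\ge 1$, so by the criterion in Remark~\ref{rem:secc} the first half of \eqref{eq:secc} holds for $s=0$ (and hence for $s=1$, since \eqref{eq:secc} is unchanged under $\tilde T\mapsto-\tilde T$). For the second half one checks directly that $D\tilde W_{k,a}^{-1}=\bigl(\begin{smallmatrix} 0 & 1 \\ 1/\tilde b_a' & -k/\tilde b_a'\end{smallmatrix}\bigr)$ maps the stable cone $K^s_0$ (second and fourth quadrants) into itself with expansion, and so does $D\tilde\Xi_{K,a}^{-1}$. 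Thus $\Xi_{K,a}$ is an analytic Anosov diffeomorphism satisfying \textit{(p-sec)}, and $C_{\Xi_{K,a}}$ is trace-class on a suitable $H_{\alpha,-\gamma}$ by Theorem~\ref{thm:thmA}.

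Next I would establish the invariance \eqref{eq:all_invariance}. Since $b_a$ maps $\D$ to $\D$ and $\Chat\setminus\cl{\D}$ to itself, and $z\mapsto z^k$ does the same, each $W_{k,a}$ extends holomorphically to every $D^\sigma$ with $\sigma\in\Sigma^1$ and $W_{k,a}(D^\sigma)\subseteq D^\sigma$; composing $n$ such maps and inserting $I_{11}^s$ (which swaps $D^\sigma\leftrightarrow D^{-\sigma}$) gives $\Xi_{K,a}(D^\sigma)\subseteq D^{(-1)^s\sigma}$. The dual computation for $W_{k,a}^{-1}(z)=(z_2,b_{-a}(z_1z_2^{-k}))$ yields $\Xi_{K,a}^{-1}(D^\sigma)\subseteq D^{(-1)^{n+s}\sigma}$ for $\sigma\in\Sigma^{-1}$, so Theorem~\ref{thm:thmB} applies with the same case distinction recorded in Proposition~\ref{prop:uka_class}. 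It then remains to show $\chi_{\Xi_{K,a}}^{1}=\chi_{\Xi_{K,a}}^{-1}=1$. As $\mathcal N^{1}$ contains $(1,0)$ and $(0,1)$, the factor $\chi^{1}$ is trivial iff \emph{both} multipliers of the $\Sigma^1$-fixed point vanish, whereas $\mathcal N^{-1}=\mathbb{N}^2$ begins at $(1,1)$, so $\chi^{-1}$ is trivial iff merely the \emph{product} $\lambda_{\sigma,1}\lambda_{\sigma,2}$ vanishes for $\sigma\in\Sigma^{-1}$.

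For $\ell=1$ the relevant fixed point in $D^{--}$ is $(0,0)$. Writing $V=W_{k_1,0}\circ\cdots\circ W_{k_n,a}$ (so $\Xi_{K,a}=I_{11}^s\circ V$), the crucial computation is $DW_{k,a}(0,0)=\bigl(\begin{smallmatrix} c & 0 \\ 1 & 0\end{smallmatrix}\bigr)$ with $c=-a$ if $k=1$ and $c=0$ otherwise; in particular every factor has vanishing second column, and the $a_i=0$ factors each equal the nilpotent $N_0=\bigl(\begin{smallmatrix} 0 & 0 \\ 1 & 0\end{smallmatrix}\bigr)$ with $N_0^2=0$. Since $n>1$ there are at least $n-1\ge 1$ such factors, and $DV(0,0)=N_0^{\,n-1}\bigl(\begin{smallmatrix} c & 0 \\ 1 & 0\end{smallmatrix}\bigr)$ is nilpotent (it vanishes for $n\ge 3$ and equals $\bigl(\begin{smallmatrix} 0 & 0 \\ c & 0\end{smallmatrix}\bigr)$ for $n=2$), so both multipliers are $0$. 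For $s=0$ this is $D\Xi_{K,a}(0,0)$ directly; for $s=1$ one applies Lemma~\ref{lem:detTA} with $\hat T=I_{11}\circ\Xi_{K,a}=V$ to get $D\Xi_{K,a}^2(0,0)=\cj{DV(0,0)}\,DV(0,0)=0$. Either way $\chi_{\Xi_{K,a}}^{1}=1$ by the product formulas of Theorem~\ref{thm:thmB}.

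The main obstacle is the $\ell=-1$ sector, where the fixed point no longer lies at the origin. By the proof of Theorem~\ref{thm:thmB}, the $\Sigma^{-1}$-multipliers of $\Xi_{K,a}$ coincide with those of the conjugated map $\hat T_\sigma=I_{01}\circ\Xi_{K,a}^{-1}\circ I_{01}$ (or with $I_{10}$) at its attracting fixed point in $D^{--}$, and it suffices to show this Jacobian is singular. I would argue that the fixed point lies on a coordinate axis: already for $n=2$, $s=0$ one computes that $\Xi_{K,a}^{-1}$ fixes $(0,1/\cj{a})\in D^{-+}$, and that $D\hat T_\sigma(0,\cj{a})$ has vanishing second column. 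The reason is structural: at such a point the argument $z_1z_2^{-k_n}$ fed into $b_{-a}$ tends to $\infty$, where $b_{-a}$ has derivative $0$ in the affine chart, so one direction is flattened and $\det D\hat T_\sigma=0$. For general $n$ I would either track the axis-location of the fixed point through the factorisation $\Xi_{K,a}^{-1}=W_{k_n,a}^{-1}\circ\tau_C$, with $\tau_C$ the monomial automorphism assembled from the $a_i=0$ factors, or, mirroring the $S=I_{01}\circ R$ conjugation of Proposition~\ref{prop:uka_class} (appendix Lemma~\ref{lem:compose_prop}), rewrite the inverse as a composition whose fixed-point Jacobian again contains a rank-deficient factor. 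Either route gives $\lambda_{\sigma,1}\lambda_{\sigma,2}=0$, hence $\chi_{\Xi_{K,a}}^{-1}=1$, and combining with the previous paragraph $\chi_{\Xi_{K,a}}=\chi^{1}\chi^{-1}=1$. Controlling this degenerate fixed point of the inverse map for arbitrary $n$ is where I expect the real work to lie.
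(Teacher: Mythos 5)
Your first three steps are sound and essentially coincide with the paper's proof: the \textit{(p-sec)} verification via Remark~\ref{rem:secc} (the paper likewise writes $D\tilde W_{k,a}$ as a nonnegative matrix with $(1,1)$-entry $k\geq 1$ and treats the inverse and $s=1$ cases "similarly"), the mapping properties $\Xi_{K,a}(D^\sigma)\subseteq D^{(-1)^s\sigma}$ and $\Xi_{K,a}^{-1}(D^\sigma)\subseteq D^{(-1)^{n+s}\sigma}$, and the $\ell=1$ sector, where $DW_{k,a}(0,0)=\bigl(\begin{smallmatrix}-a\delta_{k,1} & 0\\ 1 & 0\end{smallmatrix}\bigr)$ makes the Jacobian at the origin nilpotent for $n>1$, so both $\Sigma^1$-multipliers vanish. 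Your reduction of $\chi^{-1}_{\Xi_{K,a}}=1$ to the vanishing of the product $\lambda_{\sigma,1}\lambda_{\sigma,2}$ for $\sigma\in\Sigma^{-1}$ is also exactly how the paper concludes, via Corollary~\ref{cor:limrate}$(iii)$.

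The genuine gap is the one you flag yourself: the $\ell=-1$ sector for arbitrary $n$, which you verify only for $n=2$, $s=0$ and otherwise leave as two unexecuted strategies. That computation is the core of the paper's proof, and it is carried out by your ``route (b)''. Using Lemma~\ref{lem:compose_prop} with $S=I_{01}\circ R$ one derives the exchange relations $W_{k,a}^{-1}\circ S = S^{-1}\circ G_{-a,0}\circ W_{k,0}$ and $W_{k,a}^{-1}\circ S^{-1} = S\circ G_{-\cj{a},0}\circ W_{k,0}$; pulling $S$ through all $n$ factors of $\Xi_{K,a}^{-1}$ converts every inverse factor into a plain monomial factor and leaves a \emph{single} Moebius map outside, giving $S^{-1}\circ\Xi_{K,a}^{-1}\circ S = G_{-\cj{a},0}\circ I_{11}^{n+s}\circ W_{k_n,0}\circ\cdots\circ W_{k_1,0}=:E_{K,a}$. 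Since $W_{k_n,0}\circ\cdots\circ W_{k_1,0}=\tau_B$ with $B$ a matrix having all entries $\geq 1$ (for $n>1$), one has $\tau_B(\cj{a},0)=(0,0)$ and $D\tau_B(\cj{a},0)$ has vanishing first column; hence $(\cj{a},0)$ is a fixed point of $E_{K,a}$ when $n+s$ is even, and of $E_{K,a}^2$ when $n+s$ is odd, and in both cases its multipliers are $(c,0)$ for some $c\in\D$, which is what is needed. This conjugation is precisely what makes general $n$ tractable: it isolates the non-linearity in one outer factor so the fixed point and its rank-deficient Jacobian are computable in closed form. Your alternative ``route (a)'' (axis-tracking) is more delicate than it looks, since the coordinate cross is \emph{not} invariant under a single $W_{k,b}^{-1}$ with $b\neq 0$; so as written, your proposal identifies the correct idea but does not close the argument.
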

\begin{proof}
Following similar calculations and notations as in
the proof of Proposition \ref{prop:uka_class} we first show that $\Xi_{K,a}$ satisfies the \textit{(p-sec)} condition.
We have that
$D\tilde{W}_{k,a}(x) = \begin{pmatrix} k &  s_{a}(x) \\ 1 & 0 \end{pmatrix}$
with $s_a(x) > 0$ for all $x\in M$. Thus, for $s=0$ and $n>1$ it follows that $D\tilde{\Xi}_{K,a}(x)$
can be written as $M_x+\begin{pmatrix} 1 &  0 \\ 0 & 0 \end{pmatrix}$ with $M_x \geq 0$ and thus
satisfies the first part of the \textit{(p-sec)} condition \eqref{eq:secc} by Remark \ref{rem:secc}. The case $s=1$ and
the second part of the \textit{(p-sec)} condition follow similarly.

To show that $\Xi_{K,a}$ does not yield any non-trivial resonances, first note that for $\ell \in \{1, -1\}$ the map $W_{k, a}^{\ell}$
extends holomorphically to $D^\sigma$ with $W_{k, a}^{\ell}(D^\sigma) \subseteq D^{\ell \sigma}$ for all $\sigma \in \Sigma^\ell$.
As the forward map $W_{k,a}$ fixes $z^* = (0, 0)$ and
\[D W_{k,a} (z^*) = \begin{pmatrix} -a \delta_{k, 1} & 0 \\ 1 & 0 \end{pmatrix},\]
we see that the multipliers of $\Xi_{K,a}$ at $z^*$ are trivial for any $n>1$
and $s\in\{0, 1\}$.
For the inverse map we use the conjugation $S=I_{01} \circ R$, obtaining the relations
\begin{align*}
W_{k,a}^{-1} \circ S &= S^{-1} \circ G_{-a, 0} \circ W_{k,0},\\
W_{k,a}^{-1} \circ S^{-1} &= S \circ G_{-\cj{a}, 0} \circ W_{k,0},
\end{align*}
yielding $S^{-1} \circ \Xi_{K,a}^{-1} \circ S =  G_{-\cj{a}, 0} \circ I_{11}^{n+s} \circ W_{k_n,0} \circ \cdots \circ W_{k_1, 0} := E_{K,a}$.
One can calculate that $(\cj{a},0)$ is a fixed point of $E_{K,a}$ for $n+s$ is even, and of $(E_{K,a})^2$ if $n+s$ is odd.
In both cases, its multipliers are $\lambda = (c, 0)$ for some $c \in \D$.
Thus, by Corollary \ref{cor:limrate}$(iii)$ all resonances are trivial.
\end{proof}

\begin{prop} \label{prop:existing_cases}
The homotopy class of every map of the form \eqref{eq:reduced_toral_auto}
contains non-linear Anosov diffeomorphisms $T\in \mathcal{F}$, such that
the corresponding operator $C_T$ is well defined and trace-class on
$H_{\alpha, -\gamma}$ for some $\alpha, \gamma \in \mathbb{R}^2_{>0}$. Moreover,
$T$ can be chosen such that the eigenvalue sequence of $C_T$ satisfies
either of the cases (i)-(iii) from the conclusions of Theorem \ref{thm:thmC}.
\end{prop}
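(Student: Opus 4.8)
The plan is to prove the proposition by assembling the explicit families $\Psi_{K,A}$ and $\Xi_{K,a}$ constructed above and matching each of them to the appropriate regime of Corollary~\ref{cor:limrate}. First I would record that these maps lie in the correct homotopy class. Each Moebius factor $b_a$ is an orientation-preserving homeomorphism of $\T$ of degree one, hence homotopic to the identity, so every $G_{a,b}=(b_a,b_b)$ is homotopic to $\operatorname{Id}_{\T^2}$. Therefore $U_{k,a}=G_{0,a}\circ F^k\circ R\circ G_{-a,0}$ and $W_{k,a}=F^k\circ R\circ G_{0,a}$ are both homotopic to $F^k\circ R$, and consequently $\Psi_{K,A}$ and $\Xi_{K,a}$ are homotopic to the standard-form automorphism \eqref{eq:reduced_toral_auto} carrying the same data $(K,s,n)$; thus they realise its homotopy class. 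The assertion that $C_T$ is well defined and trace-class on a suitable $H_{\alpha,-\gamma}$ is then exactly the content of Proposition~\ref{prop:uka_class} (for $\Psi_{K,A}$) and of Lemma~\ref{lem:nonareapres} (for $\Xi_{K,a}$), so no new argument is required there. It remains to select, within each class, a representative producing the desired decay.

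For case (i) I would take $T=\Psi_{K,A}$ with all $a_i\neq 0$, which is admissible for any class (any $n$, $s$). By Corollary~\ref{cor:existing_cases}(i) the fixed-point multipliers satisfy $\lambda_{\sigma,1}\lambda_{\sigma,2}\neq 0$ for every $\sigma\in\Sigma$, so Corollary~\ref{cor:limrate}(i) yields the stretched-exponential limit $-\log|\lambda_n|/n^{1/2}\to\eta_2$. Since the same corollary guarantees that $|\lambda_{\sigma,1}|$ and $|\lambda_{\sigma,2}|$ may be prescribed independently in $(0,1)$ through the moduli of the $a_i$, the value $\eta_2$ can be driven to any prescribed $\eta>0$ (letting the $|a_i|\to 0$ forces $\eta_2\to\infty$, while $|a_i|\to 1$ forces $\eta_2\to 0$), so by continuity every positive rate is attained.

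Case (ii) concerns orientation-preserving classes, which are precisely those with $n$ even, since the matrix in \eqref{eq:factoredform} has determinant $(-1)^n$. There I would again take $T=\Psi_{K,A}$ (with $s$ matching the class) but now with exactly one $a_i=0$. By Corollary~\ref{cor:existing_cases}(ii) every multiplier then has exactly one non-vanishing entry, with a non-zero entry occurring for some $\sigma\in\Sigma^1$, which is precisely the hypothesis of Corollary~\ref{cor:limrate}(ii); hence the decay is genuinely exponential, $-\log|\lambda_n|/n\to\eta_1$, and $\eta_1$ is again tunable to any $\eta>0$ through the modulus of the unique surviving multiplier.

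For case (iii) I would restrict to classes whose standard form has $n\geq 2$ — equivalently, classes not conjugate to any $F^k\circ R$ — and take $T=\Xi_{K,a}$ with $a\neq 0$. Lemma~\ref{lem:nonareapres} gives $\chi_T\equiv 1$ directly, so by Corollary~\ref{cor:limrate}(iii) all resonances are trivial and $\operatorname{spec}(C_T)=\{0,1\}$. The genuinely delicate point — and the one I expect to be the main obstacle — is to verify that $\Xi_{K,a}$ is not smoothly conjugate to a linear automorphism, since triviality of the resonances alone cannot distinguish it from a linear model (the cat map also has only trivial resonances). Here the cleanest route is the invariance of the periodic Jacobian data under smooth conjugacy: if $T=h\circ\tau_A\circ h^{-1}$, then at every periodic point $p$ of period $m$ one has $\det D\tilde T^{m}(p)=\det(A^m)=(\pm1)^m$, because the $Dh$-factors telescope around the orbit. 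It therefore suffices to exhibit, for $a\neq 0$, a single periodic orbit of the non-area-preserving map $\Xi_{K,a}$ on which the product of Jacobian determinants has modulus different from $1$; a Livšic-type argument shows that failure of this would force $\log|\det D\tilde\Xi_{K,a}|$ to be a coboundary, which a direct computation at an explicit periodic point rules out once $a\neq 0$. The remaining verifications (homotopy class, trace-class property, and the limit formulae) then follow immediately from the cited results.
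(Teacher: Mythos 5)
Your treatment of cases (i) and (ii) is essentially identical to the paper's proof: the paper also takes $T=\Psi_{K,A}$ (all $a_i\neq 0$ for stretched-exponential decay, $n$ even with exactly one $a_i=0$ for exponential decay) and cites Proposition \ref{prop:uka_class} together with Corollaries \ref{cor:existing_cases} and \ref{cor:limrate}, tuning $\eta$ through the moduli of the $a_i$. Your explicit homotopy argument (each $b_a$ is homotopic to the identity, hence $\Psi_{K,A}$ and $\Xi_{K,a}$ are homotopic to \eqref{eq:reduced_toral_auto}) just spells out what the paper compresses into the observation that $\Psi_{K,0}$ \emph{is} the linear model and is homotopic to every $\Psi_{K,A}$.

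Case (iii) is where you genuinely diverge from the paper, and in an interesting way. The paper disposes of the non-conjugacy claim in one line: $\Xi_{K,a}$ is not area-preserving for $a\neq 0$, ``hence'' not $C^1$-conjugated to a toral automorphism. You are right to be suspicious of settling this via resonances alone, and in fact your periodic-data criterion is the sharper tool: a $C^1$-conjugate of a linear automorphism need not preserve Lebesgue measure (it only preserves a smooth equivalent measure), so non-area-preservation by itself does not formally imply non-conjugacy --- the invariant that does the job is precisely $|\det D\tilde T^m(p)|$ at periodic points, as you say. However, your own argument has a genuine gap: the decisive step, exhibiting a periodic orbit of $\Xi_{K,a}$ whose Jacobian product has modulus $\neq 1$, is only promised (``a direct computation at an explicit periodic point''), never performed; and the Liv\v{s}ic-type detour is circular, since ruling out the coboundary alternative requires exactly the periodic-orbit computation you postponed. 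The gap is easy to close: take $a\in(0,1)$ real, so that $b_a(1)=1$ and hence $(1,1)$ is a fixed point of $\Xi_{K,a}$; by Lemma \ref{lem:ba_tilde} the factor $W_{k_n,a}$ has Jacobian of modulus $1+g_a'(0)=(1+a)/(1-a)\neq 1$ at this point, while all factors $W_{k_i,0}$ and $I_{11}^s$ have unimodular Jacobians, so $|\det D\tilde\Xi_{K,a}|$ at the fixed point differs from $1$, which is incompatible with $C^1$-conjugacy to a linear map by your telescoping argument. One further slip: your claim that $n\geq 2$ is \emph{equivalent} to the class not being conjugate to any $F^k\circ R$ is not exact, since the classes of $I_{11}\circ F^k\circ R$ (that is, $n=1$, $s=1$) are also not conjugate to any $F^{k'}\circ R$ (the traces have opposite signs) yet are excluded by your restriction; this limitation, however, is inherited from Lemma \ref{lem:nonareapres} (which requires $n>1$) and is therefore shared by the paper itself.
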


\begin{proof}
To prove cases $(i)$ and $(ii)$ from Theorem \ref{thm:thmC},
we choose $T$ to be of the form $\Psi_{K,A}$ from \eqref{eq:psika},
noting that for any fixed $s \in \{0, 1\}$ and $K \in \N^n$, $n \in \N$, the map $\Psi_{K,0}$ is an area-preserving hyperbolic automorphism of the form \eqref{eq:reduced_toral_auto}, homotopic to any $\Psi_{K,A}$, $A \in \D^n$.
The claim follows directly from Proposition \ref{prop:uka_class} together
with Corollaries \ref{cor:existing_cases} and \ref{cor:limrate}.

The case $(iii)$ follows from Lemma \ref{lem:nonareapres},
noting that the maps $\Xi_{K,a}$ are not area-preserving and
hence not $C^1$-conjugated to toral automorphisms for $a \neq 0$, but have
trivial resonances.
\end{proof}

We will also need the following well-known result (see, e.g., \cite[Lemma 1.1]{F} and \cite[Theorem A]{M}):

\begin{lem}\label{lem:homaut}
For any Anosov diffeomorphism $f\colon\mathbb{T}^n \to \mathbb{T}^n$ there exists a
hyperbolic automorphism $g \colon \mathbb{T}^n \to \mathbb{T}^n$ which is homotopic to $f$.
That is, there exists a continuous one-parameter family of maps $h\colon [0, 1] \times \mathbb{T}^n \to \mathbb{T}^n$,
such that $h(0, \cdot) = f$ and $h(1, \cdot) = g$.
\end{lem}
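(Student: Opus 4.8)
The plan is to prove the statement in two stages: first show that $f$ is homotopic to the linear toral automorphism $\tau_A$ determined by its action on homotopy, and then show that this $A$ is hyperbolic. The first stage holds for \emph{any} continuous self-map of the torus and is elementary; the second stage is where the Anosov hypothesis is essential and is the crux of the argument.

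For the first stage, I would use that $\mathbb{T}^n = \mathbb{R}^n/(2\pi\mathbb{Z})^n$ is aspherical with $\pi_1(\mathbb{T}^n) \cong \mathbb{Z}^n$, so that two maps $\mathbb{T}^n \to \mathbb{T}^n$ are homotopic if and only if they induce the same endomorphism of $\pi_1(\mathbb{T}^n)$. The diffeomorphism $f$ induces an automorphism $f_*$ of $\pi_1(\mathbb{T}^n)$, represented by a matrix $A \in \operatorname{GL}_n(\mathbb{Z})$, and the linear automorphism $\tau_A$ (acting as $x \mapsto Ax$ on the universal cover) induces the same map. To exhibit the homotopy explicitly, lift $f$ to $\tilde{f}\colon \mathbb{R}^n \to \mathbb{R}^n$; then $\tilde{f}(x) - Ax$ is $(2\pi\mathbb{Z})^n$-periodic, so the linear interpolation
\[ h(t, \pi(x)) = \pi\big((1-t)\tilde{f}(x) + t\,Ax\big), \qquad (t, x) \in [0,1] \times \mathbb{R}^n, \]
is independent of the chosen lift $x$ of a point in $\mathbb{T}^n$ (changing $x$ by $2\pi v$, $v \in \mathbb{Z}^n$, alters the argument by $2\pi Av \in (2\pi\mathbb{Z})^n$), and hence descends to a continuous homotopy from $f = h(0, \cdot)$ to $\tau_A = h(1, \cdot)$.

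For the second stage, I would invoke the classical result of Franks \cite{F} and Manning \cite{M} that the linearization $A$ of a toral Anosov diffeomorphism is hyperbolic, that is, has no eigenvalue on the unit circle. The underlying mechanism is that the invariant splitting $T\mathbb{T}^n = E^s \oplus E^u$ lifts to a uniformly hyperbolic splitting for $\tilde{f}$, whose stable and unstable manifolds are properly embedded and escape to infinity under iteration at definite exponential rates; since $\tilde{f} - A$ is bounded, these rates are governed by the linear part $A$, forcing its spectrum off the unit circle. Taking $g = \tau_A$ then yields a hyperbolic automorphism homotopic to $f$, as required.

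The main obstacle is precisely this hyperbolicity of $A$: the homotopy to a linear map is a formal consequence of the $K(\mathbb{Z}^n,1)$ structure of the torus, whereas propagating the Anosov property to the linear part requires the global structure theory of Anosov systems on tori. As the excerpt treats this as well known and cites \cite{F, M}, I would present the homotopy construction in full and cite Franks and Manning for the hyperbolicity rather than reproduce their arguments.
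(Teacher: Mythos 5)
Your proposal is correct and matches the paper's treatment: the paper gives no proof of this lemma at all, simply citing \cite[Lemma 1.1]{F} and \cite[Theorem A]{M} for exactly the facts you invoke. Your explicit straight-line homotopy on the universal cover (valid since $\tilde{f}(x)-Ax$ is periodic) is the standard elementary argument for the first stage, and deferring the hyperbolicity of $A$ to Franks and Manning is precisely what the authors do.
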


We are now ready to prove our last main theorem, concluding that every homotopy class of toral Anosov diffeomorphisms contains
elements with resonances exhibiting any of stretched-exponential, exponential, or  trivially super-exponential decay rate.

\begin{proof}[Proof of Theorem \ref{thm:thmC}]
Let $\mathcal{H}$ be any homotopy class of toral Anosov diffeomorphisms.
By Lemma \ref{lem:homaut}, there exists a hyperbolic matrix $B \in \GLtwo(\Z)$ with $\tau_B \in \mathcal{H}$.
Corollary \ref{cor:conjftilde} yields that there are $A, Q \in \GLtwo(\Z)$ such that $\tau_A$ is of the form \eqref{eq:reduced_toral_auto}, and
$\tau_B$ is analytically conjugated to $\tau_A$, via $\tau_A = \tau_Q \circ \tau_B \circ \tau_Q^{-1}$.
We call $\mathcal{H}'$ the homotopy class containing $\tau_A$.
We note that because of its special form \eqref{eq:reduced_toral_auto} and using Remark \ref{rem:tuple2func},
the operator $C_{\tau_A}$ given by $f \mapsto f \circ \tau_A$
yields an isomorphism from $H_{\alpha,-\gamma}$ to itself,
while $\tau_Q$ gives rise to the isometric isomorphism
$C_{\tau_Q} \colon H_{Q,\alpha,-\gamma} \to H_{\alpha,-\gamma}$,
conjugating $C_{\tau_A}\colon H_{\alpha,-\gamma} \to H_{\alpha,-\gamma}$
and $C_{\tau_B}\colon H_{Q,\alpha,-\gamma} \to H_{Q,\alpha,-\gamma}$, for any
$\alpha, \gamma \in \R^2$.

Finally, by Proposition \ref{prop:existing_cases}, there exists an Anosov diffeomorphism
$\tau' \in \mathcal{H}'$ whose corresponding composition operator on $H_{\alpha',-\gamma'}$ for suitable $\alpha', \gamma' \in \R^2_{>0}$ has an eigenvalue sequence with any
one of the desired decay rates.
Writing out the homotopy explicitly, we have a one-parameter family of
 maps $h' \colon [0, 1] \times \mathbb{T}^2 \to \mathbb{T}^2$,
$h'_t = h'(t, \cdot) \in \mathcal{H}'$, such that $h'_0 = \tau_A$ and $h'_1 = \tau'$.
Conjugating with $\tau_Q$ we obtain a homotopy $h_t = \tau_Q^{-1} \circ h'_t \circ \tau_Q \in \mathcal{H}$
with $h_0 = \tau_B$ and $h_1 = \tau_Q^{-1} \circ \tau' \circ \tau_Q$.
Since $C_{\tau_Q} \colon H_{Q,\alpha',-\gamma'} \to H_{\alpha',-\gamma'}$ is an isometric isomorphism, the spectra of $C_{\tau'}$ and $C_{h_1} = C_{\tau^{-1}_Q} \circ C_{\tau'} \circ C_{\tau_Q}$ coincide,
and so the composition operator associated to $T = h_1$ satisfies the assertion of the proposition for $\nu = \nu_{P,\alpha,-\gamma}$ with $P = Q$, $\alpha=\alpha'$ and $\gamma=\gamma'$.
\end{proof}

\appendix

\section{Auxiliary results} \label{sec:ap_proofs}

Here we list a number of auxiliary results and proofs omitted but used in the main text.

\begin{lem} \label{lem:paq}
For every $P \in \GLtwo(\R)$, there exist $A \in \GLtwo(\Z)$ and $\tilde P \in \GLtwo(\R)$ with $\tilde P$ having only non-negative entries,
such that $P = A \tilde P$.
\end{lem}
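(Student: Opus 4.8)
The plan is to produce the factorisation by finding $B = A^{-1}\in\GLtwo(\Z)$ such that $BP$ has only non-negative entries; one then sets $\tilde P = BP$, which lies in $\GLtwo(\R)$ since $\det\tilde P = \det B\,\det P\neq 0$, and $A = B^{-1}\in\GLtwo(\Z)$, giving $P = A\tilde P$ as required. Writing $p_u,p_s\in\Rtwo$ for the two columns of $P$ and $r_1,r_2\in\Ztwo$ for the two rows of $B$, the $(i,j)$-entry of $BP$ equals $\langle r_i, p_u\rangle$ for $j=1$ and $\langle r_i,p_s\rangle$ for $j=2$, so $BP$ being entrywise non-negative is exactly the condition $r_1,r_2\in C^*$, where
\[
C^* = \{x\in\Rtwo : \langle x,p_u\rangle\geq 0,\ \langle x,p_s\rangle\geq 0\}.
\]
Since $B\in\GLtwo(\Z)$ is equivalent to $\{r_1,r_2\}$ being a $\Z$-basis of $\Ztwo$, the whole statement reduces to the following lattice-geometric claim: \emph{the cone $C^*$ contains a $\Z$-basis of $\Ztwo$.}

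To verify this claim I would first observe that, as $P$ is invertible, $p_u$ and $p_s$ are linearly independent, so the linear map $x\mapsto(\langle x,p_u\rangle,\langle x,p_s\rangle)$ is an isomorphism of $\Rtwo$ and $C^*$ is the preimage of the closed first quadrant. Hence $C^*$ is a closed convex cone with non-empty interior $\operatorname{int}(C^*) = \{x:\langle x,p_u\rangle>0,\ \langle x,p_s\rangle>0\}$. The next step is to locate a primitive lattice vector $r_1$ inside $\operatorname{int}(C^*)$: an open planar cone of positive angular width contains disks of arbitrarily large radius (centred far out along its bisector, whose distance to the boundary rays tends to infinity), hence contains a lattice point, and dividing that point by the gcd of its coordinates yields a primitive lattice vector lying on the same ray, still in $\operatorname{int}(C^*)$.

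Finally, I would complete $r_1$ to a $\Z$-basis $\{r_1,w\}$ of $\Ztwo$, which is possible precisely because $r_1$ is primitive, and set $r_2 = w + k\,r_1$ for a large positive integer $k$. Then $\det(r_1,r_2) = \det(r_1,w) = \pm 1$, so $\{r_1,r_2\}$ remains a basis of $\Ztwo$, while $r_2/k = r_1 + w/k \to r_1 \in\operatorname{int}(C^*)$ forces $r_2\in\operatorname{int}(C^*)\subseteq C^*$ once $k$ is large enough. Taking $B$ to be the matrix with rows $r_1,r_2$ then settles the claim and hence the lemma. The only non-routine ingredient is the lattice-geometric claim itself, in particular the existence of the primitive vector $r_1$ in the interior together with the shifting argument producing a unimodular partner $r_2$ inside the cone; the reduction to it and the final assembly are straightforward bookkeeping.
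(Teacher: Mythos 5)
Your proposal is correct and follows essentially the same route as the paper's proof: reduce to finding $B\in\GLtwo(\Z)$ with $BP$ entrywise non-negative, i.e.\ two lattice vectors forming a unimodular pair inside the cone $\{x:\langle x,p_u\rangle\geq 0,\ \langle x,p_s\rangle\geq 0\}$, then pick a primitive lattice vector in the (open) cone and produce its partner via Bezout/basis-completion followed by shifting $r_2 = w + k r_1$ for large $k$. Your "complete a primitive vector to a $\Z$-basis" step is exactly the paper's Bezout argument, so the two proofs coincide up to presentation.
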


\begin{proof}
It suffices to show that for $P \in \GLtwo(\R)$, there exists $B \in \GLtwo(\Z)$ such that $(B P)_{ij} > 0$ for $i,j=1,2$.
Writing the rows of $B$ as $b^u, b^s$, and the columns of $P$ as $p_u, p_s$ and denoting the cone
$\mathcal{C} = \{ v \in \R^2: \langle v, p_u \rangle > 0, \langle v, p_s \rangle > 0 \}$,
this is equivalent to there existing $b^u, b^s \in \Z^2 \cap \mathcal{C}$, such that
\begin{equation} \label{eq:det1}
b^u_1 b^s_2 - b^u_2 b^s_1 = 1.
\end{equation}
We fix any $b^u \in \Z^2 \cap \mathcal{C}$ (non-empty since $\mathcal{C}$ is an open convex cone),
without loss of generality satisfying $\operatorname{gcd}(b^u_1, b^u_2) = 1$. By Bezout's identity,
there exist $\hat{b}^s = (\hat{b}^s_1, \hat{b}^s_2) \in \Z^2$ such that $b^s_1(k) = \hat{b}^s_1 + k b^u_1, b^s_2(k) = \hat{b}^s_2 + k b^u_2$
are solutions to \eqref{eq:det1} for every $k \in \Z$. Moreover, it is easy to see that
for sufficiently large $k \in \Z$, $b^s = (b^s_1(k), b^s_2(k)) = \hat{b}^s + k \cdot b^u$ lies in the cone $\mathcal{C}$, finishing the proof.
\end{proof}

\begin{lem}\label{lem:matrix}
Let $\hat \sigma \in \Sigma^1$, and let $\{D_c : c \in C\} \subset \GLtwo(\R)$ be a
continuous family of matrices indexed by some compact set $C$,
satisfying $D_c(\mathbb{R}^2_{\geq 0}) \subset R^{\hat \sigma} \cup \{0\}$ for all $c \in C$.
Then, for any $\sigma \in \Sigma^1$, $\tilde \sigma \in \Sigma^{-1}$, $\delta, \tilde \delta \in \mathbb{R}^2_{>0}$,
there exists $q \in R^{\tilde \sigma}_{\tilde \delta}$
such that $D_c(q) \in R^\sigma_\delta$ for all $c \in C$.
\end{lem}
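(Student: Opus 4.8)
The plan is to normalise the hypothesis so that every $D_c$ maps the closed first quadrant into the open first quadrant, and then to produce the point $q$ by an explicit one-parameter scaling, using compactness of $C$ to make a single choice of $q$ work for all $c$ simultaneously.

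First I would reduce to the case $\hat\sigma=(+1,+1)$. Setting $\tilde D_c = I^{\hat\sigma} D_c$ and using $(I^{\hat\sigma})^2 = \mathbb{I}$, the hypothesis $D_c(\R^2_{\geq 0}) \subset R^{\hat\sigma}\cup\{0\}$ becomes $\tilde D_c(\R^2_{\geq 0}) \subset \R^2_{>0}\cup\{0\}$. Since $I^{\hat\sigma} R^\sigma = R^{\hat\sigma\sigma}$ and $I^{\hat\sigma} v^\sigma_\delta = v^{\hat\sigma\sigma}_\delta$, we have $I^{\hat\sigma} R^\sigma_\delta = R^{\hat\sigma\sigma}_\delta$, so the requirement $D_c q \in R^\sigma_\delta$ is equivalent to $\tilde D_c q \in R^{\hat\sigma\sigma}_\delta$, with $\hat\sigma\sigma \in \Sigma^1$ again. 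Thus it suffices to treat matrices sending the closed first quadrant into the open first quadrant. Writing $\tilde D_c = \begin{pmatrix} a_c & b_c \\ c_c & d_c \end{pmatrix}$, this is equivalent to the images $\tilde D_c e_1 = (a_c, c_c)$ and $\tilde D_c e_2 = (b_c, d_c)$ lying in $\R^2_{>0}$, forcing all four entries to be strictly positive. By continuity of $c \mapsto \tilde D_c$ and compactness of $C$, I obtain uniform bounds $0 < m \leq a_c, b_c, c_c, d_c \leq M$.

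Then I would exhibit $q$ explicitly. For the source quadrant $\tilde\sigma=(+1,-1)$ I look for $q=(t,-u)$ with $t>\tilde\delta_1$, $u>\tilde\delta_2$, giving $\tilde D_c q = (a_c t - b_c u,\ c_c t - d_c u)$; for $\tilde\sigma=(-1,+1)$ I take $q=(-u,t)$ and get $\tilde D_c q = (b_c t - a_c u,\ d_c t - c_c u)$. In either case both coordinates are differences of a term linear in $t$ with positive coefficient and a term linear in $u$ with positive coefficient. To reach the target $\sigma=(+1,+1)$, I fix $u = \tilde\delta_2+1$ (resp.\ $\tilde\delta_1+1$) and send $t \to \infty$: the estimate $a_c t - b_c u \geq mt - Mu$ exceeds $\delta_1$ once $t$ is large, uniformly in $c$, and likewise for the second coordinate, while $t>\tilde\delta_1$ places $q$ in $R^{\tilde\sigma}_{\tilde\delta}$. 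To reach $\sigma=(-1,-1)$, I instead fix $t$ and send $u\to\infty$, using $a_c t - b_c u \leq Mt - mu < -\delta_1$. This dispatches all four combinations of $\sigma\in\Sigma^1$ and $\tilde\sigma\in\Sigma^{-1}$.

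The only genuine subtlety — and the single place where compactness is indispensable — is the uniformity: the \emph{same} point $q$ must work for every $c\in C$. This is exactly what the uniform entry bounds $m,M$ supply, collapsing the per-$c$ inequalities into a single threshold condition on $t$ (or $u$); everything else is sign bookkeeping streamlined by the reduction to $\hat\sigma=(+1,+1)$. Geometrically, the argument records that the open cone $\bigcap_{c\in C}\tilde D_c^{-1}(\R^2_{>0})$ strictly contains the closed first quadrant, hence contains a ray pointing into either adjacent off-diagonal quadrant, and scaling outward along such a ray eventually overshoots any prescribed thresholds.
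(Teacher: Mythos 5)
Your proof is correct and follows essentially the same route as the paper's: reduce via sign flips to the case where all $D_c$ have strictly positive entries, use compactness of $C$ to get uniform upper and lower bounds on the entries, and then choose $q$ in the off-diagonal quadrant with one coordinate large enough (relative to those bounds) to force the image into the target quadrant uniformly in $c$. The only cosmetic differences are that the paper writes down an explicit closed-form $q$ instead of a ``let $t\to\infty$'' threshold argument, and it dispatches the remaining sign combinations by negation/coordinate-swap symmetry rather than listing them.
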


\begin{proof}
Let us first assume that $\hat\sigma = (1, 1)$, that is, $D_c(\mathbb{R}^2_{\geq 0}) \subset \mathbb{R}^2_{>0} \cup \{0\}$.
This implies $(D_c)_{kl} > 0$ for all $k,l \in \{1, 2\}$ and all $c \in C$.
By compactness of $C$ it follows that there are $\underline D, \overline D > 0$
such that $\underline D < (D_c)_{kl} < \overline D$ for all $k,l \in \{1, 2\}$ and all $c \in C$.
We fix $\delta, \tilde \delta \in \mathbb{R}^2_{>0}$,
and set $\overline \delta = \max\{\delta_1, \delta_2, \tilde\delta_1, \tilde\delta_2\}$.

We first consider the case $\sigma = (1, 1)$, $\tilde \sigma = (-1, 1)$. Setting $q_1 = - \overline \delta < -\tilde \delta_1$
and $q_2 = \overline \delta \max\{1,  \frac{1 + \overline D}{\underline D} \} > \tilde \delta_2$, we have
$q = (q_1, q_2) \in R^{\tilde \sigma}_{\tilde \delta}$, and since
$(D_c)_{k,1} q_1 + (D_c)_{k,2} q_2 \geq -\overline D \overline \delta +
\underline D \frac{1 + \overline D}{\underline D} \overline \delta = \overline \delta > \delta_k$
for $k = 1,2$ and all $c \in C$, we obtain
$D_c(q) \in R^2_{>0} + \delta = R^\sigma_\delta$, for all $c \in C$, as required.

The case $\sigma = (-1, -1)$, $\tilde \sigma = (1, -1)$ is similar, with $q' = -q \in R^{\tilde \sigma}_{\tilde \delta}$
and $D_c(q') = - D_c(q) \in R^2_{<0} - \delta = R^\sigma_\delta$ for all $c \in C$.
The other two cases ($\sigma = (1, 1)$, $\tilde \sigma = (1, -1)$ and $\sigma = (-1, -1)$, $\tilde \sigma = (-1, 1)$)
can be shown analogously by swapping the roles of $q_1$ and $q_2$ in the above construction.

Finally, for $\hat \sigma = (-1, -1)$, we note that the claim holds for $-D_c$ by the above,
and hence it follows for $D_c$ by replacing $q$ by $-q$.
\end{proof}

\begin{lem}\label{lem:cwexp_decay}
Let $f\colon \Z^2 \to \C$ be a cone-wise exponential function with cones being the quadrants $\hat{R}^{\sigma,o}$, $\sigma \in \Sigma$;
that is, for all $\sigma \in \Sigma$ there exist $\lambda_\sigma \in \D^2$, such that
$f(n) = \lambda_\sigma^n$ whenever $n \in \Z^2 \cap \hat{R}^{\sigma,o}$.
Denote by $(\lambda_n)_{n\in \N}$
be an enumeration of $\{f(n): n \in \Z^2\}$ sorted by decreasing modulus,
and $N(r) = \#\{n \in \N: |\lambda_n| \geq r \}$ for $r \in (0, 1)$.
Then $(\lambda_n)_{n\in \N}$ satisfies
\[
\lim_{r \to 0} \frac{\log N(r)}{\log |\log r|} = d,
\]
where:
\begin{enumerate}[(i)]
\item if $\lambda_{\sigma,1} \cdot \lambda_{\sigma,2} \neq 0$ for some $\sigma \in \Sigma$, then
$d = 2$ (stretched-exponential decay) and
\[
\lim_{n \to \infty} \frac{-\log|\lambda_n|}{n^{1/2}} = \eta_2
\]
with $\eta_2 = \left(1/2 \sum_{\sigma \in \Sigma: \lambda_{\sigma,1}\cdot \lambda_{\sigma,2} \neq 0}
(\log |\lambda_{\sigma,1}|\cdot \log |\lambda_{\sigma,2}|)^{-1}\right)^{-1/2}$.

\item if $\lambda_{\sigma,1} \cdot \lambda_{\sigma,2} = 0$ for all $\sigma \in \Sigma$, and $\lambda_{\sigma,k} \neq 0$
for some $\sigma \in \Sigma^1$ and $k \in \{1, 2\}$, then $d = 1$ (exponential decay), and
\[
\lim_{n \to \infty} \frac{-\log|\lambda_n|}{n} = \eta_1,
\]
with $\eta_1 = \left(\sum_{\sigma \in \Sigma^1} \sum_{k: \lambda_{\sigma,k} \neq 0} (\log |\lambda_{\sigma,k}|)^{-1}\right)^{-1}$.

\item if $\lambda_\sigma = 0$ for all $\sigma \in \Sigma^1$,
and $\lambda_{\sigma,1}\cdot \lambda_{\sigma,2} = 0$ for all $\sigma \in \Sigma^{-1}$,
then $d = 0$ (super-exponential decay). In this case $(\lambda_n)_{n \in \N}$ is the trivial sequence with $\lambda_n = \delta_{n, 1}$.
\end{enumerate}
\end{lem}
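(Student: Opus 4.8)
The plan is to reduce the whole statement to a lattice-point counting problem for the modulus $|f|$ and then to invert the resulting counting asymptotics. Since both $N(r)$ and the stated limits depend only on $|\lambda_n|$, I would first record that for $m$ in the quadrant $\hat{R}^{\sigma,o}$ one has $-\log|f(m)| = |m_1|\,a_\sigma + |m_2|\,b_\sigma$, where $a_\sigma = -\log|\lambda_{\sigma,1}|$ and $b_\sigma = -\log|\lambda_{\sigma,2}|$ lie in $(0,\infty]$, with the convention that a vanishing multiplier $\lambda_{\sigma,k}=0$ forces $f(m)=0$ as soon as the corresponding $|m_k|\geq 1$. Writing $t = \log(1/r)\to\infty$ as $r\to 0$ and noting $\log|\log r| = \log t$, the counting function splits as $N(r) = \sum_{\sigma\in\Sigma} N_\sigma(r)$ over the four quadrants, which partition $\Z^2$, with $N_\sigma(r) = \#\{m\in\hat{\Z}^\sigma : |m_1|a_\sigma + |m_2|b_\sigma \leq t\}$.

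Next I would estimate each $N_\sigma(r)$ according to how many multipliers vanish. If both $\lambda_{\sigma,1},\lambda_{\sigma,2}\neq 0$, then $N_\sigma(r)$ counts the lattice points of a quadrant inside the simplex $\{a_\sigma x + b_\sigma y \leq t\}$, and the classical Gauss estimate (area plus boundary) gives $N_\sigma(r) = \frac{t^2}{2 a_\sigma b_\sigma} + O(t)$. If exactly one multiplier vanishes, only the points on the surviving coordinate axis contribute; for $\sigma\in\Sigma^1$ the closed polar quadrant $\hat{R}^{\sigma,o}$ contains that axis and $N_\sigma(r) = t/a_\sigma + O(1)$ (with $a_\sigma$ the exponent of the nonzero multiplier), whereas for $\sigma\in\Sigma^{-1}$ the set $\hat{R}^{\sigma,o}$ is an open quadrant, so every lattice point has both coordinates nonzero and $N_\sigma(r) = O(1)$. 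If both multipliers vanish, only the origin can contribute and $N_\sigma(r) = O(1)$.

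Summing these contributions gives the order of $N(r)$ in each regime and hence the value of $d$: in case $(i)$ at least one quadrant supplies a quadratic term, so $N(r) = C\,t^2(1+o(1))$ with $C = \tfrac12\sum_{\sigma:\lambda_{\sigma,1}\lambda_{\sigma,2}\neq0}(a_\sigma b_\sigma)^{-1}$, whence $\log N(r)/\log t \to 2$; in case $(ii)$ no quadratic term survives but some $\sigma\in\Sigma^1$ supplies a linear one, so $N(r) = C'\,t(1+o(1))$ and $\log N(r)/\log t \to 1$; in case $(iii)$ every term is $O(1)$ and in fact the only surviving value is $f(0)=1$, so $N(r)\equiv 1$ and the ratio tends to $0$. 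The finer limits follow by inverting these asymptotics: since $|\lambda_n|\geq r \iff n \leq N(r)$, setting $r = |\lambda_n|$ gives $n = N(|\lambda_n|)$ up to the gaps between successive moduli, so in case $(i)$ $n \sim C(-\log|\lambda_n|)^2$ yields $-\log|\lambda_n|/n^{1/2}\to C^{-1/2}$, and in case $(ii)$ $n\sim C'(-\log|\lambda_n|)$ yields $-\log|\lambda_n|/n\to (C')^{-1}$; substituting $a_\sigma=-\log|\lambda_{\sigma,1}|$, $b_\sigma=-\log|\lambda_{\sigma,2}|$ (and the analogous single-index exponents) recovers the stated constants $\eta_2$ and $\eta_1$.

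The routine parts are the Gauss lattice-point estimate and the bookkeeping of which quadrants and axes survive in each case; the one step needing genuine care is the inversion of $N$ into the asymptotics of the sorted sequence $(\lambda_n)$. The subtlety is that $N$ is a step function whose jumps, coming from lattice points lying on a common level line $a_\sigma|m_1|+b_\sigma|m_2|=\text{const}$, could in principle obstruct a clean limit; I would control this by noting that the number of coincident points on any such level line grows at most linearly in $t$, which is of strictly lower order than $N(r)\asymp t^2$ in case $(i)$ and does not affect the leading order $t$ in case $(ii)$, so the monotone inversion passes to the limit. This is exactly the mechanism already used for the embedding operator in Lemma~\ref{lem:J_stretched}, to which I would defer for the detailed estimates.
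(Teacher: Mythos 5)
Your proposal is correct and takes essentially the same route as the paper: both reduce $N(r)$ to counting lattice points in triangles (the paper computes these counts explicitly, with error bounded by the perimeter terms $\lfloor\ell_p(r)\rfloor+\lfloor\ell_q(r)\rfloor$, while you invoke the Gauss area-plus-boundary estimate), identify the leading coefficient with $\eta_2^{-2}$ (resp.\ $\eta_1^{-1}$), and then invert the counting asymptotics to obtain the decay rate of the sorted sequence, controlling the ties coming from coincident level lines. One caveat: your closing deferral to Lemma \ref{lem:J_stretched} is circular within the paper's logical structure, since there Lemma \ref{lem:J_stretched} is itself proved by applying Lemma \ref{lem:cwexp_decay}$(i)$ to the singular values of the embedding $J$; this is only a citation slip, because your own inversion argument --- $n = N(|\lambda_n|)$ up to jumps of size $O(|\log|\lambda_n||)$, which are of strictly lower order than $N$ in cases $(i)$ and $(ii)$ --- already suffices and should simply replace that reference.
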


\begin{proof}
We write $\ell_s(r) = \log r / \log |s| $ for $s \in \D$ and $r \in (0, 1)$.
For $p, q \in \D, r \in (0, 1)$ we have
\begin{align*}
N^r_{p} :=& \#\{n \geq 1 : |p|^n \geq r \} = \lfloor \ell_p(r) \rfloor,  \\
N^r_{p,q} :=& \#\{n, m \geq 1 : |p|^n |q|^m \geq r \} \\
=& \#\{n, m \geq 1 : n (-\log |p|) + m (-\log |q|) \leq -\log r  \}\\
=& \frac{\lfloor\ell_p(r)\rfloor \lfloor\ell_q(r)\rfloor}{2} + \delta,
\qquad \text{ with } |\delta| \leq 1 + \lfloor\ell_p(r)\rfloor+ \lfloor\ell_q(r)\rfloor,
\end{align*}
where the last equality is obtained from counting the number of integer lattice points
in the triangle spanned by $(0, 0)$, $(\ell_p(r),0)$ and $(0, \ell_q(r))$, and
subtracting those lying on one of the axes.
From the definition of $f$ we obtain
\[
N(r) = 1 + \sum_{\sigma \in \Sigma}
\left( \frac{\lfloor\ell_{\lambda_{\sigma,1}}(r)\rfloor \lfloor\ell_{\lambda_{\sigma,2}}(r)\rfloor}{2} + \delta_\sigma\right)
+ \sum_{\sigma \in \Sigma^1} \left(\lfloor\ell_{\lambda_{\sigma,1}}(r)\rfloor + \lfloor\ell_{\lambda_{\sigma,2}}(r)\rfloor \right),
\]
with $|\delta_\sigma| \leq 1 + \lfloor\ell_{\lambda_{\sigma,1}}(r)\rfloor + \lfloor\ell_{\lambda_{\sigma,2}}(r)\rfloor$.

We now prove $(i)$.
Since $\ell_s(r) \to \infty$ as $r \to 0$ for any $s \in \D$ and
$\lambda_{\sigma,1} \cdot \lambda_{\sigma,2} \neq 0$ for some $\sigma \in \Sigma$,
we have that for every $\epsilon > 0$ there exists $r_\epsilon > 0$ such that for $r \in (0, r_\epsilon)$
it holds that
\begin{equation} \label{eq:ntr_estimate2}
 (1-\epsilon) \cdot \left(\frac{\log r}{\eta_2}\right)^2 \leq N(r) \leq   (1+\epsilon) \cdot \left(\frac{\log r}{\eta_2}\right)^2.
\end{equation}
The assertion $d = 2$ immediately follows. Moreover,
since $n > (1+\epsilon)\cdot (\log r / \eta_2)^2 \geq N(r)$ implies $|\lambda_n| < r$, a short calculation
yields that $\log |\lambda_n| < \log r$ holds for any sufficiently large $n$ and
$\log r \in \left(-(1+\epsilon)^{-1/2}\eta_2 \sqrt{n}, \log r_\epsilon\right)$, which implies
\[\frac{-\log |\lambda_n|}{\sqrt{n}} \geq (1+\epsilon)^{-1/2}\eta_2.\]
Conversely, $n < (1-\epsilon)\cdot (\log r / \eta_2)^2 \leq N(r)$ implies $|\lambda_n| \geq r$, and hence for large $n$ we obtain
\[\frac{-\log |\lambda_n|}{\sqrt{n}} \leq (1-\epsilon)^{-1/2}\eta_2.\]
Since the choice of $\epsilon > 0$ was arbitrary, assertion $(i)$ follows.

The proof of $(ii)$ is very similar; replacing \eqref{eq:ntr_estimate2} by
\begin{equation}
 (1-\epsilon) \cdot \frac{|\log r|}{\eta_1} \leq N(r) \leq   (1+\epsilon) \cdot \frac{|\log r|}{\eta_1}
\end{equation}
yields $d = 1$, as well as $\eta_1 / (1+\epsilon) < -\log |\lambda_n| / n < \eta_1 / (1-\epsilon)$ for sufficiently large $n$.

Finally, $(iii)$ follows directly by observing that in this case $f(n) = \delta_{n_1, 0} \delta_{n_2, 0}$.
\end{proof}

\begin{proof}[Proof of Lemma \ref{lem:J_stretched}]
The proof follows the same steps as \cite[Propositions 3.4 \& 3.5]{BJ2}. Let
$J^*\colon H_{P, A, -\Gamma} \to H_{P, \alpha, -\gamma}$ denote the Hilbert space adjoint of $J$,
then $J^*J$ is diagonal in the orthogonal basis of monomials,
as  $(J^* J p_n, p_m)_{H_{P, \alpha, -\gamma}} = (J p_n, J p_m)_{H_{P, A, -\Gamma}} =
(p_n, p_m)_{H_{P, A, -\Gamma}} =  \omega_n (p_n, p_m)_{H_{P, \alpha, -\gamma}} $ with
$\omega_n = \nu_{P, A, -\Gamma}(n)/\nu_{P, \alpha, -\gamma}(n)$.
Therefore the eigenvalues of $J^* J$, which are square roots of the singular values of $J$, are given by $\{\sqrt{\omega_n} : n \in \Z^2\}$,
which can be written as the set of all $\lambda^n, n \in \N_0^2,$ and $\mu^n, n \in \N^2$,
with $\lambda = e^{\frac{A-\alpha}{2}}$ and $\mu = e^{\frac{\gamma-\Gamma}{2}}$.
Since $A - \alpha,\gamma -\Gamma \in \R^2_{>0}$, Lemma \ref{lem:cwexp_decay}$(i)$ applies,
with \[
\eta = \eta_2 = \left( \frac{1}{\log (A_1 - \alpha_1) \log (A_2 - \alpha_2)} + \frac{1}{\log (\gamma_1- \Gamma_1) \log (\gamma_2-\Gamma_2)} \right)^{-1/2}.\qedhere
\]
\end{proof}

\begin{proof}[Proof of Lemma \ref{lem:conjstandard}]
We first note that it is sufficient to consider the case $\operatorname{Tr}M \geq 0$,
as the general case easily follows by considering $-M$ in the opposite case.
By \cite[Theorem 3]{H}, every $M \in \GLtwo(\Z)$ with $\operatorname{Tr}M \geq 0$
is similar to a so-called `standard matrix' (see \cite[Definition 1]{H}), which in the case of a hyperbolic matrix
(implying $\operatorname{Tr}M \neq 0$ and real eigenvalues $\neq -1, 1$)
reduces to two (alternative) cases:
\begin{enumerate}[(i)]
  \item $M$ is similar to $\begin{pmatrix} 1 & 1 \\ 1 & 0 \end{pmatrix}$ in the case $\operatorname{Tr}M = 1$,
  \item $M$ is similar to $\begin{pmatrix} a & b \\ c & d \end{pmatrix}, 0 \leq d \leq b, c < a$ in the case $\operatorname{Tr}M \geq 2$.
\end{enumerate}

We are left to show that every matrix in case (ii) is similar to one of \eqref{eq:factoredform}.
We note that every matrix of this form satisfies $a > 0$,
since $c < a \leq 0$ would imply $d = \operatorname{Tr}M - a \geq 2$,
and hence $1 \geq \det M = ad - bc \geq (a - c) d \geq 2$, a contradiction.
We also note that $c = 0$ yields a non-hyperbolic matrix, so we can assume $c \neq 0$.

For $c > 0$ the claim follows immediately from \cite[Theorem 4]{H}.
For $c < 0$, we note that $0 \leq d \leq b$ implies $b > 0$ (as otherwise $\det M = 0$),
and hence $ - b c \geq 1$. Since $a d - b c \leq 1$, it follows that $a d = 0$ and hence $d = 0$, as well as $b = 1$ and $c = -1$.
We obtain that $M$ is similar to a matrix of the form
$M_a = \begin{pmatrix}a & 1 \\ -1 & 0\end{pmatrix}$, which is only hyperbolic for $a \geq 3$. The claim follows by observing that $M_a, a \geq 3$, is similar to
\begin{equation*}
N_a = \begin{pmatrix}a-1 & 1 \\ a-2 & 1\end{pmatrix}
= \begin{pmatrix} 1 & 1 \\ 1 & 0\end{pmatrix} \begin{pmatrix} a-2 & 1 \\ 1 & 0\end{pmatrix} \in \GLtwo(\Z)
\end{equation*}
via $M_a = C^{-1} N_a C$ with $C = \begin{pmatrix}1 & 0 \\ 1 & 1\end{pmatrix}$.
\end{proof}

\section{Notes on the special group of toral diffeomorphisms}
\label{sec:ap_class_f}

Here we briefly restate the definition of the group of toral diffeomorphisms $\F$ from
Section \ref{sec:resonances_blaschke}, before expanding on the various types of maps that can
be constructed within this group via a set of examples.
For $a\in \D$, let $b_a\colon \Chat \to \Chat$ be an automorphism of $\D$ (or Moebius map) given by
\[b_a(z) = \frac{z-a}{1-\cj{a}z}.\]
Every such map satisfies $b_a(\T) = \T$, and a straightforward calculation
yields the following lemma.
\begin{lem}\label{lem:ba_tilde}
Fix $a=|a|e^{i\alpha} \in \D$ and let $b_a$ be as above.
For $M=([0, 2\pi]/\sim)$ let $\tilde{b}_a\colon M \to M$ be the map determined by $\pi \circ \tilde{b}_a = b_a \circ \pi$ with
$\pi(\theta) = e^{i \theta}$ for $\theta \in M$. Then
\[
\tilde{b}_{a}(\theta) = \theta + g_a(\theta),
\]
with $g_a(\theta) = 2 \arctan \left( \frac{|a|\sin(\theta - \alpha)}{1-|a|\cos(\theta-\alpha)}\right)$
and $g'_a(\theta) = 2  \left( \frac{|a|\cos(\theta - \alpha)-|a|^2}{1-2|a|\cos(\theta-\alpha)+|a|^2}\right) > -1$.
\end{lem}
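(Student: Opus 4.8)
The plan is to reduce everything to computing the argument of a single complex number. Since $b_a$ is an automorphism of $\D$ it maps $\T$ to $\T$, so $b_a(e^{i\theta})$ is unimodular and we may write $b_a(e^{i\theta}) = e^{i\tilde{b}_a(\theta)}$; consequently $e^{ig_a(\theta)} = b_a(e^{i\theta})\,e^{-i\theta}$. The key observation that makes the calculation short is that, after cancelling $e^{i\theta}$,
\[
e^{ig_a(\theta)} = \frac{e^{i\theta}-a}{1-\cj{a}e^{i\theta}}\,e^{-i\theta} = \frac{1-a e^{-i\theta}}{1-\cj{a}e^{i\theta}} = \frac{w}{\cj{w}}, \qquad w := 1 - a e^{-i\theta},
\]
because the denominator is exactly $\cj{w}$. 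Hence $g_a(\theta) = 2\arg(w) = 2\arg\!\left(1 - a e^{-i\theta}\right)$ (modulo $2\pi$). First I would record that, writing $a = |a|e^{i\alpha}$, we have $w = \bigl(1 - |a|\cos(\theta-\alpha)\bigr) + i\,|a|\sin(\theta-\alpha)$, whose real part is strictly positive since $|a| < 1$; therefore $\arg(w) \in (-\pi/2, \pi/2)$ is given unambiguously by $\arctan$ of the imaginary-over-real ratio, yielding exactly
\[
g_a(\theta) = 2\arctan\!\left(\frac{|a|\sin(\theta-\alpha)}{1-|a|\cos(\theta-\alpha)}\right).
\]
Since this expression is continuous and $2\pi$-periodic in $\theta$, it defines a genuine lift $\tilde{b}_a$ on $M$, as required.

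For the derivative I would differentiate through the logarithm rather than the explicit arctangent, using $g_a'(\theta) = 2\,\frac{d}{d\theta}\Im\log w = 2\,\Im(w'/w)$ with $w = 1 - a e^{-i\theta}$ and $w' = i a e^{-i\theta}$. Multiplying $w'/w$ by $\cj{w}/\cj{w}$ and taking imaginary parts (using $|w|^2 = 1 - 2|a|\cos(\theta-\alpha) + |a|^2$) gives
\[
g_a'(\theta) = 2\,\frac{|a|\cos(\theta-\alpha) - |a|^2}{1 - 2|a|\cos(\theta-\alpha) + |a|^2},
\]
which is the claimed formula.

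The only point requiring a brief argument is the bound $g_a'(\theta) > -1$. Writing $c = \cos(\theta-\alpha)$ and $r = |a|$, the denominator equals $|1 - r e^{-i(\theta-\alpha)}|^2 = (1-r)^2 + 2r(1-c) > 0$ for $r < 1$, so the inequality $2(rc - r^2) > -(1 - 2rc + r^2)$ is equivalent, upon clearing the positive denominator, to $1 - r^2 > 0$. This holds precisely because $a \in \D$, completing the proof. I do not anticipate any genuine obstacle here: the entire lemma hinges on the single conjugation identity $b_a(e^{i\theta})e^{-i\theta} = w/\cj{w}$, after which both the closed form and the sharp lower bound drop out of elementary manipulations.
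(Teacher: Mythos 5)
Your proposal is correct. The paper offers no proof of this lemma at all (it is introduced with the phrase ``a straightforward calculation yields the following lemma''), so there is nothing to compare against beyond the intended direct computation, of which your argument is a clean and complete instance: the identity $b_a(e^{i\theta})e^{-i\theta} = w/\cj{w}$ with $w = 1 - ae^{-i\theta}$, the observation that $\Re(w) = 1 - |a|\cos(\theta-\alpha) > 0$ pins down the $\arctan$ branch, the logarithmic derivative $g_a'(\theta) = 2\,\Im(w'/w)$, and the reduction of the bound $g_a'(\theta) > -1$ to $1 - |a|^2 > 0$ after clearing the positive denominator $|w|^2$ all check out exactly.
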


We recall the definition of the maps $F \colon (z_1, z_2) \mapsto (z_1 z_2, z_2)$,
$R \colon (z_1, z_2) \mapsto (z_2, z_1)$, $I_{kl} \colon (z_1, z_2) \mapsto (z_1^{1-2k}, z_2^{1-2l})$ for $k, l \in \{0, 1\}$,
and $\mathcal{G} = \{G_{a,b}\colon (z_1, z_2) \mapsto (b_a(z_1), b_a(z_2)) : a, b \in \D\}$ from Section \ref{sec:resonances_blaschke},
as well as the definition of $\F$ as the group of toral diffeomorphisms generated by these maps.
The group of toral automorphisms $\Aut(\T^2)$ is generated by the set $\Gamma = \{F, R, I_{01}\}$.
Any $T \in \{F, I_{00}, I_{11}\}\cup \mathcal{G}$ yields an
orientation-preserving diffeomorphism of $\Ttwo$,
while all of $\{I_{01}, I_{10}, R\}$ are orientation-reversing.
The next lemma summarises some basic properties of these maps.
\begin{lem}\label{lem:compose_prop}
Let $a,b\in\D$ and $k,l,m,n\in\{0,1\}$. The following commutation relations hold.
\begin{enumerate}[(i)]
\item $F \circ I_{kl} = I_{kl} \circ F^{-1}$ and $F^{-1} \circ I_{kl} = I_{kl} \circ F$ for $k \neq l$,
\item $F \circ I_{11} = I_{11} \circ F$ and $F^{-1} \circ I_{11} = I_{11} \circ F^{-1}$,
\item $R \circ I_{kl} = I_{lk} \circ R$,
\item $I_{kl} \circ I_{mn} = I_{1-\delta_{km},\, 1-\delta_{ln}}$,
\item $G_{a,b} \circ I_{kl} = I_{kl} \circ G_{(1-k)a+k\cj{a},\; (1-l)b+l\cj{b}}$,
\item $G_{a,b} \circ R = R \circ G_{b,a}$.
\end{enumerate}
\end{lem}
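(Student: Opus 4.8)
The plan is to verify each of the six relations by direct substitution, using only the explicit formulas for the generators $F$, $R$, $I_{kl}$ and $G_{a,b}$. Each identity is an equality of rational self-maps of $\Ctwohat$, so it suffices to check that both sides agree as formulas in $(z_1,z_2)$; the stated equality on $\Ttwo$ is then a special case. I would organise the computation by grouping the purely monomial relations (i)--(iv) together and treating the Möbius relations (v)--(vi) afterwards.

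For (i)--(iv), which involve only $F$, $R$ and the $I_{kl}$, I would compute both composites componentwise. For (iii) one has $R \circ I_{kl}(z_1,z_2) = (z_2^{1-2l}, z_1^{1-2k}) = I_{lk} \circ R(z_1,z_2)$, while (iv) follows from the observation that $(1-2k)(1-2m) = +1$ when $k=m$ and $-1$ when $k\neq m$, so that the exponent on $z_1$ in $I_{kl}\circ I_{mn}$ equals $+1$ exactly when $\delta_{km}=1$, matching the index $1-\delta_{km}$ (and likewise for the second coordinate). The relations (i) and (ii) record how $F$ interacts with a sign flip: flipping the sign of the second variable (the case $k\neq l$) turns the shear $z_1\mapsto z_1 z_2$ into $z_1\mapsto z_1 z_2^{-1}$, which is exactly conjugation of $F$ into $F^{-1}$, whereas flipping both variables leaves $F$ fixed; these are one-line checks.

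The only relation requiring a preliminary identity is (v). The key computation is that for a Möbius map $b_a$ one has
\[
b_a(z^{-1}) = \frac{z^{-1}-a}{1-\cj{a}z^{-1}} = \frac{1-az}{z-\cj{a}} = \left( \frac{z-\cj{a}}{1-az} \right)^{-1} = \big(b_{\cj{a}}(z)\big)^{-1},
\]
so that inverting the argument of $b_a$ amounts to conjugating the parameter and inverting the output. Applying this coordinatewise with $k,l\in\{0,1\}$ (the exponent $1-2k$ being $+1$ or $-1$) yields $b_a(z_1^{1-2k}) = \big(b_{(1-k)a+k\cj{a}}(z_1)\big)^{1-2k}$, and similarly in the second coordinate, which is precisely the content of (v). Finally, (vi) is immediate, since both $G_{a,b}\circ R$ and $R\circ G_{b,a}$ send $(z_1,z_2)$ to $(b_a(z_2), b_b(z_1))$.

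I do not expect a genuine obstacle here: the statement is a bookkeeping lemma collecting commutation rules, and the single nontrivial ingredient is the elementary Möbius identity displayed above. The main care required is simply tracking signs and indices consistently in (iv) and (v).
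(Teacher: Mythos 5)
Your proposal is correct and follows essentially the same route as the paper: the paper's proof consists precisely of the remark that all parts follow by direct computation, with part \emph{(v)} resting on the identity $b_a(z^{-1}) = b_{\cj{a}}(z)^{-1}$, which is exactly the identity you derive and use. Your componentwise verifications of \emph{(i)}--\emph{(iv)} and \emph{(vi)} are the same routine checks the paper leaves implicit.
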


\begin{proof}
All statements follow by direct computation, with part {\it (v)} using $b_a(z^{-1}) = b_{\cj{a}}(z)^{-1}$.
\end{proof}

We now provide examples from various interesting sub-classes of diffeomorphisms in $\F$.

\paragraph{Anosov diffeomorphisms in $\F$.}

For $a\in \mathbb{D}$ let $T_a\colon \Ttwo \to \Ttwo$ be given by
\[T_a = G_{0, -a} \circ F \circ R \circ G_{a,0}.\]
Using Lemma \ref{lem:ba_tilde}, we can see that the derivative of
the respective map $\tilde{T}_a\colon M \to M$ is given by
\[D \tilde{T}_a(x) =
\begin{pmatrix}
s_{a}(x) & 1 \\
1 & 0
\end{pmatrix},\]
with $s_a(x) = (1+ g_a'(x_1)) > 0$ for all $x=(x_1, x_2)\in M$.
\begin{example}\label{ex:hyp}~
\begin{enumerate}[(i)]
\item For $a=0$ the map $T_0(z) = (z_1z_2, z_1)$ is an Anosov automorphism induced by
$\begin{pmatrix}
1 & 1 \\
1 & 0
\end{pmatrix}$
with eigenvalues $\lambda_{u/s} = \varphi^{\pm 1}$ where
$\varphi = (1+\sqrt{5})/2$ is the golden mean.

\item One can check that $T_a(z) = (b_a(z_1) z_2, z_1)$ is Anosov for
all $a\in \mathbb{D}$ by finding suitable cone fields
(see Definition \ref{def:Anosov}).

\item The maps $T_b\circ T_a$ given by
\[(T_b \circ T_a)(z) = (b_b(b_a(z_1) z_2) z_1, b_a(z_1) z_2)\]
are Anosov for all $a,b\in \D$, and in fact $\R^2_{>0} \cup \R^2_{<0}$ can be chosen as the invariant expanding cone,
and its complementary cone as the invariant contracting one.
These are the maps considered in \cite{SlBaJu_NONL17} and \cite{PoS}.
\end{enumerate}
The maps in (iii) are orientation-preserving while the ones in (i) and (ii) are
orientation-reversing.
\end{example}

\paragraph{Area-preserving diffeomorphisms in $\F$.}~

\begin{lem}\label{lem:area}
Let $F_a = G_{0, -a} \circ F \circ G_{0,a}$ for $a\in\D$.
Then any finite composition of the elements of
$\Gamma_{ap} = \Gamma \cup \{F_a : a \in \D, k\in \N \}$
is area-preserving.
\end{lem}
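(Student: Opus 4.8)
The plan is to exploit the multiplicativity of the Jacobian determinant under composition. By Definition~\ref{def:orient_area} a toral diffeomorphism $T$ is area-preserving precisely when $|\det D_x\tilde T| = 1$ for all $x \in M$, and the chain rule gives $\det D_x(\tilde S \circ \tilde T) = \det D_{\tilde T(x)}\tilde S \cdot \det D_x \tilde T$. Hence the area-preserving diffeomorphisms of $\Ttwo$ are closed under composition, and it suffices to verify that each generator in $\Gamma_{ap} = \{F, R, I_{01}\} \cup \{F_a : a \in \D\}$ is area-preserving.

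For the three linear generators this is immediate: on $M$ the conjugated maps are $\tilde F(x_1,x_2) = (x_1+x_2, x_2)$, $\tilde R(x_1,x_2) = (x_2,x_1)$ and $\tilde I_{01}(x_1,x_2) = (x_1,-x_2)$, with constant Jacobian determinants $1$, $-1$ and $-1$ respectively, so $|\det D_x \tilde T| = 1$ in each case.

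The substantive step is $F_a = G_{0,-a} \circ F \circ G_{0,a}$. Since $G_{0,-a} = G_{0,a}^{-1}$, the map $F_a$ is a conjugate of $F$; note, however, that $G_{0,a}$ is \emph{not} itself area-preserving, so area-preservation of $F_a$ does not follow from conjugation alone and is the main point to establish. I would compute $\det D_x\tilde F_a$ directly by the chain rule. By Lemma~\ref{lem:ba_tilde}, $\tilde G_{0,a}(x_1,x_2) = (x_1, \tilde b_a(x_2))$, so $\det D_x \tilde G_{0,a} = \tilde b_a'(x_2)$ depends only on the second coordinate. Crucially $\tilde F$ fixes the second coordinate, so after applying $\tilde G_{0,a}$ and then $\tilde F$ the second coordinate equals $\tilde b_a(x_2)$; evaluating the Jacobian of $\tilde G_{0,-a}$ there and using $\tilde b_{-a} = \tilde b_a^{-1}$ gives the factor $(\tilde b_a^{-1})'(\tilde b_a(x_2)) = 1/\tilde b_a'(x_2)$. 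Combining these with $\det D\tilde F \equiv 1$ yields
\[
\det D_x \tilde F_a = \frac{1}{\tilde b_a'(x_2)} \cdot 1 \cdot \tilde b_a'(x_2) = 1,
\]
so $F_a$ is area-preserving.

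With all generators shown to be area-preserving, the opening observation on composition completes the proof: any finite composition of elements of $\Gamma_{ap}$ has Jacobian determinant of modulus $1$ at every point, hence is area-preserving. The only genuine obstacle is the cancellation in the $F_a$ computation, which hinges on the two facts that $F$ preserves the coordinate on which $G_{0,a}$ acts and that the Jacobian of $G_{0,a}$ depends on that coordinate alone.
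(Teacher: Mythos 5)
Your proof is correct, and its skeleton---multiplicativity of the Jacobian determinant plus a check of each generator---matches the paper's. The execution of the key step for $F_a$ is genuinely different, however. The paper first composes the three maps explicitly, obtaining $F_a(z_1,z_2) = (z_1 b_a(z_2), z_2)$, and then computes a single \emph{complex} Jacobian determinant, $\det DF_a(z) = b_a(z_2)$, whose modulus is $1$ on $\Ttwo$ because the Moebius map $b_a$ preserves the unit circle. You instead keep the three factors separate, work with the conjugated maps on $M$ (which matches Definition~\ref{def:orient_area} verbatim), and get the exact cancellation $(\tilde b_a^{-1})'(\tilde b_a(x_2)) \cdot \tilde b_a'(x_2) = 1$ from the chain rule; you never invoke the inner-function property $|b_a|=1$ on $\T$, only that $\tilde b_a$ is a circle diffeomorphism (its derivative is nonvanishing by Lemma~\ref{lem:ba_tilde}, which justifies the division). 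Both arguments hinge on the same structural fact, namely that $F$ fixes the coordinate on which $G_{0,a}$ acts. What your route buys: staying on $M$ avoids the identity relating the complex Jacobian on $\Ttwo$ to the real Jacobian on $M$, which the paper's proof uses tacitly and records only in a footnote elsewhere; it also shows the slightly stronger fact $\det D_x\tilde F_a \equiv 1$, i.e.\ $F_a$ is orientation-preserving, not merely area-preserving. What the paper's route buys: the explicit formula $F_a(z_1,z_2)=(z_1 b_a(z_2),z_2)$ is useful in its own right and reduces the determinant computation to one line.
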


\begin{proof}
For $z\in\Ttwo$ we have $F_a(z_1, z_2) = (z_1 b_a(z_2), z_2)$. As
$b_a$ preserves $\T$ we have $|\det{DF_a}(z)| = |b_a(z_2)| = 1$ for all $z\in\Ttwo$.
As all elements in $\Gamma_{ap}$ are area-preserving, so is their composition.
\end{proof}

\begin{example}\label{ex:ap}~
\begin{enumerate}[(i)]
\item The maps in Example \ref{ex:hyp} are area-preserving Anosov diffeomorphisms
as $T_a = G_{0,-a} \circ F \circ G_{0,a} \circ R = F_a \circ R$ with $F_a$ as in Lemma \ref{lem:area}.
\item The map $T = F \circ R \circ G_{0,a} \circ F \circ R$ given by
\[T(z_1, z_2) = (z_1 b_a(z_1) z_2, z_1 z_2)\] is not area-preserving
for $a\in\D\setminus\{0\}$ as
$|\det{DT}(z)| = |z^2_1 b_a'(z_1) z_2| = |b_a'(z_1)|$ for $z\in \Ttwo$.

\item The map $T = F \circ R \circ G_{0, -a} \circ F \circ R \circ G_{a,b}$ given by
\[T(z_1, z_2) = (z_1 b_a(z_1) b_b(z_2), b_a(z_1) b_b(z_2)), \quad (a\in \D),\]
is area-preserving for $b=0$ (see Example \ref{ex:hyp}(ii)), but is not area-preserving
for $b\neq 0$.
\end{enumerate}
\end{example}

\paragraph{Maps with symmetries.}

An automorphism $T$ of some topological space is said to have a \textit{symmetry}
if there exists an automorphism $H$ so that
\[ H^{-1} \circ T \circ H = T, \]
and to have a \textit{reversing symmetry} if there exists
an automorphism $H$ so that
\[H^{-1} \circ T \circ H = T^{-1}.\]
Clearly, for any rational map $T$ with only real coefficients, Corollary \ref{cor:TA}$(i)$
implies that $I_{11}$ is a symmetry of $T$, which by Theorem \ref{thm:thmB} and its proof induces
symmetry relations on the resonances.

On the other hand, presence of reversing symmetries is of considerable interest in classical and quantum mechanics.
For systems with {\it time-reversal symmetry} the reverse motion satisfies the same laws of
motion as the forward motion. Usually this time-reversal symmetry corresponds to a particular involution map $H$.
However, the notion of reversible systems was extended to include all involutions and
even non-involutary reversing symmetries, see for example the survey \cite{LR_98} adapted to dynamical systems
or \cite{BR} specifially for toral automorphisms. We will next present some Anosov diffeomorphisms
in $\mathcal{F}$ which have reversing symmetries.

\begin{lem}
Let $k\in \mathbb{N}$ and $a\in (0,1)$, and define the maps $T_{k,a} = F^k\circ R \circ G_{a, -a} \circ F^k \circ R$
and $U_{k,a} = G_{0, -a} \circ T_{k, a} \circ G_{a, 0}$.
\begin{enumerate}[(i)]
 \item The map $T_{k, a}$ is a non-area-preserving Anosov diffeomorphism with a reversing symmetry.
 \item The map $U_{k, a}$ is an area-preserving Anosov diffeomorphism with a reversing symmetry.
\end{enumerate}
\end{lem}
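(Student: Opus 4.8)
The plan is to treat the two maps in parallel and to split each assertion into three independent parts: the Anosov property, the (non-)area-preservation, and the reversing symmetry. Throughout I write $\tilde T_{k,a},\tilde U_{k,a}\colon M\to M$ for the lifts and abbreviate $N=\begin{pmatrix} k & 1\\ 1 & 0\end{pmatrix}$, so that $F^k\circ R=\tau_N$ is linear, and $B=N^2=\begin{pmatrix} k^2+1 & k\\ k & 1\end{pmatrix}$, a fixed hyperbolic matrix with $\operatorname{Tr}B=k^2+2\ge 3$.

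First I would compute the derivatives. By the chain rule together with Lemma \ref{lem:ba_tilde} (which gives $D\tilde G_{a,-a}=\operatorname{diag}(p,q)$ with $p=\tilde b_a'>0$ and $q=\tilde b_{-a}'>0$), a direct multiplication yields
\[ D\tilde T_{k,a}=\begin{pmatrix} k^2 p+q & kp\\ kp & p\end{pmatrix}, \]
a symmetric matrix with strictly positive entries and determinant $pq$. For $U_{k,a}$ I would first record the algebraic identity $U_{k,a}=V^2$ with $V=G_{0,-a}\circ F^k\circ R\circ G_{a,0}$, obtained by inserting $G_{a,-a}=G_{a,0}\circ G_{0,-a}$ into $U_{k,a}=G_{0,-a}\circ T_{k,a}\circ G_{a,0}$ and regrouping; here $V$ is exactly a map of the type already shown to be Anosov and area-preserving, its lift having derivative $\begin{pmatrix} s & 1\\ 1 & 0\end{pmatrix}$ with $s>0$, as for the maps $U_{k,a}$ in Proposition \ref{prop:uka_class}. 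The (non-)area statements are then immediate: $|\det D\tilde T_{k,a}|=pq$ is non-constant for $a\neq 0$, so $T_{k,a}$ is not area-preserving, while $U_{k,a}=V^2$ is a composition of area-preserving maps and hence area-preserving.

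For the reversing symmetry I would exhibit the single automorphism $H=\tau_C$ with $C=\begin{pmatrix} 0 & -1\\ 1 & 0\end{pmatrix}\in\GLtwo(\Z)$, that is $\tau_C(z_1,z_2)=(z_2^{-1},z_1)$, and verify $\tau_C^{-1}\circ T_{k,a}\circ\tau_C=T_{k,a}^{-1}$ and the analogous identity for $U_{k,a}$. The verification rests on three elementary relations, all valid for real $a$: using $b_a(1/z)=1/b_a(z)$ one checks $\tau_C^{-1}\circ G_{a,-a}\circ\tau_C=G_{a,-a}^{-1}$; from $C^{-1}NC=-N^{-1}$ one gets $\tau_C^{-1}\circ(F^k R)\circ\tau_C=I_{11}\circ(F^k R)^{-1}$; and $I_{11}=\tau_{-\mathbb{I}}$ commutes with every $\tau_M$ and, by Lemma \ref{lem:compose_prop}(v) with real coefficients, with $G_{a,-a}$. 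Writing $T_{k,a}=(F^k R)\circ G_{a,-a}\circ(F^k R)$ and conjugating factor by factor, the two resulting copies of $I_{11}$ can be commuted together and cancel, leaving exactly $(F^k R)^{-1}\circ G_{a,-a}^{-1}\circ(F^k R)^{-1}=T_{k,a}^{-1}$; the same computation applied to $V$ gives $\tau_C^{-1}\circ V\circ\tau_C=I_{11}\circ V^{-1}$, and squaring (again commuting $I_{11}$ past $V^{-1}$) yields $\tau_C^{-1}\circ U_{k,a}\circ\tau_C=V^{-2}=U_{k,a}^{-1}$. Since $\tau_C^2=I_{11}\neq\operatorname{id}$, this is a non-involutory reversing symmetry, consistent with the discussion preceding the lemma.

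The Anosov property is where the two cases genuinely diverge, and the non-area-preserving map $T_{k,a}$ is the main obstacle. In both cases the constant quadrant cones $K^u=\R^2_{>0}\cup\R^2_{<0}$ and $K^s=\R^2\setminus\overline{K^u}$ are the natural candidates: positivity of the entries of $D\tilde T_{k,a}$ gives $D\tilde T_{k,a}(\overline{K^u})\subset K^u\cup\{0\}$, and the sign pattern of $(D\tilde T_{k,a})^{-1}$ forces $(D\tilde T_{k,a})^{-1}(\overline{K^s})\subset K^s\cup\{0\}$, so the cone-invariance clause (i) of Definition \ref{def:Anosov} holds. For $U_{k,a}$ the expansion clause (ii) is then easy, since $U_{k,a}=V^2$ is a power of an Anosov map (equivalently, its lift derivative $\begin{pmatrix} s's+1 & s'\\ s & 1\end{pmatrix}$ has lower-right entry $1$, so the (p-sec) criterion of Remark \ref{rem:secc} applies) and area-preservation forces the two cone-directed eigenvalues to be reciprocal. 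For $T_{k,a}$, however, $\det D\tilde T_{k,a}=pq$ is not bounded below by $1$, and a single matrix can contract every vector of $K^u$ in the Euclidean norm when $p,q$ are both small (which occurs for $a$ close to $1$), so the naive (p-sec) estimate fails. I expect the crux to be proving uniform expansion of iterates along $E^u$, and my plan is to pass to the genuinely conjugate map $W=\tau_B\circ G_{a,-a}$ (conjugate to $T_{k,a}$ via $\tau_N$), whose lift derivative is $B\operatorname{diag}(p,q)$: the fixed matrix $B$ expands every positive-quadrant vector by a factor $\ge\sqrt{k^2+1}>1$ while $\operatorname{diag}(p,q)$ preserves that quadrant, so a contracting step can occur only when $E^u$ lies near a coordinate axis, and such a step immediately rotates $E^u$ into the interior of the quadrant; hence contracting steps cannot recur, and a two-step (or adapted-metric) estimate should recover uniform hyperbolicity. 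Making this recurrence argument quantitative and uniform in the orbit is the part I would spend the most care on.
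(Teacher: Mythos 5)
Most of your proposal is correct and, where it is correct, more explicit than the paper's own proof. The factorization $U_{k,a}=V^2$ with $V=G_{0,-a}\circ F^k\circ R\circ G_{a,0}$ is exactly the identity the paper invokes for the area statements; your formula $D\tilde T_{k,a}=N\operatorname{diag}(p,q)\,N$ with determinant $pq$ correctly yields non-preservation of area for $T_{k,a}$; and your reversing symmetry $\tau_C$, $C=\begin{pmatrix}0&-1\\1&0\end{pmatrix}$, is precisely the inverse of the paper's choice $H=I_{01}\circ R$ (an automorphism is a reversing symmetry if and only if its inverse is), verified from the same ingredients: Lemma \ref{lem:compose_prop}, realness of $a$, and the fact that $I_{11}$ commutes with linear maps and with $G$'s having real parameters. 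Part (ii) is complete in your write-up: $V$ is (up to the sign of $a$) one of the area-preserving Anosov maps of Example \ref{ex:hyp}(ii) and Proposition \ref{prop:uka_class}, and powers of Anosov diffeomorphisms are Anosov.

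The genuine gap is the Anosov property of $T_{k,a}$, and it is unfixable rather than merely unfinished: statement (i) is false for part of the parameter range, so neither your recurrence plan nor any other argument can close it. Since $a$ is real, $b_a$ and $b_{-a}$ both fix $\pm 1$, so $G_{a,-a}$ fixes all four $2$-torsion points of $\Ttwo$; for $k$ even, $\tau_N=F^k\circ R$ swaps $(1,-1)$ and $(-1,1)$, so $T_{k,a}=\tau_N\circ G_{a,-a}\circ\tau_N$ fixes $(1,-1)$. Moreover $\tilde b_a'(\pi)=\tilde b_{-a}'(0)=\tfrac{1-a}{1+a}$ by Lemma \ref{lem:ba_tilde}, so the middle factor of the derivative at this fixed point is the scalar matrix $\tfrac{1-a}{1+a}\mathbb{I}$, and
\[
D\tilde T_{k,a}\bigl((0,\pi)\bigr)=\frac{1-a}{1+a}\,N^2,
\]
whose eigenvalues are $\tfrac{1-a}{1+a}\mu_{\pm}$, with $\mu_{\pm}$ the eigenvalues of $N^2$ satisfying $\mu_+\mu_-=1$. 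Whenever $\tfrac{1-a}{1+a}<\mu_-$ both eigenvalues lie in $(0,1)$, so $(1,-1)$ is an attracting fixed point and $T_{k,a}$ cannot be Anosov; for $k=2$ this happens exactly when $a>1/\sqrt{2}$ (and by the reversing symmetry $(-1,1)$ is then repelling). This example also pinpoints why your repair heuristic fails: at this fixed point $\operatorname{diag}(p,q)$ is a multiple of the identity, so a contracting step does not rotate $E^u$ away from any ``bad'' direction, and because the orbit is stationary the contraction recurs at every step.

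Two further remarks. First, your one-step bound $\|B\operatorname{diag}(p,q)v\|\ge\sqrt{k^2+1}\,\min(p,q)\,\|v\|$, together with the analogous bound for the inverse (which follows via the reversing symmetry, since the matrix $C$ maps the quadrant pair $K^u$ onto $K^s$), does prove (i) in the regime $\tfrac{1-a}{1+a}>(k^2+1)^{-1/2}$, i.e.\ for $a$ below an explicit threshold; for $k$ odd the three nonzero $2$-torsion points form a period-$3$ orbit along which the determinant of $D\tilde T_{k,a}^{3}$ equals $1$, so this particular obstruction disappears, but uniform expansion there still requires proof. Second, the paper's own proof has exactly the defect you suspected: it deduces the Anosov property from quadrant-cone invariance alone (``it is not difficult to see''), which is insufficient for a map that does not preserve area, and by the computation above the conclusion is genuinely false for $k$ even and $a$ close to $1$. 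The lemma therefore needs a restriction on $(k,a)$; your proposal, truncated to the small-$a$ regime where the one-step estimate applies, would be a correct proof of such a corrected statement.
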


\begin{proof}
Using Lemma \ref{lem:ba_tilde} it is not difficult to see that both $T_{k,a}$ and $U_{k,a}$ are Anosov with
the first and third quadrant of $\mathbb{R}^2$ forming an unstable, and the second and forth quadrant forming
a stable invariant cone. Area preservation of $U_{k,a}$ and non-preservation for $T_{k,a}$ can be computed directly,
noting that $U_{k,a} = (G_{0,-a} \circ F^k \circ R \circ G_{a,0})^2$. For the symmetries,
a calculation with $H=I_{01} \circ R$ using Lemma \ref{lem:compose_prop} and the fact that $a \in \R$ reveals that
$H^{-1} \circ T_{k,a}^{-1} \circ H = T_{k,a}$.
Since $G_{0, a} \circ H = H \circ G_{a, 0}$ we also have $H^{-1} \circ U_{k,a}^{-1} \circ H = U_{k,a}$.
\end{proof}

\paragraph{Comparison to Blaschke product diffeomorphisms.}

In \cite{PuSh_ETDS08} the authors coined the notion of Blaschke product diffeomorphisms, which are maps of the form
\[T(z_1, z_2) = (A(z_1) B(z_2), C(z_1) D(z_2)),\]
where $A, B, C, D$ are Blaschke products in one variable.
They state that these are precisely the analytic maps on a neighbourhood of the open bidisk $\Dtwo$,
mapping $\Dtwo$ to itself and $\Ttwo$ diffeomorphically to itself,
and provide an explanation of this in \cite[Remark 5.2]{PuSh_ETDS08}.
Here we observe that this claim is inaccurate: while Examples \ref{ex:ap}(ii)-(iii)
are instances of Blaschke product diffeomorphisms, Example \ref{ex:hyp}(iii)
is a hyperbolic diffeomorphism in $\F$ containing Blaschke factors of a product of two variables,
and cannot be written as a Blaschke product diffeomorphism.

\paragraph{Acknowledgements}
All authors gratefully acknowledge the support for the research presented in this
article by the EPSRC grant EP/RO12008/1. JS gratefully acknowledges
the support by the ERC grant 833802-Resonances.

\end{document}